\newtheorem{theorem}{Theorem}
\newtheorem{theo}{Theorem}[section]
\newtheorem{lemma}[theo]{Lemma}
\newtheorem{defi}[theo]{Definition}
\newtheorem{prop}[theo]{Proposition}
\newtheorem{cor}[theo]{Corollary}
\newtheorem{remark}[theo]{Remark}
\newtheorem{example}[theo]{Example}
\numberwithin{equation}{section}
\def\coh{\operatorname{coh}}
\def\Qcoh{\operatorname{Qcoh}}
\def\lto{\longrightarrow}
\def\E{{\mathcal E}}
\def\F{{\mathcal F}}
\def\O{{\mathcal O}}
\def\D{{\mathcal{D}}}
\def\T{{\mathcal{T}}}
\def\pre-tr{\operatorname{pre-tr}}
\def\Hom{\operatorname{Hom}}
\newcommand{\bL}{{\mathbf L}}
\newcommand{\bR}{{\mathbf R}}
\newcommand{\bbZ}{{\mathbb Z}}
\newcommand{\bbN}{{\mathbb N}}
\newcommand{\bbP}{{\mathbb P}}
\newcommand{\bp}{\mathbf{p}}
\newcommand{\cJ}{{\mathcal J}}
\newcommand{\cF}{{\mathcal F}}
\newcommand{\cG}{{\mathcal G}}
\newcommand{\cO}{{\mathcal O}}
\newcommand{\cP}{{\mathcal P}}
\newcommand{\cL}{{\mathcal L}}
\newcommand{\cN}{{\mathcal N}}
\newcommand{\cD}{{\mathcal D}}
\newcommand{\cV}{{\mathcal V}}
\newcommand{\cA}{{\mathcal A}}
\newcommand{\cB}{{\mathcal B}}
\newcommand{\cI}{{\mathcal I}}
\newcommand{\cC}{{\mathcal C}}
\newcommand{\cE}{{\mathcal E}}
\newcommand{\cW}{{\mathcal W}}
\newcommand{\cU}{{\mathcal U}}
\newcommand{\cS}{{\mathcal S}}
\newcommand{\cT}{{\mathcal T}}
\newcommand{\cK}{{\mathcal K}}
\newcommand{\Perf}{\operatorname{Perf}}
\newcommand{\Ker}{\operatorname{Ker}}
\newcommand{\coker}{\operatorname{Coker}}
\newcommand{\im}{\operatorname{Im}}
\newcommand{\Ext}{\operatorname{Ext}}
\newcommand{\colim}{\underrightarrow{\operatorname{colim}}\,}
\newcommand{\holim}{\underleftarrow{\operatorname{holim}}\,}
\newcommand{\tors}{\operatorname{tors}}
\newcommand{\Ho}{{H^0}}
\newcommand{\SF}{\mathcal{S}\mathcal{F}}
\newcommand{\id}{\operatorname{id}}
\newcommand{\tr}{\operatorname{tr}}
\newcommand{\Ac}{\mathcal{A}c}
\newcommand{\Hog}{H^{*}}
\newcommand{\Mod}{\mathcal{M}od\operatorname{-}}
\newcommand{\ModU}{\mathcal{M}od_{\mathbb{U}}\operatorname{-}}
\newcommand{\ModUs}{\mathcal{M}od^{\str}_{\mathbb{U}}\operatorname{-}}
\newcommand\RHOM{\mathcal{R}\mathcal{H}om}
\newcommand\EXT{\mathcal{E}xt}
\newcommand{\cd}{\mathcal{D}}
\newcommand{\Ob}{\operatorname{Ob}}
\newcommand{\DGcat}{\operatorname{DGcat}}
\newcommand{\Hqe}{\operatorname{Hqe}}
\newcommand{\rep}{\operatorname{rep}}
\newcommand{\can}{\operatorname{can}}
\newcommand{\hocolim}{\underrightarrow{\operatorname{hocolim}}\,}
\newcommand{\dgcat}{\mathbf{DGcat}}
\newcommand{\prfdg}{\mathcal{P}erf}
\newcommand{\umod}{\operatorname{Mod}\operatorname{-}}
\newcommand{\QMod}{\operatorname{QMod}}
\newcommand{\Gr}{\operatorname{Gr}}
\newcommand{\Tors}{\operatorname{Tors}}
\newcommand{\QGr}{\operatorname{QGr}}
\newcommand{\Proj}{\mathbb{P}\operatorname{roj}}
\newcommand{\op}{\operatorname{op}}
\newcommand{\str}{\operatorname{str}}
\newcommand{\fA}{\mathfrak{A}}
\newcommand{\fE}{\mathfrak{E}}
\newcommand{\Loc}{\operatorname{Loc}}
\title[]
{Uniqueness of enhancement for triangulated categories}
\author[]{Valery A.~Lunts}
\address{Department of Mathematics, Indiana University,
Bloomington, IN 47405, USA} \email{vlunts@indiana.edu}
\author[]{Dmitri O.~Orlov}
\address{Steklov Mathematical Institute, 8 Gubkina St. Moscow, Russia}
\email{orlov@mi.ras.ru}
\thanks{The first named author was partially supported by the NSA
grant H98230-05-1-0050. The second named author was partially
supported by grant RFFI 08-01-00297 and grant NSh-1987.2008.1}
\begin{document}

\begin{abstract}
The paper contains general results on the uniqueness of a DG enhancement
for triangulated categories. As a consequence we obtain such uniqueness for
the unbounded categories of quasi-coherent sheaves, for the triangulated categories of perfect complexes,
and for the bounded derived categories of coherent sheaves on quasi-projective schemes. If a scheme is projective
then we also prove a strong uniqueness for the triangulated category of perfect complexes and
 for the bounded derived categories of coherent sheaves. These results directly imply that fully faithful functors
from the bounded derived categories of coherent sheaves and  the triangulated categories of perfect complexes on
 projective schemes can be represented by objects on the product.
\end{abstract}

\keywords{Triangulated categories, DG categories, derived categories of sheaves}
\subjclass[2000]{14F05, 18E30}

\maketitle

\tableofcontents

\section*{Introduction}

Triangulated categories were invented about 50 years ago as a
convenient tool to do homological algebra. Yet it has been known for
some time now that the notion of a triangulated category is not
satisfactory: morphisms between objects in such a category are
usually given by cohomology groups of certain complexes, and you
forget too much by passing to cohomology. The problem is that the
cone of a morphism is not functorial in triangulated categories.
Let us give a couple of "frustrating" examples.

Given a triangulated category $\cT$ we can consider the category $\cT
^{\vee}$ of all cohomological functors from $\cT$ to the category of
abelian groups. We ``know'' that $\cT ^{\vee}$ should also be a triangulated
category, which one might call the dual of $\cT.$ However one cannot
prove that $\cT ^{\vee}$ is indeed triangulated.

Another example is the operation of the tensor product of two
triangulated categories (which should also be a triangulated
category) that cannot be performed without an extra data \cite{BLL}.

Thus we like to consider a triangulated category $\cT$ together with
an {\it enhancement} $\cB,$ which has the same objects as $\cT$ and
the set of morphisms between two objects in $\cB$ is a complex. One
recovers morphisms in $\cT$ by taking the cohomology $H^0$ of the
corresponding morphism complex in $\cB.$ Thus $\cB$ is a DG category and
$\cT$ is the its {\it homotopy} category $\Ho (\cB).$ The notion
of a triangulated category lifts to the DG world \cite{BK}: one has
{\it pretriangulated} DG categories, in which the cone of a morphism
is functorial!

Let $\cT$ be a triangulated category. An {\it
enhancement} of $\cT$ is a pair $(\cB, \varepsilon),$ where $\cB$ is a
pretriangulated DG category and $\varepsilon:\Ho(\cB)\stackrel{\sim}{\lto} \cT$
is an equivalence of  triangulated categories.
There are questions of existence and uniqueness of enhancement for a
given triangulated category.

The category $\cT$ has a  unique enhancement if it
has one and for any two enhancements $(\cB, \varepsilon)$ and $(\cB', \varepsilon')$ of
$\cT$ the DG categories $\cB$ and $\cB'$ are quasi-equivalent, i.e.
there exists a quasi-functor $\phi: \cB \lto \cB'$ which induces an
equivalence $\Ho(\phi):\Ho(\cB)\stackrel{\sim}{\lto} \Ho(\cB').$ In
this case the enhancements $(\cB, \varepsilon)$ and $(\cB', \varepsilon')$ are called
{\it equivalent}.

Enhancements $(\cB, \varepsilon)$ and $(\cB', \varepsilon')$ of $\cT$ are called {\it strongly
equivalent} if there exists a quasi-functor $\phi:\cB \to \cB'$ such
that the functors $\varepsilon' \cdot\Ho(\phi)$ and $\varepsilon$ are isomorphic.

It is important to know that an enhancement exists and is unique
for a given triangulated category $\cT,$ because then its choice is not considered
as an extra data. For example, in string theory categories of D-branes
arise as  DG categories (actually $A_{\infty}$\!-categories),
homotopy categories of which are equivalent to derived categories of coherent sheaves
on some projective varieties. It is very useful to know that these equivalences can be lifted to the ``DG level'' as well, i.e.
DG categories of D-branes are quasi-equivalent
to  natural enhancements of the derived categories of coherent sheaves.

We fix a field $k$ and all our categories are $k$\!-linear.  Our main
results are the following.

\begin{theorem}\label{imain}{\rm (=Theorem \ref{main}).}
Let $\cA$ be a small category which we consider as a DG category
 and $L\subset D(\cA)$ be a
localizing subcategory with the quotient functor $\pi :D(\cA)\to
D(\cA)/L$ that has a right adjoint (Bousfield localization) $\mu.$
Assume that the following conditions hold
\begin{enumerate}
\item[a)]  for every $Y\in \cA$ the object  $\pi (h^Y)\in D(\cA)/L$ is
compact;
\item[b)] for every $Y,Z\in \cA$ we have
$\Hom (\pi (h^Y),\pi (h^Z)[i])=0\quad \text{when}\quad i<0.$
\end{enumerate}
Then the triangulated category $D(\cA)/L$ has a unique enhancement.
\end{theorem}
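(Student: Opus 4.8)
The natural strategy is to reduce the statement about the Verdier quotient $\cT := D(\cA)/L$ to a statement about a \emph{fixed} DG model built from $\cA$, and then to show that any enhancement of $\cT$ is quasi-equivalent to that model. Concretely, let $\cC \subset \cT$ be the full subcategory whose objects are the images $\pi(h^Y)$, $Y \in \cA$. By hypothesis (a) these objects are compact in $\cT$, and by hypothesis (b) they have no negative self-extensions. I would first produce a canonical DG enhancement $\underline{\cC}$ of $\cC$: namely, realize $D(\cA)/L$ via the Bousfield localization $\mu$ as the homotopy category of the DG category of $\mu$-local (fibrant) DG $\cA$-modules, and let $\underline{\cC}$ be the full DG subcategory on fibrant replacements of the $\pi(h^Y)$. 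Because $\mu$ is a right adjoint, $\Hom_{\underline{\cC}}$-complexes compute the correct $\Hom$'s in $\cT$, so $H^0(\underline{\cC}) \cong \cC$. Then, since the $\pi(h^Y)$ are compact and generate $D(\cA)/L$ (the images of the representables generate $D(\cA)$, hence their images generate the quotient), the inclusion $\underline{\cC} \hookrightarrow$ (fibrant DG modules) induces a quasi-equivalence $\Perf(\underline{\cC}) \xrightarrow{\sim} D(\cA)/L$ by the standard compact-generation argument (Keller, Neeman). So $\cT$ \emph{has} an enhancement, and the whole problem becomes: any enhancement $\cB$ of $\cT$ is quasi-equivalent to $\Perf(\underline{\cC})$.

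**Main step.** Given an arbitrary enhancement $(\cB,\varepsilon)$, transport the subcategory $\cC$ into $H^0(\cB)$ via $\varepsilon^{-1}$ and let $\underline{\cC}_\cB \subset \cB$ be the corresponding full DG subcategory. I expect the heart of the argument to be showing $\underline{\cC}_\cB$ is quasi-equivalent to $\underline{\cC}$ by an equivalence compatible with $\varepsilon$ on cohomology. This is where hypothesis (b) is essential: a DG category $\underline{\cD}$ whose cohomology categories $\Hom^i$ vanish for $i<0$ between its objects is, up to quasi-equivalence, determined by the $A_\infty$-structure on $H^*(\underline{\cD})$, and — crucially — one can hope to rigidify it. The cleanest route is to pass to a minimal $A_\infty$-model: both $\underline{\cC}$ and $\underline{\cC}_\cB$ have underlying graded category $H^*$ equal to the fixed graded category $\bigoplus_i \Hom_\cT(\pi(h^Y),\pi(h^Z)[i])$, which is concentrated in degrees $\geq 0$. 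One must then argue that the higher $A_\infty$-products are determined (or can be matched by an $A_\infty$-isomorphism) — this uses positivity of the grading to run an obstruction/induction argument degree by degree, the obstructions living in Hochschild-type groups that vanish or can be killed precisely because of condition (b). Once $\underline{\cC}_\cB \simeq \underline{\cC}$, applying $\Perf(-)$ and using that $\cB$ is pretriangulated and generated by $\underline{\cC}_\cB$ (which follows since $H^0(\cB) \cong \cT$ is generated by $\cC$) yields quasi-equivalences $\cB \xleftarrow{\sim} \Perf(\underline{\cC}_\cB) \xrightarrow{\sim} \Perf(\underline{\cC}) \xrightarrow{\sim} \cB'$ for any other enhancement $\cB'$, proving uniqueness.

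**The obstacle.** The delicate point is the rigidification of the small DG model: showing that two enhancements $\underline{\cC}, \underline{\cC}_\cB$ of $\cC$ with the \emph{same} positively-graded cohomology category are quasi-equivalent. In general DG enhancements of a $k$-linear category are \emph{not} unique (Hochschild cohomology can be nonzero), so the positivity condition (b) must be doing real work. I would handle it by the explicit $A_\infty$-transfer: choose minimal models, and inductively construct an $A_\infty$-quasi-isomorphism, at each stage solving for the next Taylor component; the obstruction to extending lies in a group $\mathrm{HH}^{n}(\cC, \cC[2-n])$ computed from the graded category, and for $n\geq 3$ the relevant shift pushes this into negative internal degrees where, by (b), the $\Hom$-complexes vanish — so the obstruction vanishes and the lift exists. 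The bookkeeping of signs and the precise identification of obstruction groups is the part I would expect to be technically heaviest, but conceptually it is a standard deformation-theoretic argument made to work by the positivity hypothesis.
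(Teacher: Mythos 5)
Your proposal diverges from the paper's method at the critical step, and unfortunately the divergence introduces a genuine gap rather than an alternative proof.

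The paper does not attempt to show that any two DG enhancements of the generating subcategory $\cC=\{\pi(h^Y)\}$ are quasi-equivalent by an $A_\infty$-rigidification argument. Instead, the entire strategy leans on a much more specific feature: the generating subcategory $\cA$ is an \emph{ordinary} category, i.e.\ its Hom-complexes are concentrated in degree $0$. Given an arbitrary enhancement $\cD$ with $\varepsilon:D(\cA)/L\xrightarrow{\sim}\Ho(\cD)$, the authors take $\cB\subset\cD$ to be the full DG subcategory on $\{\varepsilon\pi(h^Y)\}$ and form the truncated DG category $\tau_{\le 0}\cB$. Condition~(b) makes the map $p:\tau_{\le 0}\cB\to \Ho(\cB)\cong\cA$ a quasi-equivalence; the inclusion $i:\tau_{\le 0}\cB\hookrightarrow\cB$ is \emph{not} a quasi-equivalence unless there are no positive extensions. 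The resulting zig-zag gives a quasi-functor $\widetilde\rho:\SF(\cA)\to\SF(\cB)$, which one then factors through the Drinfeld DG quotient $\SF(\cA)/\cL\cap\SF(\cA)$ via Theorem~\ref{Drin}, with the heavy lifting done by Proposition~\ref{mainprop} (a direct comparison of two functors out of $\SF(\cA)$ that agree on representables, proved by climbing stupid truncations). Nowhere is it claimed, and nowhere is it true, that the DG subcategory $\cB$ is itself determined by its cohomology.

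The specific gap in your argument is the claim that the obstruction groups $\mathrm{HH}^{n}(\cC,\cC[2-n])$, $n\ge 3$, vanish because condition~(b) kills negative degrees. That conclusion only holds when the graded category $H^*$ is concentrated in degree $0$: then a degree-$(2-n)$ cochain on $(H^0)^{\otimes n}$ lands in strictly negative degree and is forced to vanish. But in the setting of the theorem, $H^*(\underline{\cC})=\bigoplus_i\Hom_{\cT}(\pi(h^Y),\pi(h^Z)[i])$ is concentrated in degrees $\ge 0$, not in degree $0$. A Hochschild cochain $m_n$ of internal degree $2-n$ can be nonzero on tensors $f_1\otimes\cdots\otimes f_n$ with total internal degree $\ge n-2$, and these exist as soon as positive extensions exist (which they do in every geometric example, e.g.\ $\pi(h^Y)=\mathcal O(j)|_X$ on a variety of positive dimension). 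Such cochains encode genuine Massey products, and in general the $A_\infty$-structure on $\Ext^*$ is \emph{not} determined by the graded category: different algebras with the same $\Ext$-algebra (e.g.\ certain finite-dimensional algebras and their simples) give different $A_\infty$-structures with no negative $\Ext$'s. So the inductive lifting you propose really does hit nonzero obstructions, and your Step~(``Main step'') cannot be completed as stated.

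The moral of the comparison: condition~(b) is doing real work, but the work it does is to make $\tau_{\le 0}\cB\to\Ho(\cB)\cong\cA$ a quasi-equivalence (so that one can transport along the genuine degree-$0$ category $\cA$, whose $A_\infty$-structure is trivially formal), not to rigidify the DG structure on $\cB$ or $\underline{\cC}_\cB$. The comparison between enhancements is then made at the level of functors out of $\SF(\cA)$ (Proposition~\ref{mainprop}), which sidesteps the need for any Hochschild-cohomological control of $\underline{\cC}_\cB$ itself; the ``descent'' to the quotient is handled by Drinfeld's universal property, which your outline omits entirely. If you want to pursue an $A_\infty$-minimal-model route, you would have to compare the $A_\infty$-category $H^*(\underline{\cC}_\cB)$ not to a canonical model of $\cC$ directly, but to the \emph{ordinary} category $\cA$ sitting upstream of the localization, and then propagate this through the DG quotient — which is essentially the paper's argument in $A_\infty$-language.
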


\begin{theorem}
\label{imainperf} {\rm (=Theorem \ref{mainperf}).}
Let $\cA$ be a small category which we consider as a DG category
and $L\subset D(\cA)$ be a
localizing subcategory that is generated by compact objects $L^c=L\cap D(\cA)^c.$
Assume that for the quotient functor $\pi :D(\cA)\to
D(\cA)/L$ the following condition holds
\begin{enumerate}
\item[] for every $Y,Z\in \cA$ we have
$\Hom (\pi (h^Y),\pi (h^Z)[i])=0\quad \text{when}\quad i<0.$
\end{enumerate}
Then the triangulated
subcategory  of compact objects $(D(\cA)/L)^c$ has a
unique enhancement.
\end{theorem}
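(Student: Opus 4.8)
The plan is to reduce to Theorem~\ref{main} and then descend from the ambient category $\cT:=D(\cA)/L$ to its subcategory of compact objects. Since $\cA$ is small, $D(\cA)$ is compactly generated by the representables $\{h^Y\mid Y\in\cA\}$, and because $L$ is generated by the compacts $L^c=L\cap D(\cA)^c$, Neeman's localization theory shows that $\pi$ is a Bousfield localization whose right adjoint preserves coproducts, that $D(\cA)/L$ is compactly generated by the objects $\pi(h^Y)$, that each $\pi(h^Y)$ is compact, and that $(D(\cA)/L)^c$ is the idempotent completion of the Verdier quotient $D(\cA)^c/L^c$, hence is idempotent complete. Thus hypothesis~(a) of Theorem~\ref{main} is automatic while hypothesis~(b) is exactly our assumption, so $\cT$ has a unique enhancement. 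Write $\cT_{\dg}$ for its canonical enhancement (the Drinfeld DG quotient of the h-projective DG modules over $\cA$ by those lying in $L$) and $\cT_{\dg}^c\subset\cT_{\dg}$ for the full DG subcategory on objects compact in $\cT$; this is a pretriangulated enhancement of $\cT^c=(D(\cA)/L)^c$, so existence of an enhancement is settled.

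Now let $(\cB,\varepsilon)$ be an arbitrary enhancement of $\cT^c$, with $\cB$ small. The functor $Y\mapsto\pi(h^Y)$ from $\cA$ to $\cT$ lands in $\cT^c$, so $G:=\varepsilon^{-1}\circ\pi\circ h\colon\cA\to\Ho(\cB)$ is an ordinary $k$-linear functor, and by hypothesis~(b), $\Hom_{\Ho(\cB)}(GY,GZ[i])\cong\Hom_{\cT}(\pi h^Y,\pi h^Z[i])=0$ for all $Y,Z\in\cA$ and $i<0$. Since $\cA$ has morphism complexes concentrated in degree $0$, the higher obstructions to lifting $G$ to an $A_\infty$-functor $\cA\to\cB$, and hence to a quasi-functor $\widetilde G\colon\cA\to\cB$ with $\Ho(\widetilde G)\cong G$, lie in precisely these vanishing Hom groups (those in negative degree), so such a lift exists. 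This is the technical heart of the argument and the only step that genuinely uses the hypothesis; everything that follows is formal.

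Extending $\widetilde G$ by colimits yields a coproduct-preserving exact functor $F\colon D(\cA)\to D(\cB)$ with $F(h^Y)$ the DG $\cB$-module represented by $\widetilde G(Y)$, together with a compatible quasi-functor $\cD_{\dg}(\cA)\to\cD_{\dg}(\cB)$ of the standard DG enhancements. A routine check, using that $\widetilde G$ lifts $G$ and that $\Ho(\cB)\xrightarrow{\varepsilon}\cT^c$, shows that $\Ho(F)$ followed by the identification $D(\cB)^c\simeq\Ho(\cB)\xrightarrow{\varepsilon}\cT^c$ agrees with $\pi$ on $D(\cA)^c$; hence $F$ annihilates each $X\in L^c$ (since $\pi X=0$) and therefore the whole localizing subcategory $L=\langle L^c\rangle$. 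Thus $F$ and its DG lift descend to an exact functor $\bar F\colon\cT\to D(\cB)$ and a quasi-functor $\cT_{\dg}\to\cD_{\dg}(\cB)$ inducing it. By construction $\bar F$ carries the compact generators $\pi h^Y$ to a set of compact generators of $D(\cB)$ and induces isomorphisms $\Hom_{\cT}(\pi h^Y,\pi h^Z[i])\xrightarrow{\sim}\Hom_{D(\cB)}(\bar F\pi h^Y,\bar F\pi h^Z[i])$; being exact and coproduct-preserving, $\bar F$ is then fully faithful on all of $\cT$ and essentially surjective, i.e.\ an equivalence. Consequently the quasi-functor $\cT_{\dg}\to\cD_{\dg}(\cB)$ is a homotopy equivalence, hence a quasi-equivalence, so $\cD_{\dg}(\cB)\simeq\cT_{\dg}$.

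Finally, passing to the full DG subcategories of compact objects gives $\cD_{\dg}(\cB)^c\simeq\cT_{\dg}^c$. On the other hand, since $\Ho(\cB)=\cT^c$ is idempotent complete, the Yoneda quasi-functor $\cB\to\cD_{\dg}(\cB)$ identifies $\cB$ with $\cD_{\dg}(\cB)^c$ (the Yoneda image is a triangulated subcategory generating $\cD_{\dg}(\cB)^c$ thickly, and it is already closed under direct summands in $D(\cB)$ by idempotent completeness of $\Ho(\cB)$). Therefore $\cB$ is quasi-equivalent to $\cT_{\dg}^c$, and since $(\cB,\varepsilon)$ was arbitrary, all enhancements of $(D(\cA)/L)^c$ are quasi-equivalent, which is the assertion. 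As indicated, the one non-formal ingredient is the DG lift $\widetilde G$ of $G$ extracted from the vanishing~(b); the remainder is an assembly of Neeman's localization theorem, the Drinfeld quotient, and the recovery of a pretriangulated DG category from its compact objects.
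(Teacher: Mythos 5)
Your outline is structurally close to the paper's (build a quasi‑functor out of the abstract enhancement that mimics $\pi$, check it kills $L$, descend, then identify compacts), and your use of Neeman's localization theorem and the idempotent‑completion description of $(D(\cA)/L)^c$ is fine. However, the two steps you label as the "technical heart'' and the "routine check'' are both where the actual work lies, and as written they contain genuine gaps.

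First, the lift $\widetilde G$. You assert, via an unspecified obstruction theory for $A_\infty$‑functors, that the vanishing of $\Hom_{\Ho(\cB)}(GY,GZ[i])$ for $i<0$ kills all obstructions to lifting the ordinary functor $G\colon\cA\to\Ho(\cB)$ to a quasi‑functor. This conclusion is correct, but it is not a citable black box and you haven't set up the obstruction theory. The paper instead makes the lift explicit by one elementary device: form the truncated DG category $\tau_{\le 0}\cB$, observe that under hypothesis $(b)$ the projection $p\colon\tau_{\le 0}\cB\to\Ho(\cB)$ is a quasi‑equivalence, and then compose $a^*$, $p_*$, $i^*$ at the level of module categories to get the quasi‑functor $\widetilde\rho$ of (\ref{rho}). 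You should either reproduce that construction or fully justify the obstruction‑theoretic claim; stating it as folklore is not enough.

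Second, and more seriously, the claim that "a routine check \dots shows that $\Ho(F)$ \dots agrees with $\pi$ on $D(\cA)^c$'' and that "everything that follows is formal'' is not correct. Once you have $\widetilde G$ you possess \emph{two} exact functors from $D(\cA)^c$ into $D(\cB)^c$ — the one induced by $\widetilde G$ and the one coming from $\varepsilon\circ\pi$ followed by the Yoneda identification — and you only know that they agree on the discrete subcategory $\{h^Y\}$ up to an isomorphism of restricted functors. Two exact functors on a triangulated category that agree on a set of classical generators need not agree objectwise on the whole category, let alone be isomorphic as functors. Establishing the objectwise agreement is precisely the content of Proposition \ref{mainprop}, a multi‑step induction over stupid truncations that again crucially invokes the negative‑Ext vanishing $(b)$ (and the compactness from $(a)$) at every stage. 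This objectwise agreement is what feeds both Lemma \ref{factor} (your "$F$ annihilates $L^c$'') and Lemma \ref{equivp} (your "$\bar F$ is fully faithful''); the fullness/faithfulness argument in particular requires the compatible family of isomorphisms produced by Proposition \ref{mainprop} in the prism diagram (\ref{prizm}), not merely an abstract bijection of Hom‑groups. So hypothesis $(b)$ is not only the input to the existence of the DG lift; it drives the entire comparison of $F$ with $\pi$, and that comparison is where you would need to supply the paper's missing ingredient.
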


Our main tool in the proof of Theorems \ref{imain} and \ref{imainperf} is the Drinfeld
construction of a DG quotient of a DG category with its universal
property \cite{Dr}.

For convenience, in Section 2 we collect all our results together.
Sections 3-6 are devoted to proofs of two main Theorems \ref{imain} and \ref{imainperf}.
First, we give some preliminary lemmas and present  the main technical tool for the next sections, which is Proposition \ref{mainprop}.
Secondly, in Section \ref{cons} we construct a quasi-functor $\widetilde\rho$ (formula (\ref{rho})) which is a central object for all our considerations.
After that in Sections 5 and 6 we prove the main theorems; we show how to  apply Drinfeld Theorem \ref{Drin} to the quasi-functor
$\widetilde\rho$ and argue that the induced quasi-functor $\rho$ is actually a quasi-equivalence between different enhancements.
In the end of Sections \ref{Thmain} and \ref{Thmainperf} we give more advanced and precise versions of Theorems \ref{imain} and \ref{imainperf}
(see  Theorems \ref{fufabig} and \ref{corperf}).

In Section \ref{geom} as a consequence of Theorem \ref{imain}
we deduce the uniqueness of an enhancement for unbounded derived category of an abelian Grothendieck category $\cC$ under mild
additional conditions on it (see Theorem \ref{Grothcat}). More precisely, we prove that
the derived category $D(\cC)$ has a unique enhancement, if  the Grothendieck category $\cC$  has a set of small generators
which are compact objects in the derived category $D(\cC).$

This result can be applied to the category of quasi-coherent sheaves on a quasi-compact and quasi-separated scheme.
We say that a quasi-compact and quasi-separated scheme $X$ {\it has enough  locally free sheaves},
if  for any finitely presented sheaf $\F$ there is an epimorphism
$\E\twoheadrightarrow\F$ with a locally free
sheaf $\E$ of finite type. Theorem \ref{Grothcat} immediately implies that
the derived category of quasi-coherent sheaves $D(\Qcoh X)$  has a unique enhancement
if the quasi-compact and separated scheme  $X$  has enough locally free sheaves (Theorem \ref{qcqse}).
In particular, this statement can be applied for any quasi-projective scheme (Corollary \ref{qcqproj}).

In Section \ref{geom} we  show how to apply Theorem \ref{imainperf}
 to the subcategories of perfect complexes $\Perf(X).$
We proved that for  any quasi-projective scheme $X$ over $k$  the
triangulated category of perfect complexes $\Perf(X)$ has a  unique
enhancement (Theorem \ref{perfc}).

In Section \ref{cohbound} we introduce a notion of compactly approximated objects in a triangulated category
and we  prove that the triangulated category of compactly
approximated objects $(D(\cA)/L)^{ca}$ has a unique
enhancement (Theorem \ref{unca}). This result allows us to deduce the uniqueness of an enhancement for
the bounded derived category
of coherent sheaves $D^b(\coh X)$ on a quasi-projective scheme $X$ (Theorem \ref{bdcmain}).

In the case of projective varieties using results of \cite{Or, Or2} we can prove  stronger results.
If $X$ is a projective scheme over $k,$ then
the bounded derived category $D^b(\coh X)$ and the triangulated category of perfect complexes $\Perf(X)$ have
{\it strongly} unique enhancements (Theorem \ref{strun}). This result is a consequence of a general statement
about the bounded derived category of an exact category possessing  an ample sequence of objects
(Theorem \ref{streq}).

As corollaries of these results we obtain a representation of fully faithful functors from
categories of perfect complexes and bounded derived categories of coherent sheaves.
Any complex of quasi-coherent sheaves $\cE^{\cdot}$
on the product $X\times Y$ determines a functor
$$
\Phi_{\cE^{\cdot}}(-)=\bR p_{2*}(\cE^{\cdot}\stackrel{\bL}{\otimes}  p^{*}_{1}(-)): D(\Qcoh X)\to D(\Qcoh Y).
$$

We show that our theorems on uniqueness of enhancements imply
that if there is   a fully faithful functor $K: \Perf(X)\to D(\Qcoh(Y))$
for a quasi-projective schemes $X$ and a quasi-compact and separated scheme $Y$
then we can find an object $\cE^{\cdot}\in D(\Qcoh(X\times Y))$ such that
the restriction of the functor $\Phi_{\cE^{\cdot}}: D(\Qcoh X)\to D(\Qcoh Y)$ on $\Perf(X)$ is fully faithful too and
$\Phi_{\cE^{\cdot}}(P^{\cdot})\cong K(P^{\cdot})$ for every $P^{\cdot}\in \Perf(X).$
If, in addition, the functor $K$ sends $\Perf(X)$ to $\Perf(Y),$ then the functor $\Phi_{\cE^{\cdot}}$ is fully faithful,
sends  $\Perf(X)$ to $\Perf(Y),$ and $\cE^{\cdot}$ is isomorphic to an object of $D^b(\coh(X\times Y)).$
Finally, if  $X$ is a projective projective such that the maximal torsion subsheaf $T_0(\O_X)\subset\O_X$ of dimension 0 is trivial,
we show that the functor $\Phi_{\cE^{\cdot}}|_{\Perf(X)}$ is isomorphic
to $K$ (Corollary \ref{repr}). For a projective scheme $X$ with $T_0(\O_X)=0$ and the bounded derived category of coherent sheaves
$D^b(\coh X)$ we also proved that a fully faithful functor $K$ from $D^b(\coh X)$
to $D(\Qcoh Y)$ has the form $\Phi_{\cE^{\cdot}}$ if $K$ commutes with homotopy limits
(see Corollaries \ref{dbcoh} and \ref{lastcor}).

The main results of this paper were reported by the first author in
December 2008 at the conference on triangulated categories at
Swansea University. Recently some results on representability of fully faithful functors
between categories of perfect complexes were independently obtained in \cite{Ba}.

The second author is grateful to C.~Lazaroiu and A.~Kuznetsov for very useful discussions.
We thank the anonymous referee for careful reading of the text, making
several useful suggestions and for finding a few minor errors in the
original version.

\section{DG categories, quasi-functors, and quotients of DG
categories}

Our main reference for DG categories is \cite{Ke,Dr}.
Here we only recall a few points and introduce notation.
Let $k$ be an arbitrary field. We will write $\otimes$ for the tensor product over $k.$
All categories, DG categories, functors, DG
functors and etc. are assumed to be $k$\!-linear.

A {\it DG category} is a $k$\!-linear category $\cA$ whose morphism spaces $\Hom (X,Y)$
are provided
with a structure of a $\bbZ$\!-graded $k$\!-module and a differential
$d:\Hom(X,Y)\to \Hom (X,Y)$ of degree 1, so that for every $X, Y, Z\in
\Ob\cA$ the composition $\Hom (Y, Z)\otimes \Hom (X, Y)\to
\Hom (X, Z)$ is the morphism of DG $k$\!-modules. The identity morphism $1_X\in \Hom (X, X)$ is closed of
degree zero.

Using the supercommutativity isomorphism $S\otimes T\simeq T\otimes
S$ in the category of DG $k$\!-modules one defines for every DG
category $\cA$ the {\it opposite DG category} $\cA^{op}$ with $\Ob\cA
^{op}=\Ob\cA$ and $\Hom_{\cA^{op}}(X, Y)=\Hom_{\cA}(Y, X).$

For a DG category $\cA$ we denote by $\Ho(\cA)$ and
$\Hog(\cA)$ its homotopy and graded homotopy categories,
respectively.
The {\it homotopy category} $\Ho(\cA)$ has the same objects as the DG category $\cA$ and its
morphisms are defined by taking the $0$\!-th cohomology
$H^0(\Hom_{\cA} (X,Y))$
of the complex $\Hom_{\cA} (X,Y).$
The {\it graded homotopy category}
$\Hog(\cA)$ is defined  by replacing
each $\Hom$ complex in $\cA$ by the direct sum of its cohomology groups.

As usual a {\it DG functor}
$\F:\cA\to\cA'$ is given by a map $\F:\Ob(\cA)\to\Ob(\cA')$ and
by morphisms of DG $k$\!-modules
$$
\F(X,Y) : \Hom_{\cA}(X,Y) \to \Hom_{\cA}(\F X,\F Y)\quad X,Y\in\Ob(\cA)
$$
compatible with the composition and the units.

A DG functor $\F: \cA\to\cB$ is called a {\it quasi-equivalence} if
$\F(X,Y)$ is a quasi-isomorphism for all objects $X, Y$ of $\cA$
and the induced functor $H^0(\F): H^0(\cA)\to H^0(\cB)$ is an
equivalence. DG categories $\cA$ and $\cB$ are called {\it quasi-equivalent} if there exist DG
categories $\cC_1,\dots, \cC_n$ and a chain of quasi-equivalences
$\cA \leftarrow \cC_1 \rightarrow \cdots \leftarrow \cC_n \rightarrow \cB.$

Given a small DG category $\cA$ we define a {\it right DG $\cA$\!-module} as a DG functor
$M: \cA^{op}\to \Mod k,$ where $\Mod k$ is the DG category of DG $k$\!-modules. We denote by $\Mod\cA$ the DG
category of right DG $\cA$\!-modules.

Denote by $\Ac(\cA)$ the full
DG subcategory of $\Mod\cA$ consisting of all acyclic DG modules.
It is well-known that the
homotopy category of DG modules $\Ho(\Mod\cA)$ has a natural structure of a triangulated category
and the homotopy category of acyclic complexes $\Ho (\Ac(\cA))$ forms a full triangulated subcategory in it.
The {\it derived
category} $D(\cA)$ is the Verdier quotient of $\Ho(\Mod\cA)$
by the subcategory $\Ho (\Ac(\cA)).$

For each object $Y$ of $\cA$ we have the right module
represented by $Y$ $$h^Y(-):=\Hom_{\cA}(-,Y)$$
which is called a {\it representable} DG module. This
gives the Yoneda DG functor
$h^\bullet :\cA \to
\Mod\cA$ that is full and faithful.

The DG $\cA$\!-module is called {\it free} if it is isomorphic to a direct sum of  DG modules of the form
$h^Y[n],$ where $Y\in\cA,\; n\in\bbZ.$
A DG $\cA$\!-module
$P$ is called {\it semi-free} if it has a filtration
$0=\Phi_0\subset \Phi_1\subset ...=P$
such that each quotient  $\Phi_{i+1}/\Phi_i$ is free. The full
DG subcategory of semi-free DG modules is denoted by $\SF(\cA).$
We denote by $\SF_{fg}(\cA)\subset \SF(\cA)$ the full DG subcategory of finitely generated semi-free
DG modules, i.e. $\Phi_n=P$ for some $n$ and $\Phi_{i+1}/\Phi_i$ is a finite direct sum of DG modules of the form
$h^Y[n].$
We also denote by   $\prfdg(\cA)$ the DG category of perfect DG modules, i.e. the full DG
subcategory of $\SF(\cA)$ consisting of all DG modules which are homotopy
equivalent to a direct summand of a finitely generated semi-free DG module.

It is also natural to consider the category of h-projective DG modules.
We call a  DG $\cA$\!-module $P$ {\it h-projective (homotopically projective)} if
$$\Hom_{\Ho(\Mod\cA)}(P, N)=0$$
for every acyclic DG module $N$ (by duality, we can define {\it h-injective} DG modules). Let  $\cP(\cA)\subset \Mod\cA$ denote the full
subcategory of h-projective objects. It can be easily checked that a semi-free
DG-module is h-projective.
For every DG
$\cA$\!-module $M$ there is a quasi-isomorphism $\bp M\to M$ such that $\bp M$ is a semi-free
DG $\cA$\!-module.
Thus we obtain that the canonical DG functors $SF(\cA)\hookrightarrow\cP(\cA)\hookrightarrow\Mod\cA$ induce equivalences
$\Ho(SF(\cA))\stackrel{\sim}{\to} \Ho(\cP(\cA))\stackrel{\sim}{\to} D(\cA)$ of the triangulated categories (see \cite{Ke} 3.1, \cite{Hi} 2.2, \cite{Dr} 13.2).

Let $\F:\cA \to \cB$ be a DG functor between DG categories.
It induces the DG functors of restriction and extension of scalars
$$
\F_*:\Mod\cB\to \Mod\cA,\quad \F^*:\Mod\cA
\to \Mod\cB.
$$
The DG functor $\F^*$ is an extension of $\F$ on the category of DG modules, i.e the following diagram
commutes
$$
\begin{CD}
\cA @>h>> \Mod\cA\\
@V{\F}VV @VV{\F^*}V\\
\cB @>h>> \Mod\cB
\end{CD}
$$
where the horizontal arrows are the Yoneda embeddings.
The DG functors $(\F^*, \F_{*})$ are adjoint: for $M\in\Mod\cA$ and $N\in\Mod\cB$ there are functorial
isomorphisms
$$
\Hom_{\Mod\cB}(\F^*(M), N)\cong\Hom_{\Mod\cA}(M, \F_*(N)).
$$
The DG functor $\F^*$ preserves semi-free DG modules and $\F^* : \SF(\cA)\to \SF(\cB)$
is a quasi-equivalence if $\F$ is such. The DG functor $\F_{*}$ preserves acyclic
DG modules.

The DG functors $\F_*$ and $\F^*$ induce
the corresponding derived functors
$$F_*:D(\cB)\to D(\cA),\quad \bL F^*:D(\cA)\to D(\cB).$$

There is a third DG functor
$\F^!:\Mod\cA
\to \Mod\cB,
$
which is a right adjoint to $\cF_{*}$ and preserves h-injectives (see \cite{ELO1}). It is defined by the following formula
$$
\F^!(M)(X):=\Hom_{\Mod\cA}(\cF_{*}(h^X), M),\quad\text{where}\quad X\in\cB \quad\text{and}\quad M\in\Mod\cA.
$$
The DG functor $\F^!$ induces the derived functor $\bR F^!: D(\cA)\to D(\cB)$ which is right adjoint to $F_*.$

Let $\cA$ and $\cB$ be two small DG categories. Let $X$ be an
$\cA-\cB$\!-bimodule, i.e. a DG $\cA^{op}\otimes\cB$\!-module $X.$
For each DG $\cA$\!-module $M,$ we obtain a DG $\cB$\!-module
$M\otimes_{\cA} X.$
The DG functor $(-)\otimes_{\cA} X: \Mod\cA \to \Mod\cB$ admits a right adjoint
$\Hom_{\cB} (X, -).$
These functors do not respect
quasi-isomorphisms in general, but they form a Quillen adjunction
and the derived functors
$(-)\stackrel{\bL}{\otimes}_{\cA}X$ and
$\bR \Hom_{\cB} (X, -)$
form an adjoint pair of functors between derived categories $D(\cA)$ and
$D(\cB).$

Let
$\DGcat_k$ be the category of small DG $k$\!-linear categories.
It is known \cite{Ta} that it admits a structure of cofibrantly
generated model category whose weak equivalences are the
quasi-equivalences.
This shows in particular that the localization $\Hqe$ of
$\DGcat_k$ with respect to the quasi-equivalences has small
$\Hom$\!-sets. This also gives that a morphism from $\cA$ to $\cB$
in the localization can be represented as $\cA\leftarrow \cA_{cof}\to\cB,$ where $\cA\leftarrow \cA_{cof}$ is a cofibrant replacement.

The morphism sets in the localization are much better
described in term of quasi-functors. Consider two DG categories $\cA$ and $\cB$.
Denote by  $\rep(\cA,\cB)$ the full subcategory of the derived
category $D(\cA^{op}\otimes\cB)$ of $\cA-\cB$\!-bimodules formed by
all bimodules $X$ such that the tensor functor
$$
(-)\stackrel{\bL}{\otimes}_\cA X: D(\cA) \to D(\cB)
$$
takes every representable $\cA$\!-module to an object which is
isomorphic to a representable $\cB$\!-module. We call such a bimodule
a {\it quasi-functor} from $\cA$ to $\cB.$
In
other words a quasi-functor is represented by a DG functor $\cA\to \Mod\cB$ whose essential image consists of
quasi-representable DG $\cB$\!-modules (``quasi-representable'' means quasi-isomorphic to a representable DG module).
Since the category of quasi-representable DG
$\cB$\!-modules is equivalent to $\Ho(\cB)$ a quasi-functor
$\F \in \rep(\cA,\cB)$ defines a functor $\Ho(\F):\Ho(\cA)\to \Ho(\cB).$

It is known (see \cite{To})  that the morphisms from $\cA$ to $\cB$
in the localization of $\DGcat_k$ with respect to the
quasi-equivalences are in natural bijection with the isomorphism
classes of $\rep(\cA,\cB).$
Denote by $\dgcat$ a 2-category of  DG
categories with objects being small DG categories, 1-morphisms --
quasi-functors, 2-morphisms -- morphisms of quasi-functors, i.e. morphisms in
$D(\cA^{op}\otimes\cB).$

\begin{example}\label{first} {\rm Let $\cA$ and $\cB$ be two DG categories and $\varPhi:\cA \to
\Mod\cB$ be a DG functor. Then $\varPhi$ induces a
quasi-functor from $\cA$ to $\SF(\cB).$ Indeed, every $Y\in \cA$
defines a DG $\SF(\cB)$\!-module by the formula:
$$
P\mapsto \Hom_{\Mod\cB}(P,\F(Y)), \quad\text{for any}\quad P\in \SF(\cA).
$$
If $Q\in \SF(\cB)$ is quasi-isomorphic to $\F(Y),$ then this DG
$\SF(\cB)$\!-module is quasi-isomorphic to $h^Q\in
\Mod\SF(\cB).$ We will denote this quasi-functor  by
$\phi:\cA \to \SF(\cB).$}
\end{example}

For any DG category $\cA$ there exist a DG category $\cA^{\text{pre-tr}}$ that is called {\it pretriangulated hull}
and canonical fully faithful DG
functor $\cA\hookrightarrow\cA^{\text{pre-tr}}.$
The idea of the definition of $\cA^{\text{pre-tr}}$ is to formally add to $\cA$
all shifts, all cones, cones of morphisms between cones and etc. The objects of this DG category
are `one-sided twisted complexes' (see \cite{BK}).
There is a canonical fully faithful DG functor
(the Yoneda embedding) $\cA^{\text{pre-tr}}\to \Mod\cA,$ and under this embedding
$\cA^{\text{pre-tr}}$ is DG-equivalent to DG category of finitely generated semi-free DG modules, which we denote
by $\SF_{fg}(\cA).$

\begin{defi} We
say that $\cA$ is {\it pretriangulated}  if for every
objects $X\in\cA$ the object $X[n]\in \cA^{\text{pre-tr}}
$ is homotopy equivalent to an object of $\cA$
and for every closed morphism $f$ in $\cA$ of degree 0 the cone $\operatorname{Cone}(f)\in \cA^{\text{pre-tr}}$
is homotopy equivalent to an object of $\cA.$
In other words $\cA$ is pretriangulated if and only if the DG functor $\cA\to\cA^{\text{pre-tr}}$
is a quasi-equivalence.
\end{defi}
Thus if $\cA$ is pretriangulated the homotopy
category $\Ho(\cA)$ is triangulated. The DG category $\cA^{\text{pre-tr}}$ is always  pretriangulated,
so $\Ho(\cA^{\text{pre-tr}})$ is a triangulated category. We denote $\cA
^{\tr}:=\Ho(\cA^{\text{pre-tr}}).$

Notice that a quasi-functor $\F \in \rep(\cA, \cB)$
defines a functor $\cA^{\text{tr}}\to \cB^{\text{tr}}.$
Let us recall the main theorem in \cite{Dr}.

\begin{theo}{\rm (\cite{Dr})}\label{Drin} Let $\cA$ be a small   DG category
and $\cB \subset \cA$ be a full DG subcategory. For all pairs $(\cC
,\xi ),$ where $\cC$ is a DG category and $\xi \in \rep(\cA ,\cC)$ the
following properties are equivalent:
\begin{enumerate}
\item[(i)] the functor $\Ho(\xi):\Ho(\cA)\to \Ho(\cC)$ is essentially
surjective, and the functor $\cA^{\tr} \to \cC^{\tr}$
corresponding to $\xi$ induces an equivalence  $\cA^{\tr}/\cB
^{\tr}\to \cC^{\tr}$;
\item[(ii)]
for every DG category $\cK$ the functor $\rep(\cC ,\cK)\to \rep(\cA
,\cK)$ corresponding to $\xi$ is fully faithful and $\Phi \in \rep(\cA
,\cK)$ belongs to its essential image if and only if the image of
$\Phi$ in $\rep(\cB ,\cK)$ is zero.
\end{enumerate}
A pair $(\cC ,\xi )$ satisfying (i),(ii) exists and is unique in the
sense of {\bf DGcat}: given another such pair $(\cC^\prime ,\xi
^\prime )$ there exists a quasi-functor $\delta \in \rep(\cC ,\cC
^\prime)$ inducing an equivalence $\Ho(\delta):\Ho(\cC )\stackrel{\sim}{\lto} \Ho(\cC
^\prime)$ and such that the quasi-functors $\xi^\prime $ and
$\delta \cdot \xi$ are isomorphic.
\end{theo}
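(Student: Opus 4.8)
The plan is to exhibit Drinfeld's explicit model of the DG quotient, verify that it satisfies both (i) and (ii), and then deduce the equivalence of the two conditions and the uniqueness assertion by a formal argument. Since $k$ is a field, every small DG category is h-flat over $k$, so Drinfeld's construction applies directly, without first passing to a resolution of $\cA$. Concretely I would set $\cC_0 := \cA/\cB$ to be the DG category obtained from $\cA$ by freely adjoining, for every object $U\in\cB$, a new morphism $\epsilon_U\in\Hom_{\cC_0}(U,U)$ of degree $-1$ with $d\epsilon_U=\id_U$, and let $\xi_0:\cA\to\cC_0$ be the canonical DG functor, viewed as a quasi-functor. The proof then splits into: (1) $(\cC_0,\xi_0)$ satisfies (i); (2) $(\cC_0,\xi_0)$ satisfies (ii), which is the substance of the theorem; and (3) the formal deduction of (i)$\Leftrightarrow$(ii) and of the uniqueness statement.

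For (1): the two DG categories have the same objects, so $\Ho(\xi_0)$ is essentially surjective, and each $U\in\cB$ becomes a zero object of $\Ho(\cC_0)$ because $\id_U$ is null-homotopic through $\epsilon_U$. To identify $(\cC_0)^{\tr}$ with $\cA^{\tr}/\cB^{\tr}$ I would filter the complexes $\Hom_{\cC_0}(X,Y)$ by the number of occurrences of the adjoined symbols $\epsilon_U$ and analyze the associated spectral sequence — equivalently, produce an explicit contracting homotopy — to conclude that the induced triangulated functor $\cA^{\tr}\to(\cC_0)^{\tr}$ is a Verdier quotient whose kernel is exactly $\cB^{\tr}$. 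This is the step that uses h-flatness over $k$; it is carried out in \cite{Dr}.

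For (2) — the universal property, which I expect to be the main obstacle — I would fix a DG category $\cK$ and represent a quasi-functor $\Phi\in\rep(\cA,\cK)$ by a DG functor $\widetilde\Phi:\cA\to\SF(\cK)$ with quasi-representable values (passing to a cofibrant model of $\cA$ if needed). Then extending $\widetilde\Phi$ along $\cA\hookrightarrow\cC_0$ amounts precisely to choosing, for each $U\in\cB$, a morphism $\sigma_U$ of degree $-1$ on $\widetilde\Phi(U)$ with $d\sigma_U=\id$, i.e.\ a contracting homotopy of $\widetilde\Phi(U)$; such a $\sigma_U$ exists exactly when $\widetilde\Phi(U)$ is contractible, which is exactly the condition that the image of $\Phi$ in $\rep(\cB,\cK)$ be zero. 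The delicate point, and really the only hard one, is that this extension is \emph{essentially unique}: the space of contracting homotopies of a fixed contractible complex is contractible, and one must upgrade this to the statement that the restriction functor $\rep(\cC_0,\cK)\to\rep(\cA,\cK)$ is fully faithful onto the full subcategory of quasi-functors annihilating $\cB$. Making this precise requires working throughout with h-flat (cofibrant) models for the bimodules involved and tracking the homotopy coherence; this is the technical core of the theorem.

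Granting (1) and (2), part (3) is formal. First, (ii) for $(\cC_0,\xi_0)$ yields the usual rigidity: if $(\cC,\xi)$ and $(\cC',\xi')$ both satisfy (ii), applying each universal property to the other gives comparison quasi-functors $\delta\in\rep(\cC,\cC')$ and $\delta'\in\rep(\cC',\cC)$ with $\delta\cdot\xi\cong\xi'$ and $\delta'\cdot\xi'\cong\xi$, and full faithfulness together with the description of the essential image forces $\delta'\cdot\delta\cong\id$ and $\delta\cdot\delta'\cong\id$, so $\Ho(\delta)$ is an equivalence. Next I would show that any $(\cC,\xi)$ satisfying (i) is equivalent to $(\cC_0,\xi_0)$: since $\cA^{\tr}\to\cC^{\tr}$ factors through $\cA^{\tr}/\cB^{\tr}$, the functor $\cB^{\tr}\to\cC^{\tr}$ is zero, so the restriction of $\xi$ to $\cB$ is the zero quasi-functor and, by (2), there is $\delta\in\rep(\cC_0,\cC)$ with $\delta\cdot\xi_0\cong\xi$; comparing the induced triangulated functors and using that both $\xi^{\tr}$ and $\xi_0^{\tr}$ realize the Verdier quotient $\cA^{\tr}\to\cA^{\tr}/\cB^{\tr}$ shows $\delta^{\tr}$ is an equivalence, hence (restricting to the full subcategories $\Ho(\cC_0)\subset(\cC_0)^{\tr}$ and $\Ho(\cC)\subset\cC^{\tr}$) $\Ho(\delta)$ is fully faithful, and it is essentially surjective because $\Ho(\xi)$ is by (i), so $\delta$ is a quasi-equivalence. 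Thus every pair satisfying (i), and by the rigidity remark every pair satisfying (ii), is equivalent in $\dgcat$ to $(\cC_0,\xi_0)$, which satisfies both conditions; since both (i) and (ii) are invariant under $\dgcat$-equivalence, they are equivalent, and the final uniqueness statement follows with the comparison quasi-functor $\delta$ as required.
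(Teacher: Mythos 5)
The paper does not prove this statement at all: Theorem~\ref{Drin} is quoted verbatim from \cite{Dr}, and the paper's only engagement with it is Remark~\ref{quot}, which records the explicit model $\cA/\cB$. So there is no ``paper's own proof'' against which to compare; what you have written is a reconstruction of Drinfeld's argument.

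As such a reconstruction it is organized sensibly, and steps (1) and (3) are essentially complete once one grants standard facts about Verdier quotients. The genuine gap is in step (2). You correctly reduce the strict extension problem to choosing contracting homotopies $\sigma_U$ of the contractible complexes $\widetilde\Phi(U)$, and you correctly observe that the resulting space of choices is contractible. But the statement you need is not about strict DG functors out of $\cA/\cB$; it is that the restriction $\rep(\cC_0,\cK)\to\rep(\cA,\cK)$, a functor between homotopy categories of bimodules, is fully faithful onto the indicated essential image. Upgrading ``contractible space of contracting homotopies'' to a full faithfulness statement at the level of $D(\cC_0^{\op}\otimes\cK)$ is precisely the content of the theorem, and your treatment of it --- the sentence beginning ``Making this precise requires working throughout with h-flat (cofibrant) models for the bimodules involved and tracking the homotopy coherence'' --- is a restatement of the problem, not a proof. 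Drinfeld's actual argument does not proceed by hand-checking homotopy coherence: it identifies the DG category of (semi-free) $\cA/\cB$-modules with a DG category of $\cA$-modules equipped with the extra operators $\epsilon_U$, constructs semi-free resolutions over $\cA/\cB$ from semi-free resolutions over $\cA$, and deduces full faithfulness from that structural description. Until you carry out some version of this, step (2) is a gap, and since (2) is the engine that drives all of step (3), the overall argument is incomplete. If the intent is merely to summarize \cite{Dr}, say so explicitly; if the intent is a self-contained proof, step (2) must be written out.
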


\begin{remark} \label{quot} {\rm  Although the above theorem is stated in the language
of quasi-functors it is important for us that one can choose a pair
$(\cC ,\xi)$ where $\xi :\cA \to \cC$ is a DG functor and not just a
quasi-functor. Indeed, recall Drinfeld's construction of the DG
quotient $\cA /\cB$: it is obtained from $\cA$ by adding for every
object $U\in \cB$ a morphism $\epsilon_U :U\to U$ of degree  $-1$
such that $d(\epsilon_U)=\id_U$ (no new objects and no new
relations between morphisms are added). Thus in particular $\cA$ is
a DG subcategory of $\cA /\cB$ and $\xi :\cA \to \cA /\cB$ is the
inclusion DG functor.}
\end{remark}

\begin{lemma}
Let $\cA$ be a small DG category and let $\cB\subset\cA$ be a full DG subcategory.
Assume that $\cA$ is pretriangulated. Then the Drinfeld DG quotient $\cA/\cB$ is also pretriangulated.
\end{lemma}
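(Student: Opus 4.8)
The plan is to use the characterization of being pretriangulated via the DG functor $\cA\to\cA^{\pre-tr}$ being a quasi-equivalence, together with the explicit description of Drinfeld's quotient from Remark \ref{quot}. Recall from that remark that $\cA/\cB$ is obtained from $\cA$ by adjoining, for each $U\in\cB$, a degree $-1$ endomorphism $\epsilon_U$ with $d(\epsilon_U)=\id_U$, and crucially \emph{no new objects are added}. So $\Ob(\cA/\cB)=\Ob(\cA)$, and the same is true for the pretriangulated hulls: since the construction of $(-)^{\pre-tr}$ only adds shifts and (iterated) cones of closed degree-$0$ morphisms, the objects of $(\cA/\cB)^{\pre-tr}$ are one-sided twisted complexes built out of the very same underlying objects of $\cA$, i.e. $\Ob((\cA/\cB)^{\pre-tr})=\Ob(\cA^{\pre-tr})$ and the natural DG functor $\cA^{\pre-tr}\to(\cA/\cB)^{\pre-tr}$ is the identity on objects.

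First I would fix $X\in\cA/\cB=\cA$ and $n\in\bbZ$, and show that $X[n]\in(\cA/\cB)^{\pre-tr}$ is homotopy equivalent to an object of $\cA/\cB$. Since $\cA$ is pretriangulated, there is an object $X'\in\cA$ and a homotopy equivalence $X[n]\simeq X'$ already inside $\cA^{\pre-tr}$; pushing this along the DG functor $\cA^{\pre-tr}\to(\cA/\cB)^{\pre-tr}$ (which sends the object $X'$ of $\cA$ to the object $X'$ of $\cA/\cB$, and is the identity on the underlying twisted complex $X[n]$) gives the desired homotopy equivalence in $(\cA/\cB)^{\pre-tr}$. The analogous argument handles cones: given a closed degree-$0$ morphism $f$ in $\cA/\cB$, one first needs to reduce to the case where $f$ is a morphism in $\cA$. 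This is where I would be careful: a closed degree-$0$ morphism in $\cA/\cB$ need not literally be a morphism of $\cA$, because $\Hom_{\cA/\cB}$ involves the adjoined $\epsilon_U$'s. The point is that the inclusion $\cA\hookrightarrow\cA/\cB$ induces a full and faithful functor on homotopy categories onto the objects of $\cA$ — or more directly, $\Ho(\cA/\cB)$ is the Verdier quotient $\Ho(\cA)/\Ho(\cB)$ — so any object of $(\cA/\cB)^{\pre-tr}$ of the form $\operatorname{Cone}(f)$ is, in $\Ho((\cA/\cB)^{\pre-tr})=(\cA/\cB)^{\tr}$, isomorphic to a finite iterated extension of objects of $\cA/\cB$; since $\cA$ (being pretriangulated) already contains a model for every such iterated cone of morphisms between its objects, one concludes $\operatorname{Cone}(f)$ is homotopy equivalent to an object of $\cA/\cB$.

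The main obstacle is exactly this last reduction: translating the abstract statement ``$\Ho(\cA/\cB)$ is a Verdier quotient of the already-triangulated $\Ho(\cA)$'' into the concrete statement that closed degree-$0$ morphisms of $\cA/\cB$ have cones homotopy-equivalent to objects of $\cA/\cB$. I expect the cleanest route is to observe that homotopy equivalence in $(\cA/\cB)^{\pre-tr}$ is detected by $\Ho$, so it suffices to show the image of $\Ob(\cA/\cB)$ in $(\cA/\cB)^{\tr}=\Ho(\cA/\cB)^{\pre-tr}$ is closed under shifts and cones; this image coincides with the image of $\Ob(\cA)$, which spans a triangulated subcategory of $(\cA/\cB)^{\tr}$ because it is the essential image of the triangulated category $\cA^{\tr}=\Ho(\cA)$ under the triangulated functor $\cA^{\tr}\to(\cA/\cB)^{\tr}$ (which is essentially the quotient functor, hence sends the generating set onto a generating set of the quotient). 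A short diagram chase with the DG functor $\cA^{\pre-tr}\to(\cA/\cB)^{\pre-tr}$ then finishes the proof. Equivalently, and perhaps more slickly, one can invoke Theorem \ref{Drin}: $(\cA/\cB)^{\tr}\cong\cA^{\tr}/\cB^{\tr}$ is a Verdier quotient of a triangulated category, hence triangulated, and by the characterization in the Definition this forces $\cA/\cB\to(\cA/\cB)^{\pre-tr}$ to be a quasi-equivalence once we know objects are preserved — which is the content of Remark \ref{quot}.
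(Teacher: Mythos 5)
Your proposal is correct, and the ``slicker'' variant at the end is precisely the paper's own argument: identify $(\cA/\cB)^{\tr}$ with the Verdier quotient $\cA^{\tr}/\cB^{\tr}$ via Theorem~\ref{Drin}, observe that the quotient functor is essentially surjective and $H^0(\cA)\to\cA^{\tr}$ is an equivalence since $\cA$ is pretriangulated, and conclude that $H^0(\cA/\cB)\to(\cA/\cB)^{\tr}$ is essentially surjective (full faithfulness being automatic). The longer first half of your argument — separately handling shifts and cones, and worrying about closed degree-$0$ morphisms of $\cA/\cB$ that are not morphisms of $\cA$ — is a correct but needlessly circuitous detour, since the single observation about essential surjectivity of the Verdier quotient functor subsumes all of it at once.
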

\begin{proof}
The canonical embedding $H^0(\cA/\cB)\to (\cA/\cB)^{\tr}$ is full and faithful. We need to prove
that it is essentially surjective. The quotient DG functor $\xi: \cA\to \cA/\cB$ induces the DG functor
$\xi^{\text{pre-tr}}:\cA^{\text{pre-tr}}\to (\cA/\cB)^{\text{pre-tr}}$ so that the natural diagram
$$
\xymatrix{
\cA \ar@{^{(}->}[r]\ar[d]_{\xi} & \cA^{\text{pre-tr}}\ar@<-2ex>[d]^{\xi^{\text{pre-tr}}}\\
\cA/\cB \ar@{^{(}->}[r]  & (\cA/\cB)^{\text{pre-tr}}
}
$$
commutes. It induces the
commutative diagram
$$
\xymatrix{
H^0(\cA) \ar[r]\ar[d]_{H^0(\xi)} & \cA^{\tr}\ar@<-0.5ex>[d]^{\xi^{\tr}}\\
H^0(\cA/\cB) \ar[r]  & (\cA/\cB)^{\tr},
}
$$
where the upper horizontal arrow is an equivalence by our assumption and the right
vertical arrow identifies $(\cA/\cB)^{\tr}$ with the Verdier quotient of $\cA^{\tr}$ by $\cB^{\tr}$
according to
Theorem \ref{Drin}. Hence the composition of these two functors is essentially
surjective which proves the lemma.
\end{proof}



\begin{defi} Let $\T$ be a category.
 An object $Y\in \T$ is called {\it compact} (in $\T$) if $\Hom
_{\T}(Y,-)$ commutes with arbitrary (existing in $\T$) direct sums, i.e. for each
family of objects $\{ X_i\}\subset \T$ such that
$\bigoplus_i X_i$ exists the canonical map
$$\bigoplus_i\Hom (Y, X_i){\longrightarrow}
\Hom (Y,\bigoplus_i X_i)$$ is an isomorphism.
\end{defi}

Denote by $\T^c\subset \T$ the full subcategory consisting of all compact
objects in $\T.$

\begin{defi}\label{van}
 Let $\cT$ be a triangulated category that admits arbitrary direct sums. A set $S \subset \cT^c$ is
called a set of compact generators if
any object $X\in \cT$ such that $\Hom(Y, X[n])=0$ for all $Y\in S$ and  all $n\in \bbZ$ is a zero object.
\end{defi}
\begin{remark}
{\rm  Since $\cT$ admits arbitrary direct sums it can be proved that the property that $S \subset \T^c$ is
 a set of compact generators is  equivalent to the following property:
the category $\cT$ coincides with the smallest full
triangulated subcategory containing $S$ and  closed under
direct sums.}
\end{remark}

\begin{example}\label{EX}{\rm Let $\cA$ be a small DG category. The set $\{h^Y\}_{Y\in \cA}$ is a set of
 compact generators of $D(\cA)$ and the subcategory of compact objects $D(\cA)^c$ coincides with the subcategory of perfect
 DG modules $\Perf(\cA).$}
\end{example}

There is another notion of a set of generators. It is called a set of classical generators.
\begin{defi}
Let $\T$ be a triangulated category. We say that a set $S\subset \T$
is a set of classical generators for $\T$ if the category $\T$ coincides with the smallest triangulated subcategory of
$\T$ which contains $S$ and is closed under direct summands.
\end{defi}
\begin{remark}\label{twodef}{\rm
These two definitions of a set of generators are closely related to each other.
Assume that a triangulated category $\T,$ which admits arbitrary direct sums, is compactly generated by the set
of compact objects $\T^c.$ In this situation a set $S\subset \T^c$ is a set of compact generators of $\T$ if and only if
this set $S$ is a set of classical generators of the subcategory of compact objects $\T^c.$ This is proved in \cite{Ne}.
Thus the set $\{h^Y\}_{Y\in \cA}$ from Example \ref{EX} is a set of
 classical generators of the subcategory of compact objects $D(\cA)^c\cong \Perf(\cA).$}
\end{remark}

We will recall the definition of homotopy colimits in triangulated categories.
Namely, let $\T$ be a triangulated category and
$$X_0\stackrel{i_0}{\lto} X_1\stackrel{i_1}{\lto}\cdots{\lto} X_n\stackrel{i_n}{\lto} X_{n+1}{\lto}\cdots$$
be a sequence of morphisms in $\T.$

\begin{defi}\label{hocolim} Assume that the direct sum $\bigoplus_n X_n$ exists in $\T.$
The homotopy limit of this sequence ${\hocolim}X_n\in \T$ is by definition given, up to non-canonical isomorphism,
by a triangle
$$
\begin{CD}
\bigoplus_n X_n @>{(1, -i_n)}>> \bigoplus_n X_n @>>> {\hocolim}X_n @>>> \left(\bigoplus_n X_n\right)[1]
\end{CD}
$$
\end{defi}

If $\cA$ is a DG category and maps $i_n: X_n\to X_{n+1}$ are closed morphisms of degree zero
in $\Mod\cA,$ then one has the usual ${\colim} X_n$ in the
category $Z^0(\Mod\cA)$ which is isomorphic to
${\hocolim} X_n$ in $D(\cA).$ The category $Z^0(\Mod\cA)$ has the same object as $\Mod\cA$ and morphisms  are
closed morphism of degree $0$ (it is an abelian category).

\begin{defi} Let $\T$ be a triangulated category with arbitrary
direct sums. A strictly full triangulated
subcategory $\cS\subset \T$ is called  localizing if it is
closed under arbitrary direct sums.
\end{defi}

\begin{remark}\label{locfac}{\rm  If a subcategory $\cS$ is localizing it is known
 that the quotient triangulated category $\T/\cS$
has arbitrary direct sums and the quotient functor $\T\to
\T/\cS$ preserves direct sums (see \cite{NeBook} Cor. 3.2.11).
For example, the triangulated category $\Ho(\Mod\cA)$ has
arbitrary direct sums and $\Ho (\Ac(\cA))$ is a localizing
subcategory in $\Ho(\Mod\cA).$ Hence, the derived category $D(\cA)$ also has
arbitrary direct sums.}
\end{remark}

The following propositions (and their proofs) are essentially equal to
Lemma 4.2 in \cite{Ke}.

\begin{prop}\label{Keller2} Let $\F:\cB\hookrightarrow \cC$ be a full embedding of DG categories.
and let $\F^{*}:\SF(\cB)\to\SF(\cC)$ be the extension DG functor. Then the induced
derived functor $\bL F^*=H^0(\F^*): D(\cB)\to D(\cC)$ is fully faithful.
If, in addition, the category $H^0(\cC)$ is classically generated by $\Ob\cB$ then
$\bL F^*$ is an equivalence.
\end{prop}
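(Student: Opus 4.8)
The plan is to use the adjunction $(\F^{*},\F_{*})$ between extension and restriction of scalars and to reduce full faithfulness of $\bL F^{*}=H^{0}(\F^{*})$ to the assertion that the unit of this adjunction is an isomorphism in $D(\cB)$. First I would record two formal facts. Since $\F_{*}$ preserves acyclic DG modules, the restriction functor $F_{*}\colon D(\cC)\to D(\cB)$ is exact and is a right adjoint of $\bL F^{*}$; moreover $F_{*}$ is computed pointwise on modules, hence commutes with arbitrary direct sums, while $\bL F^{*}$ commutes with direct sums being a left adjoint. Thus $N\mapsto F_{*}\bL F^{*}N$ is an exact endofunctor of $D(\cB)$ commuting with direct sums, and the unit $\eta_{N}\colon N\to F_{*}\bL F^{*}N$ is a morphism from the identity functor to it. By the adjunction, for $M,N\in D(\cB)$ the map
$$\Hom_{D(\cB)}(M,N)\xrightarrow{\ \bL F^{*}\ }\Hom_{D(\cC)}(\bL F^{*}M,\bL F^{*}N)$$
is identified with $(\eta_{N})_{*}\colon\Hom_{D(\cB)}(M,N)\to\Hom_{D(\cB)}(M,F_{*}\bL F^{*}N)$, so $\bL F^{*}$ is fully faithful once $\eta_{N}$ is an isomorphism for every $N\in D(\cB)$.

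Next I would verify $\eta_{N}$ is an isomorphism on representables. For $Y\in\cB$ one has $\bL F^{*}h^{Y}=h^{\F Y}$ by the defining property of $\F^{*}$ (the Yoneda square in Section 1), and $(F_{*}h^{\F Y})(Z)=\Hom_{\cC}(\F Z,\F Y)=\Hom_{\cB}(Z,Y)=h^{Y}(Z)$ because $\F$ is full and faithful; unwinding the definitions, $\eta_{h^{Y}}$ is the identity. Now the full subcategory of those $N\in D(\cB)$ for which $\eta_{N}$ is an isomorphism is a localizing subcategory (it is triangulated since $\eta$ is a morphism of exact functors, and closed under direct sums since both functors commute with them), and by Example \ref{EX} it contains the set of compact generators $\{h^{Y}\}_{Y\in\cB}$ of $D(\cB)$; hence by the Remark after Definition \ref{van} it is all of $D(\cB)$. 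This proves that $\bL F^{*}$ is fully faithful.

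For the second assertion I would argue essential surjectivity. Since $\bL F^{*}$ is exact and commutes with direct sums and is fully faithful, its essential image is a strictly full localizing subcategory $\cI\subset D(\cC)$, and it contains every $h^{\F Y}=\bL F^{*}(h^{Y})$, $Y\in\cB$. Under the additional hypothesis the objects $h^{\F Y}$, $Y\in\cB$, classically generate the subcategory of compact objects $D(\cC)^{c}=\Perf(\cC)$ (this is the meaning of ``$H^{0}(\cC)$ is classically generated by $\Ob\cB$'', via the identification of $\cC^{\tr}$ with finitely generated semi-free modules), so by Remark \ref{twodef} the set $\{h^{\F Y}\}_{Y\in\cB}$ is a set of compact generators of $D(\cC)$. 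Then by the Remark after Definition \ref{van} the only localizing subcategory of $D(\cC)$ containing all of them is $D(\cC)$ itself, so $\cI=D(\cC)$ and $\bL F^{*}$ is an equivalence.

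The argument is essentially formal: the only genuine input is the computation $F_{*}\bL F^{*}h^{Y}\cong h^{Y}$, which is precisely where full faithfulness of $\F$ is used, together with the bookkeeping that both $\bL F^{*}$ and $F_{*}$ preserve arbitrary direct sums so that the localizing-subcategory dévissage applies on both sides. The only mildly delicate point I expect is not in the homological algebra but in correctly unpacking the hypothesis of the last sentence into ``$\{h^{\F Y}\}_{Y\in\cB}$ classically generates $\Perf(\cC)$,'' after which Remark \ref{twodef} turns classical generation into compact generation and the localizing-subcategory argument concludes.
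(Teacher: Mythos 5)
Your proof is correct, and it takes a genuinely different route from the paper's. The paper proves full faithfulness by a direct two-variable d\'evissage on the map $\Hom_{D(\cB)}(X',X)\to\Hom_{D(\cC)}(\bL F^*X',\bL F^*X)$: first fix $Y\in\Ob\cB$ and let $X$ range over the localizing subcategory where the map is an isomorphism, using compactness of $h^Y$ and of $\bL F^*(h^Y)=h^{\F Y}$ to pass through direct sums; then repeat with the first variable. You instead package full faithfulness as the statement that the unit $\eta\colon\id\Rightarrow F_*\bL F^*$ is an isomorphism, check it on representables by the explicit computation $F_*h^{\F Y}\cong h^Y$ (this is exactly where fullness and faithfulness of $\F$ enter, in both proofs), and run a single d\'evissage. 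The ingredients invoked are slightly different: your argument leans on the fact that $F_*$ preserves arbitrary direct sums (true because restriction is computed pointwise), while the paper's leans on compactness of $h^{\F Y}$ in $D(\cC)$ instead; both are easy facts here, so neither approach is weaker in this setting. For essential surjectivity you also compress the paper's two steps (``$\Ho(\cC)$ lies in the image'' then ``$\Ob\cC$ compactly generates'') into one, by directly observing that $\{h^{\F Y}\}$ already compactly generates $D(\cC)$ via Remark \ref{twodef}; this is a mild but clean simplification. The adjunction-unit formulation is a little more conceptual and makes the role of $F_*$ explicit, whereas the paper's argument stays entirely on the $\Hom$-set level; either is a perfectly acceptable proof.
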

\begin{proof}
The functor $\bL F^*$ commutes with direct sums and on $\cB\subset\D(\cB)$ coincides with the
functor $H^0(\F)$  that is fully faithful.

Let $Y\in \cB .$ The objects $X\in D(\cB)$ for which the map
$$
\Hom_{D(\cB)}(Y,X[n])\lto \Hom
_{D(\cC)}(\bL F^*(Y),\bL F^*(X)[n])
$$
is bijective for all $n$ form a
triangulated subcategory of $D(\cB)$ which contains the generating
set $\Ob\cB$ and is closed under direct sums (since $Y$ and
$\bL F^*(Y)$ are compact and $\bL F^*$ commutes with direct
sums). So this subcategory coincides with the whole $D(\cB).$
The same
argument shows that for a fixed $X\in D(\cB)$ the map
$$
\Hom_{D(\cB)}(X^\prime,X)\lto \Hom
_{D(\cC)}(\bL F^*(X^\prime),\bL F^*(X))
$$
is bijective for all
$X^\prime \in D(\cB).$
Hence $\bL F^*$ is fully faithful.

If now  the category $\Ho(\cC)$ is classically generated by $\Ob\cB$ then it is contained in the essential image
of $\bL F^*.$ On the other hand, the set $\Ob \cC$ compactly generates $D(\cC).$
Hence the whole category
$D(\cC)$ is in the essential image of $\bL F^*,$ because  $D(\cB)$ contains arbitrary direct sums and $\bL F^*$
commutes with direct sums.
\end{proof}

Let $\cC$ be a pretriangulated DG category and $\cB
\subset \cC$ be a full DG subcategory. Consider the canonical DG functor
\begin{equation}\label{phi}
\varPhi :\cC \to \Mod\cB, \quad\text{where}\quad \varPhi (X)(B)=\Hom_{\cC}(B,X) \quad\text{for}\quad B\in\cB, \; X\in\cC
\end{equation}
If we denote by $\cJ$ the full embedding of $\cB$ to $\cC$ then the DG functor $\varPhi$ is the composition
of Yoneda DG functor $h^{\bullet}:\cC\to \Mod\cC$ and the restriction functor $\cJ_{*}:\Mod\cC\to\Mod\cB.$

The DG functor $\varPhi$ induces a quasi-functor $\phi:\cC\to\SF(\cB)$ (see Remark \ref{first}) and the functor
$\Ho(\varPhi):\Ho(\cC)\to \Ho(\Mod\cB).$ The homotopy functor $H^0(\phi):\Ho(\cC)\to D(\cB)$
is the composition of $\Ho(\varPhi)$ with the localization functor
$Q: \Ho(\Mod\cB)\to D(\cB).$

\begin{prop}\label{Keller1} Let $\cB\subset\cC$ and $\phi:\cC\to\SF(\cB)$ be as above. Assume that
the triangulated category $\Ho(\cC)$ is idempotent complete and is classically generated by the set $\Ob\cB.$
Then the functor $H^0(\phi)$ induces an equivalence between $\Ho(\cC)$ and  the subcategory of compact objects
$D(\cB)^c\cong\Perf(\cB).$
\end{prop}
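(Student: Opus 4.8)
The plan is a two-stage dévissage: first show that $H^0(\phi)$ is fully faithful, then identify its essential image with $\Perf(\cB)$. Throughout one uses that $H^0(\phi)$ is an exact functor of triangulated categories; this holds because $\cC$ and $\SF(\cB)$ are both pretriangulated, so the quasi-functor $\phi$ induces an exact functor $\Ho(\cC)=\cC^{\tr}\to\SF(\cB)^{\tr}=D(\cB)$, which is precisely the functor $H^0(\phi)$ recalled above. The first point to record is the behaviour of $H^0(\phi)$ on $\cB$: for $B\in\cB$ one has $\varPhi(B)=\Hom_{\cC}(-,B)|_{\cB}=h^B$, already free and hence semi-free, so $H^0(\phi)(B)\cong h^B$ in $D(\cB)$; in particular $H^0(\phi)$ carries $\Ob\cB$ into the compact (perfect) objects of $D(\cB)$, and since $\cB$ is a \emph{full} DG subcategory of $\cC$ the canonical map
$$
\Hom_{\Ho(\cC)}(B,B'[n])=H^n\bigl(\Hom_{\cC}(B,B')\bigr)=H^n\bigl(\Hom_{\cB}(B,B')\bigr)=\Hom_{D(\cB)}(h^B,h^{B'}[n])
$$
is a bijection for all $B,B'\in\cB$ and all $n\in\bbZ$.

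For full faithfulness I would fix $B\in\cB$ and consider the full subcategory of $\Ho(\cC)$ of those $X$ for which $\Hom_{\Ho(\cC)}(B,X[n])\to\Hom_{D(\cB)}(h^B,H^0(\phi)(X)[n])$ is bijective for every $n$. Exactness of $H^0(\phi)$ makes this a strictly full triangulated subcategory, and it is closed under direct summands since a retract of an isomorphism is an isomorphism; by the previous paragraph it contains $\Ob\cB$, hence (as $\Ho(\cC)$ is classically generated by $\Ob\cB$) it is all of $\Ho(\cC)$. Then fix $X\in\cC$ and repeat the argument in the first variable: the full subcategory of $Y\in\Ho(\cC)$ with $\Hom_{\Ho(\cC)}(Y,X[n])\to\Hom_{D(\cB)}(H^0(\phi)(Y),H^0(\phi)(X)[n])$ bijective for all $n$ is again strictly full, triangulated and summand-closed, contains $\Ob\cB$ by the first step, and so equals $\Ho(\cC)$. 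Thus $H^0(\phi)$ is fully faithful. (In contrast with Proposition \ref{Keller2}, no infinite direct sums and hence no compactness are needed here, because classical generation only invokes cones and summands.)

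It remains to compute the essential image $\cI\subset D(\cB)$. Since $H^0(\phi)$ is fully faithful and exact and $\Ho(\cC)$ is idempotent complete, $\cI$ is a thick subcategory: it is triangulated, and a splitting $H^0(\phi)(X)\cong Z_1\oplus Z_2$ in $D(\cB)$ corresponds, via the isomorphism $\End_{D(\cB)}(H^0(\phi)(X))\cong\End_{\Ho(\cC)}(X)$, to an idempotent which splits in $\Ho(\cC)$, realizing $Z_1$ and $Z_2$ in $\cI$. By the first paragraph $\cI$ contains every $h^B$, and by Example \ref{EX} together with Remark \ref{twodef} the set $\{h^B\}_{B\in\cB}$ classically generates $\Perf(\cB)$, so $\Perf(\cB)\subseteq\cI$. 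Conversely, the full subcategory of $X\in\Ho(\cC)$ with $H^0(\phi)(X)\in D(\cB)^c$ is triangulated and closed under summands (since $D(\cB)^c=\Perf(\cB)$ is thick) and contains $\Ob\cB$, hence is all of $\Ho(\cC)$; so $\cI\subseteq\Perf(\cB)$. Combining the two inclusions, $\cI=\Perf(\cB)$, and $H^0(\phi)$ induces the asserted equivalence $\Ho(\cC)\stackrel{\sim}{\lto}\Perf(\cB)$.

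The argument is pure formal dévissage, so I do not anticipate a genuine obstacle; the only points that need care are recognizing $H^0(\phi)$ as an exact functor (this is where pretriangulatedness of $\cC$ is essential) and checking that the essential image is idempotent-closed (this is where idempotent completeness of $\Ho(\cC)$ is used, in accordance with $\Perf(\cB)$ being idempotent complete).
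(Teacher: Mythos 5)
Your proposal is correct and follows essentially the same route as the paper's own proof: Yoneda plus h-projectivity gives the isomorphism on Hom-spaces between objects of $\cB$, classical generation then extends full faithfulness to all of $\Ho(\cC)$ by a two-variable dévissage, and idempotent completeness of $\Ho(\cC)$ together with classical generation of $\Perf(\cB)$ by $\{h^B\}$ identifies the essential image. You have simply spelled out, more explicitly than the paper, both inclusions of the essential image and the precise subcategories used in the dévissage.
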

\begin{proof}
Indeed, if $Y,Z\in \cB\subset\cC$ then
$$
\Hom_{\Ho(\Mod\cB)}(\Ho(\varPhi)(Y),\Ho(\varPhi )(Z)[n])=\Hom
_{\Ho(\cC)}(Y,Z[n])$$ by Yoneda. Now, since $\varPhi(Y)=h^Y\in \Ho(
\Mod\cB)$ is h-projective we have
$$
\Hom_{\Ho(\Mod\cB)}(\Ho(\varPhi)(Y),\Ho(\varPhi )(Z)[n])\cong\Hom
_{D(\cB)}(H^0(\phi)(Y), H^0(\phi)(Z)[n]).
$$
Since the category $\Ho(\cC)$ is classically generated by $\Ob\cB$ we get that the functor
$H^0(\phi)$ is fully faithful. Since $\Ho(\cC)$ is idempotent complete and $D(\cB)^c$ is classically generated by
$\Ob\cB$ we obtain that the subcategory $D(\cB)^c$ in the essential image of the functor $H^0(\phi).$
\end{proof}

\begin{prop}\label{Keller} Let $\cB\subset\cC,$ and $\phi:\cC\to\SF(\cB)$ be as above.
 Assume that $\Ho(\cC)$
contains arbitrary direct sums and $\Ob\cB$ forms a set of compact
generators of $\Ho(\cC).$ Then the functor $H^0(\phi): \Ho(\cC)\stackrel{\sim}{\to} D(\cB)$ is
an equivalence of
triangulated categories.
\end{prop}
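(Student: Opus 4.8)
The plan is to imitate the argument of \cite[Lemma 4.2]{Ke} already used in Propositions \ref{Keller2} and \ref{Keller1}: first check that $H^0(\phi)$ commutes with arbitrary direct sums, then that it is fully faithful, and finally that it is essentially surjective. As a preliminary I would record that on the generating set $\Ob\cB\subset\Ho(\cC)$ the functor $H^0(\phi)$ behaves like the Yoneda embedding: for $Y\in\cB$ one has $\varPhi(Y)=h^Y$, and exactly as in the proof of Proposition \ref{Keller1}, since $h^Y$ is h-projective,
\[
\Hom_{\Ho(\cC)}(Y,Z[n])\cong\Hom_{\Ho(\Mod\cB)}(h^Y,h^Z[n])\cong\Hom_{D(\cB)}(H^0(\phi)(Y),H^0(\phi)(Z)[n])
\]
for all $Y,Z\in\cB$ and all $n\in\bbZ$.

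The key step is compatibility with direct sums. Let $\{X_i\}$ be a family of objects with direct sum $X=\bigoplus_i X_i$ in $\Ho(\cC)$. The structure maps $X_i\to X$ are represented by closed degree-$0$ morphisms in $\cC$, so $\varPhi$ produces a morphism $\bigoplus_i\varPhi(X_i)\to\varPhi(X)$ in $\Ho(\Mod\cB)$ (coproducts in $\Ho(\Mod\cB)$ being computed termwise). Evaluating at an object $B\in\cB$ and passing to $n$-th cohomology turns the left-hand side into $\bigoplus_i\Hom_{\Ho(\cC)}(B,X_i[n])$ and the right-hand side into $\Hom_{\Ho(\cC)}(B,X[n])$, and these coincide because $B$ is compact in $\Ho(\cC)$. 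Since a DG $\cB$-module is acyclic precisely when it is acyclic on every $B\in\cB$, this morphism is a quasi-isomorphism; hence $H^0(\phi)\bigl(\bigoplus_i X_i\bigr)\cong\bigoplus_i H^0(\phi)(X_i)$ in $D(\cB)$, i.e. $H^0(\phi)$ preserves direct sums.

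Full faithfulness now follows by the two-step generation argument of Proposition \ref{Keller2}. Fixing $Y\in\cB$, the $X\in\Ho(\cC)$ for which $\Hom_{\Ho(\cC)}(Y,X[n])\to\Hom_{D(\cB)}(H^0(\phi)(Y),H^0(\phi)(X)[n])$ is bijective for all $n$ form a triangulated subcategory which contains $\Ob\cB$ (by the displayed isomorphism) and is closed under direct sums, because $Y$ is compact in $\Ho(\cC)$, $H^0(\phi)(Y)=h^Y$ is compact in $D(\cB)$ by Example \ref{EX}, and $H^0(\phi)$ preserves direct sums; as $\Ob\cB$ is a set of compact generators, this subcategory is all of $\Ho(\cC)$. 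Fixing $X$ and varying $X'$, the $X'$ for which $\Hom_{\Ho(\cC)}(X',X)\to\Hom_{D(\cB)}(H^0(\phi)(X'),H^0(\phi)(X))$ is bijective likewise form a triangulated subcategory containing $\Ob\cB$ and closed under direct sums in $X'$ (a coproduct in $X'$ becomes a product on both sides, and a product of bijections is a bijection); hence it equals $\Ho(\cC)$ and $H^0(\phi)$ is fully faithful. Essential surjectivity is then immediate: the essential image is a strictly full triangulated subcategory of $D(\cB)$ closed under direct sums (as $H^0(\phi)$ preserves them and $\Ho(\cC)$ has all of them) and it contains the set $\{h^Y\}_{Y\in\cB}$, which is a set of compact generators of $D(\cB)$ by Example \ref{EX}; therefore it is all of $D(\cB)$, and $H^0(\phi)$ is an equivalence. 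The only genuinely technical point is the direct-sum compatibility in the second paragraph, and that is exactly where the compactness of the objects of $\cB$ in $\Ho(\cC)$ is used.
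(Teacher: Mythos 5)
Your argument is correct and follows essentially the same route as the paper: check that $H^0(\phi)$ preserves direct sums by exploiting compactness of the objects of $\cB$ (which makes the canonical map $\bigoplus_i\varPhi(X_i)\to\varPhi(\bigoplus_iX_i)$ a pointwise quasi-isomorphism, hence an isomorphism in $D(\cB)$), then run the two-step generation argument of Proposition~\ref{Keller2} for full faithfulness, and conclude essential surjectivity because the image is a localizing subcategory containing the compact generators $\{h^Y\}_{Y\in\cB}$. The paper inserts a side remark identifying $\Ho(\cC)^c$ with $D(\cB)^c$ via Proposition~\ref{Keller1}, which you omit, but that observation is not load-bearing, so nothing is lost.
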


\begin{proof}
First notice that the functor $H^0(\phi)$ commutes with direct sums.
Indeed,  since $Q$ commutes with direct
sums (see Remark \ref{locfac}) it suffices to prove that $\Ho(\varPhi)$ does.
Fix $Y\in \cB$ and a set $\{X_i\}\subset \cC.$ Since $Y$ is compact
in $\Ho(\cC)$ we have the following isomorphisms which are
functorial in $Y$:
$$
\Ho(\varPhi)\left(\bigoplus X_i\right)(Y)=H^0\Hom_{\cC}(Y,\bigoplus X_i)=\bigoplus
H^0\Hom_{\cC}(Y,X_i)=\bigoplus \Ho(\varPhi)(X_i)(Y).
$$
Hence
$\Ho(\varPhi)(\bigoplus X_i)=\bigoplus \Ho(\varPhi)(X_i).$

Note that the exact functor $H^0(\phi)$ maps the set $\Ob\cB$
of compact generators of $\Ho(\cC)$ to the set $\{h^Y\}_{Y\in \cB}$
of compact generators of $D(\cB).$ By Remark \ref{twodef} and Proposition \ref{Keller1} it induces
an equivalence between subcategories of compact objects $\Ho(\cC)^{c}$ and
$D(\cB)^c.$

The same argument as in the proof of Proposition \ref{Keller2}
gives us that the functor $H^0(\phi)$ is fully faithful.
Since, in addition, $\Ho(\cC)$ contains arbitrary direct sums and $H^0(\phi)$
commutes with direct sums it follows that $H^0(\phi)$ is
essentially surjective.
\end{proof}

Let $\cT$ be a triangulated category with small Hom-sets, i.e Hom between any two objects should be a set.
Assume that $\cT$ admits arbitrary direct sums and let
$\cS\subset\cT$ be a localizing triangulated subcategory.
We can consider the Verdier quotient $\cT/\cS$ with a natural localization map $\pi : \cT\to \cT/\cS.$
Notice however that Hom-sets in $\cT/\cS$ need not be small.
It is known (Remark \ref{locfac}) that the category $\cT/\cS$ also has arbitrary direct sums and, moreover,
the functor $\pi$ preserves direct sums.

Assume that the Verdier quotient $\cT/\cS$ is a
category with small Hom-sets. If the triangulated category $\cT$ has a set of compact generators
then the Brown representability theorem holds for
$\cT$ and the quotient functor $\pi : \cT\to \cT/\cS$ has a right adjoint $\mu : \cT/\cS\to \cT$ (see \cite{NeBook} Ex.8.4.5).
This adjoint is
called the Bousfield localization functor.

\begin{defi} Let $\cT$ be a triangulated category with small Hom-sets.
Let $\cS$ be a thick subcategory. We say that a Bousfield localisation
functor exists for the pair $\cS\subset\cT$ when there is a right adjoint to the natural
functor
$\pi : \cT \to \cT/\cS.$
We will call the adjoint the Bousfield localisation functor, and denote it
$\mu : \cT/\cS \to \cT.$
\end{defi}

Let us summarize the facts  about Bousfield localization we will need in the following proposition.

\begin{prop}\label{adjnew}
Let $\cT$ be a compactly generated triangulated category with small Hom-sets that admits arbitrary direct sums. Let
$\cS\subset\cT$ be a localizing triangulated subcategory and
$\pi : \cT \to \cT/\cS$ be  the quotient functor. Assume that the quotient
$\cT/\cS$ is a category with small Hom-sets. Then

\begin{itemize}
\item[a)] there is a right adjoint functor $\mu :\cT/\cS\to \cT;$
\item[b)] the functor $\mu$ is full and faithful;
\item[c)] if for every compact object $Y\in \cT$ the object $\pi (Y)$ is compact in
$\cT/\cS,$ then the functor $\mu$ preserves direct sums;
\item[d)] if for every compact object $Y\in \cT$ the object $\pi (Y)$ is compact in
$\cT/\cS$ and $\cT$ is compactly generated by a set $R\subset\cT^c,$ then $\cT/\cS$ is
also compactly generated by $\pi(R).$
\end{itemize}
\end{prop}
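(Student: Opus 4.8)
The plan is to prove parts (a)--(d) in order, since each later part builds on the earlier ones.

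For part (a), I would invoke Brown representability. Since $\cT$ is compactly generated with small $\Hom$-sets and admits arbitrary direct sums, the quotient functor $\pi:\cT\to\cT/\cS$ preserves direct sums (Remark~\ref{locfac}) and, because $\cT/\cS$ is assumed to have small $\Hom$-sets, the standard form of Brown representability applies to $\cT$: a direct-sum-preserving triangulated functor out of $\cT$ has a right adjoint (see \cite{NeBook} Ex.~8.4.5). Applying this to $\pi$ gives the right adjoint $\mu:\cT/\cS\to\cT$.

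For part (b), I would use the general fact that a right adjoint $\mu$ to a Verdier localization $\pi:\cT\to\cT/\cS$ is automatically fully faithful; equivalently, the counit $\pi\mu\to\id$ is an isomorphism. The point is that $\pi$ is a localization, so the unit and counit satisfy: for $X\in\cT/\cS$ the cone of the counit $\pi\mu X\to X$ lies in $\pi(\cS)=0$, hence the counit is an isomorphism, which is equivalent to $\mu$ being fully faithful. For part (c), I would argue as follows: to show $\mu$ preserves direct sums, take a family $\{X_i\}\subset\cT/\cS$ and a compact generator $Y\in\cT^c$; then
$$
\Hom_{\cT}\Bigl(Y,\bigoplus_i\mu X_i\Bigr)\cong\bigoplus_i\Hom_{\cT}(Y,\mu X_i)\cong\bigoplus_i\Hom_{\cT/\cS}(\pi Y, X_i),
$$
using compactness of $Y$ in $\cT$ and adjunction. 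On the other hand, by adjunction $\Hom_{\cT}(Y,\mu(\bigoplus_i X_i))\cong\Hom_{\cT/\cS}(\pi Y,\bigoplus_i X_i)$, and since $\pi Y$ is compact in $\cT/\cS$ by hypothesis this equals $\bigoplus_i\Hom_{\cT/\cS}(\pi Y,X_i)$. Thus the natural map $\bigoplus_i\mu X_i\to\mu(\bigoplus_i X_i)$ induces an isomorphism after applying $\Hom_{\cT}(Y,-)$ for every $Y$ in a generating set of compact objects, and since such $Y$ detect isomorphisms, the map is an isomorphism.

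For part (d), assuming $\cT$ is compactly generated by $R\subset\cT^c$ with $\pi Y$ compact in $\cT/\cS$ for all compact $Y$, I would show $\pi(R)$ compactly generates $\cT/\cS$. Compactness of each $\pi Y$, $Y\in R$, holds by hypothesis, so it remains to check the generation (vanishing) condition: if $X\in\cT/\cS$ satisfies $\Hom_{\cT/\cS}(\pi Y, X[n])=0$ for all $Y\in R$ and all $n$, then by adjunction $\Hom_{\cT}(Y,\mu X[n])=\Hom_{\cT}(Y,(\mu X)[n])=0$ for all $Y\in R$ and all $n$ (using that $\mu$ commutes with shifts); since $R$ is a set of compact generators of $\cT$, this forces $\mu X=0$, and then $X\cong\pi\mu X=0$ by part (b). Hence $\pi(R)$ is a set of compact generators of $\cT/\cS$. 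The main obstacle throughout is bookkeeping around the smallness hypothesis on $\cT/\cS$ (needed to invoke Brown representability in part (a)) and making sure that in part (c) the comparison map $\bigoplus_i\mu X_i\to\mu(\bigoplus_i X_i)$ is genuinely the canonical one so that testing against compact generators suffices; both are routine but must be stated carefully.
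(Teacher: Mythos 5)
Your proposal is correct and follows essentially the same route as the paper's own proof: (a) Brown representability applied to the coproduct-preserving quotient functor, (b) the standard fact that a right adjoint to a Verdier localization is fully faithful, (c) testing the comparison map $\bigoplus_i\mu X_i\to\mu(\bigoplus_i X_i)$ against compact generators via adjunction, and (d) adjunction plus full faithfulness of $\mu$ to transfer the vanishing condition. The only cosmetic differences are that you spell out the counit argument in (b) where the paper simply cites \cite{NeBook} Lemma 9.1.7, and you flag the smallness bookkeeping explicitly; neither changes the substance.
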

\begin{proof}
a) It is consequence of the Brown representability theorem (\cite{NeBook} Th. 8.4.4).

b) This is also a general statement which says that if the right adjoint functor to a localization exists
then it is fully faithful (\cite{NeBook} Lemma 9.1.7).

c)
If the object $\pi(Y)$ is compact, then
\begin{multline*}
\Hom(Y,\mu(\bigoplus X_i))\cong \Hom
_{}(\pi(Y),\bigoplus X_i)
\cong \bigoplus \Hom_{}(\pi(Y),X_i)
\cong  \bigoplus \Hom_{}(Y,\mu(X_i))\cong\\\cong \Hom_{}(Y,\bigoplus\mu(X_i))
\end{multline*}
for  every $Y\in
\cT^c.$ Since $\cT$ is compactly generated  we obtain that
$\bigoplus \mu(X_i)\to \mu(\bigoplus X_i)$ is an isomorphism.

d) If $\Hom(\pi(Y), X[n])=0$ for all $Y\in R$ and all $n\in \bbZ$ then $\Hom(Y, \mu(X)[n])=0.$
Since $R\subset\cT^c$ is a set of compact generators
then $\mu(X)=0.$ And hence $X=0,$ because $\mu$ is fully faithful.
Therefore $\pi(R)$ is a set of compact generators for $\cT/\cS.$
\end{proof}

\begin{remark}\label{remar} {\rm
Let $\cA$ be a DG category. The objects $\{h^Y\}_{Y\in\cA}$ form
a set of compact generators of $D(\cA).$
Let $L\subset D(\cA)$ be a localizing subcategory, and
$\pi :D(\cA)\to D(\cA)/L$ be the quotient functor. Assume that the Verdier quotient
$D(\cA)/L$ is a category with small Hom-sets. Then we can apply Proposition \ref{adjnew}
and get a Bousfield localisation functor $\mu: D(\cA)/L\to D(\cA),$ which is fully faithful.
 If for every $Y\in \cA$ the object $\pi (h^Y)$ is compact in
$D(\cA)/L,$ then the objects $\{\pi (h^Y)\}_{Y\in\cA}$ compactly generate the category
$D(\cA)/L$ and the functor $\mu$ preserves direct sums.
}
\end{remark}

If a localizing subcategory $\cS\subset \cT$ is compactly generated by the set of objects
$\cS\cap\cT^c$ then the quotient category $\cT/\cS$ has small Hom-sets (\cite{NeBook}, Cor.4.4.3) and
we can say even more (\cite{Ne} Th.2.1, \cite{NeBook} Th.4.4.9).

\begin{theo}{\rm \cite{Ne}}\label{Neeman} Let $\cT$ be a compactly generated triangulated
category admitting arbitrary direct sums and let $\cS$ be a
localizing triangulated subcategory which is generated
by a subset $S\subset\cT^c$ of compact objects. Then

\begin{enumerate}
\item $\cT/\cS$ has small Hom-sets and is compactly generated;
\item $\T^c$ maps to $(\cT/\cS)^c$ under the quotient functor;
\item the induced functor $\T^c/\cS^c\to (\cT/\cS)^c$ is fully faithful;
\item $(\cT/\cS)^c$ is the idempotent completion of $\cT^c/\cS^c.$
\end{enumerate}
\end{theo}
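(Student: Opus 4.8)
\medskip\noindent\textbf{Step 1: Bousfield localization out of $S$.} The plan is to reproduce Neeman's argument from \cite{Ne}. Fix a set $R\subset\cT^c$ of compact generators of $\cT$ and put $\cS^c:=\cS\cap\cT^c$. First I would construct, for every $X\in\cT$, a triangle $X_{\cS}\to X\to X^{\cS}\to X_{\cS}[1]$ with $X_{\cS}\in\cS$ and $X^{\cS}\in S^{\perp}:=\{Z\in\cT\mid\Hom(s,Z[n])=0\ \text{for all}\ s\in S,\ n\in\bbZ\}$, by the usual transfinite Bousfield procedure: at successor stages attach cones on all maps into the current object from shifts of objects of $S$, at limit stages pass to homotopy colimits. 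The process closes off precisely because $S$ is a \emph{set} of \emph{compact} objects. Since $S$ compactly generates $\cS$ one has $S^{\perp}=\cS^{\perp}$ (the right orthogonal of all of $\cS$), so $X\mapsto X^{\cS}$ realizes $\cS^{\perp}$ as a reflective subcategory of $\cT$; a routine check with the Verdier calculus of fractions then shows that $\pi$ restricts to an equivalence $\cS^{\perp}\isomoto\cT/\cS$ whose quasi-inverse is the right adjoint $\mu$. In particular $\mu$ is fully faithful with essential image $\cS^{\perp}$, and the $\Hom$-sets of $\cT/\cS$ are those of the subcategory $\cS^{\perp}\subset\cT$, hence small. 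This yields the first clause of (1) and makes Proposition \ref{adjnew} available.

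\medskip\noindent\textbf{Step 2: compacts map to compacts.} Since each $s\in S$ is compact, $\cS^{\perp}$ is closed under arbitrary coproducts; combined with the fact that $\pi$ preserves coproducts (Remark \ref{locfac}), a short diagram chase gives that $\mu$ preserves coproducts and hence that $\pi$ sends compact objects to compact objects. That is (2), and it is exactly the hypothesis of Proposition \ref{adjnew}(c),(d); part (d), applied with $R$, then shows that $\pi(R)$ is a set of compact generators of $\cT/\cS$, which completes (1).

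\medskip\noindent\textbf{Step 3: the functor $\cT^c/\cS^c\to(\cT/\cS)^c$ --- the main obstacle.} By (2) the exact functor $\pi|_{\cT^c}$ lands in $(\cT/\cS)^c$ and annihilates $\cS^c$, so it factors through an exact functor $F\colon\cT^c/\cS^c\to(\cT/\cS)^c$. To prove $F$ fully faithful I would expand both Hom-groups by the calculus of fractions: $\Hom_{\cT^c/\cS^c}(A,B)$ is the colimit of the groups $\Hom_{\cT^c}(A,B')$ over morphisms $B\to B'$ in $\cT^c$ with cone in $\cS^c$, while $\Hom_{\cT/\cS}(\pi A,\pi B)$ is the colimit of the groups $\Hom_{\cT}(A,B'')$ over morphisms $B\to B''$ in $\cT$ with cone in $\cS$. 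The comparison of the two colimit systems is an isomorphism once one establishes the key finiteness statement: \emph{any morphism from a compact object of $\cT$ to an object of $\cS$ factors through the triangulated subcategory $\langle S\rangle_{\triangle}\subset\cT^c$ generated by $S$} (which sits inside $\cS^c$). This in turn follows from the cellular description of an object of $\cS$ as a homotopy colimit of a tower whose stages are built from coproducts of shifts of objects of $S$: a morphism out of a compact object factors through a finite stage of such a tower and, by compactness again, through a finite sub-coproduct, hence through an object of $\langle S\rangle_{\triangle}$. Feeding this factorization into the two fraction calculi (by an octahedron) makes the colimit systems mutually cofinal, so $F$ is fully faithful. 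This density/finiteness argument is the technical heart of the theorem and the step I expect to cost the most work.

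\medskip\noindent\textbf{Step 4: idempotent completion.} For (4): by (3) the essential image $\cU$ of $F$ is a triangulated subcategory of $(\cT/\cS)^c$ equivalent to $\cT^c/\cS^c$, and by (1)--(2) it classically generates $(\cT/\cS)^c$, since in a compactly generated triangulated category the compact objects form the thick closure of any set of compact generators (Remark \ref{twodef}, \cite{Ne}). As $\cU$ is already closed under shifts and cones, its thick closure is obtained simply by splitting idempotents; hence $(\cT/\cS)^c$ is the idempotent completion of $\cU\cong\cT^c/\cS^c$, which is (4). (The same reasoning applied to $\cS\subset\cT$ in place of $\cT/\cS$ also shows $\cS^c$ is the idempotent completion of $\langle S\rangle_{\triangle}$ inside $\cT^c$.)
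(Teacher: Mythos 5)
The paper does not prove this theorem; it is stated with attribution to Neeman (Theorem 2.1 of \cite{Ne}, also Theorem 4.4.9 of \cite{NeBook}) and used as a citation. Your attempt is therefore a reconstruction of Neeman's own argument rather than a comparison with anything in this paper, and the reconstruction is faithful in architecture and essentially correct. In particular, the order of Steps 1--2 is right: one must first show $\cT/\cS$ has small Hom-sets via the identification $\cT/\cS\simeq\cS^\perp=S^\perp$ before Brown representability in the form of Proposition \ref{adjnew} is even available, and then the implication chain ($S^\perp$ closed under coproducts $\Rightarrow$ $\mu$ preserves coproducts $\Rightarrow$ $\pi$ preserves compactness) goes through. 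The factorization lemma you isolate in Step 3 (a map from a compact object of $\cT$ into an object of $\cS$ factors through $\langle S\rangle_\triangle\subseteq\cS^c$), combined with the octahedral axiom, does give both fullness --- replace the apex of an arbitrary roof $A\leftarrow C\to B$ by the compact fiber of a map $A\to Z$ with $Z\in\langle S\rangle_\triangle$ --- and faithfulness --- replace a factorization of $f$ through $\cS$ by one through $\cS^c$ --- of $\cT^c/\cS^c\to(\cT/\cS)^c$; your phrase ``mutual cofinality of the colimit systems'' is a rather loose gloss on this, but the content is right. One small remark on Step 4: Remark \ref{twodef}, which you invoke, is itself proved in \cite{Ne} by exactly the factorization lemma of your Step 3 in the degenerate case $\cS=\cT$ (so that $\cT/\cS=0$ and part (4) reduces to ``the compacts are the thick closure of any set of compact generators''). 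So there is no circularity, but a self-contained write-up should inline that argument rather than cite the remark.
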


\begin{remark}\label{Usmall}
{\rm
One of our main tools will be Theorem \ref{Drin} above which assumes that we work with small categories.
More precisely, we will need to apply the DG localization to the DG category
$\Mod\cA,$ where $\cA$ is a small category. Thus we will choose universes $\mathbb{U}\in \mathbb{V}\in\cdots,$
assume that the category $\cA$ is a $\mathbb{U}$\!-small $\mathbb{U}$\!-category and will consider
the DG category $\ModU\cA$ of $\mathbb{U}$\!-small DG $\cA$\-modules. Thus $\ModU\cA$ is a DG $\mathbb{U}$\!-category
and we explain in Appendix that it is DG equivalent to a $\mathbb{V}$\!-small DG category $\ModUs\cA$ of strict $\mathbb{U}$\!-small
DG $\cA$\!-modules. So we may apply the DG localization to $\ModUs\cA$ instead of $\ModU\cA.$

However we decided not to mention explicitely the $\mathbb{U}$\!-smallness issue in the main body of the text. Therefore we add
an appendix which contains all the relevant statements and in particular it has the $\mathbb{U}$\!-small version of Brown representability theorem,
etc.
}
\end{remark}

\section{Enhancements of triangulated categories and formulation of main results}

\begin{defi} Let $\cT$ be a triangulated category. An {\it
enhancement} of $\cT$ is a pair $(\cB , \varepsilon),$ where $\cB$ is a
pretriangulated DG category and $\varepsilon:\Ho(\cB)\stackrel{\sim}{\to} \cT$ is an equivalence
of triangulated categories.
\end{defi}
\begin{defi} The category $\cT$ has a  unique enhancement if it
has one and for any two enhancements $(\cB, \varepsilon)$ and $(\cB', \varepsilon')$
of $\cT$ there exists a quasi-functor $\phi: \cB \to \cB'$
which induces an equivalence $\Ho(\phi):\Ho(\cB)\stackrel{\sim}{\lto} \Ho(\cB').$
In this case the enhancements $(\cB, \varepsilon)$ and $(\cB', \varepsilon')$ are called equivalent.
\end{defi}
\begin{defi}
Enhancements $(\cB, \varepsilon)$ and $(\cB', \varepsilon')$ of $\cT$ are called strong equivalent if there exists a
quasi-functor $\phi:\cB \to \cB'$ such that the functors
$\varepsilon' \cdot\Ho(\phi)$ and $\varepsilon$ are isomorphic.
\end{defi}

Here are some examples of existence of canonical enhancements.

\begin{example}{\rm  Let $\fA$ be an abelian category. Denote by $\cC_{dg}(\fA)$ the
DG category of complexes over $\fA,$ and by $D(\fA)$ the derived category
of $\fA.$ Let $L\subset D(\fA)$ be a localizing subcategory. Consider the full pretriangulated DG subcategory
$\cL\subset \cC_{dg}(\fA)$
such that $\Ob\cL=\Ob L.$ Let $\cC_{dg}(\fA)/\cL$ be the corresponding
DG quotient. Then by Theorem \ref{Drin} above there is a natural equivalence
$F:\Ho(\cC_{dg}(\fA)/\cL)\stackrel{\sim}{\lto} D(\fA)/L$ of triangulated categories.
Hence $(\cC_{dg}(\fA)/\cL, F)$ is a canonical enhancement of
$D(\fA)/L.$}
\end{example}

We also can consider a slight variation.

\begin{example}\label{exdg}{\rm  Let $\cA$ be a DG category.
If $L\subset D(\cA)$ is a localizing subcategory, the quotient
category $D(\cA)/L$ has a canonical enhancement. Namely, let $\cL
\subset \Mod\cA$ be the full DG subcategory which has the
same objects as $L.$ Let $\Mod\cA/\cL$ be the DG quotient
category. Then there is a natural equivalence
$F:\Ho(\Mod\cA/\cL)\stackrel{\sim}{\lto} D(\cA)/L$ of triangulated categories ,
i.e. $(\Mod\cA/\cL ,F)$ is an enhancement of $D(\cA)/L.$
Consider the DG subcategories $\SF(\cA)\subset\cP(\cA)\subset \Mod\cA$ of semi-free and
h-projective DG modules, respectively. Then the natural DG functors
$$
\SF(\cA)/\cL\cap \SF(\cA)\lto \cP(\cA)/\cL\cap \cP(\cA)\lto \Mod\cA/\cL
$$
are
quasi-equivalences, so that $\SF(\cA)/\cL\cap \SF(\cA)$ and $\cP(\cA)/\cL\cap \cP(\cA)$ are other
enhancements of $D(\cA)/L$ that are equivalent to
$\Mod\cA/\cL.$}
\end{example}

For a triangulated category it is natural to ask if it has an
enhancement. Next is the question of
uniqueness.

First, let us prove the following not difficult and very natural proposition.

\begin{prop}\label{forbeg}
Let $\cA$ be a small category which we consider as a DG category
 and $D(\cA)$ and $\Perf(A)=D(\cA)^c$ be the derived category of DG $\cA$\!-modules and
its subcategory of perfect DG modules, respectively. Then each of them has a unique enhancement.
\end{prop}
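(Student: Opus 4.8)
The plan is to show that any enhancement of $D(\cA)$ (resp. of $\Perf(\cA)=D(\cA)^c$) is quasi-equivalent to the standard one $\SF(\cA)$ (resp. $\prfdg(\cA)$), using Propositions \ref{Keller}, \ref{Keller1} and \ref{Keller2} together with the fact, recalled in Example \ref{EX} and Remark \ref{twodef}, that $\{h^Y\}_{Y\in\cA}$ is a set of compact generators of $D(\cA)$ and a set of classical generators of $\Perf(\cA)$. The key point is that when $\cA$ is an honest category (sitting in degree zero), the $\Hom$-complexes $h^Y(Z)=\Hom_{\cA}(Z,Y)$ are concentrated in degree zero, so there are no negative or positive $\Ext$'s between the generators, and in particular the homotopy category $\Ho(\cB)$ of any enhancement already ``sees'' the images of the generators as objects whose mutual $\Hom$'s recover $\Hom_{D(\cA)}(h^Y,h^Z[n])$.

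First I would treat $D(\cA)$. Let $(\cB,\varepsilon)$ be an enhancement, so $\cB$ is pretriangulated and $\varepsilon:\Ho(\cB)\xrightarrow{\sim}D(\cA)$. Pull back the compact generators: choose for each $Y\in\cA$ an object $B_Y\in\cB$ with $\varepsilon(B_Y)\cong h^Y$ in $D(\cA)$, and let $\cB_0\subset\cB$ be the full DG subcategory on the $B_Y$. Since $\varepsilon$ is an equivalence of triangulated categories, $\Ho(\cB)$ admits arbitrary direct sums, the $B_Y$ are compact in $\Ho(\cB)$, and they form a set of compact generators of $\Ho(\cB)$ (these properties transfer across $\varepsilon$). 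Now apply Proposition \ref{Keller} to the pair $\cB_0\subset\cB$: the canonical quasi-functor $\phi:\cB\to\SF(\cB_0)$ induces an equivalence $H^0(\phi):\Ho(\cB)\xrightarrow{\sim}D(\cB_0)$. It remains to identify $D(\cB_0)$ with $D(\cA)$ compatibly. For this, note that the DG $k$-module $\Hom_{\cB_0}(B_Z,B_Y)$ computes $\Hom_{\Ho(\cB)}(B_Z,B_Y[n])\cong\Hom_{D(\cA)}(h^Z,h^Y[n])$, which by Yoneda equals $\Hom_{\cA}(Z,Y)$ for $n=0$ and vanishes for $n\neq 0$. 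Hence $\cB_0$ is quasi-isomorphic, as a DG category, to the ordinary category $\cA$ (viewed as a DG category): the inclusion of the degree-zero cocycles, or more carefully a zig-zag through the DG subcategory of cocycles and the cohomology, gives a chain of quasi-equivalences $\cA\leftarrow\cdots\to\cB_0$. Applying $\SF(-)$, which sends quasi-equivalences to quasi-equivalences, yields $\SF(\cA)\simeq\SF(\cB_0)$, and therefore $\cB$ is quasi-equivalent to $\SF(\cA)$. Since the standard enhancement of $D(\cA)$ is $\SF(\cA)$, this proves uniqueness for $D(\cA)$.

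For $\Perf(\cA)=D(\cA)^c$ the argument is parallel but uses Proposition \ref{Keller1} instead. Given an enhancement $(\cB,\varepsilon)$ of $\Perf(\cA)$, the category $\Ho(\cB)$ is idempotent complete (it is equivalent to $\Perf(\cA)=D(\cA)^c$, which is idempotent complete by Example \ref{EX} and Remark \ref{twodef}) and is classically generated by the images of $\{h^Y\}_{Y\in\cA}$; choosing preimages $B_Y\in\cB$ and forming $\cB_0$ as before, Proposition \ref{Keller1} gives that $H^0(\phi)$ induces an equivalence $\Ho(\cB)\xrightarrow{\sim}D(\cB_0)^c\cong\prfdg(\cB_0)$. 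The same $\Hom$-complex computation shows $\cB_0$ is quasi-equivalent to $\cA$ as a DG category, hence $\prfdg(\cB_0)\simeq\prfdg(\cA)$, and so $\cB\simeq\prfdg(\cA)$, the standard enhancement of $\Perf(\cA)$. The main obstacle — and the one point requiring a little care rather than a bald citation — is the identification of $\cB_0$ with $\cA$ at the DG level: one must produce an actual chain of quasi-equivalences of DG categories realizing the fact that $\cB_0$ has cohomology concentrated in degree zero and equal to $\cA$. This is standard (pass to the DG subcategory $Z^0\cB_0$ of degree-zero cocycles, which maps quasi-isomorphically both to $\cB_0$ and, via $H^0$, to $\cA$), but it is where the hypothesis that $\cA$ is an ordinary category — not a general DG category — is essential, since only then is the cohomology of the $\Hom$-complexes a formal (intrinsically determined) object.
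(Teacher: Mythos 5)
Your overall route is the same as the paper's: pick out the full DG subcategory $\cB_0$ of the enhancement on preimages of the $h^Y$, identify it with $\cA$ by a truncation zig-zag, and then invoke Propositions \ref{Keller} (resp.\ \ref{Keller1}) to produce a chain of quasi-equivalences ending in $\SF(\cA)$ (resp.\ $\prfdg(\cA)$). The one step that does not work as you wrote it is the identification of $\cB_0$ with $\cA$ via $Z^0\cB_0$. If $Z^0\cB_0$ is the DG category whose Hom is the degree-zero cocycles $Z^0\Hom_{\cB_0}(-,-)$ concentrated in degree $0$ (with the zero differential), then the inclusion $Z^0\cB_0\hookrightarrow\cB_0$ is \emph{not} a quasi-equivalence in general: the $H^0$ of the one-term complex $Z^0\Hom$ is $Z^0\Hom$ itself, while $H^0$ of the full Hom complex is $Z^0/B^0$, and these differ whenever there are nontrivial degree-zero coboundaries. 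Likewise the map $Z^0\cB_0\to H^0\cB_0=\cA$ is the quotient $Z^0\twoheadrightarrow Z^0/B^0$ and again not a quasi-isomorphism.

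The correct intermediate DG category is the truncation $\tau_{\leq 0}\cB_0$, with $\Hom_{\tau_{\leq 0}\cB_0}=\tau_{\leq 0}\Hom_{\cB_0}$, i.e.\ keep strictly negative degrees and replace degree $0$ by $\ker d^0$. Because the Hom complexes of $\cB_0$ have cohomology only in degree $0$ (by Yoneda, $\Hom_{\Ho(\cB_0)}(B_Y,B_Z[n])\cong\Hom_{D(\cA)}(h^Y,h^Z[n])=0$ for $n\neq 0$), both DG functors $\tau_{\leq 0}\cB_0\to\cB_0$ and $\tau_{\leq 0}\cB_0\to H^0\cB_0\cong\cA$ are quasi-equivalences. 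This is exactly the zig-zag the paper uses; with that replacement your argument matches the paper's for both $D(\cA)$ and $\Perf(\cA)$.
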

\begin{proof}
The canonical DG functors $SF(\cA)\hookrightarrow\cP(\cA)\hookrightarrow\Mod\cA/\Ac(\cA)$ are quasi-isomorphisms
and gives equivalent enhancements of the derived category $D(\cA).$

Let $\cD$ be a pretriangulated DG category
and $\epsilon :D(\cA)\stackrel{\sim}{\to} \Ho(\cD)$ be an equivalence of triangulated
categories.  Denote by $\cB
\subset \cD$ the full DG subcategory with the set of objects
$\{\epsilon (h^Y)\}_{Y\in \cA}.$

Denote by $\tau_{\leq 0}\cB$ the DG subcategory with
the same objects as $\cB$ and morphisms
$$\Hom_{\tau_{\leq 0}\cB}(M,N)=\tau_{\leq 0}\Hom_{\cB}(M,N),$$
where $\tau_{\leq 0}$ is the usual truncation of complexes:
\begin{equation}\label{trunc}
\tau_{\leq 0}\{\cdots\lto C^{-1}\lto C^0\stackrel{d^0}{\lto} C^1 \lto \cdots\}=\{\cdots\lto
C^{-1}\lto \Ker (d^0)\lto 0\}.
\end{equation}
We have the obvious diagram of DG
categories and DG functors
$$
\cA=\Ho(\cB) \stackrel{p}{\longleftarrow} \tau_{\leq 0}\cB
\stackrel{i}{\lto} \cB,
$$
which are quasi-equivalences, because $\epsilon :D(\cA)\stackrel{\sim}{\to} \Ho(\cD)$ is an equivalence of triangulated
categories. They induce DG functors
$$
\SF(\cA) \stackrel{p^*}{\longleftarrow} \SF(\tau_{\leq 0}\cB)
\stackrel{i^*}{\lto} \SF(\cB),
$$
that are quasi-equivalences as well.
We also have a DG functor $\varPhi :\cD \to \Mod\cB$ defined by the rule
$\varPhi(X)(B):=\Hom_{\cB}(B,X),$ where $B\in\cB$ and $X\in \cD.$ The DG functor induces a quasi-functor $\phi:\cD\to \SF(\cB)$
(see Remark \ref{first}).
Since, by construction, objects of $\cB$
form a set of compact generators of $\Ho(\cD)\cong D(\cA),$ Proposition \ref{Keller} implies that
$\phi$ is a quasi-equivalence too. Thus, we get the following chain of quasi-equivalences
$$
\SF(\cA) \stackrel{p^*}{\longleftarrow} \SF(\tau_{\leq 0}\cB)
\stackrel{i^*}{\lto} \SF(\cB)\stackrel{\phi}{\longleftarrow}  \cD,
$$
and the enhancement $\cD$ of $D(\cA)$ is equivalent to the standard enhancement $\SF(\cA).$
The case of the subcategory of perfect DG modules can be considered similarly.
\end{proof}

The following theorems are our main results.

\begin{theo}\label{main} Let $\cA$ be a small category which we consider as a DG category
 and $L\subset D(\cA)$ be a
localizing subcategory with the quotient functor $\pi :D(\cA)\to
D(\cA)/L$ that has a right adjoint (Bousfield localization) $\mu.$ Assume that the following conditions hold
\begin{enumerate}
\item[a)]  for every $Y\in \cA$ the object  $\pi (h^Y)\in D(\cA)/L$ is
compact;
\item[b)] for every $Y,Z\in \cA$ we have
$\Hom (\pi (h^Y),\pi (h^Z)[i])=0\quad \text{when}\quad i<0.$
\end{enumerate}

Then the triangulated category $D(\cA)/L$ has a unique
enhancement.
\end{theo}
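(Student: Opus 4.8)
The plan is to show that an arbitrary enhancement $(\cD,\varepsilon)$ of $D(\cA)/L$ is quasi-equivalent to the standard one, namely $\SF(\cA)/\cL$, where $\cL\subset\SF(\cA)$ is the full DG subcategory on the semi-free modules lying in $L$ (this quotient is pretriangulated by the Lemma on quotients of pretriangulated DG categories and is an enhancement of $D(\cA)/L$ by Example \ref{exdg}). Fix $(\cD,\varepsilon)$, choose for each $Y\in\cA$ an object $e_Y\in\cD$ with $\varepsilon(e_Y)\cong\pi(h^Y)$, and let $\cB\subset\cD$ be the full DG subcategory on $\{e_Y\}_{Y\in\cA}$. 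By hypothesis (a) together with Remark \ref{remar}, the objects $\pi(h^Y)$ are compact generators of $D(\cA)/L$, so $\Ob\cB=\{e_Y\}$ is a set of compact generators of $\Ho(\cD)$. Since $\Ho(\cD)\cong D(\cA)/L$ has arbitrary direct sums, Proposition \ref{Keller} yields a quasi-equivalence $\phi:\cD\stackrel{\sim}{\to}\SF(\cB)$. It therefore suffices to produce a quasi-equivalence between the two enhancements $\SF(\cB)$ and $\SF(\cA)/\cL$ of $D(\cA)/L$.

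The crux is to construct a quasi-functor $\widetilde\rho:\SF(\cA)\to\SF(\cB)$ with $\widetilde\rho(h^Y)\simeq h^{e_Y}$, and this is where hypothesis (b) enters. The cohomology of $\Hom_\cB(e_Y,e_Z)$ equals $\bigoplus_i\Hom_{D(\cA)/L}(\pi(h^Y),\pi(h^Z)[i])$, which by (b) is concentrated in non-negative degrees. Let $\tau_{\leq0}\cB$ be the DG subcategory with the same objects and $\Hom_{\tau_{\leq0}\cB}=\tau_{\leq0}\Hom_\cB$, using the truncation (\ref{trunc}). In contrast with Proposition \ref{forbeg} — where the morphism complexes had cohomology in degree $0$ only, so that \emph{both} $\tau_{\leq0}\cB\to\Ho(\cB)$ and $\tau_{\leq0}\cB\hookrightarrow\cB$ were quasi-equivalences — here non-negativity of the cohomology still makes the DG functor $p:\tau_{\leq0}\cB\to\Ho(\cB)$ a quasi-equivalence, although $\tau_{\leq0}\cB\hookrightarrow\cB$ need not be one. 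Now $Y\mapsto\pi(h^Y)$ is an honest functor $\cA\to\Ho(\cB)$ because $\cA$ is an ordinary category; lifting it along the quasi-equivalence $p$ (which is legitimate after replacing $\cA$ by a cofibrant DG category, a harmless change) and composing with $\tau_{\leq0}\cB\hookrightarrow\cB$ and the Yoneda embedding produces a DG functor $\varPhi:\cA\to\Mod\cB$ with $\varPhi(Y)=h^{e_Y}$. Tensoring with the associated $\cA$-$\cB$-bimodule then defines $\widetilde\rho:\SF(\cA)\to\SF(\cB)$, which commutes with direct sums. Since $\Ho(\widetilde\rho)$ and $\varepsilon^{-1}\pi$ are direct-sum preserving triangulated functors $D(\cA)\to D(\cB)\cong\Ho(\cD)$ agreeing on the generating set $\{h^Y\}$, they are isomorphic; in particular $\Ho(\widetilde\rho)$ annihilates $L$.

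Finally, apply Drinfeld's Theorem \ref{Drin} with the pair $\cL\subset\SF(\cA)$ and the target $\cK=\SF(\cB)$. Because $\Ho(\widetilde\rho)$ kills $L$, the image of $\widetilde\rho$ in $\rep(\cL,\SF(\cB))$ is the zero object, so by property (ii) there is a quasi-functor $\rho:\SF(\cA)/\cL\to\SF(\cB)$ with $\rho\cdot\xi\cong\widetilde\rho$, where $\xi:\SF(\cA)\to\SF(\cA)/\cL$ is the quotient DG functor. Property (i) identifies $\Ho(\SF(\cA)/\cL)$ with $D(\cA)/L$ so that $\Ho(\xi)$ becomes the Verdier quotient functor $\pi$; hence $\Ho(\rho)\circ\pi\cong\Ho(\widetilde\rho)\cong\varepsilon^{-1}\pi$, and by the universal property of the Verdier quotient $\Ho(\rho)\cong\varepsilon^{-1}$ is an equivalence. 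Thus $\rho$ is a quasi-equivalence, so $\SF(\cA)/\cL$, $\SF(\cB)$ and $\cD$ are all quasi-equivalent; as $(\cD,\varepsilon)$ was arbitrary, $D(\cA)/L$ has a unique enhancement.

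I expect the construction of $\widetilde\rho$ to be the main obstacle: the naive ``inclusion'' $\cA\to\cD$, $Y\mapsto e_Y$, is only defined as a functor into the homotopy category $\Ho(\cD)$, and hypothesis (b) is precisely what promotes it to an actual DG quasi-functor, through the truncation $\tau_{\leq0}\cB$ which receives $\cA$ while still mapping quasi-isomorphically onto $\Ho(\cB)$. Checking that $\Ho(\widetilde\rho)$ genuinely equals $\varepsilon^{-1}\pi$ (commutes with sums, correct on generators) is part of this step as well. Once $\widetilde\rho$ is available, invoking Drinfeld's universal property of the DG quotient and the universal property of the Verdier quotient is routine; hypothesis (a) is used only for the compact-generation inputs, i.e. to get $\cD\simeq\SF(\cB)$ and to pin down $\Ho(\widetilde\rho)$ by its values on $\{h^Y\}$.
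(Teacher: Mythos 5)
Your overall architecture matches the paper's (construct $\widetilde\rho$ via the $\tau_{\leq 0}\cB$ truncation, factor through the DG quotient via Drinfeld, then show the descended quasi-functor is a quasi-equivalence), but the argument has a genuine gap at the decisive step, and the shortcut you take would, if valid, trivialize the entire problem.

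You write: ``Since $\Ho(\widetilde\rho)$ and $\varepsilon^{-1}\pi$ are direct-sum preserving triangulated functors $D(\cA)\to D(\cB)$ agreeing on the generating set $\{h^Y\}$, they are isomorphic.'' This is false in general, and it is precisely the point where the notorious non-functoriality of cones enters. Two exact functors between triangulated categories that preserve direct sums and agree (even naturally) on a generating set of compact objects need \emph{not} be isomorphic. If such a rigidity principle held, uniqueness of enhancement would be a one-line consequence, since any two enhancements produce such a pair. The paper is structured around refusing this claim: it never establishes that $F_1=\Ho(\widetilde\rho)$ and $F_2=H^0(\phi)\widetilde\epsilon\pi$ are isomorphic functors on $D(\cA)$, and in fact the conclusion of Theorem \ref{main} (unique enhancement) is deliberately weaker than strong uniqueness (Theorem \ref{strun}), which requires an extra hypothesis (an ample sequence) exactly because one cannot upgrade object-wise compatibility to a natural isomorphism of functors without more input.

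What the paper does instead, and what your proof is missing, is the hard technical core. Proposition \ref{mainprop} constructs, for each bounded-above semi-free $P$, a \emph{non-canonical} isomorphism $\theta_P: F_1(P)\to F_2(P)$ compatible only with maps from $h^Y[-k]$ (not with all morphisms); the construction proceeds by descending induction along the stupid filtration and relies critically on hypothesis (b) via Lemma \ref{nultrun} and Corollary \ref{injec} to make the inductive squares commute. Lemma \ref{fact} then shows $F_1(P)=0$ for $P\in\cL\cap\SF(\cA)$ by testing against compact generators $F_1(h^Y)[k]$, reducing via Corollary \ref{injec} to a truncation $\sigma_{\leq m}P$, applying Proposition \ref{mainprop} there, and pushing through $F_2$. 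Lemma \ref{equiv} similarly establishes full faithfulness of $\Ho(\rho)$ object-by-object, using the Bousfield localization $\mu$ (this is where hypothesis (a) and the existence of the adjoint are essential) to pass to $P=\mu\pi(h^Z)$, truncate, and compare via the $\theta$'s — it does \emph{not} deduce $\Ho(\rho)\cong\varepsilon^{-1}$. You have reduced the theorem to its hardest ingredient and then assumed it away; the $\tau_{\leq 0}\cB$ construction of $\widetilde\rho$ and the appeal to Drinfeld's universal property are correct and match the paper, but the justification that $\widetilde\rho$ annihilates $L$ and that $\Ho(\rho)$ is an equivalence requires the full machinery of Proposition \ref{mainprop} and cannot be obtained from ``agree on generators, hence isomorphic.''
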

As we mentioned in Remark \ref{Usmall} in the above theorem we actually work with a small version
of $D(\cA),$ i.e. the category $D(\cA)$ stands for $D_{\mathbb U}(\cA)$ for a chosen universe $\mathbb U$ (see Appendix).

\begin{theo}\label{mainperf}
Let $\cA$ be a small category which we consider as a DG category
and $L\subset D(\cA)$ be a
localizing subcategory that is generated by compact objects $L^c:=L\cap D(\cA)^c.$
Assume that for the quotient functor $\pi :D(\cA)\to
D(\cA)/L$ the following condition holds
\begin{enumerate}
\item[] for every $Y,Z\in \cA$ we have
$\Hom (\pi (h^Y),\pi (h^Z)[i])=0\quad \text{when}\quad i<0.$
\end{enumerate}
Then the triangulated
subcategory  of compact objects $(D(\cA)/L)^c$ has a
unique enhancement.
\end{theo}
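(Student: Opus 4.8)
The plan is to manufacture, from the data $(\cA,L)$ alone, a single ``standard'' enhancement $\cC_0$ of $(D(\cA)/L)^c$ and then to show that an arbitrary enhancement is quasi-equivalent to it; the comparison is engineered through the universal property of the Drinfeld quotient (Theorem \ref{Drin}). Let $\cL^c\subset\prfdg(\cA)$ be the full DG subcategory on the objects of $L^c=L\cap D(\cA)^c$, form the Drinfeld quotient $\prfdg(\cA)/\cL^c$ (pretriangulated, by the lemma following Remark \ref{quot}), and set $\cC_0:=\prfdg\bigl(\prfdg(\cA)/\cL^c\bigr)$. Since $L$ is generated by $L^c$, Theorem \ref{Neeman} applies: $D(\cA)/L$ has small $\Hom$-sets, $\pi$ carries $D(\cA)^c$ into $(D(\cA)/L)^c$, the induced functor $D(\cA)^c/L^c\to(D(\cA)/L)^c$ is fully faithful, and $(D(\cA)/L)^c$ is its idempotent completion. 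Because $\{h^Y\}_{Y\in\cA}$ classically generates $\Perf(\cA)=D(\cA)^c$, the set $\{\pi(h^Y)\}$ classically generates $(D(\cA)/L)^c$; and by Theorem \ref{Drin} the homotopy category of $\prfdg(\cA)/\cL^c$ is $\Perf(\cA)/L^c=D(\cA)^c/L^c$, so $\Ho(\cC_0)$ is its idempotent completion, namely $(D(\cA)/L)^c$. Thus $\cC_0$ is an enhancement, and it depends only on $(\cA,L)$.

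Fix now an enhancement $(\cD,\varepsilon)$ and let $\cB\subset\cD$ be the full DG subcategory on the objects $B_Y:=\varepsilon^{-1}(\pi(h^Y))$, $Y\in\cA$. Each $\Hom$-complex of $\cB$ computes $\Hom\bigl(\pi(h^Y),\pi(h^Z)[\,\cdot\,]\bigr)$, hence has no cohomology in negative degrees; so the truncated DG category $\tau_{\le0}\cB$ has $\Hom$-complexes cohomologically concentrated in non-positive degrees with $H^0\Hom_{\tau_{\le0}\cB}=H^0\Hom_\cB$, and the canonical DG functor $p\colon\tau_{\le0}\cB\to\Ho(\cB)=:\cA'$ is a quasi-equivalence, where $\cA'$ is (canonically) the full subcategory of $(D(\cA)/L)^c$ on the objects $\pi(h^Y)$. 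There is a $k$-linear functor $j\colon\cA\to\cA'$, $Y\mapsto\pi(h^Y)$ on objects and $f\mapsto\pi(h^f)$ on morphisms. Composing $j$ with a quasi-inverse of $p$ and with $\tau_{\le0}\cB\hookrightarrow\cB\hookrightarrow\cD$ gives a quasi-functor $\cA\to\cD$; extending it along the Yoneda-type embedding $\cA\hookrightarrow\prfdg(\cA)$ (legitimate because $\cD$ is pretriangulated and, since $\Ho(\cD)\cong(D(\cA)/L)^c$ is idempotent complete, satisfies $\prfdg(\cD)\simeq\cD$, a special case of Proposition \ref{Keller1}) yields the central quasi-functor $\widetilde\rho\colon\prfdg(\cA)\to\cD$. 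By construction $\widetilde\rho$ sends $h^Y$ to $B_Y$ and acts on $\Hom_{\Perf(\cA)}(h^Y,h^Z)=\Hom_\cA(Y,Z)$ by $f\mapsto\varepsilon^{-1}(\pi(h^f))$, so on homotopy categories it agrees with $\varepsilon^{-1}\circ(\pi|_{\Perf(\cA)})$ on the subcategory $\cA$; a natural-transformation argument propagated through the construction (see the obstacle below) upgrades this to $\Ho(\widetilde\rho)\cong\varepsilon^{-1}\circ(\pi|_{\Perf(\cA)})$, and in particular $\widetilde\rho$ annihilates $L^c=\ker\bigl(\pi|_{\Perf(\cA)}\bigr)$.

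Since $\widetilde\rho$ kills $\Ob L^c$, its image in $\rep(\cL^c,\cD)$ vanishes, so by Theorem \ref{Drin}(ii) it factors, uniquely up to isomorphism, as $\prfdg(\cA)\to\prfdg(\cA)/\cL^c\xrightarrow{\ \rho\ }\cD$. The induced functor $\Ho(\rho)\colon D(\cA)^c/L^c\to\Ho(\cD)\cong(D(\cA)/L)^c$ is then (under $\varepsilon$) the canonical fully faithful functor of Theorem \ref{Neeman}(3). Applying $\prfdg(-)$ gives a quasi-functor $\cC_0=\prfdg\bigl(\prfdg(\cA)/\cL^c\bigr)\to\prfdg(\cD)\simeq\cD$; on homotopy categories this is the idempotent completion of the fully faithful functor $D(\cA)^c/L^c\hookrightarrow(D(\cA)/L)^c$, hence an equivalence by Theorem \ref{Neeman}(4) (recall $\Ho(\cC_0)$ is the idempotent completion of $D(\cA)^c/L^c$). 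A quasi-functor between pretriangulated DG categories inducing an equivalence of homotopy categories is a quasi-equivalence, whence $\cD\simeq\cC_0$. As $\cC_0$ is independent of the enhancement, any two enhancements of $(D(\cA)/L)^c$ are quasi-equivalent, which is the claim.

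The crux is the honest DG-level construction of $\widetilde\rho$ and the proof that it annihilates $L^c$: the vanishing hypothesis is used precisely so that $\tau_{\le0}\cB$ links $\cB$ to the ordinary category $\cA'$ (and thence to $\cA$ via $j$)---the analogue, in the presence of positive-degree cohomology in the $\Hom$-complexes of $\cB$, of the truncation step in the proof of Proposition \ref{forbeg}, where one used that $\cA$ has morphisms concentrated in degree $0$. To conclude that $\Ho(\widetilde\rho)$ really is $\varepsilon^{-1}\pi$ (not merely that it agrees on objects and on the morphisms of $\cA$) one should carry an explicit natural transformation through every arrow of the construction; this is where a dedicated technical lemma is needed. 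A secondary but real nuisance, as everywhere in the paper, is the universe/smallness bookkeeping of Remark \ref{Usmall} when forming $\prfdg(-)$ and the Drinfeld quotient. Everything after $\widetilde\rho$ is formal, given Theorems \ref{Drin} and \ref{Neeman}.
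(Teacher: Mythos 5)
Your overall strategy—pass to the full DG subcategory $\cB$ on $\{\varepsilon^{-1}\pi(h^Y)\}$, truncate to link $\cB$ with the ordinary category $\cA'$, build a quasi-functor $\widetilde\rho$ out of $\prfdg(\cA)$, kill $L^c$, invoke Drinfeld's universal property, and conclude via Theorem \ref{Neeman}—is the same in outline as the paper's, and your streamlining (working with $\prfdg(\cA)$ and the comparison object $\prfdg(\prfdg(\cA)/\cL^c)$ rather than the larger $\SF(\cA)$ and its quotient) is a reasonable cosmetic variation. But the heart of the proof is missing, and the parenthetical remark that bridges it hides a claim that is actually \emph{false} in the stated generality.

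You write that a ``natural-transformation argument propagated through the construction'' upgrades the identification of $\Ho(\widetilde\rho)$ and $\varepsilon^{-1}\pi|_{\Perf(\cA)}$ on $\cA$ to a natural isomorphism $\Ho(\widetilde\rho)\cong\varepsilon^{-1}\pi|_{\Perf(\cA)}$ of functors on all of $\Perf(\cA)$. There is no such general argument. Two exact functors that agree (together with a chosen identification) on a set of classical generators need not be naturally isomorphic; in the paper this phenomenon is exactly why the proof is hard. The paper never proves such a natural isomorphism in the generality of Theorem \ref{mainperf}: what it proves instead is Proposition \ref{mainprop}, which produces for each $P$ in $\prfdg(\cA)$ an isomorphism $\theta_P\colon F_1(P)\to F_2(P)$ \emph{compatible with morphisms $h^Y[-k]\to P$} (diagram (\ref{commute})), via a delicate induction on stupid truncations using Lemma \ref{nultrun} and Corollary \ref{injec}. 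This is strictly weaker than naturality. A genuine natural isomorphism of the two functors is only obtained in Theorem \ref{strun}/\ref{streq}, for projective $X$ with $T_0(\O_X)=0$, and that step hinges on the ample-sequence machinery of Proposition \ref{exten}, which is simply not available here. So the lemma you defer to is not a routine devissage you haven't written out: it is the technical centre of the paper, and the stronger form you need ($\Ho(\widetilde\rho)\cong\varepsilon^{-1}\pi$ as functors) is more than Proposition \ref{mainprop} delivers.

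The weaker, correct statement (pointwise agreement plus compatibility) \emph{does} suffice, but then the end of your argument is no longer ``formal.'' To kill $L^c$ you only need $\widetilde\rho(P)\cong 0$ for $P\in\cL^c$, and the paper extracts this from Lemma \ref{simple2} and Proposition \ref{mainprop} (Lemma \ref{factor}). But to show that the factored quasi-functor $\rho$ induces an equivalence, the paper must verify full faithfulness of $\Ho(\rho)$ by hand: that is the prism-diagram argument of Lemma \ref{equivp}, which uses the compatibility clause of Proposition \ref{mainprop} to transport morphisms $\pi(h^Y)[k]\to\pi(h^Z)$, represented as roofs through a perfect $P$, across the isomorphisms $\theta_P$. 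Your sentence ``Everything after $\widetilde\rho$ is formal, given Theorems \ref{Drin} and \ref{Neeman}'' is therefore wrong: if all you have is pointwise agreement, identifying $\Ho(\rho)$ with the canonical fully faithful functor of Theorem \ref{Neeman}(3) requires precisely the compatibility you have not secured. In short, the proposal correctly identifies \emph{where} the work lies, but both under-supplies it (no proof of the key lemma) and mis-states what that lemma should say.
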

Note that under condition that subcategory $L$ is generated by compact objects $L^c:=L\cap D(\cA)^c$
 the quotient
$D(\cA)/L$ has small Hom-sets by Theorem \ref{Neeman} (1) and, hence, the Bousfield localization
functor $\mu: D(\cA)/L\to D(\cA)$ exists by Proposition \ref{adjnew} a).
In addition, in this case the property a) of Theorem \ref{main} is automatically holds by Theorem \ref{Neeman} (2),
i.e. for all $Y\in \cA$ the objects  $\pi (h^Y)\in D(\cA)/L$ are
compact.
We also note that Theorems \ref{main} and \ref{mainperf} have more advanced and precise versions (see  Theorems \ref{fufabig} and \ref{corperf}).

It will be shown in Section \ref{geom} that Theorem \ref{main} implies the following corollary.

\begin{theo} Let $\cC$ be a Grothendieck category. Assume that it has a set of small generators
which are compact objects in the derived category $D(\cC).$
Then the derived category  $D(\cC)$ has a unique enhancement.
\end{theo}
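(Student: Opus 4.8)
The plan is to reduce the statement to Theorem~\ref{main}: I want to exhibit a small ordinary category $\cA$, a localizing subcategory $L\subset D(\cA)$, and an equivalence $D(\cA)/L\simeq D(\cC)$ under which the conditions a) and b) of Theorem~\ref{main} become visible; then that theorem gives the conclusion at once. (Throughout I work inside a fixed universe $\mathbb{U}$ as in Remark~\ref{Usmall}, so $D(\cA)$ means $D_{\mathbb{U}}(\cA)$.)

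First I would let $\cA$ be the full subcategory of $\cC$ whose set of objects is the given generating set $S$; since $S$ is a set, $\cA$ is a small $k$-linear category, which I regard as a DG category concentrated in degree $0$. With this choice $D(\cA)$ is canonically the derived category of the abelian category $\cM$ of ordinary right $\cA$-modules, i.e.\ of $k$-linear functors $\cA^{\op}\to\Vect_k$, the representables $h^Y$ ($Y\in\cA$) forming a set of projective generators of $\cM$; in particular $\{h^Y\}_{Y\in\cA}$ is a set of compact generators of $D(\cA)$ (Example~\ref{EX}). Next I would invoke the Gabriel--Popescu theorem in its several-objects form: the restricted Yoneda functor $i\colon\cC\to\cM$, $i(X)=\Hom_{\cC}(-,X)|_{\cA}$, is fully faithful and admits an exact left adjoint $q\colon\cM\to\cC$ with $q\circ i\cong\id_{\cC}$. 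I record the one computation I will need: $q(h^Y)\cong Y$ for every $Y\in\cA$, since $\Hom_{\cC}(q(h^Y),X)\cong\Hom_{\cM}(h^Y,i(X))\cong i(X)(Y)=\Hom_{\cC}(Y,X)$ naturally in $X$.

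Now I would pass to derived categories, and this is where the only substantial work sits. Since $q$ is exact it induces a triangulated functor $\pi\colon D(\cA)=D(\cM)\to D(\cC)$ commuting with arbitrary direct sums, so by Brown representability (applicable because $D(\cA)$ is compactly generated) $\pi$ has a right adjoint $\mu\colon D(\cC)\to D(\cA)$. I would identify $\mu$ with the right derived functor $\bR i$, computed by applying $i$ degreewise to a $K$-injective resolution in $\cC$; the key point is that $i$ carries $K$-injective complexes of $\cC$ to $K$-injective complexes of $\cM$, because $q$ is exact (hence preserves acyclic complexes) and $i$ is right adjoint to $q$. From $q\circ i\cong\id_{\cC}$ one then reads off that the counit $\pi\mu\to\id_{D(\cC)}$ is an isomorphism, i.e.\ $\mu$ is fully faithful; equivalently, $L:=\Ker\pi$ is a localizing subcategory of $D(\cA)$ (the kernel of a coproduct-preserving triangulated functor) and $\pi$ descends to an equivalence $D(\cA)/L\xrightarrow{\sim}D(\cC)$, with $\mu$ the Bousfield right adjoint demanded in Theorem~\ref{main}. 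The delicate part is precisely that the abelian-level localization $\cC=\cM/\Ker q$ remains a localization after passing to the \emph{unbounded} derived categories; everything else in this step is formal manipulation with adjoints and $K$-injective resolutions in a Grothendieck category.

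It then remains to check conditions a) and b) of Theorem~\ref{main}, and both are immediate. For $Y\in\cA$, under the identification $D(\cA)/L\simeq D(\cC)$ the object $\pi(h^Y)$ is $q(h^Y)\cong Y$, which lies in $\cC\subset D(\cC)$ and is compact in $D(\cC)$ by hypothesis; this is a). For $Y,Z\in\cA$ one has $\Hom_{D(\cC)}(\pi(h^Y),\pi(h^Z)[j])\cong\Hom_{D(\cC)}(Y,Z[j])=\Ext^{j}_{\cC}(Y,Z)$, which vanishes for $j<0$ because $Y$ and $Z$ are objects of the abelian category $\cC$ (so compactness is not even needed here); this is b). Applying Theorem~\ref{main} to $L\subset D(\cA)$ now shows that $D(\cA)/L$, hence $D(\cC)$, has a unique enhancement.
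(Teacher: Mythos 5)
Your proof is correct and follows the same overall strategy as the paper: identify $D(\cC)$ with a quotient $D(\cA)/L$ by Gabriel--Popescu, verify hypotheses a) and b), and invoke Theorem~\ref{main}. The one place where you diverge is the proof that the Bousfield right adjoint $\mu$ is fully faithful (equivalently, that $\pi$ really does exhibit $D(\cC)$ as the Verdier quotient $D(\cA)/L$). The paper argues this via the section functor $\varOmega$: it checks $\varPi\bR\varOmega\cong\id$ on injectives, hence gets full faithfulness of $\bR\varOmega$ on $D^{+}$, and then extends to all of $D(\QMod_{\cS}(\cA))$ using that $\bR\varOmega$ commutes with direct sums --- a property that is \emph{equivalent} to the compactness of the generators, so the compactness hypothesis is already used at this stage. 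You instead show directly that $i$ carries $K$-injective complexes to $K$-injective complexes (since $q$ is exact), so that $\mu=\bR i$ is computed degreewise on a $K$-injective resolution and $\pi\mu\cong\id$ falls out of $q\circ i\cong\id$. This makes the equivalence $D(\cA)/L\simeq D(\cC)$ independent of compactness, which then enters only through condition~a); the price is that you must invoke the (non-trivial but standard) existence of $K$-injective resolutions in a Grothendieck category, which the paper's argument via Brown representability and $D^{+}$ avoids. Both routes are sound; yours is a modest streamlining of the localization step rather than a different proof, and once that step is in place the remainder (the computation $q(h^Y)\cong Y$, condition~a) from the hypothesis, condition~b) from $\Ext^{j}_{\cC}(Y,Z)=0$ for $j<0$) is exactly as in the paper.
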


This result can be applied to the category of quasi-coherent sheaves on a quasi-compact and quasi-separated scheme.
Let $X$ be a a quasi-compact and quasi-separated scheme over $k.$ Denote by  $\Qcoh X$ the
abelian categories of  quasi-coherent sheaves on $X.$
We say that the scheme $X$ {\it has enough  locally free sheaves},
if  for any finitely presented sheaf $\cF$ there is an epimorphism
$\E\twoheadrightarrow\cF$ with a locally free
sheaf $\E$ of finite type.

\begin{theo}
Let $X$ be a quasi-compact and separated scheme that has enough locally free sheaves. Then
the derived category of quasi-coherent sheaves $D(\Qcoh X)$  has a unique enhancement.
\end{theo}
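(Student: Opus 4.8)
The plan is to reduce the statement to the uniqueness theorem for Grothendieck categories, Theorem~\ref{Grothcat} (which itself rests on Theorem~\ref{main}). For this I must produce inside $\Qcoh X$ a set of small generators that become compact objects of the unbounded derived category $D(\Qcoh X)$.

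First I would recall two standard facts about a quasi-compact and quasi-separated scheme $X$: the category $\Qcoh X$ is a Grothendieck category, and every quasi-coherent sheaf on $X$ is a filtered colimit of finitely presented quasi-coherent sheaves. Then I take $S$ to be the (small) set of isomorphism classes of locally free sheaves of finite type on $X$, each regarded as an object of $\Qcoh X$; these are the candidates for a generating set.

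Next I verify the two required properties of $S$. \emph{Smallness:} for $\cE$ locally free of finite type there is a natural isomorphism $\Hom_{\Qcoh X}(\cE,-)\cong H^0(X,\cE^{\vee}\otimes-)$, and on a quasi-compact and quasi-separated scheme the global sections functor commutes with filtered colimits; hence $\cE$ is a finitely presented, in particular small, object of $\Qcoh X$. \emph{Generation:} if $\cF\in\Qcoh X$ is nonzero, write $\cF=\colim_i\cF_i$ as a filtered colimit of finitely presented sheaves; since $\cF\neq 0$, some structure map $\cF_i\to\cF$ is nonzero, and because $X$ has enough locally free sheaves there is an epimorphism $\cE\twoheadrightarrow\cF_i$ with $\cE$ locally free of finite type, so the composite $\cE\twoheadrightarrow\cF_i\to\cF$ is a nonzero morphism from an object of $S$ to $\cF$. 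Thus $S$ is a generating set consisting of small objects of $\Qcoh X$.

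Finally I would check that each $\cE\in S$ is compact in $D(\Qcoh X)$: a locally free sheaf of finite type, placed in degree $0$, is a perfect complex, and on a quasi-compact and separated scheme the perfect complexes are precisely the compact objects of $D(\Qcoh X)$ -- this is the point where separatedness (rather than mere quasi-separatedness) is used, via the results of B\"okstedt--Neeman and Neeman. Granting this, Theorem~\ref{Grothcat} applies to $\cC=\Qcoh X$ with the generating set $S$, yielding that $D(\Qcoh X)$ has a unique enhancement. The only non-routine ingredient is precisely the identification of the compact objects of the unbounded derived category $D(\Qcoh X)$ with the perfect complexes; once that and Theorem~\ref{Grothcat} are in hand, the rest is bookkeeping.
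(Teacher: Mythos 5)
Your proof is correct and takes essentially the same route as the paper: exhibit the locally free sheaves of finite type as a set of small generators of $\Qcoh X$ that are compact in $D(\Qcoh X)$, and apply Theorem~\ref{Grothcat}. The only cosmetic difference is that the paper establishes compactness directly by citing that the global-sections functor commutes with direct sums on a quasi-compact separated scheme, whereas you route through the identification of compact objects with perfect complexes; both arguments also rely (the paper explicitly, you implicitly via the B\"okstedt--Neeman reference) on the equivalence $D(\Qcoh X)\simeq D_{\Qcoh}(X)$ needed to transfer Neeman's results from the second category to the first.
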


For particular case of quasi-projective schemes we obtain

\begin{cor} Let $X$ be a quasi-projective scheme over $k.$ Then the
derived category of quasi-coherent sheaves $D(\Qcoh X)$   has a unique enhancement.
\end{cor}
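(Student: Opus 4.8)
The plan is to deduce this corollary at once from the preceding theorem: it suffices to show that a quasi-projective scheme $X$ over $k$ is quasi-compact, separated, and has enough locally free sheaves.

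By the definition of quasi-projectivity, $X$ fits into an open immersion $X\hookrightarrow\bar X$ with $\bar X$ a projective scheme over $k$; I would fix such a $\bar X$ together with a very ample invertible sheaf $\O_{\bar X}(1)$ relative to $k$. Since $\bar X$ is Noetherian, so is its open subscheme $X$, and in particular $X$ is quasi-compact. As $\bar X$ is separated over $k$ and an open subscheme of a separated scheme is separated, $X\to\Spec k$ is separated.

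It remains to verify that $X$ has enough locally free sheaves. Let $\F$ be a finitely presented sheaf on $X$; since $X$ is Noetherian, $\F$ is coherent, and because $\bar X$ is Noetherian and $X\subset\bar X$ is open, $\F$ extends to a coherent sheaf $\bar\F$ on $\bar X$. By Serre's theorem there are integers $m$ and $N$ and an epimorphism $\O_{\bar X}(-m)^{\oplus N}\twoheadrightarrow\bar\F$ of sheaves on $\bar X$. Restricting along the open immersion $X\hookrightarrow\bar X$ produces an epimorphism $\O_X(-m)^{\oplus N}\twoheadrightarrow\F$ whose source is a locally free sheaf of finite type. Hence $X$ has enough locally free sheaves, and the preceding theorem yields the desired unique enhancement of $D(\Qcoh X)$.

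There is no serious obstacle in this argument; the only inputs beyond formal manipulations are the existence of a projective compactification of a quasi-projective scheme, the extendability of coherent sheaves from an open subscheme of a Noetherian scheme, and Serre's theorem on global generation of coherent sheaves on a projective scheme, all of which are classical.
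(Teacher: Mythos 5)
Your proof is correct. You deduce the corollary from Theorem~\ref{qcqse} by checking directly that a quasi-projective scheme over $k$ is quasi-compact, separated, and has enough locally free sheaves; each step (projective compactification, openness of Noetherianness and separatedness, extension of coherent sheaves from an open subscheme of a Noetherian scheme, Serre's global generation theorem) is classical and correctly invoked.

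The paper's own route to Corollary~\ref{qcqproj} is slightly different in presentation: rather than checking the hypotheses of Theorem~\ref{qcqse}, the text explicitly realizes $\Qcoh X$ via Serre's theorem as the quotient of the category $\Gr(A)$ of graded modules over $A=\bigoplus_n H^0(\overline X,\cO_{\overline X}(n))$ by the localizing subcategory of $I$-torsion modules, and observes that the representables $h^Y$ go to the line bundles $\O(i)|_X$, which are compact. This feeds directly into Proposition~\ref{qcat} (equivalently Theorem~\ref{main} via Lemma~\ref{quotab}). Logically both paths pass through the same machinery; your argument is shorter for this particular corollary, but the paper's explicit $\Gr(A)/\Tors_I$ presentation is not wasted, since it is reused to establish compact generation of the localizing subcategory (Lemma~\ref{perfcg}), which is needed for the subsequent results on $\Perf(X)$ (Theorem~\ref{perfc}) and $D^b(\coh X)$ (Theorem~\ref{bdcmain}); those later applications do not follow from Theorem~\ref{qcqse} alone.
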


Denote by $\Perf(X)\subset D(\Qcoh X)$ the full
subcategory of perfect complexes which coincides with the subcategory of compact objects there (see \cite{Ne3, BvB}).
In Section \ref{geom} we also show that the above Theorem \ref{mainperf} implies the
following statement.

\begin{theo} Let $X$ be a quasi-projective scheme over $k.$ Then the
triangulated category of perfect complexes $\Perf(X)=D(\Qcoh X)^c$ has a unique enhancement.
\end{theo}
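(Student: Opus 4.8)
The plan is to realise $\Perf(X)$ as $(D(\cA)/L)^{c}$ for a small \emph{ordinary} category $\cA$ and a localizing subcategory $L\subset D(\cA)$ satisfying the hypotheses of Theorem~\ref{mainperf}, reusing the presentation of $D(\Qcoh X)$ that underlies the uniqueness of its enhancement. First I would take an ample line bundle $\cO_{X}(1)$ on $X$, so that the family $\{\cO_{X}(n)\}_{n\in\bbZ}$ is a set of small (finitely presented) generators of the Grothendieck category $\Qcoh X$, each one a line bundle and hence perfect, so compact in $D(\Qcoh X)$. Let $\cA\subset\Qcoh X$ be the full $k$-linear subcategory on these objects, $\Hom_{\cA}(\cO_{X}(m),\cO_{X}(n))=\Gamma(X,\cO_{X}(n-m))$, regarded as a DG category. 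The Serre (Gabriel--Popescu) localization attached to this generating family gives an exact, coproduct-preserving quotient functor $\pi\colon D(\cA)\to D(\Qcoh X)$ exhibiting $D(\Qcoh X)\simeq D(\cA)/L$ with $L=\Ker\pi$ and $\pi(h^{\cO_{X}(n)})\cong\cO_{X}(n)$; this is precisely the $D(\cA)/L$ used in the proof of the uniqueness of the enhancement of $D(\Qcoh X)$.

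Next I would verify the two hypotheses of Theorem~\ref{mainperf}. The vanishing condition is immediate, since $\cO_{X}(m)$ and $\cO_{X}(n)$ are objects of the abelian category $\Qcoh X$ placed in degree $0$:
\[
\Hom_{D(\cA)/L}\bigl(\pi(h^{\cO_{X}(m)}),\pi(h^{\cO_{X}(n)})[i]\bigr)=\Ext^{i}_{X}(\cO_{X}(m),\cO_{X}(n))=H^{i}(X,\cO_{X}(n-m))=0\quad\text{for }i<0 .
\]
It then remains to show that $L$ is generated by the compact objects $L\cap D(\cA)^{c}$. Here I would argue as follows: since the $\cO_{X}(n)$ are perfect, $\pi$ carries $D(\cA)^{c}=\Perf(\cA)$ into $\Perf(X)=D(\Qcoh X)^{c}$; and since the $\cO_{X}(n)$ compactly generate $D(\Qcoh X)$ they classically generate $\Perf(X)$ by Remark~\ref{twodef}, so the image of $\Perf(\cA)$ generates $\Perf(X)$ as a thick subcategory. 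Putting $L'=\Loc\bigl(L\cap D(\cA)^{c}\bigr)\subseteq L$, Theorem~\ref{Neeman} makes $D(\cA)/L'$ compactly generated, with $(D(\cA)/L')^{c}$ the idempotent completion of $D(\cA)^{c}/(L\cap D(\cA)^{c})$, and $\pi$ factors through a coproduct-preserving Verdier quotient $\bar\pi\colon D(\cA)/L'\to D(\Qcoh X)$ that preserves compactness and is dense up to direct summands on compact objects. A standard generation argument over the compact generators then shows $\bar\pi$ is fully faithful, hence an equivalence, so $L=L'$.

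The hard part will be the faithfulness of $\bar\pi$ on compact objects --- equivalently, the assertion that $\Perf(X)$ is a \emph{finite} (not merely smashing) localization of $\Perf(\cA)$, i.e. that $L$ is generated by compacts of $D(\cA)$; this is where quasi-projectivity has to be used in an essential way. I would establish it by realising $X$ as an open subscheme of a projective scheme $\bar X\subset\bbP^{N}$ with closed complement $Z$, observing that $L$ is then built from the torsion kernel attached to $\bar X$ and the subcategory $D_{Z}(\Qcoh\bar X)$, which is compactly generated by $\Perf_{Z}(\bar X)$ (Thomason), and chasing compact objects through this description --- concretely, the cones of maps $P\to P'$ in $\Perf(\cA)$ whose sheafification is an isomorphism (e.g. those lifting isomorphisms between finite line-bundle resolutions of coherent sheaves) lie in, and generate, $L$. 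Granting this, the pair $(\cA,L)$ meets all the hypotheses of Theorem~\ref{mainperf}, which yields that $(D(\cA)/L)^{c}$ has a unique enhancement; since $(D(\cA)/L)^{c}\cong D(\Qcoh X)^{c}=\Perf(X)$, this proves the theorem. Everything besides the compact generation of $L$ is formal bookkeeping on top of the $D(\Qcoh X)=D(\cA)/L$ presentation already in hand.
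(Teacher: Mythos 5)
Your overall strategy is the right one: realise $\Perf(X)$ as the compact objects of a Verdier quotient $D(\cA)/L$ with $\cA$ an ordinary small category, check the negativity hypothesis (which you do correctly), and invoke Theorem~\ref{mainperf}. You also correctly put your finger on where the real work lies, namely showing that $L$ is generated by compact objects of $D(\cA)$. But the proof of that point is not carried out, and moreover your choice of $\cA$ makes it substantially harder than it needs to be.

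You take $\cA$ to be the full $k$-linear subcategory of $\Qcoh X$ on the $\cO_X(n)$, so that $\Hom_{\cA}(m,n)=\Gamma(X,\cO_X(n-m))$. For $X$ projective this coincides with the paper's choice, but for $X$ quasi-projective and non-projective it does not: the paper instead takes $A=\bigoplus_n H^0(\overline{X},\cO_{\overline X}(n))$ for a projective closure $\overline X$, which is a \emph{noetherian finitely generated} graded algebra, whereas $\bigoplus_n\Gamma(X,\cO_X(n))$ need be neither. This choice is not cosmetic. With the paper's $\cA$ the kernel $L$ is identified concretely as $D_I(\Gr(A))$ for an explicit homogeneous ideal $I$ cutting out $\overline X\setminus X$, and Lemma~\ref{perfcg} produces compact generators of $L$ by an explicit Koszul construction; that construction uses the noetherianity and finite generation of $A$ in an essential way (one covers the kernel of a nontrivial map $A(k)\to Z^0(C^{\cdot})$ by a \emph{finite} free module and observes that the resulting Koszul complex is perfect with $I$-torsion cohomology). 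With your $\cA$ there is no such explicit description of $L$: it is defined abstractly as $\Ker\pi$, your ``sheafification'' is the Gabriel--Popescu quotient associated to a possibly intractable endomorphism algebra, and neither the noetherian hypothesis nor an explicit ideal $I$ is available.

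The paragraph where you ``observe that $L$ is built from the torsion kernel attached to $\overline X$ and $D_Z(\Qcoh\overline X)$'' does not actually apply to your $L$: your $\cA$-module category does not factor through $\Qcoh\overline X$ at all, since $\Hom_{\cA}$ uses sections over $X$ rather than over $\overline X$. Invoking Thomason for $D_Z(\Qcoh\overline X)$ and ``chasing compact objects'' therefore does not produce generators of \emph{your} $L$. Likewise, the claim that cones of maps $P\to P'$ in $\Perf(\cA)$ ``whose sheafification is an isomorphism'' lie in and generate $L$ is exactly the content of the statement you are trying to prove; asserting it is not an argument. To repair the proof one should adopt the paper's presentation from the start (the $\bbZ$-category of the graded ring of $\overline X$, so that $L=D_I(\Gr A)$) and then carry out the Koszul-complex argument of Lemma~\ref{perfcg}; that is precisely how the paper closes the gap that you correctly identified but left open.
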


In Section \ref{cohbound} we introduce a notion of compactly approximated objects in a triangulated category
and we  prove that the triangulated category of compactly
approximated objects $(D(\cA)/L)^{ca}$ has a unique
enhancement (Theorem \ref{unca}). This result allows us to deduce the uniqueness of enhancement for
the bounded derived category of coherent sheaves on a quasi-projective
scheme.

\begin{theo}
The bounded derived  category of coherent sheaves $D^b(\coh X)$ on a quasi-projective scheme $X$ has a unique enhancement.
\end{theo}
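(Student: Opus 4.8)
The plan is to realize $D^b(\coh X)$ as the triangulated category of compactly approximated objects $(D(\cA)/L)^{ca}$ for a suitable small (ordinary) category $\cA$ and a localizing subcategory $L\subset D(\cA)$ generated by compact objects, and then to quote Theorem \ref{unca}. Since the $\Hom$-vanishing hypothesis of that theorem and the compact generation of $L$ are exactly what is needed, the argument splits into two tasks: producing a presentation $D(\Qcoh X)\simeq D(\cA)/L$ with $L$ compactly generated and the vanishing verified, and identifying the compactly approximated objects of $D(\Qcoh X)$ with $D^b(\coh X)$.

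For the first task I would reuse the presentation underlying the proof of Theorem \ref{perfc}. Fix an open immersion $X\hookrightarrow Y$ into a projective scheme $Y$ over $k$ and put $Z=Y\setminus X$ with its reduced structure. The line bundles $\{\cO_Y(-n)\}_{n\geq 0}$ are compact generators of $D(\Qcoh Y)$, so as in (the proof of) Theorem \ref{Grothcat} one writes $D(\Qcoh Y)\simeq D(\cA_0)/L_0$ with $\cA_0$ a small full subcategory of $\Qcoh Y$ and $L_0$ localizing and generated by compact objects. The restriction functor identifies $D(\Qcoh X)$ with the Verdier quotient of $D(\Qcoh Y)$ by the localizing subcategory $D_Z(\Qcoh Y)$ of complexes with cohomology supported on $Z$; because $X\subset Y$ is a quasi-compact open immersion, $D_Z(\Qcoh Y)$ is compactly generated, being generated by twists of Koszul complexes on finite sets of local equations for $Z$, all of which are perfect on $Y$. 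Composing the two quotients gives $D(\Qcoh X)\simeq D(\cA_0)/L$, where $L$ is the localizing subcategory of $D(\cA_0)$ generated by $L_0$ together with lifts to compact objects of the generators of $D_Z(\Qcoh Y)$ (such lifts exist by Theorem \ref{Neeman}); in particular $L$ is generated by compact objects. The vanishing $\Hom(\pi(h^{Y'}),\pi(h^{Z'})[i])=0$ for $i<0$ holds because the objects $\pi(h^{Y'})$, $Y'\in\cA_0$, are honest quasi-coherent sheaves on $X$ and $\Ext^{<0}$ between quasi-coherent sheaves vanishes. This is precisely the data that makes $\Perf(X)=(D(\cA_0)/L)^c$ fall under Theorem \ref{mainperf} in the proof of Theorem \ref{perfc}.

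For the second task, note that $(D(\cA_0)/L)^c\simeq\Perf(X)$, so a compactly approximated object of $D(\Qcoh X)$ is, by the definition of Section \ref{cohbound}, one that is approximated by perfect complexes on $X$. Since $X$ is quasi-projective it is Noetherian and has enough locally free sheaves, so $D^b(\coh X)$ coincides with the full subcategory of $D(\Qcoh X)$ of complexes with bounded coherent cohomology, and the standard truncated-resolution argument shows that every such object admits, for each $n$, a morphism from a perfect complex (a bounded complex of vector bundles) which is an isomorphism on cohomology sheaves in all degrees $\geq -n$; this gives $D^b(\coh X)\subseteq(D(\cA_0)/L)^{ca}$. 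Conversely, a compactly approximated object is cohomologically bounded with coherent cohomology: its low truncations agree with those of perfect complexes, which lie in $D^b(\coh X)$, and a support/dimension estimate (here one uses that $X$ has finite Krull dimension) forces vanishing of its cohomology outside a finite range. Hence $(D(\cA_0)/L)^{ca}\simeq D^b(\coh X)$, and Theorem \ref{unca} yields the uniqueness of the enhancement.

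The main obstacle is this last identification. The inclusion $D^b(\coh X)\subseteq(D(\cA_0)/L)^{ca}$ is essentially the resolution property of quasi-projective schemes, but the reverse inclusion — that an abstractly defined compactly approximated object has bounded coherent cohomology — requires genuinely controlling the cohomology sheaves of an object described only through its interaction with $\Perf(X)$, and it is here that quasi-projectivity (an ample family of line bundles, Noetherianness, finite dimension) enters essentially, rather than mere quasi-compactness and separatedness. Matching the notion of Section \ref{cohbound} to this geometric picture in both directions is the technical heart of the proof.
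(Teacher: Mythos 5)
Your overall strategy matches the paper's: realize $D^b(\coh X)$ as the subcategory $(D(\cA)/L)^{ca}$ of compactly approximated objects for a quotient presentation of $D(\Qcoh X)$, verify that $L$ is compactly generated and that $\Hom(\pi(h^Y),\pi(h^Z)[i])=0$ for $i<0$, and then invoke Theorem \ref{unca}. The paper implements this with a \emph{single} localization: take $A=\bigoplus_n H^0(\overline{X},\cO_{\overline{X}}(n))$ for a projective compactification $\overline{X}$, pass to the associated $\bbZ$\!-category $\cA$, and realize $D(\Qcoh X)\simeq D(\Gr(A))/D_I(\Gr(A))$ with $I$ cutting out $\overline{X}\setminus X$; Lemma \ref{perfcg} then yields compact generation of $D_I(\Gr(A))$ in one stroke, and Proposition \ref{cohcapp} identifies the compactly approximated objects with $D^b(\coh X)$.

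There are two genuine gaps in your version. First, you assert that ``as in the proof of Theorem \ref{Grothcat}'' one obtains $D(\Qcoh Y)\simeq D(\cA_0)/L_0$ with $L_0$ \emph{generated by compact objects}. Theorem \ref{Grothcat} is proved via Theorem \ref{main}, which requires only that the Bousfield localization functor exist; it neither assumes nor produces compact generation of the localizing kernel. That compact generation is precisely what Lemma \ref{perfcg} supplies (applied with $I$ the irrelevant ideal, so $\Tors_I=\Tors$), and without it your intermediate presentation does not meet the hypotheses of Theorem \ref{unca}, so the later claim that $L=\langle L_0,\text{lifts of Koszul generators of }D_Z\rangle$ is compactly generated is not established. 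Second, you acknowledge that the identification $(D(\cA)/L)^{ca}\simeq D^b(\coh X)$ is ``the technical heart'' but do not prove it, and the sketch misattributes where finiteness enters. In the paper this is Proposition \ref{cohcapp}: the \emph{forward} direction (bounded coherent $\Rightarrow$ compactly approximated) is where the uniform $\Ext$-vanishing $\Ext^{\ge N}(\E,\cF)=0$ of Lemma \ref{fd} (finite cohomological dimension) is used, via truncated locally free resolutions; the \emph{reverse} direction requires no dimension estimate at all, only Lemma \ref{nezero} together with the boundedness of the perfect approximants $P_k$. Once these two points are filled in -- by quoting Lemma \ref{perfcg} and Proposition \ref{cohcapp} -- your argument reduces to the paper's.
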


In the case of projective varieties using results of \cite{Or, Or2} we can prove a stronger result.

\begin{theo} Let $X$ be a projective scheme over $k$ such that the maximal torsion
subsheaf $T_0(\O_X)\subset \O_X$ of dimension $0$ is trivial. Then
the triangulated categories $D^b(\coh X)$ and $\Perf(X)$ have  strongly
unique enhancements.
\end{theo}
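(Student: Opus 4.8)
The plan is to upgrade the weak uniqueness already established in Theorems \ref{bdcmain} and \ref{perfc} to the strong form, the extra input being the rigidity supplied by an ample sequence of line bundles on a projective scheme. First I would isolate the abstract reduction. Let $\cT$ be $D^b(\coh X)$ or $\Perf(X)$ and let $(\cB,\varepsilon)$, $(\cB',\varepsilon')$ be two enhancements. Weak uniqueness gives a quasi-functor $\psi\colon\cB\to\cB'$ with $\Ho(\psi)$ an equivalence, so that $\alpha:=\varepsilon'\cdot\Ho(\psi)\cdot\varepsilon^{-1}$ is an autoequivalence of $\cT$. If the autoequivalence $\varepsilon^{-1}\alpha^{-1}\varepsilon$ of $\Ho(\cB)$ lifts to a quasi-functor $\gamma\colon\cB\to\cB$, i.e.\ $\Ho(\gamma)\cong\varepsilon^{-1}\alpha^{-1}\varepsilon$, then $\phi:=\psi\cdot\gamma$ satisfies $\varepsilon'\cdot\Ho(\phi)\cong\alpha\cdot\varepsilon\cdot\Ho(\gamma)\cong\alpha\cdot\alpha^{-1}\cdot\varepsilon=\varepsilon$, so $(\cB,\varepsilon)$ and $(\cB',\varepsilon')$ are strongly equivalent. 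Hence it suffices to show that every autoequivalence of $\cT$ lifts to a quasi-autoequivalence of a fixed enhancement.

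Second, I would deduce this lifting statement from the general Theorem \ref{streq} about bounded derived categories of exact categories carrying an ample sequence. For $\cT=D^b(\coh X)$ one uses the abelian category $\coh X$ together with the ample sequence $\{\O_X(n)\}_{n\in\bbZ}$ attached to a very ample $\O_X(1)$; the key positivity $\Ext^{i}(\O_X(m),\O_X(n))=H^{i}(X,\O_X(n-m))=0$ for $i<0$ is exactly the vanishing hypothesis (b) of Theorem \ref{main}, so that on the full DG subcategory $\cA\subset\cB$ spanned by the chosen line bundles the $\tau_{\le 0}$-truncation argument of Proposition \ref{forbeg} applies and identifies $\cA$, up to quasi-equivalence, with a standard model built from $\SF$ of the ordinary $\Hom$-algebra of the sequence. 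The content of Theorem \ref{streq} is that an ample sequence is rigid enough inside $\cT$ that this identification can moreover be arranged compatibly with $\varepsilon$; this is the point where the hypothesis $T_0(\O_X)=0$ enters, through the representability results of \cite{Or,Or2}, which force the relevant (auto)equivalences to be of Fourier--Mukai type and hence to lift to the DG level via their kernels on $X\times X$.

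Third, for $\cT=\Perf(X)$ I would run the same argument with the same sequence $\{\O_X(n)\}$, which consists of perfect complexes and classically generates $\Perf(X)$, taking $\cA$ to be the corresponding full DG subcategory of the given enhancement; alternatively one transports the statement along the full embedding $\Perf(X)\hookrightarrow D^b(\coh X)$, whose enhancement is the full DG subcategory on perfect objects, again using that under $T_0(\O_X)=0$ every autoequivalence of $\Perf(X)$ is of Fourier--Mukai type and so lifts.

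I expect the main obstacle to be the strong, as opposed to weak, compatibility: producing a quasi-functor that induces $\varepsilon$ on the nose rather than $\varepsilon$ composed with some nontrivial autoequivalence. The purely DG-categorical bookkeeping --- forming $\tau_{\le 0}\cA$, passing to semi-free modules, propagating the equivalence from the ample sequence to all of $D^b(\coh X)$ or $\Perf(X)$ --- is routine once the vanishing (b) is available, exactly as in Proposition \ref{forbeg}. Pinning the equivalence down to $\varepsilon$ is what forces the use of $T_0(\O_X)=0$ together with the Orlov representability theorem, and that is where the real content of Theorem \ref{streq} lies.
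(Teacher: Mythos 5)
Your first step---reducing strong uniqueness to the task of lifting a suitable autoequivalence to the DG level---is a reasonable bookkeeping observation, and you correctly identify Theorem~\ref{streq} as the structural engine for the projective case. But the mechanism you attribute to Theorem~\ref{streq} is not the one in the paper, and the substitute you propose does not work.

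You write that the hypothesis $T_0(\O_X)=0$ enters ``through the representability results of \cite{Or,Or2}, which force the relevant (auto)equivalences to be of Fourier--Mukai type and hence to lift to the DG level via their kernels on $X\times Y$.'' This conflates two quite different theorems. The result from \cite{Or,Or2} actually used in the paper is Proposition~\ref{ext} (generalized as Proposition~\ref{exten}): if $F$ is an autoequivalence of $D^b(\fE)$ and there exists an isomorphism of functors $j\stackrel{\sim}{\to}F\cdot j$ on the ample sequence $\cP$, then this isomorphism \emph{extends uniquely} to $\id\stackrel{\sim}{\to}F$ on all of $D^b(\fE)$. That is a statement about rigidity of natural transformations relative to an ample sequence; it is not the kernel-representability theorem, and it requires nothing about Fourier--Mukai functors. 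Moreover, Fourier--Mukai representability for all autoequivalences of $D^b(\coh X)$ on an arbitrary (possibly singular, nonreduced) projective scheme $X$ is not a theorem one can invoke here. Indeed, Corollaries~\ref{repr}, \ref{dbcoh}, \ref{lastcor}, which establish representability of fully faithful functors, are \emph{consequences} of strong uniqueness in this paper: using FM representability to prove strong uniqueness would be circular.

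Your reduction also asks for too much. You do not need to lift an arbitrary autoequivalence $\alpha$; you only need to handle a very specific one. The proof of Theorem~\ref{streq} proceeds as follows: Theorem~\ref{corperf} (resp. the machinery of Section~\ref{cohbound}) hands you a quasi-functor $\cN$ together with an isomorphism $\Ho(\cN)\cdot\pi\cdot h^\bullet\stackrel{\sim}{\to}N\cdot\pi\cdot h^\bullet$ \emph{on $\cP$}. Setting $F=u^{-1}\Ho(\cN)^{-1}Nu$ gives an autoequivalence of $D^b(\fE)$ that is, by construction, isomorphic to the identity on the ample sequence. Proposition~\ref{exten} then says this forces $F\cong\id$, hence $\Ho(\cN)\cong N$. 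No lifting of a general autoequivalence is attempted or needed.

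Finally, the role of $T_0(\O_X)=0$ is elsewhere than you place it. It is used in Proposition~\ref{ampcoh} to verify condition~(c) of Definition~\ref{defamp}, namely $\Hom(C,\O_X(i))=0$ for $i\ll 0$, via local duality on $\bbP^N$: if $T_0(\O_X)\neq 0$ then $\O_X$ admits a section supported in dimension zero, and the vanishing fails. Without this condition $\{\O_X(i)\}$ is not an ample sequence in the sense required, and Proposition~\ref{exten} cannot be applied. So the hypothesis is feeding the ample-sequence rigidity, not Fourier--Mukai representability.
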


These results on uniqueness of enhancements also allow  to obtain some corollaries on
representation of fully faithful functors between derived categories of (quasi)-coherent sheaves.

\begin{cor}
Let $X$ and $Y$ be quasi-compact separated schemes over a field $k.$ Assume that $X$ has enough locally free sheaves.
Let
$F: D(\Qcoh X)\to D(\Qcoh Y)$ be a fully faithful functor that commutes with direct sums.
Then there is an object $\cE^{\cdot}\in D(\Qcoh(X\times Y))$ such that
the functor $\Phi_{\cE^{\cdot}}$ is
fully faithful and $\Phi_{\cE^{\cdot}}(C^{\cdot})\cong F(C^{\cdot})$ for any $C^{\cdot}\in D(\Qcoh X).$
\end{cor}
\begin{cor} Let $X$ be a quasi-projective scheme and $Y$ be a quasi-compact and separated scheme.
Let $K: \Perf(X)\to D(\Qcoh Y)$ be a fully faithful
functor. Then there is an object $\cE^{\cdot}\in D(\Qcoh(X\times Y))$ such that
\begin{enumerate}
\item the functor $\Phi_{\cE^{\cdot}}|_{\Perf(X)}:\Perf(X)\to D(\Qcoh Y)$ is fully faithful and $\Phi_{\cE^{\cdot}}(P^{\cdot})\cong K(P^{\cdot})$
for any $P^{\cdot}\in\Perf(X);$
\item if $X$ is projective with $T_0(\O_X)=0,$ then $\Phi_{\cE^{\cdot}}|_{\Perf(X)}\cong K;$
\item if $K$ sends $\Perf(X)$ to $\Perf(Y),$ then
the functor $\Phi_{\cE^{\cdot}}: D(\Qcoh X)\to D(\Qcoh Y)$ is fully faithful and
also sends $\Perf(X)$ to $\Perf(Y);$
\item if $Y$ is a noetherian and $K$ sends $\Perf(X)$ to $D^b(\Qcoh Y)_{\coh},$ then the object $\cE^{\cdot}$ is isomorphic to an object of
$D^b(\coh(X\times Y)).$
\end{enumerate}
\end{cor}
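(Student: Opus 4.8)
The plan is to lift $K$ to a quasi-functor between DG enhancements, to identify such quasi-functors with Fourier--Mukai kernels by derived Morita theory on $X\times Y$, and then to extract the four assertions. Since $X$ is quasi-projective, $D(\Qcoh X)$ has a compact generator $G_X$ (e.g. a finite direct sum of twists of an ample line bundle), which by Remark \ref{twodef} also classically generates $\Perf(X)=D(\Qcoh X)^c$; with $\Lambda_X:=\bR\End(G_X)$ one has $D(\Qcoh X)\simeq D(\Lambda_X)$ and $\Perf(X)\simeq D(\Lambda_X)^c$, with standard enhancements $\cA_X:=\SF(\Lambda_X)$ and $\cB_X:=\prfdg(\Lambda_X)$. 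Likewise $D(\Qcoh Y)$ is compactly generated \cite{BvB}, with compact generator $G_Y$, $\Lambda_Y:=\bR\End(G_Y)$, and enhancements $\cA_Y:=\SF(\Lambda_Y)$, $\cB_Y:=\prfdg(\Lambda_Y)$. The geometric input used throughout is the kernel representation on the product: $G_X\boxtimes G_Y$ is a compact generator of $D(\Qcoh(X\times Y))$ and, everything being flat over $k$, $\bR\End(G_X\boxtimes G_Y)\simeq\Lambda_X\otimes_k\Lambda_Y$ by K\"unneth, so that $D(\Qcoh(X\times Y))\simeq D(\Lambda_X^{op}\otimes_k\Lambda_Y)$; under this identification a continuous quasi-functor $\cA_X\to\cA_Y$ corresponds to an object $\cE^{\cdot}$ whose induced homotopy functor is isomorphic to $\Phi_{\cE^{\cdot}}$.

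To prove (1), for each $P^{\cdot}\in\Perf(X)$ pick an object $\widehat K(P^{\cdot})\in\cA_Y$ representing $K(P^{\cdot})$ and let $\cN\subset\cA_Y$ be the full DG subcategory on these objects. Because $K$ is fully faithful, $\widehat K(P^{\cdot})\mapsto P^{\cdot}$ gives an equivalence $\varepsilon'\colon H^0(\cN)\overset{\sim}{\to}\Perf(X)$, so $(\cN,\varepsilon')$ is an enhancement of $\Perf(X)$ for which the inclusion $\cN\hookrightarrow\cA_Y$ induces $K$ on homotopy categories. By the precise form of the uniqueness theorem for $\Perf(X)$ (Theorem \ref{corperf}; note $X$ is quasi-projective) there is a quasi-functor $\phi\colon\cB_X\to\cN$ whose induced functor agrees with $K$ on objects. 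Composing with the inclusion and extending by ind-completion gives a continuous quasi-functor $\widetilde\phi\colon\cA_X\to\cA_Y$, hence, via the kernel representation, an object $\cE^{\cdot}\in D(\Qcoh(X\times Y))$ with $\Phi_{\cE^{\cdot}}\cong H^0(\widetilde\phi)$. Restricting to $\Perf(X)$, the functor $\Phi_{\cE^{\cdot}}|_{\Perf(X)}$ is isomorphic to $H^0(\phi)$ followed by the fully faithful inclusion $H^0(\cN)\hookrightarrow D(\Qcoh Y)$; it is therefore fully faithful and satisfies $\Phi_{\cE^{\cdot}}(P^{\cdot})\cong K(P^{\cdot})$ for every $P^{\cdot}$.

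For the refinements: (2) if $X$ is projective with $T_0(\O_X)=0$, then $\Perf(X)$ has a strongly unique enhancement (Theorem \ref{strun}), so $\phi$ can be chosen with $\varepsilon'\cdot H^0(\phi)$ isomorphic to the structure equivalence, which upgrades the conclusion of (1) to a functorial isomorphism $\Phi_{\cE^{\cdot}}|_{\Perf(X)}\cong K$. (3) If $K(\Perf(X))\subset\Perf(Y)$, carry out the construction with $\widehat K(P^{\cdot})\in\cB_Y$; then $\Phi_{\cE^{\cdot}}$ sends each compact object into $\Perf(Y)$, and the argument of Proposition \ref{Keller2} — run first in the source-compact variable (using compactness of $P^{\cdot}$ and of $\Phi_{\cE^{\cdot}}(P^{\cdot})$), then in the remaining variable — shows that $\Phi_{\cE^{\cdot}}$ is fully faithful on all of $D(\Qcoh X)$ and carries $\Perf(X)=D(\Qcoh X)^c$ into $\Perf(Y)$. (4) If $Y$ is noetherian and $K(\Perf(X))\subset D^b(\coh Y)$, use that the twists $\O_X(-n)$, $n\ge 0$, of an ample line bundle classically generate $\Perf(X)$, so that $\bR p_{2*}\bigl(\cE^{\cdot}\overset{\bL}{\otimes}p_1^{*}\O_X(-n)\bigr)\cong\Phi_{\cE^{\cdot}}(\O_X(-n))\cong K(\O_X(-n))$ belongs to $D^b(\coh Y)$ for all $n$; a boundedness-and-coherence criterion for a complex on $X\times Y$ in terms of such twisted pushforwards, in the spirit of the ample-sequence arguments of \cite{Or,Or2}, then forces $\cE^{\cdot}$ to be isomorphic to an object of $D^b(\coh(X\times Y))$.

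The main obstacle is the derived-Morita/kernel representation invoked in the set-up: identifying continuous quasi-functors $\cA_X\to\cA_Y$ with objects of $D(\Qcoh(X\times Y))$ compatibly with $\Phi_{(-)}$ rests on $G_X\boxtimes G_Y$ being a compact generator of the product and on the K\"unneth splitting of its endomorphism DG algebra. A subtler point is the interface with the statement of (1): Theorem \ref{corperf} yields a $\phi$ whose induced functor agrees with $K$ only on objects, which is enough for $\Phi_{\cE^{\cdot}}(P^{\cdot})\cong K(P^{\cdot})$, but a general autoequivalence of $\Perf(X)$ need not lift to a quasi-functor, so the functorial identification $\Phi_{\cE^{\cdot}}|_{\Perf(X)}\cong K$ is out of reach without the strong uniqueness available for projective $X$ with $T_0(\O_X)=0$ (Theorem \ref{strun}, \cite{Or}).
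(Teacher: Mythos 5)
Your overall strategy — lift $K$ to a quasi-functor between enhancements via Theorem \ref{corperf}, then extract a kernel on $X\times Y$ — is the paper's strategy, and parts (1)--(3) are essentially the paper's proof in different dress. Two issues deserve flagging, though, one a fixable mismatch and one a genuine gap.

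First, the Morita packaging does not mesh cleanly with Theorem \ref{corperf} as stated. That theorem (and the whole machinery behind it: the stupid truncations $\sigma_{\ge n}P$, Proposition \ref{mainprop}, the semi-free DG modules as honest complexes of free modules) takes as input a \emph{small ordinary category} $\cA$ viewed as a DG category concentrated in degree $0$. Your $\Lambda_X=\bR\End(G_X)$ is a genuine DG algebra with nonzero higher cohomology in general (e.g.\ $H^*(X,\O_X)$ when $G_X$ contains $\O_X$), so you cannot take $\cA=\Lambda_X$. The paper instead applies Theorem \ref{corperf} to the $\bbZ$-category built from the graded ring $A=\bigoplus_n H^0(\overline X,\O(n))$, whose objects are $\bbZ$ and whose Hom's live in degree $0$, and then works with the enhancement $\cV\subset\SF(\cA)/\cL\cap\SF(\cA)$ rather than $\prfdg(\Lambda_X)$. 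Your argument can be routed through $\cV$ and composed with a quasi-equivalence $\cV\to\cB_X$ (itself produced by Theorem \ref{corperf} applied to the identity), and the ind-completion chain you sketch then matches the paper's
$\SF(\prfdg(X))\to\SF(\cC')\to\SF(\cD)\to\SF(\prfdg(Y))$ together with To\"en's Theorem \ref{Toen} applied to that direct-sum-preserving composite. Re-deriving To\"en via a single compact generator and K\"unneth is essentially re-proving Theorem \ref{Toen}; citing it directly is cleaner and avoids having to verify that $G_X\boxtimes G_Y$ compactly generates and that $\bR\End$ splits, which are not free.

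Second, part (4) is a real gap: you only assert the existence of "a boundedness-and-coherence criterion in the spirit of ample-sequence arguments." For quasi-projective $X$ this does not follow from $\Phi_{\cE^{\cdot}}(\O_X(-n))\in D^b(\coh Y)$ alone — one has to control coherence of $\cE^{\cdot}$ near the boundary $\overline X\setminus X$. The paper's argument handles this precisely: it pushes forward along $(i,\id_Y)\colon X\times Y\hookrightarrow \bbP^N\times Y$ (so $\Phi_{\bR(i,\id_Y)_*\cE^{\cdot}}=\Phi_{\cE^{\cdot}}\circ \bL i^*$), uses the Beilinson semi-orthogonal decomposition
$D(\Qcoh(\bbP^N\times Y))=\langle \O(-N)\boxtimes D(\Qcoh Y),\dots,\O\boxtimes D(\Qcoh Y)\rangle$,
reads off that the components of $\bR(i,\id_Y)_*\cE^{\cdot}$ are $\O(-p)\boxtimes\Phi_{\cE^{\cdot}}\bL i^*(\Omega^p_{\bbP^N}[p])$, and observes these land in $D^b(\coh Y)$ by hypothesis; the parallel decomposition of $D^b(\coh(\bbP^N\times Y))$ then forces $\bR(i,\id_Y)_*\cE^{\cdot}$, and hence $\cE^{\cdot}$, to be bounded coherent. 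Without this pushforward-and-SOD step, your part (4) is not proved.
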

\begin{cor}
Let $X$ be a projective scheme with $T_0(\O_X)=0$ and $Y$ be a quasi-compact and separated scheme. Let
$K: D^b(\coh X)\to D(\Qcoh Y)$ be a fully faithful
functor that commutes with homotopy limits.
 Then there is an object $\cE^{\cdot}\in D(\Qcoh(X\times Y))$ such that
$\Phi_{\cE^{\cdot}}|_{D^b(\coh X)}\cong K.$
\end{cor}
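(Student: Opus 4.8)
\textbf{Proof plan for the last corollary (Corollary \ref{lastcor}).}
The strategy is to reduce the statement to Corollary \ref{repr} (the representability result for $\Perf(X)$) together with the fact that for a projective scheme $X$ with $T_0(\O_X)=0$ the bounded derived category $D^b(\coh X)$ is recovered from $\Perf(X)$ by a purely categorical construction: $D^b(\coh X)$ sits inside $D(\Qcoh X)$ as a category of ``compactly approximated'' objects, and the inclusion $\Perf(X)\hookrightarrow D^b(\coh X)$ admits a description via homotopy limits. Thus the first step is to restrict $K$ to $\Perf(X)\subset D^b(\coh X)$; by Corollary \ref{repr} there is an object $\cE^{\cdot}\in D(\Qcoh(X\times Y))$ such that $\Phi_{\cE^{\cdot}}|_{\Perf(X)}\cong K|_{\Perf(X)}$, where the isomorphism uses precisely the hypothesis $T_0(\O_X)=0$.

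The second step is to show that this isomorphism extends over all of $D^b(\coh X)$. Here one uses that every object $C^{\cdot}\in D^b(\coh X)$ is a homotopy limit of a sequence of perfect complexes (this is where projectivity of $X$ enters — one resolves $C^{\cdot}$ by locally free sheaves and truncates, obtaining a tower $P_n^{\cdot}\to C^{\cdot}$ whose homotopy limit is $C^{\cdot}$, or rather one works with the naturally occurring sequence whose homotopy limit computes $C^{\cdot}$ in $D(\Qcoh X)$). Both functors in question send this homotopy limit to a homotopy limit: $K$ does so by hypothesis, and $\Phi_{\cE^{\cdot}} = \bR p_{2*}(\cE^{\cdot}\stackrel{\bL}{\otimes} p_1^*(-))$ commutes with homotopy limits because $\bR p_{2*}$ and $\bL p_1^*$ and the derived tensor all do (the tensor and pullback are left adjoints but one should phrase this carefully; in fact the point is that $\Phi_{\cE^{\cdot}}$ has a right adjoint, or that the relevant homotopy limit is a ``Milnor'' homotopy limit which these functors preserve). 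Consequently both $K(C^{\cdot})$ and $\Phi_{\cE^{\cdot}}(C^{\cdot})$ are computed as homotopy limits of $K(P_n^{\cdot})\cong\Phi_{\cE^{\cdot}}(P_n^{\cdot})$, and the isomorphism already established on $\Perf(X)$ is compatible with the transition maps, hence passes to the homotopy limit to give $\Phi_{\cE^{\cdot}}(C^{\cdot})\cong K(C^{\cdot})$, functorially in $C^{\cdot}$.

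The main obstacle is the compatibility of the isomorphism with the tower and the passage to the homotopy limit: an isomorphism of functors on $\Perf(X)$ gives, for each $C^{\cdot}$, a compatible system of isomorphisms $K(P_n^{\cdot})\cong\Phi_{\cE^{\cdot}}(P_n^{\cdot})$, but since homotopy limits in triangulated categories are only defined up to non-canonical isomorphism and the functor ``$\hocolim$'' is not functorial, one must argue that the induced map on homotopy limits is still an isomorphism. This is handled by the standard Milnor exact sequence / five-lemma argument: the homotopy limit fits into a triangle with $\prod P_n^{\cdot}$ and $\prod P_n^{\cdot}$, the isomorphisms on the products are compatible with the defining maps $(1-\text{shift})$, and the five lemma on the long exact sequence of Hom-groups (applied after mapping in an arbitrary compact object, using that $\Perf(X)$ classically generates and that $Y$ is handled via its compact generators) forces the map on homotopy limits to be an isomorphism. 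One also needs to check that $K$ indeed lands in a subcategory of $D(\Qcoh Y)$ closed under the relevant homotopy limits, which follows since $D(\Qcoh Y)$ admits arbitrary products and $K$ commutes with homotopy limits by assumption. Finally, one remarks that no further torsion hypothesis on $Y$ is needed, and that the object $\cE^{\cdot}$ produced is exactly the one from Corollary \ref{repr}.
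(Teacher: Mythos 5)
Your proposal correctly reproduces the first half of the paper's argument but has a genuine gap at the decisive step: passing from a pointwise isomorphism $\Phi_{\cE^{\cdot}}(M)\cong K(M)$ to an isomorphism of \emph{functors}.

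You start as the paper does, restricting $K$ to $\Perf(X)$ and invoking Corollary~\ref{repr} to produce $\cE^{\cdot}$ with $\theta\colon\Phi_{\cE^{\cdot}}|_{\Perf(X)}\cong K|_{\Perf(X)}$. You also correctly identify that a bounded complex of coherent sheaves is, via its bounded-above locally free resolution and stupid truncations, a homotopy \emph{colimit} of perfect complexes in $D(\Qcoh X)$ (note: throughout your proposal you say ``homotopy limit'' where the correct structure — and the one the hypothesis on $K$ and the definition in the paper actually concern — is a homotopy \emph{colimit}; the tower $\sigma_{\ge 0}P\to\sigma_{\ge -1}P\to\cdots$ is a sequential colimit diagram, and $\Phi_{\cE^{\cdot}}$ preserves it because it commutes with direct sums, which is the relevant left-adjoint property). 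Your five-lemma / Milnor-sequence argument then shows, exactly as in the paper, that for each $M\in D^b(\coh X)$ there \emph{exists} an isomorphism $\theta_M\colon\Phi_{\cE^{\cdot}}(M)\stackrel{\sim}{\to}K(M)$ compatible with the maps from the $\sigma_{\ge k}P$.

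But this is only a collection of object-by-object isomorphisms. Your claim that these isomorphisms assemble ``functorially in $C^{\cdot}$'' is not justified by anything in your argument: the homotopy colimit is not a functorial construction on triangulated categories, and the non-canonical choice of $\theta_M$ at each object need not be compatible with morphisms $M\to N$ between compactly approximated objects. This is exactly the subtlety the paper addresses, and it requires two more ingredients that your proposal omits entirely. First, the pointwise isomorphisms, combined with a standard generation argument, show that $\Phi_{\cE^{\cdot}}|_{D^b(\coh X)}$ is itself fully faithful with the same essential image as $K$; therefore $K^{-1}\cdot\Phi_{\cE^{\cdot}}|_{D^b(\coh X)}$ is an \emph{autoequivalence} of $D^b(\coh X)$. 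Second, this autoequivalence agrees with the identity on the full subcategory $\cP=\{\O_X(i)\}$, and Proposition~\ref{exten} (the exact-category version of Orlov's ample-sequence extension theorem, which is precisely where the hypothesis $T_0(\O_X)=0$ is used via Proposition~\ref{ampcoh} to guarantee ampleness of $\{\O_X(i)\}$ in $\coh X$) upgrades this to an isomorphism of functors on all of $D^b(\coh X)$. Without invoking Proposition~\ref{exten} or an equivalent rigidity statement, there is no way to manufacture naturality from pointwise isomorphisms, and your proof does not close.
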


\section{Preliminary Lemmas and Propositions}

The following four Sections 3-6 are devoted to proofs of two main Theorems \ref{main} and \ref{mainperf}.
In this section we give some preliminary lemmas and present  the main technical tool for the next sections, which is Proposition \ref{mainprop}.
In Section \ref{cons} we construct a quasi-functor $\widetilde\rho$ (formula (\ref{rho})) which is a central object for all our considerations.
In Sections 5 and 6 we give proofs of our main theorems. We show that we can apply Drinfeld Theorem \ref{Drin} to the quasi-functor
$\widetilde\rho$ and argue that the induced quasi-functor $\rho$ is actually a quasi-equivalence between different enhancements.

Let $\cA$ be a small category which we consider as a DG category.
As above we denote by $\Mod\cA$ the DG category of DG $\cA$\!-modules.
Let us consider the full DG subcategory $\SF(\cA)\subset \Mod\cA$ of semi-free
DG modules.
Since $\cA$ is an ordinary category, any semi-free DG module $P\in\SF(\cA)$ is actually
a complex
$$
P=\{\cdots\lto P^{n-1}\lto P^n \lto \cdots\}
$$
where any $P^n=\bigoplus h^Y$ is a free
$\cA$\!-module $P.$ Of course, not any such complex is a semi-free DG module.
By definition, it is semi-free if it also has a filtration
$0=\Phi_0\subset \Phi_1\subset ...=P$
such that each quotient  $\Phi_{i+1}/\Phi_i$ is a free DG $\cA$\!-module.
On the other hand, any bounded above such complex is semi-free.
We denote by $\SF^{-}(\cA)$ the DG category of bounded above complexes of free $\cA$\!-modules.
and  denote by $\SF_{fg}(\cA)$ the DG category of finitely generated semi-free DG modules,
which is actually DG equivalent to the pretriangulated hull $\cA^{\text{pre-tr}}$ of $\cA.$
We also denote by  $\prfdg(\cA)$ the DG category of perfect DG modules, i.e. the full DG
subcategory of $\SF(\cA)$ consisting of all DG modules which are homotopy
equivalent to a direct summand of a finitely generated semi-free DG module.
Since any cohomologically bounded complex has a bounded above free resolution,
the DG subcategory $\prfdg(\cA)\cap \SF^{-}(\cA)$ is DG equivalent to $\prfdg(\cA).$
So we will consider $\prfdg(\cA)$ as a full DG subcategory of $\SF^{-}(\cA).$

For any semi-free DG module $P$ we can consider the "stupid" truncations of $P$ that by definition are complexes of the form
$$
\sigma_{\leq m}P=\{\cdots \lto P^{m-1}\lto P^m\lto 0\},
$$
$$
\sigma_{\geq n}P=\{0\lto P^{n}\lto P^{n+1}\lto \cdots\}.
$$
We also put
$$
P^{[n,m]}:=\sigma_{\geq n}\sigma_{\leq m}P=\{ 0\lto
P^n\lto\cdots\lto P^m\lto 0\}.
$$

The DG $\cA$\!-modules $\sigma_{\leq m}P,$ $\sigma_{\geq n}P,$
$P^{[n,m]}$ are also semi-free.
For every $n$ there is an exact sequence in $Z^0(\SF(\cA))$
$$
0\lto \sigma_{\geq (n+1)}P\lto P\lto \sigma_{\leq n}P \lto 0
$$
For a fixed $m$ we also have $\sigma_{\leq m}P\cong \colim_n P^{[n,m]}$ in
$Z^0(\SF^-(\cA))$ and hence $\sigma_{\leq m}P\cong \hocolim_n P^{[n,m]}$ in
$\Ho(\SF^{-}(\cA)).$

Let $\cU\subset\SF(\cA)$ be a full pretriangulated DG subcategory that contains
$\cA$ and if $P\in \cU$ then $\cU$ contains all stupid truncations $\sigma_{\leq m}P$ and $\sigma_{\geq  m}P$ as well.
Denote by $h^{\bullet}:\cA\to\Ho(\cU)$ the natural fully faithful functor.

Let $F: \Ho(\cU)\to \T$ be an exact functor to a triangulated category $\T$ that has the following properties

\begin{tabular}{ll}
$(*)$ & \begin{tabular}{ll}
1) & $F$ preserves all direct sums that exist in $\Ho(\cU);$\\
2) & $F(h^Y)$ is compact in $\T$ for every $Y\in \cA;$\\
3) & $\Hom(F(h^Y),F(h^Z)[s])=0$ for every $Y, Z\in \cA$ when $s<0.$
\end{tabular}
\end{tabular}
\begin{remark}{\rm
Since $F$ commutes with all direct sums that exist in $\Ho(\cU)$ then
it also commutes with any homotopy colimits that exists in $\Ho(\cU).$
}
\end{remark}

\begin{lemma}\label{nultrun} Let $\cU$ be either $\SF(\cA)$ or $\SF_{fg}(\cA)$ and let $\cT$ be a triangulated category
and $F:\Ho(\cU)\to \cT$
be an exact functor. Assume that $F$ has the properties
1),2),3) of $(*)$ above. Then for any $Y\in \cA$ and for any semi-free DG $\cA$\!-module $P\in \cU$ we have
$$\Hom (F(h^Y), F(\sigma_{\geq n}P)[i])=0$$
when $i<n.$
\end{lemma}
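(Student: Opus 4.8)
The plan is to fix $Y \in \cA$ and prove the vanishing $\Hom(F(h^Y), F(\sigma_{\geq n}P)[i]) = 0$ for $i < n$ by reducing to the bounded-above case and then to the case of a single free module in a fixed degree. First I would observe that, since $F$ commutes with direct sums and $\sigma_{\geq n}P$ is the (homotopy) colimit of its finite pieces $P^{[n,m]} = \sigma_{\geq n}\sigma_{\leq m}P$ along the stupid truncation maps, it suffices by the Remark preceding the lemma to prove the statement for each $P^{[n,m]}$, and then pass to the limit using that $F(h^Y)$ is compact (condition 2), so that $\Hom(F(h^Y),-)$ commutes with the relevant homotopy colimit and the Milnor sequence degenerates because all terms vanish. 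Actually more carefully: for the $\hocolim$ triangle one needs $\Hom(F(h^Y), F(\bigoplus P^{[n,m]})[i-1]) = 0$ as well, which again follows from compactness plus the bounded-above vanishing, so the argument is internally consistent.

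**Main reduction.** So the heart of the matter is the case where $P$ is a \emph{bounded} complex of free modules concentrated in degrees $[n,m]$. Here I would induct on the length $m-n$. When $m = n$, the module $P^{[n,n]}$ is just a free module $\bigoplus h^{Z_\alpha}$ placed in degree $n$, i.e. $(\bigoplus h^{Z_\alpha})[-n]$; since $F$ commutes with direct sums, $F$ of this is $\bigl(\bigoplus F(h^{Z_\alpha})\bigr)[-n]$, and compactness of $F(h^Y)$ gives
$$
\Hom\bigl(F(h^Y), F(P^{[n,n]})[i]\bigr) = \bigoplus_\alpha \Hom\bigl(F(h^Y), F(h^{Z_\alpha})[i-n]\bigr),
$$
which vanishes for $i - n < 0$, i.e. $i < n$, by condition 3. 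For the inductive step, use the exact sequence in $Z^0(\SF(\cA))$
$$
0 \lto \sigma_{\geq (n+1)}P^{[n,m]} \lto P^{[n,m]} \lto P^{[n,n]} \lto 0,
$$
which becomes a distinguished triangle in $\Ho(\cU)$, hence $F$ of it is a distinguished triangle in $\cT$. Applying $\Hom(F(h^Y), -[i])$ gives a long exact sequence in which the outer terms $\Hom(F(h^Y), F(P^{[n,n]})[i])$ and $\Hom(F(h^Y), F(\sigma_{\geq(n+1)}P^{[n,m]})[i])$ vanish for $i < n$ — the first by the base case, the second by the inductive hypothesis (its complex lives in degrees $\geq n+1$, and $i < n < n+1$) — so the middle term vanishes too.

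**The one delicate point.** The step I expect to require the most care is the passage from the bounded pieces $P^{[n,m]}$ to $\sigma_{\geq n}P$ via homotopy colimit: one must check that the identification $\sigma_{\geq n}P \cong \hocolim_m P^{[n,m]}$ holds in $\Ho(\cU)$ (not merely in $\Ho(\SF(\cA))$), which is why the hypothesis that $\cU$ is either $\SF(\cA)$ or $\SF_{fg}(\cA)$ matters — in the former case $\sigma_{\geq n}P$ itself lies in $\cU$ and the colimit is the ordinary one in $Z^0$, computed exactly as for $\sigma_{\leq m}$ in the text; in the latter case $P$ is already bounded so $\sigma_{\geq n}P = P^{[n,m]}$ for $m \gg 0$ and there is nothing to pass to the limit. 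Once that identification is in hand, the vanishing for all finite $m$ together with compactness of $F(h^Y)$ forces $\Hom(F(h^Y), F(\sigma_{\geq n}P)[i]) = 0$ for $i < n$, completing the proof.
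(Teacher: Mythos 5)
There is a genuine gap in the passage from the bounded pieces to $\sigma_{\geq n}P$ when $\cU = \SF(\cA)$. You assert that $\sigma_{\geq n}P \cong \hocolim_m P^{[n,m]}$ ``along the stupid truncation maps,'' by analogy with the identity $\sigma_{\leq m}P \cong \colim_n P^{[n,m]}$ used in the text. But that analogy fails precisely because the differential breaks the symmetry between the two stupid truncations. For fixed $m$ and decreasing $n$, the maps $P^{[n,m]} \hookrightarrow P^{[n-1,m]}$ are honest inclusions of chain complexes (the new degree $n-1$ piece is added at the bottom, where $d$ maps \emph{into} the old complex), so the colimit over $n\to-\infty$ exists in $Z^0(\SF(\cA))$ and equals $\sigma_{\leq m}P$. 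For fixed $n$ and increasing $m$, the natural maps go the other way: $P^{[n,m+1]} \twoheadrightarrow P^{[n,m]}$ is a chain map (quotient out the top degree), whereas the would-be inclusion $P^{[n,m]}\to P^{[n,m+1]}$ fails to be a chain map unless $d\vert_{P^m}=0$. Hence $\sigma_{\geq n}P$ is an \emph{inverse} limit of the $P^{[n,m]}$, not a colimit. Since the hypotheses only give that $F$ preserves direct sums (and therefore homotopy colimits), not products or homotopy limits, you cannot identify $F(\sigma_{\geq n}P)$ with $\holim_m F(P^{[n,m]})$, and the Milnor-sequence route you gesture at cannot even get started.

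The paper avoids this by filtering $\sigma_{\geq n}P$ by the \emph{semi-free} filtration rather than by degree: $\Phi'_j := \Phi_j \cap \sigma_{\geq n}P$, where $\{\Phi_j\}$ is the given semi-free filtration on $P$. This genuinely is an increasing filtration, $\sigma_{\geq n}P = \colim_j \Phi'_j$ in $Z^0$, and each quotient $\Phi'_{j+1}/\Phi'_j$ is a direct sum of $h^Z[s]$ with $s\leq -n$; conditions 1)--3) kill $\Hom(F(h^Y), F(\Phi'_{j+1}/\Phi'_j)[i])$ for $i<n$, the induction on $j$ goes through, and compactness of $F(h^Y)$ lets one pass to the colimit. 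Your induction on the length of $P^{[n,m]}$ (base case a shifted free module, inductive step via the triangle $P^{[n+1,m]}\to P^{[n,m]}\to P^{[n,n]}$) is correct and handles the case $\cU = \SF_{fg}(\cA)$, where $\sigma_{\geq n}P$ is already one of the finite pieces; but for unbounded $P\in\SF(\cA)$ you need a genuine exhaustion by subobjects, and the stupid truncations do not provide one in the direction you need.
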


\begin{proof} The filtration on semi-free DG module $P$  induces a filtration
$\Phi^\prime_j:= \Phi_j\cap \sigma_{\geq n}P,
$
on $\sigma_{\geq n}P$
and each quotient $\Phi^\prime_{j+1}/\Phi^\prime_j$ is isomorphic to a direct sum
of object $h^Z[s]$ with $s\le -n.$ The assumptions 1),2), and 3)
imply that
\begin{multline*}
\Hom (F(h^Y), F (\Phi^\prime_{j+1}/\Phi^\prime_j)[i])=
\Hom (F(h^Y), F (\bigoplus h^Z[s])[i])\cong
\Hom (F(h^Y), \bigoplus F(h^Z[s+i]))\cong\\
\cong \bigoplus \Hom (F(h^Y), F(h^Z)[s+i])=
0.
\end{multline*}
Hence, by induction, $\Hom (F(h^Y),F( \Phi^\prime_{j})[i])=0$ for all $j.$

In the case $\cU=\SF_{fg}(\cA)$ everything is proved, because $\sigma_{\geq n}P\cong\Phi'_j$ for some $j.$

In the case $\cU=\SF(\cA)$ the object $\sigma_{\geq n}P$ is isomorphic to
$\hocolim \Phi'_j$ in $\Ho(\SF(\cA)).$
The functor $F$ preserves all direct sums,
hence it preserves homotopy colimits an we get
$$
F(\sigma_{\geq n}P)\cong\hocolim F(\Phi'_j).
$$
Since the object $F(h^Y)$ is compact, the functor $\Hom(F(h^Y), -)$  commutes with direct sums and carries
 homotopy colimits to colimits of abelian groups (see \cite{Ne} Lemma 1.5). Thus
$$
\Hom (F(h^Y),F(\sigma_{\geq n}P)[i])=\colim\Hom
(F(h^Y),F(\Phi'_j)[i])=0.
$$\end{proof}

\begin{cor}\label{injec} Under the assumptions of Lemma \ref{nultrun}
for every $Y\in \cA$ and every $m\ge0$ we have an injection
$$\Hom (F(h^Y),F(P))\hookrightarrow\Hom (F(h^Y),F(\sigma_{\leq m}P))\cong
\colim_{n}\Hom(F(h^Y),F(P^{[n,m]})),$$ which is a bijection when $m>0.$
\end{cor}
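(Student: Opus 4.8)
The plan is to derive Corollary \ref{injec} directly from Lemma \ref{nultrun} together with the exact sequences relating $P$ to its stupid truncations. First I would apply the functor $F$ to the short exact sequence
$$
0\lto \sigma_{\geq (m+1)}P\lto P\lto \sigma_{\leq m}P \lto 0
$$
in $Z^0(\SF(\cA))$, which becomes a distinguished triangle in $\Ho(\cU)$, and hence a distinguished triangle
$$
F(\sigma_{\geq (m+1)}P)\lto F(P)\lto F(\sigma_{\leq m}P)\lto F(\sigma_{\geq (m+1)}P)[1]
$$
in $\cT$. Applying $\Hom(F(h^Y),-)$ gives a long exact sequence; by Lemma \ref{nultrun} the group $\Hom(F(h^Y),F(\sigma_{\geq (m+1)}P)[i])$ vanishes for $i<m+1$, so in particular it vanishes for $i=0$ and for $i=1$ provided $m\geq 0$ (for $i=1$ we need $1<m+1$, i.e. $m>0$). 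Thus the map $\Hom(F(h^Y),F(P))\to\Hom(F(h^Y),F(\sigma_{\leq m}P))$ is injective for $m\geq 0$ and bijective for $m>0$.

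Next I would identify the target $\Hom(F(h^Y),F(\sigma_{\leq m}P))$ with the colimit $\colim_n \Hom(F(h^Y),F(P^{[n,m]}))$. For this I use the fact recalled in the text that $\sigma_{\leq m}P\cong \colim_n P^{[n,m]}$ in $Z^0(\SF^-(\cA))$, hence $\sigma_{\leq m}P\cong\hocolim_n P^{[n,m]}$ in $\Ho(\cU)$, with the transition maps being the closed degree zero inclusions $P^{[n,m]}\hookrightarrow P^{[n-1,m]}$ whose successive cones are $h^Z[s]$-type modules (here one must be a little careful with indexing, since decreasing $n$ enlarges the truncation — but the structure of the homotopy colimit is exactly as in Definition \ref{hocolim}). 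Since $F$ preserves direct sums, it preserves this homotopy colimit, so $F(\sigma_{\leq m}P)\cong \hocolim_n F(P^{[n,m]})$. Because $F(h^Y)$ is compact in $\cT$ by property 2) of $(*)$, the functor $\Hom(F(h^Y),-)$ carries homotopy colimits to colimits of abelian groups (as in \cite{Ne} Lemma 1.5), which gives the displayed isomorphism $\Hom(F(h^Y),F(\sigma_{\leq m}P))\cong\colim_n\Hom(F(h^Y),F(P^{[n,m]}))$.

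The only mild subtlety — the step I expect to require the most care — is the bookkeeping in the $i=1$ versus $i=0$ distinction: the map is merely injective for $m=0$ because Lemma \ref{nultrun} only gives vanishing of $\Hom(F(h^Y),F(\sigma_{\geq 1}P)[i])$ for $i<1$, so nothing is said about the $i=1$ term controlling surjectivity, whereas for $m>0$ the term $\Hom(F(h^Y),F(\sigma_{\geq (m+1)}P)[1])$ vanishes since $1<m+1$, forcing surjectivity. Everything else is a routine combination of the long exact sequence, compactness, and the homotopy colimit description already established in the preceding discussion.
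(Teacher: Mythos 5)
Your proof is correct and follows essentially the same route as the paper's own proof: apply Lemma \ref{nultrun} to the triangle $F(\sigma_{\geq(m+1)}P)\to F(P)\to F(\sigma_{\leq m}P)$ to get the injection (and the bijection when $m>0$), and use preservation of homotopy colimits plus compactness of $F(h^Y)$ to identify the target with the filtered colimit.
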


\begin{proof}
The isomorphism is a consequence of the facts
that $F$ preserves homotopy colimits and $F(h^Y)$ is compact.
The injection follows from the exact triangle
$$\sigma_{\geq (m+1)}P\to P\to \sigma_{\leq m}P$$
and the last lemma which gives that
$\Hom (F(h^Y), F(\sigma_{\geq (m+1)}P))=0$ for $m\ge 0.$ If $m>0$ we immediately obtain that the injection is also a bijection.
\end{proof}

\begin{prop}\label{mainprop}
Let $\cU$ be either $\SF^{-}(\cA)$ or $\prfdg(\cA),$ or $\SF_{fg}(\cA)$ and let $\T$ be a triangulated category.
 Let $F_1, F_2:\Ho(\cU)\to \T$ be two exact functors
that satisfy conditions $(*).$ Assume that there is an isomorphism of functors $\theta: F_1\cdot h^\bullet \stackrel{\sim}{\to}
F_2\cdot h^\bullet$  from $\cA$ to $\T.$ Then for  every $P\in \cU$ there exists an isomorphism $\theta_P:
F_1(P) \stackrel{\sim}{\to} F_2(P)$ such that for  any $Y\in \cA$ and every $f\in
\Hom_{\Ho(\cU)}(h^Y[-k], P)$ where $k\in \bbZ$ the diagram
\begin{equation}\label{commute}
\begin{CD}
F_1(h^Y)[-k] @>{F_1(f)}>>  F_1(P)\\
@V{\theta_Y}[-k]VV  @VV{\theta_P}V \\
F_2(h^Y)[-k] @>{F_2(f)}>> F_2(P)
\end{CD}
\end{equation}
commutes in $\T.$
\end{prop}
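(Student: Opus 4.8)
The plan is to build the isomorphism $\theta_P$ by a double induction: first on the ``width'' $m-n$ of a finite complex $P^{[n,m]}$, extending $\theta$ from one object $h^Y$ to finite one-sided twisted complexes, and then passing to the limit to handle arbitrary $P \in \cU$. The starting point is the case where $P$ is a shift $h^Y[-k]$, where $\theta_P := \theta_Y[-k]$ works and the compatibility square (\ref{commute}) is trivially the functoriality of $\theta$. For the inductive step on finite complexes, I would write a finitely generated semi-free $P^{[n,m]}$ as a cone: there is an exact triangle
$$
\sigma_{\geq (n+1)}P^{[n,m]} \lto P^{[n,m]} \lto \sigma_{\leq n}P^{[n,m]} = P^n \lto \sigma_{\geq(n+1)}P^{[n,m]}[1],
$$
where $P^n$ is a (finite, in the $\SF_{fg}$ or $\prfdg$ case) direct sum of objects $h^Y[-n]$ and $\sigma_{\geq(n+1)}P^{[n,m]}$ is a complex of strictly smaller width. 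Since $F_i$ is exact, applying $F_1$ and $F_2$ gives two triangles, and by induction I have compatible isomorphisms $\theta$ on the two outer terms. The key point is that to produce a morphism of triangles filling in the middle, I need the relevant connecting map to be matched up by $\theta$; this is where the vanishing Lemma \ref{nultrun} and Corollary \ref{injec} enter, guaranteeing that $\Hom(F_i(h^Y), F_i(\sigma_{\geq(n+1)}P)[i'])$ behaves well and that there are no obstruction groups $\Hom^{-1}$ in the way for the morphism of triangles to exist and be compatible.

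Concretely, the compatibility I must maintain through the induction is exactly statement (\ref{commute}): for \emph{every} $Y$ and \emph{every} $f \colon h^Y[-k] \to P$ the square commutes. This is what makes the induction go through rather than merely producing \emph{some} isomorphism. When I form the morphism of triangles on $\sigma_{\geq(n+1)}P \to P \to P^n$, I would check that the resulting middle isomorphism $\theta_P$ is compatible with all maps out of shifted representables $h^Y[-k]$: any such $f$ can be analyzed via the long exact sequences obtained by applying $\Hom(F_i(h^Y), F_i(-))$ to the two triangles, and the five-lemma-type argument combined with the inductive compatibility on the outer vertices forces the square for $f$ to commute. The vanishing of negative Hom-groups (condition 3 of $(*)$, propagated by Lemma \ref{nultrun}) is precisely what kills the ambiguity in lifting $f$ and guarantees uniqueness of the comparison at each stage.

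For the passage from finite complexes to general $P \in \cU$: when $\cU = \SF^-(\cA)$, any bounded-above complex $P$ is a homotopy colimit $\hocolim_n P^{[n,m]}$ (for suitable $m$, and then letting the upper bound move if needed, but for $\SF^-$ we have $P = \sigma_{\leq m}P$ for $m \gg 0$, so really $P \cong \hocolim_n P^{[n,m]}$). Since $F_1, F_2$ preserve direct sums, hence homotopy colimits, and since the $\theta_{P^{[n,m]}}$ are compatible with the transition maps $P^{[n,m]} \to P^{[n-1,m]}$ (by the compatibility clause (\ref{commute}) applied to these structure maps), I get an induced isomorphism $\theta_P \colon F_1(P) \xrightarrow{\sim} F_2(P)$ on the homotopy colimit. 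Compatibility of $\theta_P$ with a map $f \colon h^Y[-k] \to P$ then follows because $F_i(h^Y)$ is compact, so $f$ factors through some finite stage $P^{[n,m]}$, reducing to the already-established finite case. The case $\cU = \prfdg(\cA)$ is handled by noting a perfect module is a direct summand of a finitely generated semi-free one and transporting $\theta$ along the idempotent.

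\textbf{Main obstacle.} The genuinely delicate point is not the existence of \emph{an} isomorphism at each inductive stage — that is standard triangulated-category manipulation — but ensuring the \emph{simultaneous} compatibility (\ref{commute}) with all test maps $f$ from all shifted representables, and that this compatibility is preserved both under the cone construction and under the homotopy colimit. The non-functoriality of cones in triangulated categories means one must choose the filling isomorphism carefully and verify that the choice is rigid enough; the vanishing conditions in $(*)$ and Lemma \ref{nultrun} are exactly the leverage that removes the freedom, and orchestrating their use at each step is where the real work lies.
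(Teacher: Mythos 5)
Your strategy is the same as the paper's (peel off one free term at a time via stupid truncations, use the vanishing/injectivity from Lemma~\ref{nultrun} and Corollary~\ref{injec}, pass to a homotopy colimit, then handle $\prfdg$ by a direct-summand argument). The paper organizes this differently: it first constructs, by descending induction on $n$, isomorphisms $\theta_{\geq n}$ compatible only with the truncation triangles $P^{n-1}[-n]\to\sigma_{\geq n}P\to\sigma_{\geq(n-1)}P$ (Step~2), and then in a separate step (Step~5) verifies the full compatibility~(\ref{commute}). You instead carry~(\ref{commute}) itself as the inductive hypothesis; that is a legitimate reorganization, but it puts all the weight on the transition step, and it is exactly there that your writeup has a genuine gap.

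The gap: when you build $\theta_{P}$ from the triangle $A\to P\to C$ with $A=\sigma_{\geq(n+1)}P^{[n,m]}$ and $C=P^{n}[-n]$, the ``long exact sequence / five-lemma'' argument you sketch does the following: given $f\colon h^{Y}[-k]\to P$, the triangle morphism plus compatibility for $C$ show that the error $\varepsilon=\theta_{P}F_{1}(f)-F_{2}(f)\theta_{Y}[-k]$ lifts to $\Hom(F_{1}(h^{Y})[-k],F_{2}(A))$. By Lemma~\ref{nultrun} this group vanishes precisely when $k<n+1$, so the ``vanishing kills the ambiguity'' reasoning only disposes of the case $k\leq n$. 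For $k\geq n+1$ the obstruction group is generically nonzero (already for $k=n+1$ it contains honest degree-zero Homs), and a separate idea is needed: a chain map $h^{Y}[-k]\to P$ automatically lands in the subcomplex $\sigma_{\geq k}P\subseteq A$, so $f$ factors through $A\to P$, and compatibility then follows from the triangle morphism together with the inductive hypothesis on $A$. This factoring observation is the heart of the paper's Step~5 and is not in your proposal; without it the inductive step is not established for all $f$. A related point you gloss over is that before you may even invoke the morphism-of-triangles axiom you must \emph{verify} that the square pairing $\theta_{A}$ with $\theta_{C}$ and the connecting map commutes; the paper does this with the injectivity statement of Corollary~\ref{injec} (Step~2), and this too needs to be spelled out in your version since your triangle's vertices are not just shifted representables beyond the base case.
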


\begin{proof} We will construct $\theta_P$ in a few steps.

\noindent
{\bf Step 1.}
Denote by $h^{\bullet}(\cA)^{\oplus}\subset \Ho(\cU)$ the full subcategory of $\Ho(\cU)$
which is obtained from $h^{\bullet}(\cA)$ by adding arbitrary (existing in $\Ho(\cU)$)
direct sums of objects of $h^{\bullet}(\cA).$

Since the functors $F_i$ commute with direct sums we can extend our transformation $\theta$ onto
the whole subcategory $h^{\bullet}(\cA)^{\oplus}.$ Indeed, for any $P=\bigoplus h^Y$ the canonical isomorphisms
$\bigoplus F_i(h^Y)\cong F_i(\bigoplus h^Y)$ allow to define an isomorphism
$\theta_P: F_1(\bigoplus h^Y)\to F_2(\bigoplus h^Y)$ as the product of
the canonical morphisms $F_1(h^Y)\stackrel{\theta_Y}{\to} F_2(h^Y)\to F_2(\bigoplus h^Y).$

\noindent{\bf Step 2.}
Now consider a semi-free DG module $P=\{\cdots\to P^{m-1}\to P^m\}$
which  by assumption is bounded above.
For every $n\leq m$ we have the exact
triangle in $\Ho(\cU)$
$$
P^{n-1}[-n] \lto \sigma_{\ge n}P\lto \sigma_{\ge (n-1)} P.
$$
For all $P^k$ we have isomorphisms $\theta^{k}: F_1(P^k)\stackrel{\sim}{\to} F_2(P^k)$ constructed in Step 1.

In Step
2 by descending induction on $n\leq m$ we will construct
isomorphisms $\theta_{\ge n}:F_1(\sigma_{\ge n}P)\stackrel{\sim}{\to} F_2(\sigma_{\ge n}P)$ such
that there is an isomorphism of triangles
$$
\begin{CD}
F_1(P^{n-1})[-n] @>>> F_1(\sigma_{\ge n}P) @>>> F_1(\sigma_{\ge (n-1)}P) @>>> F_1(P^{n-1})[-n+1]\\
@V\theta^{n-1}[-n]VV   @V\theta_{\ge n}VV  @VV\theta_{\ge (n-1)}V  @VV\theta^{n-1}[-n+1]V\\
F_2(P^{n-1})[-n] @>>> F_2(\sigma_{\ge n}P) @>>> F_2(\sigma_{\ge (n-1)}P) @>>> F_2(P^{n-1})[-n+1]
\end{CD}
$$
for all $n\leq m.$

We start with the diagram
$$
\begin{CD}
F_1(P^{m-1}[-m]) @>>> F_1(P^m[-m]) @>>> F_1(\sigma_{\ge (m-1)}P)\\
@V\theta^{{m-1}}[-m]VV @VV\theta^{m}[-m]V  \\
F_2(P^{m-1}[-m]) @>>> F_2(P^m[-m]) @>>> F_2(\sigma_{\ge (m-1)}P)
\end{CD}
$$
where the square is commutative. Hence, there exists an isomorphism $\theta
_{{\ge (m-1)}}:F_1(\sigma_{\ge (m-1)}P)\to F_2(\sigma_{\ge (m-1)}P)$ which completes the
above diagram to a morphism of triangles:
$$
\begin{CD}
F_1(P^{m-1}[-m]) @>>> F_1(P^m[-m]) @>>> F_1(\sigma_{\ge (m-1)}P)@>>> F_1(P^{m-1}[-m+1])\\
@V\theta^{{m-1}}[-m]VV @VV\theta^{m}[-m]V  @VV\theta_{{\ge (m-1)}}V  @VV\theta^{{m-1}}[-m+1]V\\
F_2(P^{m-1}[-m]) @>>> F_2(P^m[-m]) @>>> F_2(\sigma_{\ge (m-1)}P) @>>> F_2(P^{m-1}[-m+1])
\end{CD}
$$
This provides the base of induction.

Assume by induction that we have defined isomorphisms $\theta
_{\ge (n+1)},$ $\theta_{\ge n}$ so that the diagram
\begin{equation}\label{induc}
\begin{CD}
F_1(\sigma_{\ge (n+1)}P) @>>> F_1(\sigma_{\ge n}P) @>>> F_1(P^{n})[-n]\\
@V\theta_{\ge (n+1)}VV  @V\theta_{\ge n}VV  @VV\theta^{n}[-n]V\\
F_2(\sigma_{\ge (n+1)}P) @>>> F_2(\sigma_{\ge n}P) @>>> F_2(P^{n})[-n]
\end{CD}
\end{equation}
is a morphism of triangles. Then we claim that the natural diagram
$$\begin{CD}
F_1(P^{n-1})[-n] @>>> F_1(\sigma_{\ge n} P)\\
@V\theta^{n-1}[-n]VV  @VV\theta_{\ge n}V\\
F_2(P^{n-1})[-n] @>>> F_2(\sigma_{\ge n} P)
\end{CD}
$$
commutes. Indeed, complete it to a diagram
$$\begin{CD}
F_1(P^{n-1})[-n] @>>> F_1(\sigma_{\ge n}P) @>>> F_1(P^n)[-n]\\
@V\theta^{n-1}[-n]VV  @V\theta_{\ge n}VV  @VV\theta^{n}[-n]V\\
F_2(P^{n-1})[-n] @>>> F_2(\sigma_{\ge n}P) @>>> F_2(P^n)[-n]
\end{CD}
$$
where the right square and the outside square commute. By Corollary \ref{injec}
the
natural map
$$
\Hom (F_1(P^{n-1})[-n],F_1(\sigma_{\ge n} P))\lto
\Hom (F_1(P^{n-1})[-n],F_1(P^{n})[-n])
$$
is injective. Using the
compositions with the isomorphisms $\theta_{\ge n}$ and $\theta
^n[-n]$ we conclude that the natural map
$$
\Hom (F_1(P^{n-1})[-n],F_2(\sigma_{\ge n} P))\lto
\Hom (F_1(P^{n-1})[-n],F_2(P^{n})[-n])
$$
is also injective.
Therefore the left square in the above diagram commutes too. Hence,
there is an isomorphism $\theta_{\ge (n-1)}:F_1(\sigma_{\ge (n-1)}P)\to
F_2(\sigma_{\ge (n-1)}P)$ such that the diagram
$$
\begin{CD}
F_1(P^{n-1})[-n] @>>> F_1(\sigma_{\ge n}P) @>>> F_1(\sigma_{\ge (n-1)}P) @>>> F_1(P^{n-1})[-n+1]\\
@V\theta^{n-1}[-n]VV   @V\theta_{\ge n}VV  @VV\theta_{\ge (n-1)}V  @VV\theta^{n-1}[-n+1]V\\
F_2(P^{n-1})[-n] @>>> F_2(\sigma_{\ge n}P) @>>> F_2(\sigma_{\ge (n-1)}P) @>>> F_2(P^{n-1})[-n+1]
\end{CD}
$$
is an isomorphism of triangles. This completes the induction step.

If $\cU=\SF_{fg}(\cA)$ the construction of isomorphisms $\theta_P$ is finished.

\noindent{\bf Step 3.}
Let now consider the case $\cU=\SF^{-}(\cA).$
Since $P$ is bounded above we have a natural isomorphism $\colim_n (\sigma_{\ge n}P)\stackrel{\sim}{\to} P$ in
$Z^0(\SF^-(\cA))$ and hence $P$ is isomorphic to $\hocolim_n (\sigma_{\ge n}P)$ in
$\Ho(\SF^{-}(\cA)).$ The canonical sequence
$$
\bigoplus_{n\le m}(\sigma_{\ge n}P) \lto \bigoplus_{n\le m} (\sigma_{\ge n}P) \lto P
$$
gives an exact triangle in $\Ho(\SF(\cA)).$

Since the functors $F_i$ commute with direct sums we can extend our isomorphisms $\theta_{\ge n}$ on
the direct sum. Indeed, for the object $\bigoplus (\sigma_{\ge n}P)$ the canonical isomorphism
$\bigoplus F_i(\sigma_{\ge n}P)\cong F_i(\bigoplus \sigma_{\ge n}P)$ allows to define an isomorphism
$\theta_{\bigoplus}: F_1(\bigoplus \sigma_{\ge n}P)\to F_2(\bigoplus \sigma_{\ge n} P)$ as the product of
the canonical morphisms $F_1(\sigma_{\ge n} P)\stackrel{\theta_{\ge n}}{\lto} F_2(\sigma_{\ge n}P)\stackrel{\can}{\lto}
F_2(\bigoplus \sigma_{\ge n}P).$
Now there is an isomorphism $\theta_P: F_1(P)\stackrel{\sim}{\to} F_2(P)$ that gives an isomorphism of triangles
$$
\begin{CD}
F_1(\bigoplus_n(\sigma_{\ge n}P)) @>>> F_1(\bigoplus_n (\sigma_{\ge n}P)) @>>> F_1(P)\\
@V\theta_{\bigoplus}VV   @V\theta_{\bigoplus}VV  @VV\theta_{P}V  \\
F_2(\bigoplus_n(\sigma_{\ge n}P)) @>>> F_2(\bigoplus(\sigma_{\ge (n-1)}P)) @>>> F_2(P)
\end{CD}
$$
It is easy to see that the isomorphism $\theta
_{P}:F_1(P)\stackrel{\sim}{\to} F_2(P)$ makes
the following diagrams commutative for each $n$
$$
\begin{CD}
F_1(\sigma_{\ge n}P) @>>> F_1(P)\\
@V\theta_{\ge n}VV @VV\theta_{P}V \\
F_2(\sigma_{\ge n}P) @>>> F_2(P).
\end{CD}
$$

\noindent{\bf Step 4.}
Let us now consider the case $\cU=\prfdg(\cA)\subset\SF^{-}(\cA).$ By definition the object $P\in \prfdg(\cA)$
is a direct summand of a finitely generated object $Q\in \SF_{fg}(\cA)$ in the triangulated category $\Ho(\cU)\subset D(\cA).$
Take a sufficiently negative $n\ll 0.$ The object $\sigma_{\ge n} P$ is finitely generated semi-free and, hence,
the object $\sigma_{\le (n-1)}P$ is a perfect DG module as well. Thus it is homotopy
equivalent to a direct summand of a finitely generated semi-free DG module $R.$ It is easy to see that the object
$\sigma_{\le (n-1)}P$ is also a direct summand of the finitely generated DG module $\sigma_{\le (n-1)}R.$
Let us consider a diagram
\begin{equation}\label{diagperf}
\begin{CD}
F_1(\sigma_{\le (n-1)}P)[-1] @>>> F_1(\sigma_{\ge n}P) @>>> F_1(P) @>>> F_1(\sigma_{\le (n-1)}P)\\
&&  @V{\theta_{\ge n}}V{\wr}V   \\
F_2(\sigma_{\le (n-1)}P)[-1] @>>> F_2(\sigma_{\ge n}P) @>>> F_2(P) @>>> F_2(\sigma_{\le (n-1)}P)
\end{CD}
\end{equation}
where $\theta_{\ge n}$ was constructed in Step 2.
Since $n\ll 0$ and $\sigma_{\le (n-1)}R, Q\in \SF_{fg}(\cA)$ we obtain that
$$
\Hom (F_j(\sigma_{\le (n-1)}R)[i], F_k(Q))=0\quad\text{for all}\quad
i\ge -1,\quad \text{and}\quad j,k=1,2
$$
by Lemma \ref{nultrun}. This implies that for their direct summands $\sigma_{\le (n-1)}P$ and $P$ we also have
\begin{equation}\label{vanish}
\Hom (F_j(\sigma_{\le (n-1)}P)[-1], F_k(P))=0\quad\text{for all}\quad
i\ge -1,\quad \text{and}\quad j,k=1,2.
\end{equation}
 Therefore, there are unique morphisms $\theta_{P}: F_1(P){\to} F_2(P)$ and
 $\theta_{\le (n-1)}: F_1(\sigma_{\le (n-1)}P)\to F_2(\sigma_{\le (n-1)}P)$ that complete
the diagram (\ref{diagperf}) to a morphism of triangles.
Since $\theta_{\ge n}$ is an isomorphism, cones $C(\theta_P)$ and $C(\theta_{\le (n-1)})$ are isomorphic.
Vanishing conditions (\ref{vanish}) implies that
$$
\Hom(F_2(\sigma_{\le (n-1)}P), C(\theta_P))=0\quad\text{and}\quad\Hom(F_1(\sigma_{\le (n-1)}P)[1], C(\theta_P))=0.
$$
Hence, the cone $C(\theta_{\le (n-1)})\cong C(\theta_P)$ is trivial and
$\theta_P$ is an isomorphism.

Finally, since for any $n\ll 0$ the isomorphism $\theta_P$ is unique
it does not depend on $n.$  More precisely, for any $n\ll 0$ and each $k>n$  the right and the left squares in the diagram
$$
\begin{CD}
F_1(\sigma_{\ge k}P) @>>> F_1(\sigma_{\ge n}P) @>>> F_1(P)\\
@V\theta_{\ge k}VV  @VV\theta_{\ge n}V  @VV\theta_PV\\
F_2(\sigma_{\ge k}P) @>>> F_2(\sigma_{\ge n}P) @>>> F_2(P)
\end{CD}
$$
are commutative. Hence the outside square is also commutative for each $k\in\bbZ.$

\noindent{\bf Step 5.}
Now we should check that the diagram (\ref{commute})
$$
\begin{CD}
F_1(h^Y)[-k] @>{F_1(f)}>> F_1(P)\\
@V{\theta_Y}[-k]VV  @VV{\theta_P}V\\
F_2(h^Y)[-k] @>{F_2(f)}>>  F_2(P)
\end{CD}
$$
 commutes for any $Y\in \cA$ and every  $f\in \Hom_{\Ho(\cU)}(h^Y[-k],P).$

The map $f$ is
represented by a morphism of the complexes $f: h^Y[-k]\to P.$ It is decomposed as
$$
h^Y[-k]\to \sigma_{\ge k}P\to P.
$$
In the diagram
$$\begin{CD}
F_1(h^Y)[-k] @>>> F_1(\sigma_{\geq k}P) @>>> F_1(P)\\
@V{\theta_Y}[-k]VV  @VV{\theta_{\geq k}}V  @VV{\theta_P}V\\
F_2(h^Y)[-k] @>>> F_2(\sigma_{\geq k}P) @>>> F_2(P)
\end{CD}
$$
the right square commutes for any $k\in \bbZ$ by the construction of $\theta_P.$
Thus it is sufficient  to prove that the first square commutes too.
The map $h^Y[-k]\to \sigma_{\ge k}P\to P$ is induced by a map $f_k: h^Y\to P^k.$ Consider the diagram
$$
\begin{CD}
F_1(h^Y)[-k] @>>> F_1(\sigma_{\geq k}P) @>>> F_1(P^k)[-k]\\
@V{\theta_Y[-k]}VV  @VV{\theta_{\geq k}}V @VV{\theta^k[-k]}V\\
F_2(h^Y)[-k] @>>> F_2(\sigma_{\geq k}P) @>>> F_2(P^k)[-k]
\end{CD}
$$
 The right square commutes by (\ref{induc}) and the outside square commutes by Step 1.

By Corollary \ref{injec}
the
natural map
$$
\Hom (F_1(h^Y)[-k],F_1(\sigma_{\ge k} P))\lto
\Hom (F_1(h^Y)[-k],F_1(P^{k})[-k])
$$
is injective. Using the
compositions with the isomorphisms $\theta_{\ge k}$ and $\theta
^k[-k]$ we conclude that the natural map
$$
\Hom (F_1(h^Y)[-k],F_2(\sigma_{\ge k} P))\lto
\Hom (F_1(h^Y)[-k],F_2(P^{k})[-k])
$$
is also injective.
Therefore the left square in the above diagram commutes too.
\end{proof}

\section{Preliminary Constructions}\label{cons}

Let $\cA$ be a small category which we consider as a DG category and $L\subset D(\cA)$ be a
localizing subcategory with the quotient functor $\pi :D(\cA)\to
D(\cA)/L.$

As above we denote by $\Mod\cA$ the DG category of DG $\cA$\!-modules.
Consider the DG subcategories $\SF(\cA)\subset\cP(\cA)\subset \Mod\cA$ of semi-free and
h-projective DG modules, respectively. The canonical DG functors $\SF(\cA)\hookrightarrow\cP(\cA)\hookrightarrow\Mod\cA/\Ac(\cA)$ are quasi-equivalences
and give equivalent enhancements of the derived category $D(\cA)$ (see Example \ref{exdg}).
As a consequence, the category $D(\cA)/L$ has
canonical enhancements. Namely, if $\cL \subset \Mod\cA$ is
the full DG subcategory having the same objects as $L$ then
the natural DG functors
$$
\SF(\cA)/\cL\cap \SF(\cA)\lto \cP(\cA)/\cL\cap \cP(\cA)\lto \Mod\cA/\cL
$$
are
quasi-equivalences, i.e. the canonical functors
$$
\Ho(\SF(\cA)/\cL\cap \SF(\cA))\stackrel{\sim}{\lto} \Ho(\cP(\cA)/\cL\cap \cP(\cA))\stackrel{\sim}{\lto} \Ho(\Mod\cA/\cL)\cong D(\cA)/L
$$
are equivalences.

Denote by $\widetilde{\pi}$  the
quotient DG functor $\widetilde{\pi}:\SF(\cA)\to
\SF(\cA)/\cL\cap \SF(\cA).$ We denote the
composition of the functor ${\Ho(h^\bullet)}:\cA {\to} \Ho(\Mod\cA)$ with the localization $\Ho(\Mod\cA)\to D(\cA)$
simply by $h^\bullet.$ Note that this functor $h^\bullet :\cA \to
D(\cA)$ is full and faithful.

Consider an enhancement  of the triangulated category of compact objects $(D(\cA)/L)^c.$
This means that there are given a pretriangulated DG category $\cC$
and an equivalence of triangulated
categories $\epsilon :(D(\cA)/L)^c\stackrel{\sim}{\to} \Ho(\cC).$  Denote by $\cB
\subset \cC$ the full DG subcategory with the set of objects
$\{\epsilon \pi(h^Y)\}_{Y\in \cA}.$
As in formula (\ref{phi}) before Proposition \ref{Keller1} there is the canonical DG functor
$$
\varPsi :\cC \lto \Mod\cB,\quad\text{ where}\quad \varPsi (X)(B)=\Hom_{\cC}(Y,X)\quad\text{ for}\quad B\in\cB, \; X\in\cC.
$$
In composition with DG functor $\Mod\cB\to \Mod\cB/\Ac(\cB)$ it induces (as in Remark \ref{first}) a quasi-functor
$$
\psi:\cC\lto \SF(\cB),\qquad
H^0(\psi) :\Ho(\cC)\stackrel{}{\lto}
\Ho(\SF(\cB))\cong D(\cB).
$$
Since the objects $\{\pi(h^Y)\}_{Y\in \cA}$ are compact in $D(\cA)/L$ we have
the following composition
$$
\cA\stackrel{\pi h^\bullet}{\lto} (D(A)/L)^c \stackrel{\epsilon}{\lto}\Ho(\cC)\stackrel{H^0(\psi)}{\lto}
D(\cB).
$$
By construction it factors through the canonical embedding $\Ho(\cB)\hookrightarrow \D(\cB)$
and we denote by $a$ the corresponding functor $a:\cA\to \Ho(\cB).$

Denote by $\tau_{\leq 0}\cB$ the DG subcategory with
the same objects as $\cB$ and morphisms
$$\Hom_{\tau_{\leq 0}\cB}(M,N)=\tau_{\leq 0}\Hom_{\cB}(M,N),$$
where $\tau_{\leq 0}$ is the usual truncation of complexes as in (\ref{trunc}).

We have the obvious diagram of DG
categories and DG functors
$$
\cA\stackrel{a}{\lto} \Ho(\cB) \stackrel{p}{\longleftarrow} \tau_{\leq 0}\cB
\stackrel{i}{\lto} \cB,
$$
where the functor $p: \tau_{\leq 0}\cB\to \Ho(B) $ is a quasi-equivalence by condition b) in the assumptions of Theorem \ref{main}.
They induce DG functors between semi-free DG modules and we obtain a commutative diagram
$$
\begin{CD}
\cA @>a>> \Ho(\cB) @<p<<  \tau_{\leq 0}\cB @>{i}>> \cB\\
@V{h}VV @V{h}VV @VV{h}V @VV{h}V\\
\SF(\cA)@>a^*>> \SF(\Ho(\cB)) @<{p^*}<< \SF(\tau_{\leq 0}\cB)
@>{i^*}>> \SF(\cB).
\end{CD}
$$
Passing to homotopy categories we get the following commutative diagram
\begin{equation}\label{maindiag}
\begin{CD}
\cA @>a>> \Ho(\cB) @= \Ho(\cB) @= \Ho(\cB)\\
@V{h}VV @V{h}VV @VV{h}V @VV{h}V\\
\Ho(\SF(\cA))@>\Ho(a^*)>> \Ho(\SF(\Ho(\cB))) @<\Ho(p^*)<< \Ho(\SF(\tau_{\leq 0}\cB))
@>\Ho(i^*)>> \Ho(\SF(\cB))\\
@| @| @| @|\\
D(\cA))@>{\bL a^*}>> D(\Ho(\cB)) @<{\bL p^*}<< D(\tau_{\leq 0}\cB)
@>\bL i^*>> D(\cB)
\end{CD}
\end{equation}
The DG-functor $p$ and, hence, DG functor $p^*$ are quasi-equivalences.
This gives us  a quasi-functor $p_*:\SF(\Ho(\cB))\to \SF(\tau_{\leq 0}\cB).$
This also implies that the functor $\Ho(p^*)\cong\bL p^*$ is an equivalence and the right
adjoint functor $p_*: D(\Ho(\cB))\to D(\tau_{\leq 0}\cB)$ is its quasi-inverse.

We denote by $\widetilde\rho$ the quasi-functor that is the composition
\begin{equation}\label{rho}
\widetilde\rho: \SF(\cA)\stackrel{ a^*}{\lto} \SF(\Ho(\cB)) \stackrel{p_*}{\lto} \SF(\tau_{\leq 0}\cB)
\stackrel{i^*}{\lto} \SF(\cB)
\end{equation}
and denote by $F_1$ the induced functor
\begin{equation}\label{F1}
F_1=\Ho(\widetilde\rho): D(\cA)\stackrel{\bL a^*}{\lto} D(\Ho(\cB)) \stackrel{p_*}{\lto} D(\tau_{\leq 0}\cB)
\stackrel{\bL i^*}{\lto} D(\cB).
\end{equation}
The functor $F_1$ evidently preserves direct sums.

\section{Proof of Theorem \ref{main}}\label{Thmain}

Let $\cD$ be a pretriangulated DG category
and $\widetilde\epsilon :D(\cA)/L\stackrel{\sim}{\to} \Ho(\cD)$ be an equivalence of triangulated
categories.  Denote by $\cC
\subset \cD$ the full DG subcategory of compact objects in $\Ho(\cD).$
The equivalence $\widetilde\epsilon$ induces an  equivalence $\epsilon: (D(\cA)/L)^c\stackrel{\sim}{\to}\Ho(\cC).$
Thus we can apply the construction from the previous section.

As above denote by $\cB\subset\cC\subset\cD$ the full DG subcategory with the set of objects
$\{\widetilde\epsilon \pi (h^Y)\}_{Y\in \cA}.$
By construction in previous section formulas (\ref{rho}) and (\ref{F1}) give a quasi-functor
$$
\widetilde\rho: \SF(\cA)\lto\SF(\cB)
$$
and the corresponding functor
$
F_1=\Ho(\widetilde\rho): D(\cA)\to D(\cB).
$

Notice that, by assumption, the objects $\{\widetilde\epsilon \pi (h^Y)\}_{Y\in
\cA}$ are compact and by Remark \ref{remar} they  compactly generate the category $\Ho(\cD).$
Therefore, by Proposition \ref{Keller} the canonical DG functor
$\varPhi :\cD \to \Mod\cB,$ which is defined by formula (\ref{phi})
$
\varPhi (X)(B)=\Hom_{\cD}(B, X)$ for $B\in\cB, \; X\in\cD,$
induces a quasi-functor
$$
\phi:\cD\to \SF(\cB)\quad\text{ such that}\quad
H^0(\phi) :\Ho(\cD)\stackrel{\sim}{\lto}
\Ho(\SF(\cB))\cong D(\cB)
$$
is an equivalence.
Therefore, $\SF(\cB)$ is another enhancement of $D(\cA)/L$ which is equivalent to $\cD.$
Thus it is sufficient to prove that $\SF(\cB)$ is quasi-equivalent to $\SF(\cA)/\cL\cap \SF(\cA).$

Denote by $F_2$ the composition
of $H^0(\phi)$ with the equivalence $\widetilde\epsilon$  and the localization $\pi$
$$
F_2: D(\cA)\stackrel{\pi}{\lto} D(A)/L\stackrel{\widetilde\epsilon}{\lto}\Ho(\cD)\stackrel{H^0(\phi)}{\lto}
D(\cB).
$$

Thus we have two functors $F_1$ and $F_2$ from $D(\cA)$ to $D(\cB)$ and both of them enjoy properties
1), 2), 3) of $(*).$ By construction of $F_2$ and $a,$ the composition of $h^{\bullet}:\cA\to\D(\cA)$ and $F_2$
coincides with $\cA\stackrel{a}{\to}\Ho(\cB)\to D(\cB).$ The commutativity of diagram (\ref{maindiag}) immediately proves  the following
lemma.
\begin{lemma}\label{simple} There exists an isomorphism  $\theta: F_1\cdot h^\bullet \stackrel{\sim}{\to}
F_2 \cdot h^\bullet$ of functors from $\cA$ to $D(\cB).$
\end{lemma}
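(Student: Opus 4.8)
The plan is to prove Lemma \ref{simple} by showing that each of the two composites $F_1\cdot h^\bullet$ and $F_2\cdot h^\bullet$ is naturally isomorphic to one and the same functor $\cA\to D(\cB)$, namely the composite $h^\bullet\cdot a$, where $h^\bullet:\Ho(\cB)\hookrightarrow D(\cB)$ is the Yoneda embedding followed by localization and $a:\cA\to\Ho(\cB)$ is the functor constructed in Section \ref{cons}. The isomorphism $\theta$ is then obtained by composing these two natural isomorphisms.

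First I would handle $F_2=H^0(\phi)\cdot\widetilde\epsilon\cdot\pi$. Since $\cB\subset\cC\subset\cD$, the DG functor $\varPsi:\cC\to\Mod\cB$ used in Section \ref{cons} is simply the restriction to $\cC$ of the DG functor $\varPhi:\cD\to\Mod\cB$ of (\ref{phi}) — both send $X$ to $B\mapsto\Hom(B,X)$ — so the quasi-functor $\psi$ is the restriction of $\phi$, and hence $H^0(\phi)$ restricted to $\Ho(\cC)$ equals $H^0(\psi)$. Likewise $\widetilde\epsilon$ restricts on compact objects to the equivalence $\epsilon:(D(\cA)/L)^c\stackrel{\sim}{\to}\Ho(\cC)$, and $\pi\cdot h^\bullet$ factors through $(D(\cA)/L)^c$ by condition a) of Theorem \ref{main}. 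Therefore $F_2\cdot h^\bullet=H^0(\psi)\cdot\epsilon\cdot\pi\cdot h^\bullet$, which is exactly the composite through which $a$ was defined in Section \ref{cons}; thus $F_2\cdot h^\bullet=h^\bullet\cdot a$, strictly.

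Next I would handle $F_1=\bL i^*\cdot p_*\cdot\bL a^*$ of (\ref{F1}) by reading off the commutative diagram (\ref{maindiag}). Its leftmost square gives $\bL a^*\cdot h^\bullet=h^\bullet\cdot a$ as functors $\cA\to D(\Ho(\cB))$, since extension of scalars along $a$ sends the representable module $h^Y$ to the representable module $h^{a(Y)}$; its rightmost square gives $\bL i^*\cdot h^\bullet=h^\bullet$ as functors $\Ho(\cB)\to D(\cB)$, using that $i$ is the identity on objects. The one point needing care is the middle term: $p_*:D(\Ho(\cB))\to D(\tau_{\leq 0}\cB)$ is the right adjoint quasi-inverse of the equivalence $\bL p^*$, while the middle square of (\ref{maindiag}) records $\bL p^*\cdot h^\bullet=h^\bullet$. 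Applying $p_*$ and composing with the adjunction unit — which is a natural isomorphism because $\bL p^*$ is an equivalence — produces a natural isomorphism $p_*\cdot h^\bullet\stackrel{\sim}{\to}h^\bullet$ of functors $\Ho(\cB)\to D(\tau_{\leq 0}\cB)$. Chaining these three facts yields $F_1\cdot h^\bullet\cong\bL i^*\cdot p_*\cdot h^\bullet\cdot a\cong\bL i^*\cdot h^\bullet\cdot a=h^\bullet\cdot a$.

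Combining the last two paragraphs furnishes the desired natural isomorphism $\theta:F_1\cdot h^\bullet\stackrel{\sim}{\to}F_2\cdot h^\bullet$. I do not expect any genuine difficulty; the only subtlety is the harmless bookkeeping that $F_1$ is built from $p_*$ rather than from $\bL p^*$, so the identification $p_*\bL p^*\cong\Id$ enters as a natural isomorphism, not an equality — which is precisely why (\ref{maindiag}) is read "up to a fixed choice" of such isomorphisms. Once that is pinned down, the lemma is immediate, exactly as claimed.
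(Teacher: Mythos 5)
Your proof is correct and takes essentially the same approach as the paper: the authors simply assert that $F_2\cdot h^\bullet = h^\bullet\cdot a$ by construction and that commutativity of diagram~(\ref{maindiag}) gives $F_1\cdot h^\bullet\cong h^\bullet\cdot a$, and you have unpacked exactly those two facts, including the minor point that $p_*$ is a quasi-inverse rather than an inverse of $\bL p^*$.
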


Now we are ready to prove the following lemma.

\begin{lemma}\label{fact} The quasi-functor $\widetilde\rho:\SF(\cA)\to \SF(\cB)$ factors through the DG quotient
$\SF(\cA)/\cL\cap\SF(\cA).$
\end{lemma}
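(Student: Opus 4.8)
The plan is to deduce the factorization from the universal property of the Drinfeld DG quotient. By Theorem \ref{Drin}(ii) (applied with the target DG category $\SF(\cB)$), the quasi-functor $\widetilde\rho\in\rep(\SF(\cA),\SF(\cB))$ factors through $\SF(\cA)/\cL\cap\SF(\cA)$ if and only if the restriction of $\widetilde\rho$ to the full DG subcategory $\cL\cap\SF(\cA)$ is the zero object of $\rep(\cL\cap\SF(\cA),\SF(\cB))$. Now a bimodule over $(\cL\cap\SF(\cA))^{\op}\otimes\SF(\cB)$ is acyclic exactly when, for every $M\in\cL\cap\SF(\cA)$, the DG $\SF(\cB)$-module obtained by evaluating it at $M$ is acyclic, i.e. represents the zero object of $D(\cB)$; since that module represents $F_1(M)=\Ho(\widetilde\rho)(M)$, the statement reduces to the claim that $F_1(M)=0$ in $D(\cB)$ for every semi-free DG module $M$ that lies in $L$. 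Moreover, by Lemma \ref{simple} the objects $F_1(h^Y)$, $Y\in\cA$, are isomorphic to $F_2(h^Y)=H^0(\phi)(\widetilde\epsilon\pi(h^Y))$, which are the representable $\cB$-modules on the objects of $\cB$, and these form a set of compact generators of $D(\cB)$ (Example \ref{EX}). So it suffices to prove that $\Hom_{D(\cB)}(F_1(h^Y),F_1(M)[i])=0$ for every $Y\in\cA$ and every $i\in\bbZ$.

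To prove this I would compare $F_1$ with $F_2=H^0(\phi)\cdot\widetilde\epsilon\cdot\pi$. Since $\pi$ annihilates $L$, one has $F_2(M)=0$ for $M\in L$, hence $\Hom_{D(\cB)}(F_2(h^Y),F_2(M)[i])=0$. Both $F_1$ and $F_2$ are exact, commute with arbitrary direct sums, and satisfy conditions $(*)$, as was already observed; in particular their restrictions to $\Ho(\SF^{-}(\cA))$ again satisfy $(*)$, so Proposition \ref{mainprop} applies to the pair $F_1,F_2$ with $\cU=\SF^{-}(\cA)$ and the isomorphism $\theta$ of Lemma \ref{simple}.

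Now fix $i\in\bbZ$ and pick $m\geq i+1$. I would apply $\Hom_{D(\cB)}(F_j(h^Y),-)$ ($j=1,2$) to the image under $F_j$ of the exact triangle $\sigma_{\geq m+1}M\to M\to\sigma_{\leq m}M$ in $D(\cA)$ coming from the short exact sequence in $Z^0(\SF(\cA))$. By Lemma \ref{nultrun}, applied with $\cU=\SF(\cA)$, $P=M$ and $n=m+1$, the groups $\Hom_{D(\cB)}(F_j(h^Y),F_j(\sigma_{\geq m+1}M)[s])$ vanish for all $s<m+1$, in particular for $s=i$ and $s=i+1$; hence the resulting long exact sequence gives
$$
\Hom_{D(\cB)}(F_j(h^Y),F_j(M)[i])\;\cong\;\Hom_{D(\cB)}(F_j(h^Y),F_j(\sigma_{\leq m}M)[i]),\qquad j=1,2.
$$
Since $\sigma_{\leq m}M$ is a bounded above complex of free $\cA$-modules, it is an object of $\SF^{-}(\cA)$, so Proposition \ref{mainprop} provides an isomorphism $\theta_{\sigma_{\leq m}M}\colon F_1(\sigma_{\leq m}M)\stackrel{\sim}{\to}F_2(\sigma_{\leq m}M)$. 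Conjugating by this isomorphism and by $F_1(h^Y)\stackrel{\sim}{\to}F_2(h^Y)$ identifies $\Hom_{D(\cB)}(F_1(h^Y),F_1(\sigma_{\leq m}M)[i])$ with $\Hom_{D(\cB)}(F_2(h^Y),F_2(\sigma_{\leq m}M)[i])$. Chaining the three isomorphisms together with $F_2(M)=0$ yields $\Hom_{D(\cB)}(F_1(h^Y),F_1(M)[i])=0$; as $Y$ and $i$ were arbitrary, $F_1(M)=0$ in $D(\cB)$, which by the first paragraph proves the lemma.

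The main difficulty is the passage from bounded above semi-free modules — where Proposition \ref{mainprop} already produces a genuine isomorphism $F_1\cong F_2$ compatible with morphisms out of the $h^Y$ — to an arbitrary semi-free module $M\in\cL\cap\SF(\cA)$, which in general is unbounded in cohomology in both directions, so that neither $\SF^{-}(\cA)$ nor $\prfdg(\cA)$ nor $\SF_{fg}(\cA)$ contains it. This is exactly what the $\cU=\SF(\cA)$ version of Lemma \ref{nultrun} is for: it guarantees that in each fixed cohomological degree $i$ the group $\Hom_{D(\cB)}(F_j(h^Y),F_j(M)[i])$ is already computed on the bounded above stupid truncation $\sigma_{\leq m}M$ once $m$ is large enough, so the untruncated tail $\sigma_{\geq m+1}M$ contributes nothing and the comparison via Proposition \ref{mainprop} goes through.
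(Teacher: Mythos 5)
Your proof is correct and follows essentially the same route as the paper: invoke Drinfeld's universal property (Theorem \ref{Drin}) to reduce to showing $F_1$ kills $L\cap\SF(\cA)$, test against the compact generators $F_1(h^Y)$, pass to the stupid truncation $\sigma_{\le m}M$ (using Lemma \ref{nultrun} rather than its packaged form Corollary \ref{injec}, which is the same vanishing), transport the computation to $F_2$ via Proposition \ref{mainprop} on $\SF^{-}(\cA)$, and conclude from $F_2(M)=\pi(M)=0$. The only differences are presentational: you unpack the ``quasi-functor factors through the DG quotient iff its restriction is zero'' step more explicitly, and you work directly with Lemma \ref{nultrun} and the long exact sequence where the paper cites Corollary \ref{injec}.
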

\begin{proof}
By Drinfeld Theorem \ref{Drin} it is sufficient to show that the corresponding functor $F_1:D(\cA)\to D(\cB)$
factors through the quotient $D(\cA)/L.$

Let $P\in\SF(\cA)$ be a semi-free DG module that belongs to $\cL.$ Consider it as the object of
$D(\cA).$ We have to show that $F_1(P)\cong 0.$ Since $\{F_1(h^Y)\}_{Y\in\cA}$ is a set of compact generators of $D(\cB)$
it is enough to check that
for any $Y\in \cA$ and any $k\in \bbZ$
$$
\Hom(F_1(h^Y)[k], F_1(P))=0
$$
Let us consider a stupid truncation
$
\sigma_{\ge (m+1)}P\to P\to \sigma_{\le m}P,
$
for a some $m>k.$

By Corollary \ref{injec}, we have an isomorphism
$$
\Hom(F_1(h^Y)[k], F_1(P))\cong\Hom(F_1(h^Y)[k], F_1(\sigma_{\le m}P)).
$$
On the other hand, $\sigma_{\le m}P$ belongs to $\SF^{-}(\cA)$
and by Proposition \ref{mainprop} there is an isomorphism $F_1(\sigma_{\le m}P)\cong F_2(\sigma_{\le m}P).$
Now applying Corollary \ref{injec} to the functor
$F_2$ we obtain isomorphisms
\begin{multline*}
\Hom(F_1(h^Y)[k], F_1(P))\cong \Hom(F_1(h^Y)[k], F_1(\sigma_{\le m} P))\cong\Hom(F_2(h^Y)[k], F_2(\sigma_{\le m} P))\cong
\\\cong\Hom(F_2(h^Y)[k], F_2(P))=0,
\end{multline*}
where the last equality holds because $F_2(P)=H^0(\phi)\cdot\widetilde\epsilon\cdot\pi(P)=0.$

Thus, by Theorem \ref{Drin} there
exists a quasi-functor
$$
\rho: \SF(\cA)/\cL \cap \SF(\cA) \lto \SF(\cB)
$$
and an isomorphism of
functors $F_1=\Ho(\widetilde\rho)\cong \Ho(\rho)\cdot \pi.$
\end{proof}

Now the following lemma finishes the proof of the theorem.

\begin{lemma}\label{equiv}
The functor $\Ho(\rho):D(\cA)/L\lto D(\cB)$ is an equivalence.
\end{lemma}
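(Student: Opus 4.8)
The plan is to show that $\Ho(\rho)$ is both essentially surjective and fully faithful. For essential surjectivity I would first note that $\Ho(\rho)$ commutes with direct sums, since $F_1=\Ho(\rho)\circ\pi$ does and $\pi$ is essentially surjective; hence the essential image of $\Ho(\rho)$ is a localizing subcategory of $D(\cB)$. It contains every object $\Ho(\rho)(\pi h^Y)=F_1(h^Y)$, and by Lemma \ref{simple} these are isomorphic to $F_2(h^Y)=H^0(\phi)(\widetilde\epsilon\,\pi(h^Y))$, i.e.\ to the representable modules $h^B$, $B\in\Ob\cB$, which form a set of compact generators of $D(\cB)$ (Example \ref{EX}). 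So the essential image is all of $D(\cB)$. For full faithfulness I would run the dévissage of Proposition \ref{Keller2}: since $\{\pi h^Y\}_{Y\in\cA}$ is a set of compact generators of $D(\cA)/L$ (Remark \ref{remar}, using assumption a)), and $\Ho(\rho)$ commutes with sums and carries these compact objects to compact objects, it suffices to prove that
$$\Ho(\rho)\colon\Hom_{D(\cA)/L}(\pi h^Y,\pi h^Z[n])\lto\Hom_{D(\cB)}(F_1 h^Y,F_1 h^Z[n])$$
is bijective for all $Y,Z\in\cA$ and $n\in\bbZ$; one then first lets the target range over the localizing subcategory of those $X$ for which $\Hom(\pi h^Y,X[n])\to\Hom(F_1 h^Y,\Ho(\rho)X[n])$ is bijective for all $n$, and afterwards lets the source vary similarly.

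To establish this bijectivity I would argue as follows. Both $F_1,F_2\colon D(\cA)\to D(\cB)$ satisfy conditions $(*)$: they preserve sums; $F_i(h^Y)$ is compact, being isomorphic (via $\theta$, resp.\ via the equivalences defining $F_2$) to a representable $\cB$-module; and $\Hom(F_i h^Y,F_i h^Z[s])\cong\Hom_{D(\cA)/L}(\pi h^Y,\pi h^Z[s])=0$ for $s<0$ by assumption b). Moreover both factor through $\pi$, namely $F_1=\Ho(\rho)\circ\pi$ by Lemma \ref{fact} and $F_2=\bar F_2\circ\pi$ with $\bar F_2:=H^0(\phi)\circ\widetilde\epsilon$ an equivalence. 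Put $N:=\mu\pi(h^Z)$ with unit $\eta\colon h^Z\to N$; then $\pi(\eta)$ is an isomorphism (triangle identities and full faithfulness of $\mu$), hence so is $F_i(\eta)$. The adjunction $(\pi,\mu)$ gives $\Hom_{D(\cA)/L}(\pi h^Y,\pi h^Z[n])\cong\Hom_{D(\cA)}(h^Y,N[n])$, under which $\Ho(\rho)$ sends the morphism corresponding to $\tilde g\colon h^Y\to N[n]$ to $F_1(\eta)^{-1}[n]\circ F_1(\tilde g)$, and $\bar F_2$ sends it to $F_2(\eta)^{-1}[n]\circ F_2(\tilde g)$. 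Now choose a semi-free resolution $M\to N$ in $\SF(\cA)$. Since $h^Y$ sits in degree $0$, for $m>n$ the stupid truncation $M\to\sigma_{\le m}M$ induces an isomorphism $\Hom_{D(\cA)}(h^Y,M[n])\cong\Hom_{D(\cA)}(h^Y,\sigma_{\le m}M[n])$ (use the triangle $\sigma_{\ge m+1}M\to M\to\sigma_{\le m}M$), and by Corollary \ref{injec} applied to $F_1$ and to $F_2$ it also induces isomorphisms $\Hom(F_i h^Y,F_i M[n])\cong\Hom(F_i h^Y,F_i(\sigma_{\le m}M)[n])$. As $\sigma_{\le m}M\in\SF^{-}(\cA)$, Proposition \ref{mainprop} supplies an isomorphism $\theta_{\sigma_{\le m}M}\colon F_1(\sigma_{\le m}M)\stackrel{\sim}{\to}F_2(\sigma_{\le m}M)$ making (\ref{commute}) commute for every map out of $h^Y[-k]$. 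Transporting a morphism of $D(\cA)/L$ (for injectivity), or a morphism $F_1 h^Y\to F_1 h^Z[n]$ (for surjectivity), down to $\sigma_{\le m}M$, applying the commutativity of (\ref{commute}) there, and transporting back — using that $\bar F_2$ is an equivalence and that $\theta$, $\theta_{\sigma_{\le m}M}$ and the $F_i(\eta)$ are isomorphisms — then yields both the injectivity and the surjectivity of the displayed map. This is a diagram chase of exactly the kind carried out in Step 5 of the proof of Proposition \ref{mainprop}, and it finishes the proof of the lemma (and hence of Theorem \ref{main}).

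The step I expect to be the main obstacle is precisely this last one. The object $N=\mu\pi(h^Z)$ need not be cohomologically bounded above, so Proposition \ref{mainprop} — which compares $F_1$ and $F_2$ only on bounded-above modules — cannot be applied to it directly. The remedy is to cut $N$ down to the stupid truncation $\sigma_{\le m}M\in\SF^{-}(\cA)$, which is harmless for the relevant $\Hom$-groups out of $h^Y$ and out of $F_i h^Y$ by Corollary \ref{injec}, and then to keep careful track of how the chosen isomorphisms $\theta$, the $F_i(\eta)$, and $\theta_{\sigma_{\le m}M}$ match up with the truncation maps $M\to\sigma_{\le m}M$ under these identifications — the same bookkeeping as in Step 5 of the proof of Proposition \ref{mainprop}.
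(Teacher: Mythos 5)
Your proof is correct and follows essentially the same route as the paper's: reduce to Hom-groups between the compact generators $\pi(h^Y),\pi(h^Z)$; use the Bousfield functor $\mu$ to transport them to $\Hom_{D(\cA)}(h^Y,\mu\pi(h^Z)[n])$; replace $\mu\pi(h^Z)$ by a semi-free model, cut it down to a bounded-above stupid truncation using Corollary \ref{injec}; and then apply Proposition \ref{mainprop} on $\SF^-(\cA)$ to finish the diagram chase. Your bookkeeping via the unit $\eta$ is a harmless notational variant of the paper's choice of a semi-free $P\cong\mu\pi(h^Z)$ with $\pi(P)\cong\pi(h^Z)$, and your preliminary dévissage is the same as the paper's appeal to compact generation.
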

\begin{proof}
Let us  prove that the functor $\Ho(\rho)$ is fully  faithful.

The set $\{\pi (h^Y)\}_{Y\in \cA}$ is a set of compact generators
for the category $D(\cA)/L,$ and the functor $\Ho(\rho)$ preserves
direct sums, since $\pi$ and $H^0(\widetilde\rho)$ do. Thus, it suffices to prove that  the map
$$
\Ho(\rho):\Hom (\pi (h^Y)[k],\pi (h^Z))\lto \Hom ( \Ho(\rho)\pi (h^Y)[k], \Ho(\rho) \pi (h^Z))
$$
is an isomorphism for every $Y,Z \in \cA$
and any $k\in \bbZ.$

Let us fix $Y,Z$ and $k$ as above.
Recall  that the localization functor $\pi$ has a right adjoint
functor $\mu :D(\cA)/L\to D(\cA)$ which is full and faithful, so
that the natural morphism of functors $\id_{D(\cA)/L}\to \pi \mu$ is an
isomorphism.
Let $P\cong\mu \pi (h^Z)\in D(\cA)$ be a semi-free DG $\cA$\!-module. We have $\pi (h^Z)\cong \pi (P)$
and the map
$$
\pi :\Hom_{D(\cA)}(h^Y[k], P)\lto \Hom_{D(\cA)/L}(\pi (h^Y)[k], \pi(P))
$$
is an isomorphism.

Consider the stupid truncation
$\sigma_{\le m}P$
for some $m>k.$
We have a commutative diagram
$$
\begin{CD}
\Hom_{D(\cA)}(h^Y[k], P ) @>{\sim}>>   \Hom_{D(\cA)/L}(\pi(h^Y)[k], \pi(P))\\
@VV{\wr}V  @VV{\wr}V \\
\Hom_{D(\cA)}(h^Y[k], \sigma_{\le m} P) @>{\pi}>> \Hom_{D(\cA)/L}(\pi(h^Y)[k], \pi (\sigma_{\le m} P))
\end{CD}
$$
where the right vertical arrow is an isomorphism by Corollary \ref{injec}. Hence, the lower horizontal arrow is an isomorphism too.

On the other hand, by Lemma \ref{simple} and Proposition \ref{mainprop}  there exist
 isomorphisms $\theta_Y: F_1(h^Y)\stackrel{\sim}{\to} F_2 (h^Y)$
 and
 $\theta_{\le m}: F_1 (\sigma_{\le m} P)\stackrel{\sim}{\to} F_2 (\sigma_{\le m} P)$
 such that the
 diagram

$$
\begin{CD}
\Hom_{D(\cA)}(h^Y[k], \sigma_{\le m} P ) @=   \Hom_{D(\cA)}(h^Y[k], \sigma_{\le m} P)\\
@V{F_1}VV  @VV{F_2}V \\
\Hom_{D(\cB)}(F_1 (h^Y)[k], F_1(\sigma_{\le m} P)) @>{\theta_{\le m} \cdot \theta_Y^{-1}}>>
\Hom_{D(\cB)}(F_2 (h^Y)[k], F_2 (\sigma_{\le m} P))
\end{CD}
$$
commutes.
Since $F_1\cong \Ho(\rho)\cdot \pi$ and $F_2=H^0(\phi)\cdot\widetilde\epsilon\cdot\pi$ we obtain the
following commutative diagram

$$
\begin{CD}
\Hom_{D(\cA)}(h^Y[k], \sigma_{\le m} P) @=  \Hom_{D(\cA)}(h^Y[k], \sigma_{\le m} P)\\
@V{\pi}V{\wr}V   @V{\pi}V{\wr}V \\
\Hom_{D(\cA)/L}(\pi (h^Y)[k],\pi (\sigma_{\le m} P)) @=  \Hom_{D(\cA)/L}(\pi (h^Y)[k], \pi (\sigma_{\le m} P))\\
@V{\Ho(\rho)}VV   @V{H^0(\phi)\cdot\widetilde\epsilon}V{\wr}V \\
\Hom_{D(\cB)}(\Ho(\rho) \pi (h^Y)[k], \Ho(\rho)\pi (\sigma_{\le m} P)) @>{\theta_{\le m} \cdot \theta_Y^{-1}}>>
\Hom_{D(\cB)}( F_2 (h^Y)[k], F_2 (\sigma_{\le m} P))
\end{CD}
$$
In this diagram all arrows, except possibly $\Ho(\rho),$ are
isomorphisms. Hence  $\Ho(\rho)$ is an isomorphism as well. Finally we obtain that in the commutative diagram
$$
\begin{CD}
\Hom_{D(\cA)/L}(\pi (h^Y)[k],\pi (P))@>{\sim}>> \Hom_{D(\cA)/L}(\pi (h^Y)[k],\pi (\sigma_{\le m} P))\\
@V{\Ho(\rho)}VV @V{\Ho(\rho)}V{\wr}V   \\
\Hom_{D(\cB)}(\Ho(\rho) \pi (h^Y)[k], \Ho(\rho)\pi (P))@>{\sim}>> \Hom_{D(\cB)}(\Ho(\rho) \pi (h^Y)[k],
\Ho(\rho)\pi (\sigma_{\le m} P))
\end{CD}
$$
the horizontal arrows are isomorphisms by Corollary \ref{injec} and the right arrow is also an isomorphism as proved above.
Thus we get that the canonical map
$$
\Ho(\rho):\Hom (\pi (h^Y)[k],\pi (h^Z))\lto \Hom ( \Ho(\rho)\pi (h^Y)[k], \Ho(\rho) \pi (h^Z))
$$
is an isomorphism.

Thus we proved
that $\Ho(\rho)$ is fully faithful. Since the image of $\Ho(\rho)$ contains the set of
compact generators $\cB\subset D(\cB)$ and is closed under taking arbitrary direct sums, the functor
$\Ho(\rho)$ is essentially surjective. Therefore, it is an equivalence.
This proves the lemma and the
theorem.
\end{proof}

\begin{theo}\label{fufabig}
Let $\cA$ be a small category which we consider as a DG category
 and $L\subset D(\cA)$ be a
localizing subcategory with the quotient functor $\pi :D(\cA)\to
D(\cA)/L$ that has a right adjoint (Bousfield localization) $\mu.$ Assume that the following conditions hold
\begin{enumerate}
\item[a)]  for every $Y\in \cA$ the object  $\pi (h^Y)\in D(\cA)/L$ is
compact;
\item[b)] for every $Y,Z\in \cA$ we have
$\Hom (\pi (h^Y),\pi (h^Z)[i])=0\quad \text{when}\quad i<0.$
\end{enumerate}
Let  $\cE$ be a DG category and let $F: D(\cA)/L\to \Ho(\cE)$ be a fully faithful functor.
Then there is a quasi-functor $\cF:\SF(\cA)/\cL\cap\SF(\cA)\to\cE$ such that
\begin{enumerate}
\item the functor
$\Ho(\cF):D(\cA)/L\to \Ho(\cE)$ is also fully faithful;
\item $\Ho(\cF)(X)\cong F(X)$ for any $X\in D(\cA)/L.$
\end{enumerate}
\end{theo}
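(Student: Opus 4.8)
The plan is to re-run the proof of Theorem~\ref{main} with the pretriangulated enhancement $\cD$ there replaced by the ambient DG category $\cE$, substituting ``$F$ fully faithful'' for ``$\widetilde\epsilon$ an equivalence'' at the point where the two arguments must diverge. First I would let $\cB\subset\cE$ be the full DG subcategory with objects $\{F\pi(h^Y)\}_{Y\in\cA}$; full faithfulness of $F$ together with condition~b) forces $H^i\Hom_\cB(F\pi(h^Y),F\pi(h^Z))=0$ for $i<0$, so $p:\tau_{\le 0}\cB\to\Ho(\cB)$ is a quasi-equivalence and the construction of Section~\ref{cons} applies word for word, with $a:\cA\to\Ho(\cB)$ the functor induced by $F\pi h^\bullet$. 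This produces the quasi-functor $\widetilde\rho:\SF(\cA)\to\SF(\cB)$ of (\ref{rho}) and the functor $F_1=\Ho(\widetilde\rho):D(\cA)\to D(\cB)$ of (\ref{F1}), which satisfies the conditions $(*)$ and has $F_1(h^Y)\cong h^{F\pi(h^Y)}$.

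The role of $F_2$ --- which in Theorem~\ref{main} was $H^0(\phi)\cdot\widetilde\epsilon\cdot\pi$, with $H^0(\phi)$ an equivalence by Proposition~\ref{Keller} --- is now played as follows. Let $\varPsi:\cE\to\Mod\cB$, $\varPsi(X)(B)=\Hom_\cE(B,X)$, be the canonical DG functor; it induces a quasi-functor $\psi:\cE\to\SF(\cB)$ (see Remark~\ref{first}) and a functor $\Ho(\psi):\Ho(\cE)\to D(\cB)$ with $\Ho(\psi)(F\pi(h^Y))\cong h^{F\pi(h^Y)}$, and I set $F_2:=\Ho(\psi)\cdot F\cdot\pi:D(\cA)\to D(\cB)$. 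One checks that $F_2$ satisfies $(*)$: $F_2(h^Y)\cong h^{F\pi(h^Y)}$ is representable, hence compact in $D(\cB)$; the negativity vanishing in~$(*)$ is condition~b) transported through the fully faithful $F$; and the diagram (\ref{maindiag}) yields an isomorphism $\theta:F_1\cdot h^\bullet\stackrel{\sim}{\to}F_2\cdot h^\bullet$ exactly as in Lemma~\ref{simple}. With $F_1,F_2,\theta$ in hand, Proposition~\ref{mainprop} and Corollary~\ref{injec} apply unchanged, so the argument of Lemma~\ref{fact} gives $F_1(P)=0$ for every $P\in\cL$ --- using $F_2(P)=\Ho(\psi)\cdot F\cdot\pi(P)=\Ho(\psi)F(0)=0$ --- and Theorem~\ref{Drin} makes $\widetilde\rho$ descend to a quasi-functor $\rho:\SF(\cA)/\cL\cap\SF(\cA)\to\SF(\cB)$ with $F_1\cong\Ho(\rho)\cdot\pi$. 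Running the proof of Lemma~\ref{equiv} verbatim, with $\Ho(\psi)\cdot F$ in place of $H^0(\phi)\cdot\widetilde\epsilon$ in the main commutative square, shows that $\Ho(\rho):D(\cA)/L\to D(\cB)$ is fully faithful; since its essential image contains the compact generators $\cB$ and is closed under direct sums, $\Ho(\rho)$ is an equivalence, i.e.\ $\rho$ is a quasi-equivalence.

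It remains to descend to $\cE$. The functors $\Ho(\rho)$ and $\Ho(\psi)\cdot F$ are exact, preserve direct sums, and agree up to $\theta$ on the generating set $\{\pi(h^Y)\}$, hence are isomorphic; as $\Ho(\rho)$ is an equivalence, so is $\Ho(\psi)\cdot F$, and therefore $\psi$ restricts to a quasi-equivalence from the full DG subcategory $\cE_F\subset\cE$ on objects $\{F(X)\}_{X\in D(\cA)/L}$ onto $\SF(\cB)$. The desired quasi-functor is then $\cF:=j\cdot\psi_F^{-1}\cdot\rho:\SF(\cA)/\cL\cap\SF(\cA)\to\cE$, where $\psi_F^{-1}$ is a quasi-inverse of $\psi|_{\cE_F}$ and $j:\cE_F\hookrightarrow\cE$ is the inclusion; then $\Ho(\cF)$ is a composite of two equivalences and a fully faithful functor, hence fully faithful, and by construction $\Ho(\cF)(X)\cong F(X)$ for all $X\in D(\cA)/L$.

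The step I expect to be the main obstacle is the verification that $F_2$ satisfies $(*)$: in Theorem~\ref{main} these properties were automatic because $\cD$ was itself an enhancement of $D(\cA)/L$, whereas here they have to be wrung out of the hypotheses on $F$ (fullness, faithfulness, and compatibility with shifts and arbitrary direct sums) together with conditions~a), b) --- getting the negativity vanishing and the direct-sum behaviour exactly right is the delicate point. A secondary, more bookkeeping-type issue is the final descent from a quasi-functor with target $\SF(\cB)$ to one with target $\cE$, through the identification of $\SF(\cB)$ with the full DG subcategory $\cE_F$.
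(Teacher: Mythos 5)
Your Part~(1) matches the paper's argument closely (the paper first passes to the full DG subcategory $\cD\subset\cE$ on the essential image of $F$, which is what you call $\cE_F$, and then uses Proposition~\ref{Keller} directly to see that the Yoneda quasi-functor $\phi\colon\cD\to\SF(\cB)$ induces an equivalence $H^0(\phi)$; you deduce the same thing by a detour through $\Ho(\rho)$, but it comes to the same construction $\cF=j\cdot\phi^{-1}\cdot\rho$). Your identification of $F_2$ as $\Ho(\psi)\cdot F\cdot\pi$ is also correct, and your worry about verifying $(*)$ for $F_2$ is well placed but surmountable: since $F$ is fully faithful, $\Hom_\cE(F\pi(h^Y),F(X))\cong\Hom_{D(\cA)/L}(\pi(h^Y),X)$, and compactness of $\pi(h^Y)$ then gives the direct-sum and vanishing properties for $F_2$.

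The genuine gap is in the last paragraph, where you assert that $\Ho(\rho)$ and $\Ho(\psi)\cdot F$ ``agree up to $\theta$ on the generating set $\{\pi(h^Y)\}$, hence are isomorphic,'' and then conclude $\Ho(\cF)(X)\cong F(X)$ ``by construction.'' Two exact coproduct-preserving functors out of a compactly generated category that agree on a generating set of compact objects are not isomorphic as functors in general, and they need not even agree object-wise. Unwinding the definitions, your conclusion $(2)$ is equivalent to an object-wise isomorphism $F_1(P)\cong F_2(P)$ for every $P\in\SF(\cA)$, and this is exactly what the paper proves in part~(2) of Theorem~\ref{fufabig} by a substantial argument: Proposition~\ref{mainprop} delivers the isomorphism only when $\cU=\SF^-(\cA)$ (i.e.\ for bounded-above $P$), because its truncation induction uses Lemma~\ref{nultrun}/Corollary~\ref{injec} which require the $\sigma_{\ge n}P$ to be semi-free of bounded type. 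For an arbitrary $P\in\SF(\cA)$ the paper writes $P\cong\holim_m\sigma_{\le m}P$, builds compatible isomorphisms $\vartheta_{\le m}\colon F_1(\sigma_{\le m}P)\to F_2(\sigma_{\le m}P)$ by an ascending induction (using the vanishing condition $(*)(3)$ and a separate injectivity argument to propagate commutativity), and finally passes to the homotopy limit after first verifying that $F_i(P)\cong\holim_m F_i(\sigma_{\le m}P)$. None of this is automatic, and it is not implied by the agreement on $\{h^Y\}_{Y\in\cA}$. So the step you expected to be the main obstacle is actually the easy one; the obstacle you glossed over as ``by construction'' is the real content, and your proof as written does not supply it.
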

\begin{proof}
(1) Let $\cD\subset\cE$ be a full DG subcategory that consists of all objects in the essential image of $F.$
Then DG category $\cD$ is another enhancement for $D(\cA)/L$
the functor $F$ induces
an equivalence $\widetilde\epsilon :(D(\cA)/L)\stackrel{\sim}{\to} \Ho(\cD)$ between  the triangulated
categories.

As in the proof of Theorem \ref{main} we
denote by $\cB
\subset \cD$ the full DG subcategory with the set of objects
$\{\widetilde\epsilon \pi(h^Y)\}_{Y\in \cA}.$ By Proposition \ref{Keller} a DG functor
$\varPhi :\cD \to \Mod\cB$ induces a quasi-functor  $\phi:\cD\to\SF(\cB)$
such that
$
H^0(\phi) :\Ho(\cD)\stackrel{\sim}{\lto}D(\cB)
$
is an equivalence.

By construction in  Section \ref{cons} (formula (\ref{rho}) we have a quasi-functor
$$
\widetilde\rho: \SF(\cA)\lto\SF(\cB)
$$
and by Lemma \ref{fact}
the quasi-functor $\widetilde\rho$ factors through the DG quotient
$\SF(\cA)/\cL\cap\SF(\cA).$ Hence it induces a quasi-functor
$\rho:\SF(\cA)/\cL\cap\SF(\cA)\to \SF(\cB).$ In Lemma \ref{equiv} we proved that
our quasi-functor $\rho$ is a quasi-equivalence.
We denote by $\cF$ the composition of quasi-functors $\rho, \phi^{-1}$
and the full embedding of $\cD$ to $\cE.$
It is evident that $H^0(\cF)$ is fully faithful by construction.

(2) Lemma \ref{simple} implies  that there is an isomorphism of functors
$\theta: \Ho(\cF)\cdot\pi\cdot h^{\bullet}\stackrel{\sim}{\to} F\cdot\pi\cdot h^{\bullet}$
 from $\cA$ to $H^{0}(\cE).$
By Proposition \ref{mainprop} applied to $\cU=\SF^{-}(\cA)$
there is an isomorphism $\Ho(\cF)\pi(P)\cong F\pi(P)$ for any $P\in \SF^{-}(\cA).$

Denote as above the functor $H^0(\rho)\pi$ and
$H^0(\phi)\widetilde\epsilon\pi$ from $\SF(\cA)$ to $\SF(\cB)$ by $F_1$ and $F_2,$ respectively.
Now let $P\in \SF(\cA)$ be any semi-free DG module.
Consider its truncations
$\sigma _{\leq m}P\in\SF^{-}(\cA).$
The object $P$ is isomorphic to a homotopy limit of the truncations $\sigma_{\le m}P$ in $D(\cA),$ i.e. there is an exact triangle
$$
\begin{CD}
P@>>> \prod\limits_m \sigma_{\le m} P@>>>\prod\limits_m \sigma_{\le m} P.
\end{CD}
$$
We have a natural map $\kappa_P: F_i(P)\to \holim F_i(\sigma_{\le m} P).$ By Lemma \ref{nultrun} for any $Y\in\cA$ and $j\in\bbZ$
we have $\Hom (F_i(h^Y), F_i(\sigma_{\geq m}P)[j])=0$ when $m>j.$ Therefore,
the map
$$
\Hom (F_i(h^Y), F_i(P)[j])\stackrel{\sim}{\lto} \Hom (F_i(h^Y), \holim F_i(\sigma_{\le m}P)[j])
$$
is an isomorphism for any $Y$ and $j.$
Since $F_i(h^Y)$ forms a set of compact generators of $\SF(\cB)$
the map $\kappa_P$ is an isomorphism.

Now we construct an isomorphism $\vartheta_P: F_1(P)\to F_2(P).$
By Proposition \ref{mainprop} there are
isomorphisms $\theta^m:F_1(P^m)\stackrel{\sim}{\to} F_2(P^m)$ for all $m$ and $\theta
_{\leq 0}:F_1(\sigma _{\leq 0}P)\stackrel{\sim}{\to} F_2(\sigma _{\leq 0}P)$ making
the following  square commutative.
$$
\begin{CD}
F_1(P^0) @>>> F_1(\sigma _{\leq 0}P) \\
@V\theta ^0VV  @VV\theta _{\leq 0}V \\
F_2(P^0) @>>> F_2(\sigma _{\leq 0}P)
\end{CD}
$$

For every $m\ge0$ we will construct an
isomorphism $\vartheta _{\le m}:F_1(\sigma _{\leq m}P)\stackrel{\sim}{\to} F_2(\sigma _{\leq
m}P)$ such that the diagram
$$
\begin{CD}
F_1(P^m)[-m]@>>> F_1(\sigma _{\leq m}P) @>>>  F_1(\sigma _{\leq m-1}P)\\
@V\theta^m[-m]VV  @V\vartheta _{\leq m}VV @VV \vartheta _{\leq m-1}V \\
F_2 (P^m)[-m] @>>> F_2(\sigma _{\leq m}P) @>>>  F_2(\sigma _{\leq
m-1}P)
\end{CD}
$$
commutes for $m>0.$ The proof
uses the (ascending) induction on $m$ with the base of induction for
$m=0.$ Assume that the map
$\vartheta _{\le m}:F_1(\sigma _{\leq m}P)\stackrel{\sim}{\to} F_2(\sigma _{\leq
m}P)$
exists for some $m\geq 0$.
We claim that in the natural diagram
$$
\begin{CD}
F_1(P^m)[-m]@>>> F_1(\sigma _{\leq m}P) @>>> F_1(P^{m+1})[-m]\\
@V\theta^m[-m]VV @V\vartheta _{\leq m}VV  @VV\theta ^{m+1}[-m]V\\
F_2(P^m)[-m]@>>> F_2(\sigma _{\leq m}P) @>>> F_2(P^{m+1})[-m]
\end{CD}
$$
the right square commutes. Indeed, the outside square commutes by Proposition \ref{mainprop}
and the left one commutes by induction assumption.
Further, we have that the
map
$$
\Hom (F_1(\sigma _{\leq m}P),F_1(P^{m+1})[-m])\to \Hom
(F_1(P^m)[-m],F_1(P^{m+1})[-m])
$$
is injective, because
$$
\Hom (F_1(\sigma_{\le m-1}P),F_1(P^{m+1})[-m])=\Hom(\hocolim_k F_1( P^{[k, m-1]})[m], F_1(P^m))=0
$$
by property 3) of (*).
Using the
compositions with $\theta^{m+1}$ we conclude that the natural map
$$
\Hom (F_1(\sigma _{\leq m}P),F_2(P^{m+1})[-m])\to \Hom
(F_1(P^m)[-m],F_2(P^{m+1})[-m])
$$
is also injective.
Therefore, the right square in the above diagram commutes too.
Hence we can find $\vartheta _{\le m+1}$ so
that the diagram
$$\begin{CD}
F_1(P^{m+1})[-m-1] @>>> F_1(\sigma _{\leq {m+1}}P) @>>>
F_1(\sigma _{\leq m}P) @>>> F_1(P^{m+1})[-m]\\
@V\theta ^{m+1}[-m-1]VV   @V\vartheta _{\leq m+1}VV  @VV\vartheta _{\leq m}V @VV\theta ^{m+1}[-m]V\\
F_2 (P^{m+1})[-m-1] @>>> F_2(\sigma _{\leq m+1}P) @>>>  F_2(\sigma
_{\leq m}P) @>>> F_2 (P^{m+1})[-m]
\end{CD}
$$
commutes. This provides the induction step.
Since $F_i(P)\cong\holim F_i(\sigma_{\le m}P),$
there exists an isomorphism $\vartheta _P:F_1(P)\stackrel{\sim}{\to} F_2(P)$ that
gives an isomorphism of triangles
$$
\begin{CD}
F_1(P) @>>> \prod\limits_m F_1(\sigma _{\leq m}P)@>>>\prod\limits_m F_1(\sigma _{\leq m}P)\\
@V\vartheta _PVV @V\prod\vartheta _{\leq m}VV @VV\prod\vartheta _{\leq m}V\\
F_2(P) @>>> \prod\limits_m F_2(\sigma _{\leq m}P)@>>>\prod\limits_m F_2(\sigma _{\leq m}P)
\end{CD}
$$
Thus, $H^0(\cF)\pi(P)\cong F\pi(P)$ for any $P\in\SF(\cA).$
\end{proof}

\section{Proof of Theorem \ref{mainperf}}\label{Thmainperf}

As in Section \ref{cons}, let $\cC$ be a pretriangulated DG category
and  $\epsilon :(D(\cA)/L)^c\stackrel{\sim}{\to} \Ho(\cC)$ be an equivalence of  triangulated
categories.  Denote by $\cB
\subset \cC$ the full DG subcategory with the set of objects
$\{\epsilon \pi(h^Y)\}_{Y\in \cA}.$
As in formula (\ref{phi})  there is the canonical DG functor
$\varPsi :\cC \to \Mod\cB$ which induces a quasi-functor $\psi:\cC\to \SF(\cB)$
and the functor
$$
H^0(\psi) :\Ho(\cC)\stackrel{}{\lto}
\Ho(\SF(\cB))\cong D(\cB).
$$
Objects $\{\epsilon \pi(h^Y)\}_{Y\in \cA}$ classically generate the category $(D(\cA)/L)^c$ by Theorem \ref{Neeman}.
Hence by Proposition \ref{Keller1} the functor $H^0(\psi)$ induces an equivalence between
$\Ho(\cC)$ and $\Perf(\cB)=D(\cB)^c.$

Consider    the DG category of perfect DG modules $\prfdg(\cB),$ that by definition is the full DG
subcategory of $\SF(\cB)$ consisting of all DG modules which are homotopy
equivalent to a direct summand of a finitely generated semi-free DG module.
The quasi-functor $\psi$ induces a quasi-functor $\chi: \cC\to \prfdg(\cB)$ that is quasi-equivalence, i.e.
the induced functor
$$
H^0(\chi): H^0(\cC)\stackrel{\sim}{\lto} D(\cB)^c=\Perf(\cB)
$$
is an equivalence. Thus
the DG category $\prfdg(\cB)$ is another enhancement of $(D(\cA)/L)^c$ which is equivalent to $\cC$ via
the quasi-functor $\chi.$
Therefore it is sufficient to prove that $\prfdg(\cB)$ is quasi-equivalent to a natural enhancement coming from the
DG quotient. More precisely, we can consider a full DG subcategory $\cV\subset\SF(\cA)/\cL\cap\SF(\cA)$ which consists
of all objects that belong to the subcategory of compact objects
$(D(\cA)/L)^c.$
Thus we have to show that the DG category $\cV$ is quasi-equivalent to $\prfdg(\cB).$

By construction in  Section \ref{cons} formulas (\ref{rho}) and (\ref{F1}) give a quasi-functor
$$
\widetilde\rho: \SF(\cA)\lto\SF(\cB)
$$
and the induced functor $F_1=\Ho(\widetilde\rho): D(\cA)\to D(\cB).$
The restriction of $F_1$ to the subcategory of compact objects $D(\cA)^c$ induces a functor
$$
G_1:\Perf(\cA)=D(\cA)^c{\lto}
D(\cB)^c=\Perf(\cB).
$$

Denote by $G_2$ the composition
of $H^0(\chi)$ with the equivalence $\epsilon$  and the localization $\pi$
$$
G_2: \Perf(\cA)=D(\cA)^c\stackrel{\pi}{\lto} (D(A)/L)^c\stackrel{\epsilon}{\lto}\Ho(\cC)\stackrel{H^0(\chi)}{\lto}
D(\cB)^c=\Perf(\cB).
$$

Thus we have two functors $G_1$ and $G_2$ from $D(\cA)^c$ to $D(\cB)^c$ and both of them enjoy the properties
1), 2), 3) of $(*).$ By construction of $G_2$ and $a,$ the composition of $h^{\bullet}:\cA\to\D(\cA)^c$ and $G_2$
coincides with $\cA\stackrel{a}{\to}\Ho(\cB)\to D(\cB)^c.$ The commutativity of diagram (\ref{maindiag})
immediately proves  the following
lemma.
\begin{lemma}\label{simple2} There exists an isomorphism  $\theta: G_1\cdot h^\bullet \stackrel{\sim}{\to}
G_2 \cdot h^\bullet$ of functors from $\cA$ to $D(\cB)^c.$
\end{lemma}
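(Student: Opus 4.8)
The plan is to obtain the lemma from the commutativity of diagram (\ref{maindiag}), in exactly the manner Lemma \ref{simple} was obtained in Section \ref{Thmain}; the content is bookkeeping rather than a new idea. First I would note that, by Example \ref{EX}, every representable module $h^Y$ with $Y\in\cA$ is a compact object of $D(\cA)$, so the Yoneda functor $h^\bullet\colon\cA\to D(\cA)$ factors through $D(\cA)^c=\Perf(\cA)$. Hence $G_1\cdot h^\bullet$ is just $F_1\cdot h^\bullet$ with its target restricted to $D(\cB)^c$, and it suffices to identify $F_1\cdot h^\bullet$ up to natural isomorphism.

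Next I would trace a representable $h^Y$ through the three functors composing $F_1=\bL i^*\cdot p_*\cdot\bL a^*$, using the commuting squares in (\ref{maindiag}). Since extension of scalars sends representables to representables, commutativity of the left square gives a natural isomorphism $\bL a^*(h^Y)\cong h^{a(Y)}$ in $D(\Ho(\cB))$; because $p_*$ is a quasi-inverse of the equivalence $\bL p^*$ and the middle square commutes, $p_*$ carries $h^{a(Y)}$ to the representable $\tau_{\leq 0}\cB$-module at $a(Y)$; and commutativity of the right square then gives $\bL i^*$ of that object equal to $h^{a(Y)}$ over $\cB$, since $p$ and $i$ are the identity on objects. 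Collecting these, $F_1\cdot h^\bullet$ is naturally isomorphic to the composite $\cA\stackrel{a}{\to}\Ho(\cB)\hookrightarrow D(\cB)^c$ (the Yoneda embedding of $\Ho(\cB)$ lands in compact objects, as the $h^B$, $B\in\cB$, are perfect).

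On the other side I would unwind the definition $G_2=H^0(\chi)\cdot\epsilon\cdot\pi$: since $\chi$ is the quasi-functor induced by $\psi$, the functor $H^0(\chi)$ agrees with $H^0(\psi)$ modulo the embedding $D(\cB)^c\hookrightarrow D(\cB)$. By the very definition of $a$ given in Section \ref{cons}, namely that $\cA\stackrel{a}{\to}\Ho(\cB)\hookrightarrow D(\cB)$ equals $H^0(\psi)\cdot\epsilon\cdot\pi\cdot h^\bullet$, the composite $G_2\cdot h^\bullet$ therefore coincides with $\cA\stackrel{a}{\to}\Ho(\cB)\hookrightarrow D(\cB)^c$. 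Comparing the two descriptions yields the required natural isomorphism $\theta\colon G_1\cdot h^\bullet\stackrel{\sim}{\to}G_2\cdot h^\bullet$.

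The only slightly delicate points are keeping the identifications $\Ho(\tau_{\leq 0}\cB)=\Ho(\cB)=\Ho(\cB)$ induced by $p$ and $i$ straight, and checking that the isomorphisms assembled at each stage are natural in $Y\in\cA$ rather than merely pointwise; both are immediate once (\ref{maindiag}) is written out, so I do not expect a genuine obstacle --- the lemma is a formal consequence of the construction in Section \ref{cons}.
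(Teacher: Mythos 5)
Your argument is correct and takes essentially the same route as the paper: the paper proves Lemma~\ref{simple2} exactly by observing that $G_2\cdot h^\bullet$ equals $\cA\stackrel{a}{\to}\Ho(\cB)\hookrightarrow D(\cB)^c$ by construction and that commutativity of diagram~(\ref{maindiag}) immediately identifies $G_1\cdot h^\bullet$ with the same composite. Your write-up simply spells out the chase through (\ref{maindiag}) and the unwinding of $\chi$ versus $\psi$ that the paper leaves implicit.
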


As in the proof of Theorem \ref{main}, first,  we show that the following lemma holds.
\begin{lemma}\label{factor}
The quasi-functor $\widetilde\rho:\SF(\cA)\to\SF(\cB)$ factors through the DG quotient
$\SF(\cA)/\cL\cap\SF(\cA).$
\end{lemma}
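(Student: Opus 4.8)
The plan is to apply Drinfeld's Theorem \ref{Drin} exactly as in the proof of Lemma \ref{fact}: it suffices to show that the induced functor $F_1=\Ho(\widetilde\rho):D(\cA)\to D(\cB)$ factors through the Verdier quotient $D(\cA)/L$, i.e. that $F_1(X)\cong 0$ for every $X\in L$. The difference with Lemma \ref{fact} is that here $L$ is generated, as a localizing subcategory, by the compact objects $L^c=L\cap D(\cA)^c$, and this is what replaces the stupid-truncation argument used there.

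First I would note that, since $F_1$ is exact and preserves arbitrary direct sums (as recorded at the end of Section \ref{cons}), the full subcategory $\{X\in D(\cA)\mid F_1(X)\cong 0\}$ is a localizing triangulated subcategory of $D(\cA)$; hence it is enough to check that $F_1(P)\cong 0$ for every $P\in L^c$. Fix such a $P$. Since $\Ho(\prfdg(\cA))=D(\cA)^c$, we may represent $P$ by an object of $\prfdg(\cA)$. Recall now the two functors $G_1,G_2$ from $D(\cA)^c$ to $D(\cB)^c$, where $G_1$ is the restriction of $F_1$ and $G_2=H^0(\chi)\cdot\epsilon\cdot\pi$; both satisfy conditions $(*)$, and by Lemma \ref{simple2} there is an isomorphism $\theta:G_1\cdot h^\bullet\stackrel{\sim}{\to}G_2\cdot h^\bullet$ of functors $\cA\to D(\cB)^c$. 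Applying Proposition \ref{mainprop} with $\cU=\prfdg(\cA)$ therefore yields an isomorphism $G_1(P)\cong G_2(P)$. But $G_1(P)=F_1(P)$, whereas $G_2(P)=H^0(\chi)\cdot\epsilon\cdot\pi(P)=0$, since $P\in L$ forces $\pi(P)=0$. Thus $F_1(P)\cong 0$, and by the previous remark $F_1$ annihilates all of $L$.

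It then follows from Drinfeld's Theorem \ref{Drin} that $\widetilde\rho$ factors through the DG quotient $\SF(\cA)/\cL\cap\SF(\cA)$: there is a quasi-functor $\rho:\SF(\cA)/\cL\cap\SF(\cA)\to\SF(\cB)$ together with an isomorphism $F_1=\Ho(\widetilde\rho)\cong\Ho(\rho)\cdot\pi$. The one substantive point is the reduction to compact $P$: in Lemma \ref{fact} a second functor $F_2$ defined on all of $D(\cA)$ was available and one peeled off the truncations $\sigma_{\le m}P$ of an arbitrary semi-free module in $\cL$, whereas here the companion functor exists only on compact objects (it is $G_2$), so one must exploit the hypothesis that $L$ is compactly generated in order to test the vanishing of $F_1$ on perfect $P$ only, where Proposition \ref{mainprop} for $\cU=\prfdg(\cA)$ applies. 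The remaining steps — that $\ker F_1$ is localizing, that $G_1,G_2$ satisfy $(*)$, and that the Drinfeld factorization criterion for $\widetilde\rho$ amounts to $F_1|_L=0$ — are routine.
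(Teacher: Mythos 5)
Your proof is correct and follows essentially the same route as the paper's: reduce via Drinfeld's theorem to showing $F_1$ kills $L$, observe that $\ker F_1$ is localizing so it suffices to treat $L^c$, and then apply Lemma \ref{simple2} and Proposition \ref{mainprop} (with $\cU=\prfdg(\cA)$) to get $F_1(P)\cong G_2(P)=0$ for $P\in L^c$. Your remark contrasting this with the truncation argument of Lemma \ref{fact} is an accurate reading of why the two lemmas differ.
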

\begin{proof}
By Drinfeld Theorem \ref{Drin} it is sufficient to show that the induced functor $F_1: D(\cA)\to D(\cB)$
factors through the quotient $D(\cA)/L.$
Let $P\in\prfdg(\cA)$ be a perfect DG module that belongs to $\cL.$
By Lemma \ref{simple2} and Proposition \ref{mainprop} we have
$$
G_1(P)\cong G_2(P)=H^0(\chi)\cdot\epsilon\cdot\pi(P)=0.
$$
By assumption, the subcategory $L$ is generated by objects compact in $D(\cA).$
Since $F_1$ commutes with direct sums
we get that $F_1(P)=0$ for any $P\in \SF(\cA)\cap\cL.$
Hence, the functor $F_1:D(\cA)\to D(\cB)$
factors through the quotient $D(\cA)/L,$
and, by Theorem \ref{Drin}, there
is a quasi-functor
$$
\rho: \SF(\cA)/\cL \cap \SF(\cA) \lto \SF(\cB)
$$
with an isomorphism of
functors $F_1=\Ho(\widetilde\rho)\cong \Ho(\rho)\cdot \pi.$
\end{proof}

Next we would like
to prove the following lemma
\begin{lemma}\label{equivp}
The functor $\Ho(\rho): D(\cA)/L\lto D(\cB)$ is an equivalence.
\end{lemma}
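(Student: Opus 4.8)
The plan is to deduce that $\Ho(\rho)$ is an equivalence from the fact that it restricts to an equivalence on the subcategories of compact objects, and then to bootstrap to the whole category exactly as in the proof of Proposition \ref{Keller2}. First I would note that $\Ho(\rho)$ commutes with arbitrary direct sums: the functor $\pi$ is essentially surjective and commutes with direct sums, $F_1=\Ho(\widetilde\rho)$ commutes with direct sums, and $F_1\cong\Ho(\rho)\cdot\pi$ by Lemma \ref{factor}. Since $L$ is generated by compact objects, Theorem \ref{Neeman} guarantees that $\pi$ sends compacts to compacts, and Proposition \ref{adjnew}(d) then gives that $\{\pi(h^Y)\}_{Y\in\cA}$ is a set of compact generators of $D(\cA)/L$. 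Consequently it is enough to check that $\Ho(\rho)$ carries these to compact generators of $D(\cB)$ and is bijective on the $\Hom$-groups between their shifts.

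For the compact level, recall that $F_1|_{\Perf(\cA)}=G_1=\Ho(\rho)\cdot\pi|_{\Perf(\cA)}$. By Lemma \ref{simple2} there is an isomorphism $G_1\cdot h^{\bullet}\xrightarrow{\sim}G_2\cdot h^{\bullet}$, and since $G_1$ and $G_2$ satisfy the conditions $(*)$ (condition 3) being precisely the hypothesis of Theorem \ref{mainperf}), Proposition \ref{mainprop} applied with $\cU=\prfdg(\cA)$ produces compatible isomorphisms $\theta_P\colon G_1(P)\xrightarrow{\sim}G_2(P)$ for every $P$. Their compatibility with morphisms out of the shifted representables $h^Y[-k]$ lets one check that the $\theta_P$ assemble into a natural isomorphism $G_1\cong G_2$ of functors on $\Perf(\cA)$. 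Now $G_2=H^0(\chi)\cdot\epsilon\cdot\pi|_{\Perf(\cA)}$, where $H^0(\chi)\cdot\epsilon\colon(D(\cA)/L)^c\to\Perf(\cB)=D(\cB)^c$ is an equivalence, and by Theorem \ref{Neeman} the functor $\pi$ exhibits $D(\cA)^c/L^c$ as a triangulated subcategory of $(D(\cA)/L)^c$ whose idempotent completion is all of $(D(\cA)/L)^c$. Hence the isomorphism $\Ho(\rho)\cdot\pi|_{\Perf(\cA)}\cong H^0(\chi)\cdot\epsilon\cdot\pi|_{\Perf(\cA)}$ descends along the Verdier quotient $\Perf(\cA)\to D(\cA)^c/L^c$ and extends to an isomorphism $\Ho(\rho)|_{(D(\cA)/L)^c}\cong H^0(\chi)\cdot\epsilon$ on the idempotent completion. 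In particular $\Ho(\rho)$ restricts to an equivalence $(D(\cA)/L)^c\to\Perf(\cB)=D(\cB)^c$ and sends $\pi(h^Y)$ to the representable $\cB$-module $h^{\epsilon\pi(h^Y)}$.

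It remains to bootstrap. The modules $h^{\epsilon\pi(h^Y)}$, $Y\in\cA$, are exactly the representable $\cB$-modules, hence a set of compact generators of $D(\cB)$; and $\Ho(\rho)$ is bijective on $\Hom$-groups between shifts of the $\pi(h^Y)$ by the previous paragraph. The standard d\'evissage then applies: for fixed $Y$, the full subcategory of those $X\in D(\cA)/L$ for which $\Hom(\pi(h^Y)[n],X)\to\Hom(\Ho(\rho)\pi(h^Y)[n],\Ho(\rho)X)$ is bijective for all $n$ is triangulated, closed under direct sums (because $\pi(h^Y)$ and its image are compact and $\Ho(\rho)$ commutes with sums) and contains the compact generators, so it is all of $D(\cA)/L$; varying the second argument in the same way shows $\Ho(\rho)$ is fully faithful; and its essential image, being a triangulated subcategory closed under direct sums and containing a set of compact generators of $D(\cB)$, is all of $D(\cB)$. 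Thus $\Ho(\rho)$ is an equivalence.

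The main obstacle is the middle step. Unlike in Lemma \ref{equiv}, here we are given only an enhancement $\cC$ of the compact objects $(D(\cA)/L)^c$, not of $D(\cA)/L$ itself, so there is no ambient functor on $D(\cA)$ available to serve as the counterpart of the functor ``$F_2$'' used there; one is forced to establish the comparison $G_1\cong G_2$ at the level of compacts and then transport it, through the Verdier quotient $\Perf(\cA)\to D(\cA)^c/L^c$ and its idempotent completion, to an identification of $\Ho(\rho)$ on $(D(\cA)/L)^c$. Making the passage from the objectwise isomorphisms $\theta_P$ of Proposition \ref{mainprop} to a natural isomorphism, together with its subsequent descent, is the delicate part; throughout, the universe conventions of Remark \ref{Usmall} are to be kept in force.
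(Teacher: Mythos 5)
Your overall outline — establish the statement at the level of compact objects, then bootstrap to the whole category using compact generation — is the same as the paper's. Your bootstrap step and the verification that $\Ho(\rho)$ preserves direct sums are correct. But there is a genuine gap at the crucial middle step, and you yourself flag exactly the place where it occurs.

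You claim that the compatibility of the $\theta_P$ with morphisms out of the shifted representables $h^Y[-k]$ ``lets one check that the $\theta_P$ assemble into a natural isomorphism $G_1\cong G_2$ of functors on $\Perf(\cA)$.'' This does not follow from Proposition \ref{mainprop}. That proposition produces, for each $P$ separately, an isomorphism $\theta_P$ that commutes only with morphisms $h^Y[-k]\to P$; it says nothing about compatibility with an arbitrary morphism $\phi\colon P\to Q$ between perfect modules. To deduce $\theta_Q\,G_1(\phi)=G_2(\phi)\,\theta_P$ from the known compatibility, one would try to show that the difference $\delta=\theta_Q\,G_1(\phi)\,\theta_P^{-1}-G_2(\phi)\colon G_2(P)\to G_2(Q)$ is zero because $\delta\cdot G_2(f)=0$ for every $f\colon h^Y[-k]\to P$. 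But the morphisms $G_2(f)$ do not detect vanishing in $\Hom_{D(\cB)}(G_2(P),G_2(Q))$: the functor $G_2=H^0(\chi)\cdot\epsilon\cdot\pi|_{\Perf(\cA)}$ factors through the Verdier quotient $\pi\colon\Perf(\cA)\to D(\cA)^c/L^c$, which is neither full nor faithful, so not every map $h^{\epsilon\pi(h^Y)}[-k]\to G_2(P)$ in $D(\cB)$ lies in the image of $G_2$. A d\'evissage over cones and summands runs into the same obstruction: the $\theta_P$ are constructed independently for each $P$ by choices in the inductive proof of Proposition \ref{mainprop}, and nothing coordinates $\theta_P$ and $\theta_Q$ across an exact triangle. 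So the naturality you need to descend along $\Perf(\cA)\to D(\cA)^c/L^c$ is simply not available from the cited proposition, and no argument is supplied for it.

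The paper's proof deliberately avoids naturality. It never compares $G_1$ and $G_2$ as functors. Instead, for full faithfulness it fixes $Y,Z,k$ and a single morphism $f\colon\pi(h^Y)[k]\to\pi(h^Z)$, writes it (using Theorem \ref{Neeman}, as $L$ is compactly generated) as a roof $h^Y[k]\xrightarrow{g}P\xleftarrow{s}h^Z$ with $P\in\Perf(\cA)$ and $\pi(s)$ invertible, and then applies Proposition \ref{mainprop} with $\cU=\prfdg(\cA)$ to obtain one isomorphism $\theta_P$ compatible with the two maps $g$ and $s$ from shifted representables into $P$. The prism diagram (\ref{prizm}), whose three squares and lower triangle commute, is then enough: the chain of equalities shows $f_1=\Ho(\rho)(f)$ for fullness, and the analogous chase gives faithfulness. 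That is, the argument only ever needs the compatibilities that Proposition \ref{mainprop} actually provides — compatibility of $\theta_P$ with the specific legs of a specific roof — and requires no global natural transformation, no descent along the Verdier quotient, and no extension to the idempotent completion.
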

\begin{proof}
 We already know
that $\Ho(\rho)$ is essentially surjective, because the image of $\Ho(\rho)$ contains the set of
compact generators $\cB\subset D(\cB)$ and is closed under taking arbitrary direct sums.
Now we show that it is full and faithful.

As usual, since the set $\{\pi (h^Y)\}_{Y\in \cA}$ is a set of compact generators
for  $D(\cA)/L,$ and the functor $\Ho(\rho)$ preserves
direct sums, it suffices to prove that for any $Y,Z \in \cA,$
and any $k\in \bbZ$ the map
$$
\Ho(\rho):\Hom (\pi (h^Y)[k],\pi (h^Z))\lto \Hom ( \Ho(\rho)\pi (h^Y)[k], \Ho(\rho)\pi(h^Z))\cong \Hom ( G_1 (h^Y)[k], G_1 (h^Z))
$$
is an isomorphism.

By Theorem \ref{Neeman}, since $L$ is generated by compact objects,
the natural functor $D(\cA)^c/L^c\to (D(\cA)/L)^c$ is fully faithful. Hence, by definition of
localization, any morphism $f: \pi(h^Y)[k] \to \pi(h^Z)$ is represented by a pair $(g, s)$ in $D(\cA)^c$ of the form
$$
h^Y[k] \stackrel{g}{\lto} P \stackrel{s}{\longleftarrow} h^Z,
$$
where $P$ is a perfect
DG module and a cone of $s$ belongs to $L^c,$ i.e $\pi(s)$ is an isomorphism and $f=\pi(s)^{-1} \pi(g).$

{\bf Full.}
Consider a morphism $f_1 \in \Hom ( G_1 (h^Y)[k], G_1 (h^Z)).$ The isomorphism $\theta: G_1\cdot h^\bullet \stackrel{\sim}{\to}
G_2 \cdot h^\bullet$ induces a morphism
$f_2=\theta_Z\cdot  f_1\cdot  \theta_Y[k]^{-1}$ from $G_2 (h^Y)[k]$ to $G_2 (h^Z).$

The functor $G_2$ is the composition $H^0(\chi)\cdot\epsilon\cdot\pi$ and $H^0(\chi)\cdot\epsilon:
(D(\cA)/L)^c\stackrel{\sim}{\to} \Ho(\cB)$
is an equivalence.

Denote by $f: \pi(h^Y)[k] \to \pi(h^Z)$ the morphism for which $H^0(\chi)\cdot\epsilon(f)=f_2.$
By Proposition \ref{mainprop} applied to $\cU=\prfdg(\cA)$ there is an isomorphism $\theta_{P}: G_1(P)\stackrel{\sim}{\to} G_2(P)$
such that in the following diagram

\begin{equation}\label{prizm}
\xymatrix{
& G_1(P) \ar@{->}'[d][dd]^(.2){\wr}_(.3){\theta_P}\\
G_1(h^Y)[k]\ar[ur]^{G_1(g)}\ar[rr]^(.4){f_1}\ar[dd]_{\theta_{Y}[k]}^{\wr}&& G_1(h^Z)\ar[ul]_{G_1(s)}\ar[dd]^{\theta_{Z}}_{\wr}\\
& G_2(P)\\
G_2(h^Y)[k]\ar[ur]^{G_2(g)}\ar[rr]^(.4){f_2}&& G_2(h^Z)\ar[ul]_{G_2(s)}
}.
\end{equation}
the three squares are commutative and the lower triangle is also commutative.
Now the following
sequence of equalities
\begin{equation}
\begin{split}
f_1&=\Ho(\rho)(\pi(s)^{-1})\cdot G_1(s)\cdot f_1=\Ho(\rho)(\pi(s)^{-1})\cdot G_1(s)\cdot \theta_Z^{-1}\cdot f_2\cdot \theta_Y[k]=
\\
&=\Ho(\rho)(\pi(s)^{-1})\cdot \theta_P^{-1}\cdot G_2(s)\cdot  f_2\cdot \theta_Y[k]
=\Ho(\rho)(\pi(s)^{-1})\cdot \theta_P^{-1}\cdot G_2(g)\cdot \theta_Y[k]=
\\
&=\Ho(\rho)(\pi(s)^{-1})\cdot G_1(g)=
\Ho(\rho)(\pi(s)^{-1}\pi(g))=\Ho(\rho)(f)
\end{split}
\end{equation}
show us that $f_1$ is in the image of the functor $\Ho(\rho).$

{\bf Faithful.}
Consider a morphism $f: \pi(h^Y)[k]\to \pi(h^Z).$
As above there are a perfect DG $\cA$\!-module $P\in \prfdg(\cA)$ and a pair of morphisms $(g, s)$ in $D(\cA)^c$ of the form
$$
h^Y[k] \stackrel{g}{\lto} P \stackrel{s}{\longleftarrow} h^Z
$$
such that the morphism $\pi(s)$  is an isomorphism and $f=\pi(s)^{-1}\pi(g).$

Denote by $f_1$ and $f_2$ the images of $f$ under the functors $\Ho(\rho)$ and  $H^0(\chi)\cdot\epsilon,$ respectively
Consider again the diagram (\ref{prizm}). Now both triangles are commutative and both back squares are commutative too.
Assume that $f_1=\Ho(\rho)(f)=0.$ Then $G_1(g)=0$ and, as consequence, $G_2(g)=0.$ Thus we get
$$
f_2=H^0(\chi)\epsilon(\pi(s)^{-1}) G_2(s) f_2= H^0(\chi)\epsilon(\pi(s)^{-1}) G_2(g)=0.
$$
But the functor $H^0(\chi)\epsilon$ is an equivalence. We conclude that $f=0.$
This proves faithfulness.
\end{proof}

Thus, our quasi-functor $\rho: \SF(\cA)/\cL \cap \SF(\cA) \lto \SF(\cB)$ is a quasi-equivalence and it induces
a quasi-functor between subcategories of compact objects
$\rho^c: \cV\to \prfdg(\cB),$ where $\cV\subset\SF(\cA)/\cL\cap\SF(\cA)$ is the full DG subcategory which consists
of all compact objects $(D(\cA)/L)^c.$ And $\rho^c$ is a quasi-equivalence as well.
This finishes the proof of Theorem \ref{mainperf}.

More precisely we had proved the following theorem

\begin{theo}\label{corperf}
Let $\cA$ be a small category which we consider as a DG category
and $L\subset D(\cA)$ be a
localizing subcategory that is generated by compact objects $L^c=L\cap D(\cA)^c.$
Assume that for the quotient functor $\pi :D(\cA)\to
D(\cA)/L$ the following condition holds
\begin{enumerate}
\item[] for every $Y,Z\in \cA$ we have
$\Hom (\pi (h^Y),\pi (h^Z)[i])=0\quad \text{when}\quad i<0.$
\end{enumerate}
Let $\cV\subset\SF(\cA)/\cL\cap\SF(\cA)$ be the full DG subcategory which consists
of all compact objects $(D(\cA)/L)^c.$
Let  $\cE$ be another pretriangulated DG category and $N: (D(\cA)/L)^c \to \Ho(\cE)$ be a fully faithful functor.
Then there is a quasi-functor $\cN:\cV\to\cE$ such that
\begin{enumerate}
\item the functor
$\Ho(\cN): (D(\cA)/L)^c\to \Ho(\cE)$ is also fully faithful;
\item there is an isomorphisms of functors
$\theta: \Ho(\cN)\cdot\pi\cdot h^{\bullet}\stackrel{\sim}{\to} N\cdot\pi\cdot h^{\bullet}$
 from $\cA$ to $H^{0}(\cE);$
\item $\Ho(\cN)(X)\cong N(X)$ for any $X\in (D(\cA)/L)^c.$
\end{enumerate}
\end{theo}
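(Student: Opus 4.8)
The plan is to repeat the proof of Theorem~\ref{mainperf} essentially verbatim, this time taking for the auxiliary enhancement the essential image of $N$ in place of an abstract $\cC$, and then to extract conclusions (1)--(3) exactly as in the proof of Theorem~\ref{fufabig}. \emph{Set-up.} First I would let $\cD\subset\cE$ be the full DG subcategory whose objects are those lying in the essential image of $N$; then $\cD$ is a pretriangulated enhancement of $(D(\cA)/L)^c$, and $N$ is the composition of an equivalence $\epsilon:(D(\cA)/L)^c\to\Ho(\cD)$ with the inclusion $\cD\hookrightarrow\cE$. Now run the constructions of Sections~\ref{cons} and~\ref{Thmainperf} with this $\cD$ in the role of $\cC$: put $\cB\subset\cD$ equal to the full DG subcategory on $\{\epsilon\pi(h^Y)\}_{Y\in\cA}$; by Proposition~\ref{Keller1} the canonical DG functor $\cD\to\Mod\cB$ induces a quasi-equivalence $\chi:\cD\to\prfdg(\cB)$ with $H^0(\chi):\Ho(\cD)\to\Perf(\cB)$ an equivalence; and formulas (\ref{rho})--(\ref{F1}) produce the quasi-functor $\widetilde\rho:\SF(\cA)\to\SF(\cB)$ with $F_1=\Ho(\widetilde\rho)$. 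By Lemma~\ref{factor} the quasi-functor $\widetilde\rho$ factors through $\SF(\cA)/\cL\cap\SF(\cA)$ as a quasi-functor $\rho$, which by Lemma~\ref{equivp} is a quasi-equivalence, hence restricts to a quasi-equivalence $\rho^c:\cV\to\prfdg(\cB)$ between the subcategories of compact objects. Then I would take $\cN$ to be the composition of quasi-functors $\cV\xrightarrow{\rho^c}\prfdg(\cB)\xrightarrow{\chi^{-1}}\cD\hookrightarrow\cE$. As a composition of two quasi-equivalences followed by a full DG embedding, $\cN$ has the property that $\Ho(\cN):(D(\cA)/L)^c\to\Ho(\cE)$ is fully faithful (with essential image $\Ho(\cD)$), which is (1).

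For (2), unwinding the definitions shows that for $Y\in\cA$ the objects $\Ho(\cN)(\pi(h^Y))$ and $N(\pi(h^Y))$ are the images of $G_1(h^Y)$, respectively $G_2(h^Y)$, under the fully faithful functor obtained by composing $H^0(\chi)^{-1}$ with the inclusion $\Ho(\cD)\hookrightarrow\Ho(\cE)$, where $G_1,G_2:\Perf(\cA)\to\Perf(\cB)$ are the two functors introduced in Section~\ref{Thmainperf}. Lemma~\ref{simple2} supplies an isomorphism $G_1\cdot h^\bullet\to G_2\cdot h^\bullet$, and transporting it through this functor yields the required isomorphism $\theta:\Ho(\cN)\cdot\pi\cdot h^\bullet\to N\cdot\pi\cdot h^\bullet$. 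For (3), I would invoke Proposition~\ref{mainprop} with $\cU=\prfdg(\cA)$ for the pair $G_1,G_2$ (they satisfy conditions $(*)$ and agree on $\cA$ by Lemma~\ref{simple2}): for every $P\in\prfdg(\cA)$ it produces an isomorphism $G_1(P)\to G_2(P)$ which is compatible, via diagram (\ref{commute}), with all morphisms out of representable DG modules. Transporting again through $H^0(\chi)^{-1}$ and the inclusion, this gives $\Ho(\cN)(\pi(P))\cong N(\pi(P))$ for every $P\in\prfdg(\cA)$, i.e.\ on the image of $D(\cA)^c/L^c$. Finally, since by Theorem~\ref{Neeman}~(4) the category $(D(\cA)/L)^c$ is the idempotent completion of $D(\cA)^c/L^c$, every $X\in(D(\cA)/L)^c$ is a direct summand of some $\pi(P)$ with $P\in\prfdg(\cA)$; one then deduces $\Ho(\cN)(X)\cong N(X)$ by checking that the comparison isomorphisms above are compatible with the splitting idempotent, which one carries out with the roof-calculus of Lemma~\ref{equivp}.

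I expect this last point to be the main obstacle. Proposition~\ref{mainprop} only controls the comparison isomorphisms against morphisms out of representable DG modules, whereas $(D(\cA)/L)^c$ is merely the idempotent completion of $D(\cA)^c/L^c$ and the idempotents cutting out general compact objects need not be seen through representables; hence the descent from $\{\pi(P):P\in\prfdg(\cA)\}$ to all of $(D(\cA)/L)^c$ requires the same roof-chasing bookkeeping that appears in the full-faithfulness argument of Lemma~\ref{equivp}. Once that bookkeeping is in place, (1), (2) and (3) are a formal transcription of Sections~\ref{cons} and~\ref{Thmainperf} combined with the argument of Theorem~\ref{fufabig}.
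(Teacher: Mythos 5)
Your construction of $\cN$ and your treatment of parts (1) and (2) coincide with the paper's proof: set $\cC\subseteq\cE$ to be the full DG subcategory on the essential image of $N$, build $\cB$, $\chi$, $\widetilde\rho$, $\rho$, $\rho^c$ as in Sections~\ref{cons} and~\ref{Thmainperf}, and take $\cN=\iota\cdot\chi^{-1}\cdot\rho^c$. Part (2) then follows from Lemma~\ref{simple2} exactly as you say.

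For part (3), however, you correctly locate the obstacle --- passing from $\{\pi(P): P\in\prfdg(\cA)\}$, where Proposition~\ref{mainprop} gives the isomorphism, to arbitrary $X\in(D(\cA)/L)^c$ living only in the idempotent completion --- but your proposed resolution does not work, and the paper uses a genuinely different idea. You suggest checking that $\theta_P$ is compatible with the splitting idempotent $e:\pi(P)\to\pi(P)$ by ``the roof-calculus of Lemma~\ref{equivp}.'' The difficulty is that Proposition~\ref{mainprop} only controls $\theta_P$ against morphisms $h^Y[-k]\to P$ from representables; it says nothing about compatibility with a general endomorphism of $P$, and the prism argument of Lemma~\ref{equivp} likewise only handles roofs whose source and target are representables. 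The idempotent $e$, being a roof between non-representable objects $P\to P'\leftarrow P$ in $D(\cA)^c$, falls outside the scope of that control, so the bookkeeping you envision cannot be carried out from the ingredients you have.

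What the paper does instead is a short $K$-theoretic detour. Since $X$ is a summand of some $\pi(P)$ and $\Hom(\pi(h^Y),\pi(h^Z)[i])=0$ for $i<0$, one first obtains the vanishing
$\Hom(H^0(\cN)(X),N(X)[k])=0$ and $\Hom(N(X),H^0(\cN)(X)[k])=0$ for $k\ll 0.$
Then one considers $X\oplus X[2m+1]$ for $m\gg 0$: its class in $K_0((D(\cA)/L)^c)$ is zero, so by Thomason's Lemma~2.2 in \cite{Tom} (classification of dense triangulated subcategories) it lies in the image of $D(\cA)^c$, i.e.\ $X\oplus X[2m+1]\cong\pi(Q)$ for some perfect $Q$. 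Now $H^0(\cN)\pi(Q)\cong N\pi(Q)$ is available on the nose, and the vanishing conditions above force the off-diagonal entries of this isomorphism (viewed as a $2\times 2$ matrix with respect to the decomposition $X\oplus X[2m+1]$) and of its inverse to vanish, so the diagonal entry gives the desired isomorphism $H^0(\cN)(X)\cong N(X)$. This $K_0$ argument is the missing key idea in your proposal; without it, part (3) has a real gap.
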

\begin{proof}
Let $\cC\subseteq\cE$ be a full DG subcategory that consists of all objects in the essential image of $N.$
Then DG category $\cC$ is another enhancement for $(D(\cA)/L)^c$ and the functor $N$ induces
an equivalence $\epsilon :(D(\cA)/L)^c\stackrel{\sim}{\to} \Ho(\cC)$ between  the triangulated
categories.

As in the proof of Theorem \ref{mainperf} we
denote by $\cB
\subset \cC$ the full DG subcategory with the set of objects
$\{\epsilon \pi(h^Y)\}_{Y\in \cA}.$ By Proposition \ref{Keller1} a DG functor
$\varPsi :\cC \to \Mod\cB$ induces a quasi-functor  $\chi:\cC\to\prfdg(\cB)$
such that
$$
H^0(\chi) :\Ho(\cC)\stackrel{\sim}{\lto}D(\cB)^c=\prfdg(\cB).
$$
is an equivalence.
By construction in  Section \ref{cons} (formula (\ref{rho}) we have a quasi-functor
$$
\widetilde\rho: \SF(\cA)\lto\SF(\cB)
$$
and by Lemma \ref{factor}
the quasi-functor $\widetilde\rho$ factors through the DG quotient
$\SF(\cA)/\cL\cap\SF(\cA).$ Hence it induces a quasi-functor
$\rho:\SF(\cA)/\cL\cap\SF(\cA)\to \SF(\cB).$ In Lemma \ref{equivp} we proved that
our quasi-functor $\rho$ is a quasi-equivalence and, therefore,
it induces
a quasi-equivalence between DG subcategories of compact objects
$\rho^c: \cV\to \prfdg(\cB).$ Now we denote by $\cN$ the composition of quasi-functors $\rho^c, \chi^{-1}$
and the full embedding of $\cC$ to $\cE.$

(1) It is evident that $H^0(\cN)$ is fully faithful by construction.

(2) Lemma \ref{simple2} implies immediately that there is an isomorphisms of functors
$\theta: \Ho(\cN)\cdot\pi\cdot h^{\bullet}\stackrel{\sim}{\to} N\cdot\pi\cdot h^{\bullet}$
 from $\cA$ to $H^{0}(\cE).$

(3) By Proposition \ref{mainprop} applied to $\cU=\prfdg(\cA)$
there is an isomorphism $\Ho(\cN)\pi(P)\cong N\pi(P)$ for any $P\in D(\cA)^c.$
By Theorem \ref{Neeman} (4)  any object $X\in (D(\cA)/L)^c$ is a direct summand of an object $\pi(P),$ where
$P\in\SF_{fg}(\cA).$ Since for any $Y, Z\in\cA$ we have $\Hom(\pi(h^Y), \pi(h^Z)[i])=0,$ when $i<0,$ we can deduce that
$\Hom(\pi(P), \pi(P)[k])=0$ for sufficiently negative $k\ll 0.$ Therefore, we do not have any homomorphisms
from
$H^0(\cN)\pi(P)$ to  $N\pi(P)[k]$ and from $N\pi(P)$ to $H^0(\cN)\pi(P)[k]$ for $k\ll 0,$
because these functors are fully faithful and $H^0(\cN)\pi(P)\cong N\pi(P).$
 This implies that for $X$ as a direct summand of $\pi(P)$ we also have
\begin{equation}\label{vancon}
\Hom(H^0(\cN)(X), N(X)[k])=0,\quad \Hom(N(X), H^0(\cN)(X)[k])=0\quad\text{for}\quad k\ll 0.
\end{equation}
Consider an object $X\oplus X[2m+1]$ for sufficiently large $m\gg 0.$
Its class in the Grothendieck group $K_0((D(\cA)/L)^c)$ is equal to $0.$
By Lemma 2.2 of \cite{Tom} it belongs to any full subcategory whose idempotent completion
is $(D(\cA)/L)^c.$ Hence, $X\oplus X[2m+1]\cong\pi(Q)$
for some $Q\in\Perf(\cA)=D(\cA)^c.$ We know that there is an isomorphism
$H^0(\cN)\pi(Q)\stackrel{\sim}{\to}N\pi(Q).$ The vanishing conditions (\ref{vancon})
implies that such isomorphism induces an isomorphism $H^0(\cN)(X)\stackrel{\sim}{\to} N(X).$
\end{proof}


\section{Applications to commutative and non-commutative geometry}\label{geom}

Let $\cA$ be a small category. As above we can consider $\cA$ as DG category and let $\Mod\cA$ be the DG category of DG $\cA$\!-modules.
We also can consider the abelian category of right $\cA$\!-modules which we denote by $\umod\cA.$
The DG category of all complexes $C^{dg}(\umod\cA)$ is exactly the DG category $\Mod\cA$
and  the derived category $D(\umod\cA)$ of the abelian category $\umod\cA$ is exactly the derived category $D(\cA)$ introduced above.

Let $\cS\subset \umod\cA$ be a Serre (or dense) subcategory. By definition, this means that for any exact sequence
$$
0\lto M'\lto M\lto M''\lto 0
$$
$M$ is in $\cS$ if and only if both $M'$ and $M''$ are in $\cS.$

\begin{defi}
In this case we can define a quotient category $\QMod_{\cS}(\cA):=\umod\cA/\cS$ as a category with the same objects as $\umod\cA$
and
$$
\Hom_{\QMod_{\cS}(\cA)}(M, N):= \colim \Hom_{\umod\cA}(M', N/N'),
$$
where $M'$ and $N'$ are submodules of $M$ and $N$ such that $M/M'$ and $N'$ are in $\cS.$
\end{defi}

We will assume that the subcategory $\cS$ is {\it localizing}, i.e. it is a Serre category that is closed
under direct sums in $\umod\cA.$
(Hence it is closed under direct limits as well.)
Denote by $\varPi$ the canonical quotient functor from $\umod\cA$ to $\QMod_{\cS}(\cA).$ It is exact, preserve direct sums and,
since $\umod\cA$ is an AB5-category with a set of generators and with enough injective objects, the quotient category $\QMod_{\cS}(\cA)$
also has these properties. Moreover, in this case the quotient functor $\varPi$ has a right adjoint functor $\varOmega$
which is called a section functor.
The functor $\varOmega$ is left exact and  it is full and faithful, i.e. the natural morphism $\varPi\varOmega\to \id_{\QMod}$
is an isomorphism.
All these facts are standard theory of localization for abelian categories and can be found in \cite{Ga, Po}.

The functor $\varPi$ is exact and it induces the functor between the derived categories which we also denote by
$$
\varPi: D(\cA)=D(\umod\cA)\to D(\QMod_{\cS}(\cA)).
$$
Since $\varPi$ respect direct sums, by  Theorem 4.1 \cite{Ne3}, it has a right adjoint functor
$$
\bR\varOmega: D(\QMod_{\cS}(\cA))\to D(\umod\cA)=D(\cA).
$$

We are interesting only the case when $\varPi(h^Y)$ are compact in $D(\QMod_{\cS}(\cA))$ for all $Y\in\cA.$
This property is equivalent to the condition that $\bR\varOmega$ preserves direct sums.
Indeed, in the following commutative diagram

$$
\begin{CD}
\bigoplus\limits_i \Hom(\varPi (h^Y), X_i)@>{\sim}>> \bigoplus\limits_i \Hom(h^Y, \bR\varOmega(X_i))\\
@V{can}VV @V{can}VV   \\
\Hom(\varPi (h^Y), \bigoplus\limits_i X_i)@>{\sim}>>  \Hom(h^Y, \bR\varOmega(\bigoplus\limits_i X_i))
\end{CD}
$$
the right arrow is an isomorphism if and only if the left arrow is an isomorphism.

Now for any injective object $I\in \QMod_{\cS}(\cA)$ we have
$\varPi\bR\varOmega(I)\cong \varPi\varOmega(I)\cong I.$ Hence the functor $\bR\varOmega$ is fully faithful on the subcategory
of bounded below complexes $D^{+}(\QMod_{\cS}(\cA)).$ If now $\bR\varOmega$ preserves direct sums then it is fully faithful on the whole
derived category $D(\QMod_{\cS}(\cA)).$

Furthermore, for any object $M^{\cdot}\in D(\cA)$ we have an exact triangle of the form
$$
 N^{\cdot}\lto M^{\cdot}\lto \bR\varOmega\varPi(M^{\cdot})
$$
and $\varPi(N^{\cdot})\cong 0.$ This implies that cohomologies of $N^{\cdot}$ belongs to $\cS.$
Denote by $L_{\cS}$ the full triangulated subcategory of $D(\cA)$ that consists of all objects cohomologies of which belongs to $\cS.$
It is a localizing triangulated subcategory, because it is closed under taking direct sums. Since $\varPi(N^{\cdot})=0$
when $N^{\cdot}\in L_{\cS},$
the quotient functor
$\varPi: D(\cA)\to D(\QMod_{\cS}(\cA))$ factors through  the projection $\pi: D(\cA)\to D(\cA)/L_{\cS}.$ Moreover, it is evident now
that the functor $\bR\varOmega$ in composition with $\pi$ establish an equivalence between $D(\QMod(\cA))$ and $D(\cA)/L_{\cS},$
because both  these categories are equivalent to the  right orthogonal $L_{\cS}^{\perp}$ in $D(\cA).$
Thus we obtain
\begin{lemma}\label{quotab} Let $\cA$ be a small category and $\cS\subset \umod\cA$ be a localizing subcategory.
Let $\varPi:\umod\cA\to \QMod_{\cS}(\cA)$ be the canonical functor to the quotient category  and let $L_{\cS}\subset D(\umod\cA)$ be
the full triangulated subcategory of $D(\cA)$ that consists of all objects cohomologies of which belongs to $\cS.$
Assume that the objects $\varPi(h^Y)$ are compact in $D(\QMod_{\cS}(\cA))$ for all $Y\in\cA.$
Then the functor $\varPi$ induces an equivalence of triangulated categories
$$
\varPi': D(\umod\cA)/L_{\cS}\stackrel{\sim}{\lto} D(\QMod_{\cS}(\cA)).
$$
\end{lemma}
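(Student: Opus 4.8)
The plan is to realize the right adjoint $\bR\varOmega$ of $\varPi$ as a fully faithful functor and to identify its essential image with the right orthogonal $L_{\cS}^{\perp}\subset D(\cA)$, which the Bousfield localization identifies with $D(\cA)/L_{\cS}$. First one checks that $\varPi$ factors through $\pi$: since $\varPi\colon\umod\cA\to\QMod_{\cS}(\cA)$ is exact, for a complex $M^{\cdot}$ one has $H^i(\varPi M^{\cdot})\cong\varPi(H^iM^{\cdot})$, and $\varPi N=0$ for a module $N$ exactly when $N\in\cS$; hence $\varPi(M^{\cdot})\cong0$ in $D(\QMod_{\cS}(\cA))$ if and only if every cohomology $H^iM^{\cdot}$ lies in $\cS$, that is, if and only if $M^{\cdot}\in L_{\cS}$. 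So $\varPi$ annihilates $L_{\cS}$, the induced functor $\varPi'\colon D(\cA)/L_{\cS}\to D(\QMod_{\cS}(\cA))$ is defined, and it remains to show it is an equivalence.

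The main step is the full faithfulness of $\bR\varOmega$, i.e. that the counit $\varepsilon_X\colon\varPi\bR\varOmega(X)\to X$ is an isomorphism for every $X$. For an injective $I\in\QMod_{\cS}(\cA)$ one has $\bR\varOmega(I)\cong\varOmega(I)$ and $\varPi\varOmega(I)\cong I$, so $\varepsilon_I$ is an isomorphism; since a bounded below complex of injectives computes $\bR\varOmega$ termwise, $\varepsilon_X$ is an isomorphism for all $X\in D^{+}(\QMod_{\cS}(\cA))$. The full subcategory of objects on which $\varepsilon$ is an isomorphism is triangulated, and it is closed under arbitrary direct sums because $\varPi$ preserves them and — by the discussion preceding the lemma, which is exactly where the hypothesis that the $\varPi(h^Y)$ are compact is used — so does $\bR\varOmega$; hence it is localizing and closed under homotopy colimits. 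Any $X$, represented by a complex $M^{\cdot}$, satisfies $X\cong\hocolim_m\sigma_{\ge -m}M^{\cdot}$, where the stupid truncations form an increasing chain of subcomplexes whose ordinary colimit $M^{\cdot}$ computes the homotopy colimit (filtered colimits are exact in the Grothendieck category $\QMod_{\cS}(\cA)$), and each $\sigma_{\ge -m}M^{\cdot}$ lies in $D^{+}$. Hence $\varepsilon_X$ is an isomorphism for all $X$ and $\bR\varOmega$ is fully faithful.

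Finally, the triangle $N^{\cdot}\to M^{\cdot}\to\bR\varOmega\varPi(M^{\cdot})$ with $N^{\cdot}\in L_{\cS}$ gives, via $\Hom_{D(\cA)}(N^{\cdot},\bR\varOmega\varPi(M^{\cdot}))\cong\Hom(\varPi N^{\cdot},\varPi M^{\cdot})=0$, that $\bR\varOmega\varPi(M^{\cdot})\in L_{\cS}^{\perp}$ for every $M^{\cdot}$; conversely any $X\in L_{\cS}^{\perp}$ is in the essential image of $\bR\varOmega$, since in the triangle $N^{\cdot}\to X\to\bR\varOmega\varPi(X)$ the map $N^{\cdot}\to X$ vanishes, so $X$ is a direct summand of $\bR\varOmega\varPi(X)$ whose complement lies in $L_{\cS}\cap L_{\cS}^{\perp}=0$. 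Thus $\bR\varOmega$ is an equivalence of $D(\QMod_{\cS}(\cA))$ onto $L_{\cS}^{\perp}$, with quasi-inverse the restriction of $\varPi$; composing with the equivalence $\pi\colon L_{\cS}^{\perp}\stackrel{\sim}{\lto} D(\cA)/L_{\cS}$ (the standard description of the image of the Bousfield localization functor) shows that $\varPi'$ is an equivalence — equivalently, one invokes the general fact that a triangulated localization with a fully faithful right adjoint identifies its target with the Verdier quotient by its kernel, here $L_{\cS}$. The only non-formal ingredient is the passage from $D^{+}$ to all of $D(\QMod_{\cS}(\cA))$ in the full faithfulness of $\bR\varOmega$, and that is precisely where the compactness hypothesis on the $\varPi(h^Y)$ is needed.
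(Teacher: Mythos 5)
Your argument is the same as the paper's: realize $\bR\varOmega$ as a fully faithful right adjoint of $\varPi$ (first on $D^+$ via injectives, then on all of $D(\QMod_{\cS}(\cA))$ using that the compactness of the $\varPi(h^Y)$ makes $\bR\varOmega$ preserve direct sums), identify its essential image with $L_{\cS}^{\perp}$, and compare with the Bousfield localization $\pi$. The only difference is that you make explicit the step the paper leaves to the reader — extending full faithfulness from $D^+$ to $D$ by writing an arbitrary complex as the homotopy colimit of its bounded-below stupid truncations in the AB5 category $\QMod_{\cS}(\cA)$ — which is a legitimate and clean way to fill that gap.
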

Now Theorem \ref{main} together with this lemma implies the following proposition.
\begin{prop}\label{qcat} Let $\cA$ be a small category and $\cS\subset \umod\cA$ be a localizing subcategory.
 Then the derived category of the quotient abelian category $D(\QMod_{\cS}(\cA))$ has a unique enhancement if
the objects $\varPi(h^Y)\in D(\QMod_{\cS}(\cA))$ are compact for all $Y\in\cA.$
\end{prop}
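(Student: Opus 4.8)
The plan is to deduce Proposition \ref{qcat} from Theorem \ref{main}, applied to the localizing subcategory $L_{\cS}\subset D(\cA)$, by transporting its hypotheses and conclusion along the equivalence furnished by Lemma \ref{quotab}. First I would invoke Lemma \ref{quotab}: under the standing hypothesis that $\varPi(h^Y)$ is compact in $D(\QMod_{\cS}(\cA))$ for every $Y\in\cA$, the exact functor $\varPi$ induces an equivalence $\varPi':D(\cA)/L_{\cS}\stackrel{\sim}{\lto} D(\QMod_{\cS}(\cA))$ of triangulated categories. Since the property of having a unique enhancement is invariant under triangulated equivalences (an enhancement of one category yields one of the other by composing with the equivalence), it suffices to show that $D(\cA)/L_{\cS}$ has a unique enhancement, and for this I would verify the hypotheses of Theorem \ref{main} for the pair $L_{\cS}\subset D(\cA)$.

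The subcategory $L_{\cS}$ is localizing by its very definition (it is closed under direct sums, being the kernel of the direct-sum-preserving exact functor $\varPi$). Next, under our compactness hypothesis the right adjoint $\bR\varOmega$ preserves direct sums and is therefore fully faithful on all of $D(\QMod_{\cS}(\cA))$, as recorded in the discussion preceding Lemma \ref{quotab}; hence $D(\QMod_{\cS}(\cA))$ embeds as a full subcategory of $D(\cA)$ and so has small Hom-sets, and via $\varPi'$ so does $D(\cA)/L_{\cS}$. As $D(\cA)$ is compactly generated with small Hom-sets by the set $\{h^Y\}_{Y\in\cA}$ (Example \ref{EX}), Proposition \ref{adjnew} a) supplies the Bousfield localization functor $\mu$, a right adjoint to $\pi:D(\cA)\to D(\cA)/L_{\cS}$; concretely $\varPi'$ carries $\mu$ to $\bR\varOmega$.

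It then remains to check conditions a) and b) of Theorem \ref{main}. For a): $\varPi'$ sends $\pi(h^Y)$ to $\varPi(h^Y)$, which is compact by hypothesis, so $\pi(h^Y)$ is compact in $D(\cA)/L_{\cS}$. For b): since $\varPi'$ is an equivalence, $\Hom(\pi(h^Y),\pi(h^Z)[i])\cong\Hom_{D(\QMod_{\cS}(\cA))}(\varPi(h^Y),\varPi(h^Z)[i])$. The representable modules $h^Y,h^Z$ are genuine right $\cA$-modules, i.e. objects of the abelian category $\umod\cA$ placed in cohomological degree $0$; hence $\varPi(h^Y)$ and $\varPi(h^Z)$ are objects of the abelian category $\QMod_{\cS}(\cA)$ in degree $0$, and the group above equals $\Ext^i_{\QMod_{\cS}(\cA)}(\varPi(h^Y),\varPi(h^Z))$, which vanishes for $i<0$. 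Thus Theorem \ref{main} applies and gives the uniqueness of enhancement for $D(\cA)/L_{\cS}$, hence for $D(\QMod_{\cS}(\cA))$.

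I do not expect a genuine obstacle: all the substance is already contained in Theorem \ref{main}, in Lemma \ref{quotab}, and in the standard localization theory of Grothendieck categories recalled in the text. The only point that requires a moment's care is the bookkeeping — transporting the adjoint $\mu$, the compactness of the $\pi(h^Y)$, and the $\Hom$-vanishing across the equivalence $\varPi'$ — together with the elementary but essential observation that the representable modules $h^Y$ live in a single cohomological degree, which makes condition b) automatic from the fact that negative $\Ext$-groups in an abelian category vanish.
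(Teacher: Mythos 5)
Your proposal is correct and takes essentially the same route as the paper, which dispatches the proposition in one sentence by citing Theorem \ref{main} together with Lemma \ref{quotab}; you have simply spelled out the bookkeeping (transport of the Bousfield adjoint, compactness, and the vanishing of negative $\Ext$ for objects concentrated in degree $0$) that the paper leaves implicit.
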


As we mentioned above
the quotient category $\QMod_{\cS}(\cA)$
is a Grothendieck category, i.e. it is
an abelian AB5-category with a set of generators.
A well-known theorem of Gabriel and Popescu (see, for example, \cite{Po})
states essentially  that any Grothendieck category $\cC$ is equivalent to a quotient
category of the category $\umod\Lambda$ of right modules
over the endomorphism  ring $\Lambda$ of a  generator $U\in\cC.$
There  are also  Gabriel-Popescu type theorems for a set of generators.

\begin{theo}\label{GP}{\rm (\cite{Me, Igl})} Let $\cC$ be a Grothendieck category. Assume that $\cA$ is a full subcategory of $\cC$
such that $\Ob\cA$ form a set of generators of $\cC.$
Then there exist a localizing subcategory $\cN \subset
\umod\cA$ and an equivalence of categories
$\cC \stackrel{\sim}{\to} \QMod _{\cN}(\cA).$
\end{theo}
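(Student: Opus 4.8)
The goal is to realize $\cC$ as an exact localization of $\umod\cA$, dually to the construction of $\QMod_{\cS}(\cA)$ recalled at the beginning of this section; this is exactly the Gabriel--Popescu theorem, now with the set of generators $\Ob\cA$ in place of a single generator. Note first that $\cA$, being a set of objects, is small, so $\umod\cA$ is well defined.

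First I would produce an adjoint pair. Since $\cA$ is small, $k$-linear, and a \emph{full} subcategory of the cocomplete $k$-linear category $\cC$, the inclusion $\iota:\cA\hookrightarrow\cC$ extends along the Yoneda embedding $h^{\bullet}:\cA\to\umod\cA$ to the cocontinuous functor
$$
S:\umod\cA\lto\cC,\qquad S(M)=\int^{Y\in\cA}M(Y)\otimes_k Y
$$
(the left Kan extension of $\iota$ along $h^{\bullet}$); it preserves all colimits, is right exact, and sends $h^Y\mapsto Y$. Its right adjoint is the restricted Yoneda (``singular'') functor
$$
T:\cC\lto\umod\cA,\qquad T(C)=\Hom_{\cC}(-,C)|_{\cA}.
$$
Fullness of $\cA$ in $\cC$ makes the unit $\eta_{h^Y}:h^Y\to TS(h^Y)$ an isomorphism for all $Y\in\cA$. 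Because $\Ob\cA$ is a set of generators, $T$ is faithful (a morphism killed by $T$ is killed after precomposition with every arrow $Y\to C$, $Y\in\cA$, hence vanishes) and dually the counit $\varepsilon_C:ST(C)\to C$ is an epimorphism (its image is the sum of the images of all $Y\to C$, $Y\in\cA$).

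The heart of the matter is then to prove (i) that $S$ is \emph{exact}, and (ii) that $T$ is \emph{full} --- equivalently that $\varepsilon$ is an isomorphism. For (i), since $S$ commutes with filtered colimits I would write an arbitrary $\cA$-module as the filtered union of its finitely generated submodules and invoke the AB5 property of $\cC$ (a filtered colimit of monomorphisms is a monomorphism) to reduce the preservation of monomorphisms by $S$ to inclusions of finitely generated $\cA$-modules; presenting a finitely generated module as a cokernel $\bigoplus_j h^{Z_j}\to\bigoplus_i h^{Y_i}\to M\to 0$ and tracking kernels then reduces it further --- again using AB5 and the generation hypothesis --- to inclusions of subfunctors of representables, which can be checked directly. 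For (ii) I would choose a free presentation $F_1\to F_0\to T(C)\to 0$ in $\umod\cA$, apply the right-exact $S$ and compose with $\varepsilon_C$ to obtain an epimorphism $p:SF_0=\bigoplus_i Y_i\twoheadrightarrow C$; a morphism $\alpha:T(C)\to T(C')$ yields, via $F_0\to T(C)\xrightarrow{\alpha}T(C')$, a morphism $\beta:\bigoplus_i Y_i\to C'$, and one shows --- using exactness of $S$ from (i) and the generation hypothesis --- that $\beta$ factors through $p$, producing $g:C\to C'$ with $T(g)=\alpha$. Steps (i) and (ii) are the classical Gabriel--Popescu arguments (\cite{Ga,Po}; \cite{Me,Igl} for the several-object case). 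Step (i) is where the hypotheses on $\cC$ are genuinely used, and it is the step I expect to be the main obstacle.

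Finally, granted (i) and (ii), the conclusion is formal. By (i) the kernel $\cN:=\ker S\subset\umod\cA$ is a Serre subcategory closed under arbitrary direct sums, hence a localizing subcategory, and by (ii) together with faithfulness of $T$ the functor $T$ is fully faithful. The standard localization theory for abelian categories (\cite{Ga,Po}, as recalled at the beginning of this section) then shows that $S$ factors as $\umod\cA\xrightarrow{\varPi}\umod\cA/\cN=\QMod_{\cN}(\cA)\xrightarrow{\bar S}\cC$ with $\bar S$ an equivalence, its quasi-inverse being the functor $\cC\to\QMod_{\cN}(\cA)$ induced by $T$. This quasi-inverse is the desired equivalence $\cC\stackrel{\sim}{\to}\QMod_{\cN}(\cA)$.
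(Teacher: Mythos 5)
The paper does not prove Theorem \ref{GP}: it is stated as a citation of \cite{Me,Igl}, so there is no proof in the text to compare against. Your outline is nonetheless the standard one and matches the cited references: build the adjunction $S\dashv T$ with $S$ the colimit-preserving extension of $\cA\hookrightarrow\cC$ along Yoneda and $T$ the restricted Yoneda functor, prove $T$ fully faithful and $S$ exact, set $\cN=\Ker S$, and conclude by the universal property of the Serre quotient. The identifications of unit and counit, the use of fullness of $\cA\subset\cC$ and of the generating property, and the final localization argument are all correct.

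The genuine gap is precisely where you flag the main obstacle. The phrase ``inclusions of subfunctors of representables, which can be checked directly'' undersells the difficulty: that $S(K)\to S(h^Y)\cong Y$ is monic for every subfunctor $K\subset h^Y$ is \emph{the} key lemma of Gabriel--Popescu, and it is not a direct check but the place where AB5 and the generation hypothesis interact in a genuinely delicate way (the classical argument identifies elements of $K(Z)$ with morphisms $Z\to Y$ in $\cC$, builds a filtered diagram of subobjects of $Y$, and uses AB5 to pass to the colimit). Moreover, the intermediate reduction is slightly misstated: a free $\cA$-module is a coproduct of representables, so the reduction from a finitely generated $M$ lands on subfunctors of \emph{finite direct sums} $\bigoplus_{i=1}^{n}h^{Y_i}$, and one needs a further argument to descend to a single $h^Y$ (or one proves the key lemma directly for finite sums). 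None of this changes the architecture of your proof, and since the paper itself delegates to \cite{Me,Igl} it is reasonable to do the same; but as written the crux of the theorem is asserted rather than established.
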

Combining this theorem with Proposition \ref{qcat} we obtain

\begin{theo}\label{Grothcat} Let $\cC$ be a Grothendieck category. Assume that it has a set of small generators
which are compact objects in the derived category $D(\cC).$
Then the derived category  $D(\cC)$ has a unique enhancement.
\end{theo}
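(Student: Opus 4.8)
The plan is to deduce this from Proposition \ref{qcat} by means of the Gabriel--Popescu type Theorem \ref{GP}. First I would choose a set of small generators of $\cC$ that are compact in $D(\cC)$, and let $\cA\subset\cC$ be the full subcategory spanned by them; since they form a set, $\cA$ is a small category, which we regard as a DG category. Applying Theorem \ref{GP} to $\cA\subset\cC$ then produces a localizing subcategory $\cN\subset\umod\cA$ together with an equivalence of abelian categories $e\colon\cC\stackrel{\sim}{\to}\QMod_{\cN}(\cA)$. The key structural point is that this equivalence is compatible with the Yoneda embedding: the composition $e\circ(\cA\hookrightarrow\cC)$ is isomorphic to $\varPi\circ h^{\bullet}\colon\cA\to\umod\cA\to\QMod_{\cN}(\cA)$, so that $e(Y)\cong\varPi(h^Y)$ for every $Y\in\cA$.

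Next I would transport compactness along the induced equivalence of triangulated categories $D(\cC)\stackrel{\sim}{\to}D(\QMod_{\cN}(\cA))$. This equivalence sends each generator $Y\in\cA$, placed in degree zero, to $\varPi(h^Y)$; since $Y$ is compact in $D(\cC)$ by hypothesis, $\varPi(h^Y)$ is compact in $D(\QMod_{\cN}(\cA))$ for every $Y\in\cA$. This is precisely the hypothesis of Proposition \ref{qcat}, so $D(\QMod_{\cN}(\cA))$ has a unique enhancement. (The vanishing $\Hom(\varPi(h^Y),\varPi(h^Z)[i])=0$ for $i<0$, i.e. condition (b) of Theorem \ref{main}, holds automatically because $\varPi(h^Y)$ and $\varPi(h^Z)$ lie in degree zero inside the abelian category $\QMod_{\cN}(\cA)$, so this $\Hom$ computes $\Ext^{i}$; this vanishing has already been absorbed into the statement of Proposition \ref{qcat}.) Finally, having a unique enhancement is invariant under equivalence of triangulated categories: an enhancement of $D(\QMod_{\cN}(\cA))$ pulls back along $e$ to an enhancement of $D(\cC)$ and conversely, and equivalences between enhancements are carried along in both directions, so $D(\cC)$ inherits the uniqueness.

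The whole argument is short once Theorem \ref{main}, Lemma \ref{quotab} and Proposition \ref{qcat} are in hand; the one step that requires care is the compatibility of the Gabriel--Popescu equivalence with the representable modules asserted in the first paragraph, since it is exactly this identification that lets one translate the compactness hypothesis on the generators of $\cC$ into the compactness of the $\varPi(h^Y)$ that Proposition \ref{qcat} demands. This compatibility is built into Theorem \ref{GP} as stated (and into the references \cite{Me,Igl}), so no additional work is needed.
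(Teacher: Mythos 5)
Your proof is correct and follows precisely the route the paper takes: the paper deduces Theorem \ref{Grothcat} from the Gabriel--Popescu type Theorem \ref{GP} together with Proposition \ref{qcat}, though it does so in a single line without supplying details. You have filled in the omitted details correctly, in particular the compatibility of the Gabriel--Popescu equivalence with the Yoneda embedding (identifying the chosen generators with the images $\varPi(h^Y)$), which is the step needed to transport the compactness hypothesis.
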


Now we can apply this result to the category of quasi-coherent sheaves on a quasi-compact and separated scheme.
We say that a quasi-compact and separated scheme $X$ {\it has enough  locally free sheaves},
if  for any finitely presented sheaf $\cF$ there is an epimorphism
$\E\twoheadrightarrow\cF$ with a locally free
sheaf $\E$ of finite type. In this case, the set of all locally free sheaves of finite rank forms a set
of generators of the abelian category of quasi-coherent sheaves $\Qcoh X.$ This follows from a fact that in this case
 every sheaf in $\Qcoh X$
is a filtering colimit of finitely presented $\O_X$\!-modules (see \cite{EGA1} 6.9.12).
Moreover,  any locally free
sheaf  of finite rank is a compact object in $D(\Qcoh X),$ because the functor of global sections commutes with direct sums
for a quasi-compact and separated scheme (see \cite{Ne3} Lemma 1.4, Ex.1.10). It is also need to mention that
for a quasi-compact and separated scheme $X$ the category
$D(\Qcoh X)$ is equivalent to
the category $D(X)_{\Qcoh}$ of complexes of $\O_X$\!-modules with quasi-coherent
cohomology (\cite{BN} Cor.5.5).
Thus, Theorem \ref{Grothcat} implies immediately.

\begin{cor}\label{qcqse}
Let $X$ be a quasi-compact and separated scheme that has enough locally free sheaves. Then
the derived category of quasi-coherent sheaves $D(\Qcoh X)$  has a unique enhancement.
\end{cor}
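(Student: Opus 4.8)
The plan is to obtain this statement as an immediate application of Theorem \ref{Grothcat} to the Grothendieck category $\cC=\Qcoh X$ (recall that $\Qcoh X$ is indeed a Grothendieck category). Thus it suffices to exhibit a set of small generators of $\Qcoh X$ and to check that these generators can be taken to be compact objects of $D(\Qcoh X)$.

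First I would identify the generators. Since $X$ is quasi-compact and quasi-separated, every quasi-coherent sheaf is a filtered colimit of finitely presented $\cO_X$\!-modules (\cite{EGA1}, 6.9.12); and by hypothesis every finitely presented sheaf $\cF$ admits an epimorphism $\cE\twoheadrightarrow\cF$ with $\cE$ a locally free sheaf of finite rank. In an AB5-category a filtered colimit is a quotient of the corresponding coproduct, so combining these two facts shows that every quasi-coherent sheaf is a quotient of a direct sum of locally free sheaves of finite rank; hence these sheaves form a set of generators of $\Qcoh X$. They form a set up to isomorphism --- on the quasi-compact space $X$ the rank, being locally constant, has finite image, and locally free sheaves of a fixed rank are classified by a set --- and each of them is a finitely presented, hence small, object of $\Qcoh X$.

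Next I would verify compactness in $D(\Qcoh X)$. For a quasi-compact and separated scheme the functor $\bR\Gamma$ commutes with arbitrary direct sums (\cite{Ne3}, Lemma~1.4 and Example~1.10; here one also uses the identification $D(\Qcoh X)\cong D(X)_{\Qcoh}$ of \cite{BN}). For a locally free sheaf $\cE$ of finite rank one has
$$
\Hom_{D(\Qcoh X)}(\cE,-)\cong \Hom_{D(\Qcoh X)}\bigl(\cO_X,\cE^{\vee}\otimes(-)\bigr)\cong H^0\bR\Gamma\bigl(\cE^{\vee}\otimes(-)\bigr),
$$
where $\cE^{\vee}$ is the dual locally free sheaf. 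Since tensoring with $\cE^{\vee}$ is exact and preserves direct sums, the right-hand functor commutes with direct sums, so $\cE$ is compact. Theorem \ref{Grothcat} then applies and shows that $D(\Qcoh X)$ has a unique enhancement. The only step that requires genuine care --- the main obstacle, such as it is --- is the verification that the locally free sheaves of finite rank really constitute a set of generators, which is exactly where the assumption that $X$ has enough locally free sheaves enters; once this is in place, the compactness of these generators is a formal consequence of Neeman's theorem that $\bR\Gamma$ preserves coproducts on a quasi-compact separated scheme.
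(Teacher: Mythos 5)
Your proof is correct and follows essentially the same route as the paper: identify the locally free sheaves of finite rank as a generating set of the Grothendieck category $\Qcoh X$ via \cite{EGA1}~6.9.12 together with the ``enough locally free sheaves'' hypothesis, check their compactness in $D(\Qcoh X)$ using Neeman's result that $\bR\Gamma$ commutes with coproducts on a quasi-compact separated scheme (and the identification $D(\Qcoh X)\cong D(X)_{\Qcoh}$ from \cite{BN}), and then invoke Theorem~\ref{Grothcat}. The only difference is that you spell out the reduction $\Hom_{D(\Qcoh X)}(\cE,-)\cong H^0\bR\Gamma(\cE^{\vee}\otimes(-))$ explicitly, which the paper leaves implicit.
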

\begin{remark} {\rm The statement is also true for a quasi-compact and semi-separated scheme in definition of \cite{TT} B.7, because
the proofs of Corollary 5.5 in \cite{BN} and Lemma 1.4 in \cite{Ne3} can be applied for a semi-separated scheme directly. }
\end{remark}

For a quasi-projective scheme by Serre theorem we have a precise description of the category of quasi-coherent sheaves
as a quotient category.
Let $X$ be a quasi-projective scheme. Then it is an open subscheme of a projective scheme $\overline{X}\subset\bbP^n.$
Denote by $A$ the following $\bbZ$\!-graded algebra
$$
A=\bigoplus_n H^0( \overline{X}, \cO_{\overline{X}}(n)).
$$
We can  consider the abelian category  of all graded $A$\!-modules $\Gr(A).$
This category has a Serre subcategory of torsions modules $\Tors(A).$
 Recall that a module $M$ is called torsion if for
any element $x\in M$ one has $x A_{\ge p}=0$ for some $p.$ Denote by
$\QGr(A)$ the quotient category $\Gr(A)/\Tors(A).$

With the graded algebra $A$ one can associate  a
$\bbZ$\!-category $\cA,$ objects of which are $\bbZ$ and morphisms
$\Hom_{\cA}(i, j)=A^{j-i}$ so that the composition in
$\cA$ comes from the multiplication in $A.$
It is clear that the
categories $\Gr(A),$ $\Tors(A),$ and $\QGr(A)$ are equivalent  to
$\umod\cA,$ $\Tors(\cA),$ and $\QMod(\cA)$ respectively.

The well-known Serre theorem gives us an equivalence between the category of quasi-coherent sheaves $\Qcoh\overline{X}$ on $\overline{X}$ and
the quotient category $\QGr(A)=\QMod(\cA)$ (see \cite{Se, EGA2}).
On the other hand, the category of quasi-coherent sheaves $\Qcoh X$ on $X$ is equivalent to the quotient
of the category $\Qcoh\overline{X}$ by the subcategory of quasi-coherent sheaves with support on the complement
$\overline{X}\backslash X$ (\cite{Ga}).
Therefore, the category $\Qcoh X$ is equivalent to the quotient of the category $\Gr(A)=\umod\cA$ by the
localizing subcategory of $I$\!-torsion modules
$\Tors_{I}(A),$ where $I$ is a homogenous ideal such that the support of the subscheme $\Proj A/I\subset \overline{X}$
is exactly $\overline{X}\backslash X.$
More precisely, a graded $A$\!-module $M$ is called $I$\!-torsion if
$$
M\cong\varGamma_I(M)=\colim_n \Hom_{A}(A/I^n, M).
$$

In addition, the free modules $h^Y$ map to the corresponding line bundles $\O(i)|_{X}$ which are compact objects in
$D(\Qcoh X).$

\begin{cor}\label{qcqproj}
The derived category of quasi-coherent sheaves $D(\Qcoh X)$ on a quasi-projective scheme $X$ has a unique enhancement.
\end{cor}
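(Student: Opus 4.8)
The plan is to apply Proposition \ref{qcat} to the explicit quotient presentation of $\Qcoh X$ assembled in the paragraph preceding the statement. First I would fix an open immersion of $X$ into a projective scheme $\overline{X}\subset\bbP^n$, form the homogeneous coordinate ring $A=\bigoplus_{n\geq 0}H^0(\overline{X},\O_{\overline{X}}(n))$ and the associated $\bbZ$-category $\cA$ (objects $\bbZ$, morphisms $\Hom_{\cA}(i,j)=A^{j-i}$), so that $\umod\cA\simeq\Gr(A)$ and $\QMod(\cA)\simeq\QGr(A)$. Serre's theorem identifies $\Qcoh\overline{X}$ with $\QGr(A)$, and the description of $\Qcoh X$ as the quotient of $\Qcoh\overline{X}$ by the quasi-coherent sheaves supported on $\overline{X}\setminus X$ (\cite{Ga}) then identifies $\Qcoh X$ with $\QMod_{\cS}(\cA)$, where $\cS=\Tors_I(\cA)$ is the localizing subcategory of $I$-torsion modules for a homogeneous ideal $I$ cutting out the complement.

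Next I would verify the hypothesis of Proposition \ref{qcat}, namely that $\varPi(h^Y)$ is compact in $D(\QMod_{\cS}(\cA))\simeq D(\Qcoh X)$ for every object $Y=i$ of $\cA$. Under the Serre equivalence the representable module $h^Y$ is carried to the twisting sheaf $\O_X(i):=\O_{\overline{X}}(i)|_X$. Since $X$ is quasi-compact and separated, the global sections functor on $\Qcoh X$ commutes with arbitrary direct sums (\cite{Ne3} Lemma 1.4), hence every locally free sheaf of finite rank---in particular each $\O_X(i)$---is a compact object of $D(\Qcoh X)$. So Proposition \ref{qcat} applies and yields the uniqueness of enhancement for $D(\QMod_{\cS}(\cA))\simeq D(\Qcoh X)$, which is the assertion.

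As an alternative one can bypass the module-category bookkeeping and invoke Corollary \ref{qcqse} directly, after noting that a quasi-projective $X$ has enough locally free sheaves: for any finitely presented sheaf $\cF$ the twist $\cF(m)$ is globally generated for $m\gg 0$, so there is an epimorphism $\O_X(-m)^{\oplus r}\twoheadrightarrow\cF$ with $\O_X(-m)^{\oplus r}$ locally free of finite rank. Both routes are short.

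I do not expect a genuine obstacle here: the substantive work lies entirely in Theorem \ref{main}, and downstream in Theorem \ref{Grothcat} and Proposition \ref{qcat}, while the present statement is a packaging step. The only points needing a sentence of care are the precise identification $\Qcoh X\simeq\QMod_{\cS}(\cA)$ (stitching Serre's theorem together with the localization description of $\Qcoh$ of an open subscheme) and the observation that the Serre equivalence sends $h^Y$ to $\O_X(i)$, which is immediate from its construction.
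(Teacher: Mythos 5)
Your proposal matches the paper's own route: the paper's paragraph preceding the corollary sets up exactly the Serre/Gabriel identification $\Qcoh X\simeq\Gr(A)/\Tors_I(A)\simeq\QMod_{\cS}(\cA)$ and notes that the $h^Y$ land on the compact twists $\O(i)|_X$, so the corollary is Proposition~\ref{qcat} applied to this data. Your alternative via Corollary~\ref{qcqse} (quasi-projective schemes have enough locally free sheaves) is also a correct, shorter packaging of the same ingredients.
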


Now let us consider the subcategory of compact objects $D(\Qcoh X)^c.$ It is well-known that the subcategory of compact objects
on any quasi-compact and quasi-separated scheme
coincides with the subcategory of perfect complexes (see \cite{Ne3, BvB}).

\begin{theo}\label{perfc}
The triangulated category of perfect complexes $\Perf(X)$ on a quasi-projective scheme $X$ has a unique enhancement.
\end{theo}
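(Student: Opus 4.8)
The plan is to obtain this from Theorem \ref{mainperf}, applied to the $\bbZ$\!-category $\cA$ attached to the graded algebra $A=\bigoplus_n H^0(\overline X,\cO_{\overline X}(n))$ introduced above and to the localizing subcategory $L\subset D(\cA)$ consisting of all complexes whose cohomologies are $I$\!-torsion, where $I\subset A$ is a homogeneous ideal with $\Proj(A/I)=\overline X\setminus X$. As recalled just before the statement, $\Qcoh X\simeq\QMod_{\cS}(\cA)$ with $\cS=\Tors_I(A)\subset\umod\cA$, and the representable modules $h^Y$ correspond under this equivalence to the line bundles $\cO(i)|_X$, which are compact in $D(\Qcoh X)$; hence $L=L_{\cS}$ in the notation of Lemma \ref{quotab}, and that lemma gives an equivalence $D(\cA)/L\stackrel{\sim}{\lto} D(\Qcoh X)$. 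Since $X$ is quasi-compact and separated, $D(\Qcoh X)\simeq D(X)_{\Qcoh}$, whose subcategory of compact objects is $\Perf(X)$; therefore $\Perf(X)\simeq(D(\cA)/L)^c$, and it is enough to verify the two hypotheses of Theorem \ref{mainperf} for the pair $L\subset D(\cA)$. The $\Hom$\!-vanishing hypothesis is immediate: if $\pi(h^Y)$, $\pi(h^Z)$ correspond to $\cO(i)|_X$, $\cO(j)|_X$, then for $s<0$
$$
\Hom\bigl(\pi(h^Y),\pi(h^Z)[s]\bigr)\cong\Ext^s_X\bigl(\cO(i)|_X,\cO(j)|_X\bigr)\cong H^s\bigl(X,\cO(j-i)|_X\bigr)=0 .
$$

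The substantive point is to show that $L$ is generated, as a localizing subcategory of $D(\cA)$, by the compact objects it contains, i.e.\ by $L\cap D(\cA)^c=L\cap\Perf(\cA)$; here finiteness enters. As $A$ is a finitely generated $k$\!-algebra it is Noetherian, so $I=(f_1,\dots,f_r)$ with the $f_i$ homogeneous, $\deg f_i=d_i$. For each $j\in\bbZ$ the Koszul complex
$$
K_j:=\Bigl(\bigotimes_{i=1}^{r}\bigl(A(-d_i)\stackrel{f_i}{\lto}A\bigr)\Bigr)(j)
$$
is a bounded complex of finite free graded $A$\!-modules, hence a perfect DG $\cA$\!-module, and its Koszul homology is annihilated by $I$, so $K_j\in L\cap\Perf(\cA)$. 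To see that $\{K_j\}_{j\in\bbZ}$ generates $L$, take $N\in L$ with $\Hom(K_j[n],N)=0$ for all $j,n$. Since $K(f_1,\dots,f_r)$ is perfect (and self-dual up to shift and twist), this is equivalent to $K(f_1,\dots,f_r)\stackrel{\bL}{\otimes}_A N=0$; as each $K(f_1^m,\dots,f_r^m)$ lies in the thick subcategory generated by $K(f_1,\dots,f_r)$, also $K(f_1^m,\dots,f_r^m)\stackrel{\bL}{\otimes}_A N=0$ for every $m$. But, up to a shift, $\bR\varGamma_I N$ is the homotopy colimit over $m$ of the complexes $K(f_1^m,\dots,f_r^m)\stackrel{\bL}{\otimes}_A N$, so $\bR\varGamma_I N\simeq 0$; and since the cohomologies of $N$ are $I$\!-torsion we have $N\simeq\bR\varGamma_I N$, whence $N=0$. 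Thus $L=L_{\cS}$ is generated by its compact objects.

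With both hypotheses of Theorem \ref{mainperf} in hand, the conclusion is that $(D(\cA)/L)^c\simeq\Perf(X)$ has a unique enhancement. I expect the main obstacle to be the second paragraph above — exhibiting an explicit set of \emph{perfect} generators of the localizing subcategory $L$ — which rests on the Noetherian property of the section ring $A$ and on the classical description of the derived $I$\!-torsion functor $\bR\varGamma_I$ as a homotopy colimit of Koszul complexes. Everything else is a matter of tracking through the equivalences already set up in the text (Serre's theorem, Lemma \ref{quotab}, and the identification $D(\Qcoh X)^c=\Perf(X)$ on a quasi-compact separated scheme).
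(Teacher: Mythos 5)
Your proof is correct and reaches the conclusion via Theorem \ref{mainperf} just as the paper does, but your argument for the crucial step — compact generation of $L=D_I(\Gr(A))$, which is Lemma \ref{perfcg} in the paper — is genuinely different. The paper verifies compact generation object by object: given a nonzero $C^{\cdot}\in L$, it lifts a nonzero map $A(k)\to H^0(C^{\cdot})$ to $Z^0(C^{\cdot})$, covers the kernel by a free module, and shows that the resulting ad hoc Koszul complex $\cK^{\cdot}$ is a perfect complex lying in $L$ and admitting a nonzero map to $C^{\cdot}$. You instead exhibit a fixed, explicit set of compact generators — the twists $\{K(f_1,\dots,f_r)(j)\}_{j\in\bbZ}$ of the Koszul complex on a homogeneous generating set of $I$ — and verify the orthogonality criterion through derived local cohomology: vanishing of $\Hom(K_j[n],N)$ for all $j,n$ forces $K(f_1^m,\dots,f_r^m)\otimes^{\bL}_A N=0$ for every $m$, hence $\bR\varGamma_I N=0$, and since $N\in L$ satisfies $N\simeq\bR\varGamma_I N$ this gives $N=0$. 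Both arguments rest on Noetherianity of the section ring and on the identification of $D(\Tors_I(A))$ with $D_I(\Gr(A))$ via $\bR\varGamma_I$; yours is a bit more conceptual and has the bonus of producing a canonical small generating set, at the price of invoking the stable Koszul description of $\bR\varGamma_I$ as a homotopy colimit, whereas the paper only uses the finite Koszul presentation $\bR\varGamma_I M\cong M\otimes\cK(t_1)\otimes\cdots\otimes\cK(t_m)$ in the preliminaries. One imprecision worth tidying: $K(f_1^m,\dots,f_r^m)$ lies in the thick subcategory generated by all the twists $K(f_1,\dots,f_r)(l)$, $l\in\bbZ$, not by $K(f_1,\dots,f_r)$ alone; this is harmless because $K(f_1,\dots,f_r)\otimes^{\bL}_A N=0$ in $D(\Gr(A))$ already implies vanishing after any twist.
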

\begin{proof}
To applying Theorem \ref{mainperf} for the subcategory of compact objects $\Perf(X)=D(\Qcoh X)^c$
we need to show that the corresponding localizing subcategory $D_{I}(\Gr(A))$ of all complexes
cohomologies of which are $I$\!-torsion modules
is compactly generated.

The
functor $\varGamma_I: \Gr(A)\to \Tors_I(A)$ has a right-derived functor $\bR\varGamma_I :
D(\Gr(A))\to D(\Tors_I(A))$ via h-injective resolutions, i.e. complexes of modules $\cI$ such that
$\Hom(\cN, \cI)=0$ in the homotopy category of graded $A$\!-modules
for any acyclic complex $\cN$ (see \cite{Sp} Th.C, or \cite{KS} Ch.14 for details).

It is known that the canonical functor $i: D(\Tors_I(A))\to D(\Gr(A))$
is fully faithful and realizes an equivalence of
$D(\Tors_{I}(A))$ with the full subcategory $D_{I}(\Gr(A)).$
(It is proved for noetherian rings, for example, in \cite{L}
Cor 3.2.1.)
To prove this fact it is sufficient to show that for
any $C^{\cdot}\in D_{I}(\Gr(A))$ the natural map
$i\bR\varGamma_I (C^{\cdot}) \to C^{\cdot}$ is an isomorphism.
Since the functor
$\bR\varGamma_I$ is bounded for noetherian schemes (\cite{L}, Cor. 3.1.4), by usual "way
out" argument (\cite{Ha}, \S 7)  it is sufficient to check the
isomorphism $i\bR\varGamma_I (M) \to M$ only for $I$\!-torsion modules $M\in
\Tors_I(A).$ But for $I$\!-torsion module $M$ we have $\bR\varGamma_I M\cong
\varGamma_I M,$ because  $\bR\varGamma_I M\cong M\otimes \cK(t_1)\otimes\cdots\cK(t_m),$ where
$(t_1,\dots, t_m)$ is a sequence in $A,$ generating the ideal $I$ and $\cK(t_i):=\{A\to A_{t_i}\}$
(Prop. 3.1.2 \cite{L}).
Now the corollary follows from the lemma

\begin{lemma}\label{perfcg} The subcategory $D_{I}(\Gr(A))\subset D(\Gr(A))$ is compactly generated.
\end{lemma}
\begin{proof}
By definition it is sufficient to show that for any $C^{\cdot}\in D_{I}(\Gr(A))$
there is a perfect complex $P^{\cdot}$ from $D_{I}(\Gr(A))$ and a non-zero morphism
from $P^{\cdot}$ to $C^{\cdot}.$

Since there is an equivalence
$D(\Tors_I(A))\stackrel{\sim}{\to} D_{I}(\Gr(A))$ we can assume that the object
$C^{\cdot}$ is a  complex of $I$\!-torsion modules.
Consider a nontrivial cohomology of $C^{\cdot}.$ Assume for simplicity that it is 0th cohomology $H^{0}(C^{\cdot})\ne 0.$
Let us consider a non trivial map from $A(k)\to H^{0}(C^{\cdot})$ for some $k\in \bbZ$ and
lift it to a nontrivial map $f: A(k)\to Z^0(C^{\cdot})=\Ker d_0.$ Thus the corresponding morphism
$\widetilde{f}: A(k)\to Z^0(C^{\cdot})\to C^{\cdot}$
is non trivial in the derived category,
and the image of $f$ is an $I$\!-torsion module. Let us cover the kernel $\Ker f$ by a free module $T=\bigoplus_{i=1}^{p} A(k_i),$
and consider the Koszul complex
$$
\cK^{\cdot}:=\{ 0\lto\det(T(-k))(k)\lto\cdots\lto(\Lambda^2 T(-k))(k)\lto T\lto A(k)\lto 0.\}
$$
The cohomologies of the Koszul complex are $I$\!-torsion modules, hence $\cK^{\cdot}\in D_{I}(\Gr(A)).$
On the other hand it is perfect. Finally the map $\widetilde{f}$ can be factorized as
$
 A(k)\to  \cK^{\cdot}\to \im f \hookrightarrow Z^{0}(C^{\cdot})\to C^{\cdot}.
$
Since $\widetilde{f}$ is not trivial, the morphism $\cK^{\cdot}\to  C^{\cdot}$ is also non trivial.
Therefore, the subcategory $D_{I}(\Gr(A))$ is compactly generated.
\end{proof}
Thus,  applying Theorem \ref{mainperf} we obtain the required statement.
\end{proof}

\section{Bounded derived categories of coherent sheaves}\label{cohbound}

Let as above $\cA$ be a small category which we consider as DG category.
Let $D(\cA)$ be the derived category of DG $\cA$\!-modules and $L$ be a localizing subcategory
in $D(\cA)$ which is generated by objects compact in $D(\cA).$
Consider again the quotient functor $\pi: D(\cA)\to D(\cA)/L.$ By Neeman's theorem \ref{Neeman}
the objects  $\pi (h^Y)\in D(\cA)/L$ are
compact for all $Y,Z\in \cA.$
As above we assume that the following condition holds
\begin{enumerate}
\item[] for every $Y,Z\in \cA$ we have
$\Hom (\pi (h^Y),\pi (h^Z)[i])=0\quad \text{when}\quad i<0.$
\end{enumerate}

In this section we are going to talk
about bounded derived categories of coherent sheaves. To work with these categories we introduce  a
notion of a triangulated subcategory of bounded and coherent objects for a general triangulated category.

\begin{defi}\label{bcoh} Let $\cT$ be a triangulated category that admits arbitrary direct sums.
Let $S\subset \cT^{c}$ be a set of compact generators of the category $\cT.$
We say that an object $M\in \cT$ is compactly approximated if
\begin{itemize}
\item[a)] there is $m\in \bbZ$ such  that for any $Y\in S$
we have $\Hom ( Y, M[i])=0$  when $i<m;$
\item[b)] for any $k\in \bbZ$ there is a morphism $\varphi_k: P_k\to M$ from a compact object $P_k\in \cT^c$
such that for every  $Y\in S$ the canonical map
$$
\Hom(Y, P_k[i])\lto \Hom(Y, M[i])
$$
is an isomorphism when $i\ge k.$
\end{itemize} We denote by $\cT^{ca}$ the full subcategory of compactly approximated objects.
\end{defi}
\begin{remark}{\rm
This definition depends on a set of compact generators $S.$ In our applications this set is fixed by a construction and gives
a usual bounded derived category of coherent sheaves as we see below (see Proposition \ref{cohcapp}).}
\end{remark}
\begin{remark}\label{Pcomp}{\rm
By Remark \ref{twodef} the set $S$ classically generates the category of compact objects $\cT^c.$
Hence any compact object $P$ is a direct summand of a finite extension of finite direct sums
of objects of the form $Y[i]$ where $Y\in S.$ This implies that if we fix  a
compactly approximated object $M$ and a compact object $P$  then for any $m\in \bbZ$
there is an $s\in\bbZ$ such that for any $k\le s$  the morphism
$$
\Hom(P, P_k[i])\lto \Hom(P, M[i])
$$
induced by $\varphi_k: P_k\to M$
is an isomorphism when $i\ge m.$
}
\end{remark}
This remark implies the following lemma.

\begin{lemma}\label{hocolcoh} The property b) of Definition \ref{bcoh} holds if and only if $M$ is isomorphic to
$\hocolim P_k,$ where $P_0\to P_{-1}\to \cdots$ is a sequence of morphisms of compact objects
such that for every $Y\in S$ the canonical map $\Hom(Y, P_k[i])\stackrel{\sim}{\to} \Hom(Y, P_s[i])$
is an isomorphism when $i\ge k\ge s.$
\end{lemma}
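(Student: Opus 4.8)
\emph{Proof proposal.} The plan is to prove the two implications of the equivalence separately; the forward one will rest on the factorisation statement contained in Remark \ref{Pcomp} (whose argument uses only property b)).

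Suppose first that $M$ is isomorphic to $\hocolim R_n$, where $R_0\to R_1\to\cdots$ is the given sequence of compact objects with $R_n$ in the role of the $(-n)$-th term, so that the hypothesis reads: for every $Y\in S$ the map $\Hom(Y,R_n[i])\to\Hom(Y,R_{n+1}[i])$ is an isomorphism whenever $i\ge -n$. To verify property b) of Definition \ref{bcoh}, for $k\le 0$ I would take $P_k:=R_{-k}$ with $\varphi_k$ the canonical map $R_{-k}\to\hocolim R_n\cong M$, and for $k>0$ simply $P_k:=R_0$, $\varphi_k:=\varphi_0$. Since each $Y\in S$ is compact one has $\Hom(Y,M[i])=\colim_n\Hom(Y,R_n[i])$, and by hypothesis the transition maps of this colimit are isomorphisms once $n\ge\max(0,-i)$; hence the structure map $\Hom(Y,R_{n_0}[i])\to\Hom(Y,M[i])$ is an isomorphism for every $n_0\ge\max(0,-i)$. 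Taking $n_0=-k$ when $k\le 0$ and $n_0=0$ when $k>0$ gives exactly the isomorphism $\Hom(Y,P_k[i])\to\Hom(Y,M[i])$ for $i\ge k$ demanded by b).

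Conversely, assume b) and fix morphisms $\varphi_k:P_k\to M$ as in Definition \ref{bcoh}. The obstacle is that the $P_k$ carry no transition maps, so I would first produce a compatible subsequence. For each $k$, applying the argument of Remark \ref{Pcomp} to the compact object $P_k$ and $M$ with $m=0$ yields an $s(k)\in\bbZ$ such that the map $\Hom(P_k,P_j[i])\to\Hom(P_k,M[i])$ induced by $\varphi_j$ is an isomorphism for all $j\le s(k)$ and all $i\ge 0$; in degree $i=0$ this says that $\varphi_k$ factors \emph{uniquely} through $\varphi_j$, i.e.\ $\varphi_k=\varphi_j\circ g$ for a unique $g:P_k\to P_j$, for every $j\le s(k)$. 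Setting $k_0:=0$ and $k_{n+1}:=\min\{s(k_n),k_n-1\}$, one gets $k_n\le -n$ and unique maps $g_n:P_{k_n}\to P_{k_{n+1}}$ with $\varphi_{k_{n+1}}g_n=\varphi_{k_n}$. Put $R_n:=P_{k_n}$ and $\psi_n:=\varphi_{k_n}$; the $\psi_n$ form a cocone on $R_0\to R_1\to\cdots$ and therefore induce a morphism $\psi:\hocolim R_n\to M$.

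It then remains to check, by testing against the compact generators $S$, that $\psi$ is an isomorphism and that $(R_n)$, relabelled so that $R_n$ is the $(-n)$-th term, meets the transition condition of the lemma. For the first: for $Y\in S$ and $i\in\bbZ$ we have $\Hom(Y,\hocolim R_n[i])=\colim_n\Hom(Y,P_{k_n}[i])$, and since $k_n\to-\infty$ the maps $\Hom(Y,P_{k_n}[i])\to\Hom(Y,M[i])$ (induced by $\psi_n$, hence compatible with $\psi$) are isomorphisms for $n$ large; as $S$ compactly generates $\cT$, $\psi$ is an isomorphism. For the second: given $a\le b$, $Y\in S$ and $i\ge -a$, both $\Hom(Y,R_a[i])\to\Hom(Y,M[i])$ and $\Hom(Y,R_b[i])\to\Hom(Y,M[i])$ are isomorphisms (because $k_a\le -a\le i$ and then $k_b\le -b\le -a\le i$), and the triangle through $M$ commutes by the cocone relations, so $\Hom(Y,R_a[i])\to\Hom(Y,R_b[i])$ is an isomorphism. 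I expect the genuine difficulty to be this forward direction — converting the a priori unrelated approximations $\varphi_k$ into an honest telescope — and the key input is precisely the \emph{uniqueness} of the factorisation $\varphi_k=\varphi_j\circ g$ provided by Remark \ref{Pcomp}.
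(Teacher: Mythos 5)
Your argument is correct and follows the same route as the paper's proof: the easy implication is checked directly, and the forward direction iterates Remark~\ref{Pcomp} to build a telescope $P_{k_0}\to P_{k_1}\to\cdots$ with $k_n\to-\infty$, then tests the induced map $\hocolim P_{k_n}\to M$ against the compact generators $S$. (One small remark: the \emph{existence} of the factorisation $\varphi_k=\varphi_j\circ g$ from Remark~\ref{Pcomp} is all that is used; the uniqueness you emphasise is available but not actually needed for the telescope or the final isomorphism check.)
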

\begin{proof}
$\Leftarrow$ If $M\cong\hocolim P_k$ then the morphisms $\varphi_k: P_k\to M$ satisfy property
b) of Definition \ref{bcoh}, because $\Hom(Y, P_k[i])\cong\Hom(Y, M[i])$ for any $i\ge k.$

$\Rightarrow$ By Remark \ref{Pcomp} for the map $\varphi_0: P_0\to M$ we can find $k_1\ll 0$
and a map $u_0: P_0\to P_{k_1}$ such that $\varphi_{k_1}u_0=\varphi_0.$ Moreover, there are isomorphisms
$$
\Hom(Y, P_0[i])\stackrel{\sim}{\lto}\Hom(Y, P_{k_1}[i])\stackrel{\sim}{\lto}\Hom(Y, M[i])
$$
for any $i\ge 0.$ Taking $\varphi_{k_1}:P_{k_1}\to M$ we can find $k_2\ll k_1$ and a map $u_1: P_{k_1}\to P_{k_2}$
such that $\varphi_{k_2}u_1=\varphi_{k_1}.$ Repeating this procedure we get
 a sequence of morphisms $\{u_i: P_{k_i}\to P_{k_{i+1}}\}_{i\in\bbN}$ that satisfies condition
of the lemma. Now we can take  $M'=\hocolim P_{k_i}.$ The maps $\varphi_{k_i}$ induce a map $\varphi: M'\to M.$
It is easy to see that $\Hom(Y, M'[i])\cong \Hom(Y, M[i])$ under $\varphi$ for any $i\in\bbZ$ and any $Y\in S\subset\cT^{c}.$
Since $S$ is a set of compact generators the map $\varphi$ is an isomorphism.
\end{proof}

\begin{lemma}\label{subsetcomp}
In the notation as above, the  compactly approximated objects $\T^{ca}$ form a triangulated subcategory.
In addition, if all objects $Z\in S$ satisfy property a) of Definition \ref{bcoh} (with $Z=M$) the subcategory $\cT^{ca}$ contains
the subcategory of compact objects $\cT^c.$
\end{lemma}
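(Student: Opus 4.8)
The plan is to prove the two assertions separately, and for the first to reduce everything to closure under shifts and cones. Since $\cT^{ca}$ is a full subcategory closed under isomorphism, it is a triangulated subcategory as soon as it is closed under the shift $[1]$ and its inverse $[-1]$ and under cones. Closure under shifts is just bookkeeping of the integers in Definition \ref{bcoh}: if $M$ has bound $m$ in a) and approximating morphisms $\varphi_k\colon P_k\to M$ in b), then $M[1]$ has bound $m-1$ and approximating morphisms $\varphi_{k+1}[1]$, and symmetrically for $M[-1]$. So the substance is: given a distinguished triangle $M_1\to M_2\to M_3\to M_1[1]$ with $M_1, M_2\in\cT^{ca}$, one must show $M_3\in\cT^{ca}$. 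Property a) for $M_3$ is immediate, since the long exact sequence obtained by applying $\Hom(Y,-)$ gives $\Hom(Y,M_3[i])=0$ for $i<\min(m_2,m_1-1)$, uniformly in $Y\in S$.

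The real work is property b) for $M_3$. First I would fix $k\in\bbZ$ and choose, by b) for $M_1$, an approximation $\varphi^1\colon P^1\to M_1$ which is an isomorphism on $\Hom(Y,-[i])$ for $i\ge k$. Next I would apply Remark \ref{Pcomp} to the compact object $P^1$, the object $M_2\in\cT^{ca}$ and $m=0$: this yields an index $j\le k$ such that the composite $P^1\to M_1\to M_2$ lifts along the approximation $\varphi^2_j\colon P^2_j\to M_2$, say to $a\colon P^1\to P^2_j$ with $\varphi^2_j\cdot a$ equal to that composite. The resulting square commutes, so it extends to a morphism of distinguished triangles whose third term is the compact object $Q=\operatorname{Cone}(a)$, together with a map $\psi\colon Q\to M_3$. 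Finally I would apply $\Hom(Y,-)$ to this morphism of triangles and use the five lemma: since $\Hom(Y,P^1[i])\to\Hom(Y,M_1[i])$ is an isomorphism for $i\ge k$ and $\Hom(Y,P^2_j[i])\to\Hom(Y,M_2[i])$ is an isomorphism for $i\ge j$, the five lemma forces $\Hom(Y,Q[i])\to\Hom(Y,M_3[i])$ to be an isomorphism for all $Y\in S$ and all $i\ge\max(k,j)=k$. Hence $\psi$ is the required approximation of $M_3$ in degrees $\ge k$, and $M_3\in\cT^{ca}$. (The choice $j\le k$ is what keeps the threshold from drifting; alternatively one could run this through the $\hocolim$ description of Lemma \ref{hocolcoh}.)

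For the second assertion I would introduce the auxiliary full subcategory $\cT'\subset\cT$ of all objects satisfying only property a) of Definition \ref{bcoh}. The same long-exact-sequence manipulations show that $\cT'$ is closed under shifts and satisfies the two-out-of-three property for distinguished triangles, hence is a triangulated subcategory, and applying $\Hom(Y,-)$ to a splitting $M\oplus M'$ shows that it is also closed under direct summands. By hypothesis $S\subset\cT'$, and by Remark \ref{twodef} the subcategory of compact objects $\cT^c$ is exactly the smallest triangulated subcategory of $\cT$ containing $S$ and closed under direct summands; therefore $\cT^c\subset\cT'$. Since every compact object $P$ satisfies property b) trivially — take $P_k=P$ and $\varphi_k=\id_P$ for all $k$ — this gives $\cT^c\subset\cT^{ca}$, as claimed.

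The only point requiring genuine care is the construction of the approximation of $M_3$ in property b): assembling, out of the separate degreewise approximations of $M_1$ and of $M_2$, a single compact object agreeing with $M_3$ on $\Hom(Y,-)$ in all sufficiently high degrees, while controlling the threshold degree (hence the lift through $\varphi^2_j$ with $j\le k$). Everything else is routine five-lemma and long-exact-sequence bookkeeping, and Remark \ref{Pcomp} is precisely the tool that makes the lifting step available.
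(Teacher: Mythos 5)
Your proof is correct and follows essentially the same strategy as the paper's: reduce to closure under shifts and cones, handle property a) by the long exact sequence, and for property b) build a compact approximation of the cone by first approximating $M_1$, then using Remark \ref{Pcomp} to lift the composite into a sufficiently deep approximation of $M_2$ and applying the five lemma. Your presentation of the second assertion via the auxiliary subcategory $\cT'$ of objects satisfying a) is a slightly more explicit way of phrasing the paper's argument that a) is preserved under triangles and direct summands, but it is not a genuinely different route.
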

\begin{proof}
It is evident that any shift $M[i]$ of a compactly approximated object is compactly approximated too.
It is also easy to see that a cone $C(f)$ of a map $f:M\to N$ of two compactly approximated objects satisfies
property a) of Definition \ref{bcoh}.

Now we need to show that
the cone $C(f)$ satisfies property b) of this definition.
Let us fix $k\in\bbZ$ and consider a map $\varphi_k: P_k\to M$ from Definition \ref{bcoh}.
Now take a sufficiently negative $m\ll k$ and
a morphism $\psi_m: Q_m\to N$ from a compact object $Q_m\in \cT^c$
as in Definition \ref{bcoh} such that
the canonical map
$$
\Hom(P_k, Q_m[i])\lto \Hom(P_k, N[i])
$$
is an isomorphism when $i\ge m.$ It exists by Remark \ref{Pcomp}.
The morphism $f: M\to N$ induces a morphism from $P_k$ to $N$ that can be uniquely lifted to a map
$f': P_k\to Q_m.$
Now morphisms $\varphi_k$ and $\psi_m$ induce a map $\chi_k$ from the compact object $C(f')$ to the object $C(f)$
$$
\begin{CD}
P_k @>{f'}>> Q_m @>>> C(f') @>>> P_k[1]\\
@V{\varphi_k}VV   @V{\psi_m}VV  @VV{\chi_k}V  @VV{\varphi_k[1]}V\\
M @>f>> N @>>> C(f) @>>> M[1]
\end{CD}
$$
The 5-Lemma gives us isomorphisms
$$
\Hom(Y, C(f')[i])\stackrel{\sim}{\lto} \Hom(Y, C(f)[i])
$$
for any $Y\in S$ and $i\ge k.$ Hence, the property b) of Definition \ref{bcoh} holds for the object $C(f)$ too.

If a compact object satisfies the property a) then it is compactly approximated because it satisfies the property b) with
$\varphi_k$ be the identity morphism. Therefore, all objects $Y\in S$ belong to $\cT^{ca}.$ Since $S$
classically generates the subcategory
of compact objects,
the subcategory $\cT^{ca}$ contains $\cT^c,$ because the property a) obviously extends to direct summands.
\end{proof}

Let $\cA$ be a small category which we consider as DG category.
Let $D(\cA)$ be the derived category of DG $\cA$\!-modules and $L$ be a localizing subcategory
in $D(\cA),$ generated by compact objects $L^c=D(\cA)^c\cap L.$
Consider the quotient functor $\pi: D(\cA)\to D(\cA)/L.$ By Theorem \ref{Neeman}
the objects  $\pi (h^Y)\in D(\cA)/L$ are
compact for all $Y,Z\in \cA.$ We assume that the following condition holds
\begin{enumerate}
\item[] for every $Y,Z\in \cA$ we have
$\Hom (\pi (h^Y),\pi (h^Z)[i])=0\quad \text{when}\quad i<0.$
\end{enumerate}

The main aim of this section is to show that  the triangulated subcategory of compactly approximated objects $(D(\cA)/L)^{ca}$
with respect to
the set of compact generators $\pi(h^Y),\; Y\in\cA$ also has a unique enhancement.

Let $\cD$ be an enhancement of this category, i.e. $\D$ is a pretriangulated DG category
and $\bar\epsilon :(D(\cA)/L)^{ca}\stackrel{\sim}{\to} \Ho(\cD)$ be an equivalence of triangulated
categories.

Denote by $\cC
\subseteq \cD$ the full DG subcategory which consists of all  objects that belongs to $\bar\epsilon((D(\cA)/L)^c).$
(By Lemma \ref{subsetcomp} $(D(\cA)/L)^c\subseteq (D(\cA)/L)^{ca}.$)
The equivalence $\bar\epsilon$ induces an  equivalence $\epsilon: (D(\cA)/L)^c\stackrel{\sim}{\to}\Ho(\cC).$
Thus we can apply the construction from Section \ref{cons}.

As in Section \ref{cons} denote by $\cB\subset\cC\subseteq\cD$ the full DG subcategory with the set of objects
$\{\epsilon \pi (h^Y)\}_{Y\in \cA}.$
By the construction in Section \ref{cons} formulas (\ref{rho}) and (\ref{F1}) give a quasi-functor
$$
\widetilde\rho: \SF(\cA)\lto\SF(\cB)
$$
and the induced functor $F_1=\Ho(\widetilde\rho): D(\cA)\lto D(\cB).$

By Lemma \ref{factor} the quasi-functor $\widetilde\rho:\SF(\cA)\to \SF(\cB)$ can be factored through the DG quotient
$\SF(\cA)/\cL\cap\SF(\cA).$ And we get a quasi-functor
$$
\rho: \SF(\cA)/\cL \cap \SF(\cA) \lto \SF(\cB)
$$
such that
the induced functor $\Ho(\rho): D(\cA)/L\lto D(\cB)$ is an equivalence by Lemma \ref{equivp}.

Denote by $\cW\subset\SF(\cA)/\cL\cap\SF(\cA)$ the full DG subcategory which consists
of all objects that are compactly approximated in  $D(\cA)/L,$ i.e that belong to $(D(\cA)/L)^{ca}.$
Denote by $\cW'\subset\SF(\cB)$ the essential image of $\cW$ under the quasi-functor $\rho.$
It is evident that the DG subcategory $\cW'\subset \SF(\cB)$ consists of all compactly approximated objects
$D(\cB)^{ca}$ (compactly approximated with respect to $\cB$). As a consequence, we obtain
a quasi-equivalence
\begin{equation}\label{comaprho}
\rho^{ca}: \cW\to \cW'
\end{equation}
between DG categories $\cW$ and $\cW'$ that are natural enhancements of
$(D(\cA)/L)^{ca}$ and $D(\cB)^{ca}$ respectively.

Now we consider the canonical DG functor
$\varPhi :\cD \to \Mod\cB$ defined by the rule
$$
\varPhi(X)(B)=\Hom_{\cD}(B, X), \quad\text{where}\quad B\in\cB, X\in\cD.
$$
In composition with DG quotient functor $\Mod\cB\to \Mod\cB/\Ac(\cB)$
it induces a quasi-functor
$$
\phi :\cD\lto\SF(\cB).
$$
It remains to show that $\phi$ induces a quasi-equivalence between $\cD$ and $\cW'.$
By Proposition \ref{Keller1} the functor $H^0(\phi)$ realizes an equivalence between
subcategories of compact objects $H^0(\cC)\cong (D(\cA)/L)^c$ and $D(\cB)^c.$
Moreover, by construction, we have isomorphisms
\begin{equation}\label{newis}
\Hom_{H^0(\cD)}(B, X)\stackrel{\sim}{\lto} \Hom_{D(\cB)}(H^0(\phi)B, H^0(\phi)X)
\end{equation}
for any $B\in\cB$ and $X\in\cD.$

Let us denote by $Q$ the composition of functors
\begin{equation}
Q: (D(\cA)/L)^{ca}\stackrel{\bar\epsilon}{\lto}H^0(\cD)\stackrel{H^0(\phi)}{\lto} D(\cB)\stackrel{H^{0}(\rho)^{-1}}\lto D(\cA)/L.
\end{equation}

Now we prove the following lemma.
\begin{lemma}\label{fufa} The functor $Q$ induces a functor $
Q' : (D(\cA)/L)^{ca}\to (D(\cA)/L)^{ca},
$ which is fully faithful.
\end{lemma}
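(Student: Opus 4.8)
The plan is to split Lemma~\ref{fufa} into two parts: that $Q$ carries $(D(\cA)/L)^{ca}$ into itself, so that it corestricts to $Q'$, and that this corestriction is fully faithful. Throughout I write $B_Y:=\epsilon\pi(h^Y)=\bar\epsilon\pi(h^Y)$, so $\Ob\cB=\{B_Y\}_{Y\in\cA}$; under $H^0(\phi)$ the object $B_Y$ goes to the representable module $h^{B_Y}$, and since $\Ob\cB=\{B_Y\}_{Y\in\cA}$ the family $\{h^{B_Y}\}_{Y\in\cA}$ is exactly the standard set of compact generators of $D(\cB)$. The first thing I would do is upgrade the isomorphisms (\ref{newis}) to the following statement $(\star)$: \emph{for every $P\in\cC$ (equivalently every object of $(D(\cA)/L)^c$, through $\epsilon$) and every $X\in\cD$, the map $\Hom_{\Ho(\cD)}(P,X[n])\to\Hom_{D(\cB)}(H^0(\phi)P,H^0(\phi)X[n])$ is bijective for all $n$.} Indeed, for a fixed $X$ the objects $P$ with this property form a thick subcategory of $\Ho(\cC)$ containing $\Ob\cB$ by (\ref{newis}), and $\Ob\cB$ classically generates $\Ho(\cC)\cong(D(\cA)/L)^c$ by Theorem~\ref{Neeman}; the argument of Proposition~\ref{Keller1} then yields $(\star)$.

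\textbf{Step 1: $Q$ lands in $(D(\cA)/L)^{ca}$.} Given $M\in(D(\cA)/L)^{ca}$, I would transport the two conditions of Definition~\ref{bcoh} through $\bar\epsilon$ and $(\star)$. Since $\bar\epsilon\pi(h^Y)=B_Y$, $H^0(\phi)B_Y\cong h^{B_Y}$, and
$$\Hom_{D(\cB)}\bigl(h^{B_Y},H^0(\phi)\bar\epsilon(M)[i]\bigr)\cong\Hom_{\Ho(\cD)}\bigl(B_Y,\bar\epsilon(M)[i]\bigr)\cong\Hom_{D(\cA)/L}\bigl(\pi(h^Y),M[i]\bigr)$$
by $(\star)$, property a) for $M$ (relative to $\{\pi(h^Y)\}$) becomes property a) for $H^0(\phi)\bar\epsilon(M)$ (relative to $\{h^{B_Y}\}$); likewise an approximation morphism $\varphi_k\colon P_k\to M$ with $P_k$ compact is carried to $\widehat\varphi_k\colon\widehat{P_k}\to\widehat M$, where $\widehat{P_k}:=H^0(\phi)\epsilon(P_k)\in D(\cB)^c$ by Proposition~\ref{Keller1} and $\widehat M:=H^0(\phi)\bar\epsilon(M)$, and the isomorphism requirement of Definition~\ref{bcoh}~b) transfers verbatim via $(\star)$. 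Hence $H^0(\phi)\bar\epsilon$ maps $(D(\cA)/L)^{ca}$ into $D(\cB)^{ca}$. Composing with $H^0(\rho)^{-1}$, which by the quasi-equivalence $\rho^{ca}\colon\cW\to\cW'$ and the fact that $\cW'$ enhances $D(\cB)^{ca}$ restricts to an equivalence $D(\cB)^{ca}\stackrel{\sim}{\to}(D(\cA)/L)^{ca}$, we conclude that $Q$ corestricts to a functor $Q'\colon(D(\cA)/L)^{ca}\to(D(\cA)/L)^{ca}$.

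\textbf{Step 2: $Q'$ is fully faithful.} As $H^0(\rho)^{-1}$ and $\bar\epsilon$ are equivalences, $Q'$ is fully faithful iff $H^0(\phi)\colon\Ho(\cD)\to D(\cB)$ is fully faithful. Fix $M,N\in(D(\cA)/L)^{ca}\cong\Ho(\cD)$ and $n\in\bbZ$. By Lemma~\ref{hocolcoh} write $M\cong\hocolim_k P_k$ and $N\cong\hocolim_l Q_l$ in $D(\cA)/L$ with all $P_k,Q_l$ compact. I first claim $\widehat M\cong\hocolim_k\widehat{P_k}$ (and $\widehat N\cong\hocolim_l\widehat{Q_l}$) in $D(\cB)$: the tower $\{\widehat{P_k}\}$ maps canonically to $\widehat M$, and for each (compact) generator $h^{B_Y}$ the induced map $\colim_k\Hom_{D(\cB)}(h^{B_Y},\widehat{P_k}[i])\to\Hom_{D(\cB)}(h^{B_Y},\widehat M[i])$ is, via $(\star)$, the map $\colim_k\Hom_{D(\cA)/L}(\pi(h^Y),P_k[i])\to\Hom_{D(\cA)/L}(\pi(h^Y),M[i])$, which is an isomorphism because the left-hand system stabilizes onto the right-hand group (Lemma~\ref{hocolcoh}); since the $h^{B_Y}$ generate $D(\cB)$ the claim follows. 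Now apply the Milnor exact sequence coming from the defining triangle of a homotopy colimit (see \cite{Ne}): for every $m$,
\begin{multline*}
0\to{\varprojlim_k}^{1}\Hom_{D(\cA)/L}(P_k,N[m-1])\to\Hom_{D(\cA)/L}(M,N[m])\\
\to\varprojlim_k\Hom_{D(\cA)/L}(P_k,N[m])\to0,
\end{multline*}
together with the strictly parallel sequence computing $\Hom_{D(\cB)}(\widehat M,\widehat N[m])$. Since $P_k$, $\widehat{P_k}$ are compact while $N\cong\hocolim_l Q_l$, $\widehat N\cong\hocolim_l\widehat{Q_l}$, one has $\Hom(P_k,N[m])=\colim_l\Hom(P_k,Q_l[m])$ and $\Hom(\widehat{P_k},\widehat N[m])=\colim_l\Hom(\widehat{P_k},\widehat{Q_l}[m])$, and $(\star)$ on compact objects identifies $\Hom(P_k,Q_l[m])$ with $\Hom(\widehat{P_k},\widehat{Q_l}[m])$ compatibly with all transition maps; hence the inverse systems $\{\Hom(P_k,N[m])\}_k$ and $\{\Hom(\widehat{P_k},\widehat N[m])\}_k$ are isomorphic for $m=n,n-1$, and so are the outer terms of the two Milnor sequences. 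Functoriality of $H^0(\phi)$ together with $(\star)$ shows that $H^0(\phi)$ intertwines $\widehat\varphi_k^{*}$ with $\varphi_k^{*}$ up to these identifications, so $H^0(\phi)$ defines a morphism between the two exact sequences inducing the above isomorphisms on the outer terms; the five lemma then gives $H^0(\phi)\colon\Hom_{\Ho(\cD)}(M,N[n])\stackrel{\sim}{\to}\Hom_{D(\cB)}(\widehat M,\widehat N[n])$. Since $n$ was arbitrary, $H^0(\phi)$, hence $Q'$, is fully faithful.

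The main obstacle is Step~2, and within it two points require care: (a) proving $\widehat M\cong\hocolim_k\widehat{P_k}$ genuinely inside $D(\cB)$, although the equivalent category $\Ho(\cD)\cong(D(\cA)/L)^{ca}$ does not contain the coproducts entering a homotopy colimit — which is why the verification must be made against the compact generators $h^{B_Y}$; and (b) checking that $H^0(\phi)$ induces the expected isomorphism on the ${\varprojlim}^{1}$ subobjects of the two Milnor sequences, which I would deduce from the naturality of the Milnor sequence in the tower $\{P_k\}$ together with the identification of towers furnished by $(\star)$. A recurring subtlety is that $\Hom_{D(\cA)/L}(M,N[n])=\Hom_{\Ho(\cD)}(\bar\epsilon M,\bar\epsilon N[n])$ while its $\varprojlim/{\varprojlim}^{1}$ decomposition is obtained from a homotopy-colimit presentation living in $D(\cA)/L$ and not in $\Ho(\cD)$; it serves only as a bookkeeping device.
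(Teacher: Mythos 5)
Your Step~1 agrees with the paper in substance. The paper extends (\ref{newis}) from $\cB$ to all compacts by observing that $(D(\cA)/L)^c$ is the smallest thick subcategory containing $\{\pi(h^Y)\}$ (Theorem~\ref{Neeman}, Remark~\ref{twodef}), obtaining $\Hom(P,X)\stackrel{\sim}{\to}\Hom(Q(P),Q(X))$ for $P$ compact and $X$ compactly approximated; this is your $(\star)$, and the transport of conditions a), b) of Definition~\ref{bcoh} is then the same.

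Your Step~2 takes a genuinely different route, and the five-lemma step does not go through as written. To apply the five lemma you need the map $\bar g\colon A_1\to A_2$ that $H^0(\phi)$ induces on the kernels of the two Milnor sequences to be an isomorphism; it is not enough to know that $A_1$ and $A_2$ are each (non-canonically) identified with ${\varprojlim}^1$ of abstractly isomorphic towers. You invoke ``naturality of the Milnor sequence in the tower,'' but that naturality is unavailable here: the functor $F=H^0(\phi)\bar\epsilon$ is defined only on $(D(\cA)/L)^{ca}\cong\Ho(\cD)$, which contains no infinite coproducts, so $F$ cannot be applied to the defining triangles of the two homotopy colimits, and the connecting homomorphisms that identify $A_i$ with ${\varprojlim}^1$ are produced by two triangles chosen independently in $D(\cA)/L$ and in $D(\cB)$. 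The only way I see to close the gap is to show that the inverse systems $\{\Hom(P_k,N[m-1])\}_k$ and $\{\Hom(\widehat{P_k},\widehat{N}[m-1])\}_k$ are eventually constant, so that both ${\varprojlim}^1$ terms vanish and the five lemma is vacuous. But that eventual stabilization is exactly what the paper proves directly, and far more cheaply: fix $m$ bounding $X'=N$ as in Definition~\ref{bcoh}~a), take one compact approximation $\varphi_k\colon P_k\to X=M$ with $k\ll m$, and use the right adjoint $\mu$ together with the standard t-structure on $D(\cA)$ to show $\Hom(C_k,X')=0=\Hom(C_k[-1],X')$ for the cone $C_k$ of $\varphi_k$, whence $\Hom(X,X')\cong\Hom(P_k,X')$ already for this single $k$; full faithfulness of $Q$ then drops out of a short commutative diagram and the compact case. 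Once you have that vanishing the entire Milnor/${\varprojlim}^1$ apparatus is superfluous, so I would replace Step~2 with the direct approximation argument via $\mu$.
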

\begin{proof}
As we know that $Q(\pi(h^Y))\cong \pi(h^Y)$ for each $Y\in\cA$ and $Q$ gives an autoequivalence of the subcategory
of compact objects $(D(\cA)/L)^c.$ Moreover, the isomorphism (\ref{newis}) gives an isomorphism
$$
\Hom(\pi(h^Y), X)\stackrel{\sim}{\lto} \Hom(Q(\pi(h^Y)), Q(X))
$$
for every $Y\in\cA$ and every $X\in (D(\cA)/L)^{ca}.$
Since the subcategory $(D(\cA)/L)^c$ is the smallest triangulated subcategory which contains $\pi(h^Y)$
and which is closed under direct summands
we have the same isomorphism
$$
\Hom(P, X)\stackrel{\sim}{\lto} \Hom(Q(P), Q(X))
$$
for any compact object $P\in (D(\cA)/L)^c.$ This immediately implies that any object $Q(X)$ is compactly approximated
for each $X\in (D(\cA)/L)^{ca}.$ Hence, we obtain a functor
$$
Q': (D(\cA)/L)^{ca}\to (D(\cA)/L)^{ca}.
$$

Let us show that $Q$ (and consequently $Q'$) is fully faithful.
Let $X$ and $X'$ be two objects of $(D(\cA)/L)^{ca}.$ Let $m$ be an integer such that
$
\Hom(\pi(h^Y), X'[i])=0,
$
for all $Y\in\cA$ when $i < m.$

Fix  some $k\ll m$ and
consider a map $\varphi_k: P_k\to X$ as in Definition \ref{bcoh} from a compact object $P_k$
such that for every $\pi(h^Y)$ the canonical map
$$
\Hom(\pi(h^Y), P_k[i])\lto \Hom(\pi(h^Y), X[i])
$$
is an isomorphism when $i\ge k.$
Denote by $C_k$ a cone of $\varphi_k.$
Consider the right adjoint to $\pi$ functor $\mu: D(\cA)/L\to D(\cA).$ For any object $M\in D(\cA)/L$ we have
$$
\Hom( h^Y, \mu(M)[i])\cong \Hom(\pi(h^Y), M[i]).
$$
Hence the cohomologies $H^i(\mu(X'))$ and $H^i(\mu Q(X'))$ are trivial when $i< m.$

By the same reason the cohomologies $H^j(\mu(C_k))$ and $H^j(\mu Q(C_k))$ are trivial when $j\ge k.$
We know that the functor $\mu$ is fully faithful. Since $k\ll m$ we obtain
\begin{equation}\label{zero}
\Hom(C_k, X')=\Hom(\mu(C_k), \mu(X'))=0,\quad \Hom(C_k[-1], X')=\Hom(\mu(C_k)[-1], \mu(X'))=0.
\end{equation}
This implies that the canonical map
$$
\Hom(X, X')\lto\Hom(P_k, X')
$$
is an isomorphism. For $Q(C_k)$ and $Q(X')$ there is a similar vanishing as in (\ref{zero}) and we get an isomorphism
$$
\Hom(Q(X), Q(X'))\stackrel{\sim}{\lto}\Hom(Q(P_k), Q(X')).
$$
Thus we have a commutative diagram
$$
\begin{CD}
\Hom (X, X' ) @> Q>>   \Hom(Q(X), Q(X'))\\
@VV{\wr}V  @VV{\wr}V \\
\Hom( P_k, X) @>{\sim}>> \Hom(Q(P_k), Q(X'))
\end{CD}
$$
where three arrows are isomorphisms. Hence the upper arrow is also an isomorphism.
This implies that the functor $Q$ is fully faithful.
\end{proof}

Finally, we have to show that the corresponding functor
$Q'$ is essentially surjective.

\begin{lemma}\label{essurj} In the notation as above
the functor
$Q': (D(\cA)/L)^{ca}\to (D(\cA)/L)^{ca}$ is essentially surjective.
\end{lemma}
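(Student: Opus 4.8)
The plan is to realise every compactly approximated object as a homotopy colimit of compact objects, to lift that presentation through the autoequivalence that $Q'$ induces on the subcategory of compact objects, and then to verify that the homotopy colimit of the lifted sequence is again compactly approximated and is carried by $Q'$ back to the object we started from. So, first I would fix $X\in (D(\cA)/L)^{ca}$. By Lemma \ref{hocolcoh} there is a sequence $P_0\to P_{-1}\to\cdots$ of compact objects $P_k\in (D(\cA)/L)^c$ with $X\cong\hocolim P_k$ and such that for every $Y\in\cA$ the maps $\Hom(\pi(h^Y),P_k[i])\to\Hom(\pi(h^Y),P_s[i])$ are isomorphisms whenever $i\ge k\ge s$; such a homotopy colimit exists because $D(\cA)/L$ has arbitrary direct sums (Remark \ref{locfac}). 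As recorded in the proof of Lemma \ref{fufa}, $Q$ restricts to an autoequivalence of $(D(\cA)/L)^c$ with $Q(\pi(h^Y))\cong\pi(h^Y)$. Using this I would choose compact objects $P'_k$ together with isomorphisms $Q'(P'_k)\cong P_k$ and, by full faithfulness of $Q'$ on compacts, transition maps $u'_k: P'_k\to P'_{k-1}$ so that $Q'$ applied to the sequence $(P'_k,u'_k)$ is isomorphic, as a diagram, to $(P_k,u_k)$. Put $X'=\hocolim P'_k\in D(\cA)/L$.

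Next I would check that $X'\in (D(\cA)/L)^{ca}$. Since $\Hom(\pi(h^Y),\pi(h^Z)[i])=0$ for $i<0$, Lemma \ref{subsetcomp} gives $(D(\cA)/L)^c\subseteq (D(\cA)/L)^{ca}$, so the objects $\pi(h^Y)[-i]$ and $P'_k$ lie in $(D(\cA)/L)^{ca}$; as $Q'$ is fully faithful (Lemma \ref{fufa}) with $Q'(\pi(h^Y))\cong\pi(h^Y)$, it induces isomorphisms $\Hom(\pi(h^Y),P'_k[i])\cong\Hom(\pi(h^Y),P_k[i])$ natural in the transition maps. Hence the isomorphism condition of Lemma \ref{hocolcoh} for $(P_k)$ transfers to $(P'_k)$, which is exactly property b) of Definition \ref{bcoh} for $X'$. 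For property a), compactness of $\pi(h^Y)$ gives $\Hom(\pi(h^Y),X'[i])=\colim_k\Hom(\pi(h^Y),P'_k[i])=\colim_k\Hom(\pi(h^Y),P_k[i])$; if $m$ is the bound attached to $X$, then for $i<m$ each term with $k\le i$ is $\Hom(\pi(h^Y),P_k[i])\cong\Hom(\pi(h^Y),X[i])=0$, so the colimit vanishes and property a) holds. Thus $X'\in (D(\cA)/L)^{ca}$.

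It remains to show $Q'(X')\cong X$. Writing $\varphi'_k: P'_k\to X'$ for the structure morphisms, the maps $Q'(\varphi'_k): P_k\cong Q'(P'_k)\to Q'(X')$ form a compatible family and hence induce a morphism $g: X=\hocolim P_k\to Q'(X')$. Since the $\pi(h^Y)$ form a set of compact generators of $D(\cA)/L$, it suffices to see that $\Hom(\pi(h^Y),g[i])$ is bijective for all $Y$ and $i$; but under the identifications $\Hom(\pi(h^Y),X[i])=\colim_k\Hom(\pi(h^Y),P_k[i])$ and $\Hom(\pi(h^Y),Q'(X')[i])\cong\Hom(\pi(h^Y),X'[i])=\colim_k\Hom(\pi(h^Y),P'_k[i])$ this map becomes the colimit of the term-wise isomorphisms $\Hom(\pi(h^Y),P_k[i])\cong\Hom(\pi(h^Y),P'_k[i])$ from the previous paragraph, so it is bijective. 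Therefore $g$ is an isomorphism, and $Q'$ is essentially surjective, which together with Lemma \ref{fufa} finishes the proof.

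The main obstacle I expect is the bookkeeping around the homotopy colimit: $X'=\hocolim P'_k$ is computed in $D(\cA)/L$ and its defining triangle involves the infinite sum $\bigoplus P'_k$, which is not compactly approximated, so $Q'$ cannot be applied to that triangle directly; the comparison morphism $g$ must be built from the compatible family of maps on the compact terms, and its bijectivity read off from the two colimit presentations rather than from exactness of $Q'$ on a triangle. Checking that those two presentations are genuinely compatible with $g$ and with the term-wise isomorphisms induced by $Q'$ is the point that needs care.
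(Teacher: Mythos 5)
Your proof is correct and follows essentially the same route as the paper's: lift the approximating sequence of compacts through the autoequivalence $Q'$ induces on $(D(\cA)/L)^c$, form $X'$ as the homotopy colimit of the lifted sequence, verify $X'$ is compactly approximated, and show the comparison map is an isomorphism by testing against the compact generators $\pi(h^Y)$. You spell out a few steps the paper leaves terse (in particular why $X'$ satisfies property a) of Definition \ref{bcoh}, and why the comparison map must be assembled from the compatible family of maps on compact terms rather than by applying $Q'$ to the colimit triangle), but the underlying argument is identical.
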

\begin{proof}
Let $X\in (D(\cA)/L)^{ca}$ be a compactly approximated object. Suppose that it is bounded by $m\in\bbZ$ as in Definition \ref{bcoh} a) and
$X$ is isomorphic to $\hocolim P_k,$ where
$$
P_0\stackrel{u_0}{\lto} P_{-1}\stackrel{u_1}{\lto}\cdots\lto P_{-n}\stackrel{u_n}{\lto} P_{-n-1}\lto\cdots
$$
is a sequence of morphisms of compact objects
such that for every $Y\in S$ the canonical map $\Hom(Y, P_k[i])\stackrel{\sim}{\to} \Hom(Y, P_s[i])$
is an isomorphism when $i\ge k\ge s$ as in Lemma \ref{hocolcoh}.

The functor $Q'$ induces an autoequivalence on the subcategory of compact objects. Hence there are a sequence of compact object
$\{u'_i: P'_{-i}\to P'_{-i-1}\}_{i\in\bbN}$ and isomorphisms
$t_i:P_{-i}\stackrel{\sim}{\to}Q'(P'_{-i})$ such that $Q'(u'_{i})t_i= t_{i+1}u_i.$
Consider an object $X'\cong \hocolim P'_k$ and denote by $\varphi'_k: P_k\to X'$ the respective morphisms.
 The object $X'$ is bounded by the same  $m\in\bbZ$ as $X.$
By Lemma \ref{hocolcoh} it is compactly approximated.
Now we have to prove that $Q'(X')\cong X.$

Since $X\cong\hocolim P_k\cong\hocolim Q'(P'_k)$ we have a morphism $t_X: X\to Q'(X')$
which is induced by $Q'(\varphi'_k).$
For any $k\ll 0$ and $i\ge k$ there is a commutative diagram
$$
\begin{CD}
\Hom(\pi(h^Y), P_k[i]) @>{\sim}>>\Hom(\pi(h^Y), X[i]) \\
@VV{\wr}V  @VVV  \\
\Hom(Q'\pi(h^Y), Q'(P'_k)[i]) @>{\sim}>>\Hom(Q'\pi(h^Y), Q'(X')[i]).
\end{CD}
$$
This implies that the right vertical arrow is an isomorphism for any $i.$ Hence the morphism $t_X: X\to Q'(X')$ is an isomorphism
since the objects $\pi(h^Y)$ form a set of compact generators.
\end{proof}

 Thus  we obtain the following theorem.

\begin{theo}\label{unca}
Let $\cA$ be a small category which we consider as a DG category
 and $L\subset D(\cA)$ be a
localizing subcategory, which is generated by compact objects $L^c=D(\cA)^c\cap L.$
Assume that for the quotient functor $\pi :D(\cA)\to
D(\cA)/L$ the following condition holds
\begin{enumerate}
\item[] for every $Y,Z\in \cA$ we have
$\Hom (\pi (h^Y),\pi (h^Z)[i])=0\quad \text{when}\quad i<0.$
\end{enumerate}
Then the category of compactly approximated objects $(D(\cA)/L)^{ca}$ has a unique
enhancement.
\end{theo}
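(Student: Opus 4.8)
The plan is to follow the pattern of Theorems \ref{main} and \ref{mainperf}: fix an arbitrary enhancement $(\cD,\bar\epsilon)$ of $(D(\cA)/L)^{ca}$ and exhibit a chain of quasi-equivalences between $\cD$ and the ``standard'' enhancement $\cW\subset\SF(\cA)/\cL\cap\SF(\cA)$, the full (pretriangulated, by Lemma \ref{subsetcomp}) DG subcategory on the objects of $(D(\cA)/L)^{ca}$; since $\cW$ is independent of $\cD$, uniqueness follows. First I would descend to the compact part. By Lemma \ref{subsetcomp}, $(D(\cA)/L)^c\subseteq(D(\cA)/L)^{ca}$, so inside $\cD$ I form the full DG subcategory $\cC$ on the objects corresponding to $(D(\cA)/L)^c$ and $\cB\subset\cC$ on the objects $\{\epsilon\pi(h^Y)\}_{Y\in\cA}$, exactly as at the start of Section \ref{Thmainperf}. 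The constructions of Section \ref{cons} (formulas (\ref{rho}), (\ref{F1})) then give the quasi-functor $\widetilde\rho:\SF(\cA)\to\SF(\cB)$; by Lemma \ref{factor} it factors through $\SF(\cA)/\cL\cap\SF(\cA)$ as a quasi-functor $\rho$, and by Lemma \ref{equivp} the induced functor $\Ho(\rho):D(\cA)/L\stackrel{\sim}{\to}D(\cB)$ is an equivalence.

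Next I would transport $\cW$ along $\rho$: by (\ref{comaprho}), $\rho$ carries $\cW$ onto the full DG subcategory $\cW'\subset\SF(\cB)$ of DG $\cB$-modules that are compactly approximated with respect to $\cB$, and $\rho^{ca}:\cW\to\cW'$ is a quasi-equivalence. It remains to compare $\cD$ with $\cW'$, and this is done through the canonical quasi-functor $\phi:\cD\to\SF(\cB)$: by Proposition \ref{Keller1} $H^0(\phi)$ restricts to an equivalence $H^0(\cC)\cong(D(\cA)/L)^c\stackrel{\sim}{\to}D(\cB)^c$, and by (\ref{newis}) $\Hom_{H^0(\cD)}(B,X)\stackrel{\sim}{\to}\Hom_{D(\cB)}(H^0(\phi)B,H^0(\phi)X)$ for $B\in\cB$, $X\in\cD$. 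I claim it is enough to show that $H^0(\phi):\Ho(\cD)\to D(\cB)$ is fully faithful with essential image $\Ho(\cW')=D(\cB)^{ca}$. Indeed, $\cW'$ being a full DG subcategory of $\SF(\cB)$, the quasi-functor $\phi$ then factors as a quasi-functor $\cD\to\cW'$, which induces an equivalence on homotopy categories and is therefore a quasi-equivalence; combined with $\rho^{ca}$ this yields $\cD\to\cW'\leftarrow\cW$, as desired.

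The key input is provided by Lemmas \ref{fufa} and \ref{essurj}. Put $Q:=H^0(\rho)^{-1}\cdot H^0(\phi)\cdot\bar\epsilon:(D(\cA)/L)^{ca}\to D(\cA)/L$. By Lemma \ref{fufa}, $Q$ factors through the inclusion $\iota:(D(\cA)/L)^{ca}\hookrightarrow D(\cA)/L$ as $Q=\iota\cdot Q'$ with $Q':(D(\cA)/L)^{ca}\to(D(\cA)/L)^{ca}$ fully faithful, and by Lemma \ref{essurj} $Q'$ is essentially surjective, hence an autoequivalence. Then $H^0(\phi)\cdot\bar\epsilon=\Ho(\rho)\cdot\iota\cdot Q'$. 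Since $Q'$ is essentially surjective, $\iota$ has essential image $(D(\cA)/L)^{ca}=\Ho(\cW)$, and $\Ho(\rho)$ sends $\Ho(\cW)$ onto $\Ho(\cW')$ by the definition of $\cW'$, the essential image of $H^0(\phi)\cdot\bar\epsilon$ equals $\Ho(\cW')$; and $H^0(\phi)$ is fully faithful because $\iota,Q',\Ho(\rho)^{-1},\bar\epsilon$ are. Precomposing with $\bar\epsilon^{-1}$ gives $H^0(\phi):\Ho(\cD)\stackrel{\sim}{\to}\Ho(\cW')$, which closes the argument of the previous paragraph; so $\cD$ is quasi-equivalent to $\cW$ and the enhancement of $(D(\cA)/L)^{ca}$ is unique.

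I expect all the genuine difficulty to lie in Lemmas \ref{fufa} and \ref{essurj}, i.e.\ in verifying that $\phi$ carries $\cD$ \emph{exactly} onto the compactly approximated DG $\cB$-modules. The reason is that such objects are infinite homotopy colimits of compacts (Lemma \ref{hocolcoh}), so the autoequivalence on the subcategory of compact objects has to be carried through these $\hocolim$'s. For full faithfulness one invokes the section functor $\mu$ right adjoint to $\pi$ to force $\Hom(C_k,X')=\Hom(C_k[-1],X')=0$, where $C_k$ is a cone of an approximation $P_k\to X$ as in Definition \ref{bcoh}, thereby reducing $\Hom(X,X')$ to $\Hom(P_k,X')$ on both sides of $Q$; for essential surjectivity one lifts a presenting sequence $P_0\to P_{-1}\to\cdots$ of compacts through the autoequivalence, sets $X':=\hocolim P'_k$, and uses compactness of the generators $\pi(h^Y)$ to check $Q'(X')\cong X$. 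By contrast, the final step — upgrading ``$H^0(\phi)$ is an equivalence onto $\Ho(\cW')$'' to ``$\phi$ is a quasi-equivalence'' — is routine, just as at the ends of the proofs of Theorems \ref{main} and \ref{mainperf}.
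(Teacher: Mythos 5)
Your proposal follows the paper's proof essentially step for step: descend to the compact part via $\cB\subset\cC\subset\cD$, reuse the Section~\ref{cons} construction together with Lemmas~\ref{factor} and~\ref{equivp} and the quasi-equivalence $\rho^{ca}:\cW\to\cW'$ from~(\ref{comaprho}), and then use the canonical $\phi:\cD\to\SF(\cB)$ together with Lemmas~\ref{fufa} and~\ref{essurj} to see that $Q'$ is an autoequivalence of $(D(\cA)/L)^{ca}$, so that $\phi$ factors as a quasi-equivalence $\phi^{ca}:\cD\to\cW'$. This is precisely the argument in the paper's proof of Theorem~\ref{unca}, including the identification of Lemmas~\ref{fufa} and~\ref{essurj} as the substantive content and the final routine upgrade to a quasi-equivalence.
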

\begin{proof}
Since the image of the functor $Q$ belongs to the subcategory of compactly approximated objects  the quasi-functor
$\phi:\cD \to \SF(\cB)$ induces a quasi-functor
$
\phi^{ca}:\cD\lto \cW',
$
 where  $\cW'\subset\SF(\cB)$ as above is the DG subcategory which consists
of all compactly approximated object $D(\cB)^{ca}.$
By Lemmas \ref{fufa} and \ref{essurj} the functor $Q'$ is an equivalence. Hence the quasi-functor $\phi^{ca}$
is a quasi-equivalence as well.
On the other hand, we showed above that there is a quasi-equivalence
$\rho^{ca}: \cW\to \cW',$ where $\cW\subset\SF(\cA)/\cL\cap\SF(\cA)$ is the full DG subcategory, which consists of all
compactly approximated  objects in $D(\cA)/L.$ The composition
\begin{equation}\label{quasf}
(\phi^{ca})^{-1}\cdot\rho^{ca}: \cW\to \cD
\end{equation}
gives a quasi-equivalence between these two different enhancements of $(D(\cA)/L)^{ca}.$
\end{proof}

Let $X$ be a noetherian scheme. As above we say that
$X$ {\it has enough  locally free sheaves}
if for any coherent sheaf $\cF$
there is a locally free sheaf of finite type $\E$ and an epimorphism $\E\twoheadrightarrow\cF.$
For example, any quasi-projective scheme satisfies these conditions.

\begin{prop}\label{cohcapp}
Let $X$ be a noetherian scheme that has enough locally free sheaves. Let $S=\{\E_i\}_{i\in I}$
be a set of locally free sheaves of finite type such that for any coherent sheaf $\cF$
there is an epimorphism from a finite direct sum $\bigoplus_{j=1}^{n}\E_{i_j}$ to $\cF.$
Then the set $S$ is a set of compact generators of $D(\Qcoh X)$ and
an object $M\in D(\Qcoh X)$ is compactly approximated (with respect to $S$) if and only if it is a cohomologically
bounded complex with coherent cohomologies.
Thus the triangulated subcategory of compactly approximated objects $D(\Qcoh X)^{ca}$ is equivalent
to  the bounded category of coherent sheaves $D^b(\coh X)\cong D^b(\Qcoh X)_{\coh}.$
\end{prop}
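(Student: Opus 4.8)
The plan is to prove the three assertions in succession; the one that requires real work is the equivalence ``compactly approximated $\iff$ cohomologically bounded with coherent cohomology''. Throughout I would use only that $X$ is noetherian, has enough locally free sheaves, and that $\bR\Gamma(X,-)$ has some finite cohomological dimension $d$ on quasi-coherent sheaves.

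First, to see that $S$ is a set of compact generators of $D(\Qcoh X)$: each $\E_i$ is locally free of finite type, hence perfect, hence compact (\cite{Ne3,BvB}); and since $X$ is noetherian, any non-zero quasi-coherent sheaf contains a non-zero coherent subsheaf, which by hypothesis admits a non-zero map from some $\E_i$, so --- using the truncation triangles of the standard $t$-structure --- an object $M$ with $\Hom(\E_i,M[n])=0$ for all $i,n$ is acyclic. (This is the same observation, applied to $\Qcoh X$, that underlies Theorem \ref{Grothcat}: a set of generators of a Grothendieck category consisting of objects compact in its derived category is a set of compact generators of that derived category.)

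Next, for the implication ``$M\in D^b(\Qcoh X)_{\coh}$ $\Rightarrow$ $M$ is compactly approximated'': if the cohomology of $M$ lies in $[a,b]$ then $\Hom(\E_i,M[j])\cong\mathbb H^j(X,\E_i^{\vee}\otimes M)$ vanishes for $j<a$ because $\E_i^{\vee}\otimes M$ has cohomology in $[a,b]$, which is condition (a) of Definition \ref{bcoh} with $m=a$. For condition (b), given $k$ I would choose (using enough locally free sheaves) a quasi-isomorphism $\cF^{\bullet}\to M$ with each $\cF^n$ locally free of finite type and $\cF^n=0$ for $n>b$, and take $P_k:=\sigma_{\geq k-d-1}\cF^{\bullet}$; this is a bounded complex of locally free sheaves of finite type, hence perfect and compact, the inclusion $\sigma_{\geq k-d-1}\cF^{\bullet}\hookrightarrow\cF^{\bullet}\simeq M$ is the required $\varphi_k$, and its cone $\sigma_{\leq k-d-2}\cF^{\bullet}$ has cohomology in degrees $\le k-d-2$, so $\Hom(\E_i,\sigma_{\leq k-d-2}\cF^{\bullet}[j])=\mathbb H^j(X,\E_i^{\vee}\otimes\sigma_{\leq k-d-2}\cF^{\bullet})=0$ for $j>k-2$, whence the long exact sequence of the triangle gives $\Hom(\E_i,P_k[j])\isomoto\Hom(\E_i,M[j])$ for all $j\ge k$.

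Finally, for the converse I would start from Lemma \ref{hocolcoh}: a compactly approximated $M$ is $\hocolim_k P_k$ along a sequence $P_0\to P_{-1}\to\cdots$ of compact (hence perfect, hence cohomologically bounded and coherent) objects with $\Hom(Y,P_k[i])\isomoto\Hom(Y,P_s[i])$ for $Y\in S$ and $i\ge k\ge s$. Because $S$ generates, the cone of each transition map (again perfect, hence bounded) has cohomology in degrees $<k$, so $\cH^j(P_k)\isomoto\cH^j(M)$ for $j>k$; this shows every $\cH^j(M)$ is coherent, and fixing one index shows $\cH^j(M)=0$ for $j$ large, i.e. $M$ is cohomologically bounded above. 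Boundedness below is the step I expect to be the main obstacle. From condition (a), $\Hom(\E_i,M[j])=0$ for all $i$ and $j<m$; granting that $M$ has a bottom non-zero cohomology sheaf $\cH^{j_0}(M)$, the corner of the hypercohomology spectral sequence $H^p(X,\E_i^{\vee}\otimes\cH^q(M))\Rightarrow\Hom(\E_i,M[p+q])$ identifies $\Hom(\E_i,M[j_0])$ with $\Hom(\E_i,\cH^{j_0}(M))$, which is non-zero for a suitable $i$, forcing $j_0\ge m$. What must be ruled out is that $\cH^j(M)\ne0$ for arbitrarily negative $j$ (equivalently, that the approximating perfect complexes of (b) cannot be taken uniformly bounded below); this is the delicate point, to be settled by playing the uniform bound of (a) off against those approximating complexes, and in the quasi-projective case it is immediate from Serre vanishing for the twists $\E_i=\cO(-N)$, $N\gg0$. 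Once $M$ is known to be cohomologically bounded with coherent cohomology, the last assertion follows from the standard identification $D^b(\coh X)\isomoto D^b(\Qcoh X)_{\coh}$ for noetherian $X$ (\cite{Ha}).
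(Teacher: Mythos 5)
Your compact-generation argument, your proof of ``$D^b(\coh X)_{\coh}\Rightarrow$ compactly approximated'', and your derivation of boundedness above with coherence of the cohomology sheaves are all sound, and run parallel to the paper's (the paper cites the uniform $\Ext$-vanishing bound of \cite{TT} in place of your cohomological-dimension constant $d$, and works with a single approximation $\varphi_k\colon P_k\to M$ and its cone rather than with the homotopy colimit of Lemma \ref{hocolcoh}; these are presentational differences). The genuine gap is exactly where you flag it: boundedness below. Your hypercohomology spectral sequence argument \emph{presupposes} that $M$ has a lowest nonvanishing cohomology sheaf before it starts, so it only bounds $j_0$ \emph{if} $j_0$ exists; it does not rule out cohomology in arbitrarily negative degrees, and for such a complex the spectral sequence has no usable corner. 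Neither of the fallbacks you sketch closes this: ``playing (a) off against the approximating complexes of (b)'' is not an argument, and Serre vanishing is confined to the quasi-projective case and, worse, presupposes coherence of $\cH^q(M)$, which is part of what must be proved.

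The missing idea is the paper's Lemma \ref{nezero}, a \emph{degree-by-degree} detection statement valid for arbitrary, possibly unbounded, $M\in D(\Qcoh X)$: if $\cH^j(M)\neq 0$ then $\Hom(\E_i,M[j])\neq 0$ for some $\E_i\in S$. Its proof represents $M$ by an honest complex and passes to the \emph{stupid} truncation $\sigma_{\geq j}M$, a subcomplex of $M$ with $\cH^j(\sigma_{\geq j}M)=\Ker d^j$ surjecting onto $\cH^j(M)$; since $X$ is noetherian, $\Ker d^j$ is the filtered colimit of its coherent subsheaves, so one can pick a coherent $\cF\subset\Ker d^j$ mapping nontrivially to $\cH^j(M)$, cover $\cF$ by some $\E_i$, and the resulting chain map $\E_i[-j]\to\sigma_{\geq j}M\hookrightarrow M$ is nonzero because it is nonzero on $\cH^j$. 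With this lemma in hand, condition (a) of Definition \ref{bcoh} forces $\cH^j(M)=0$ for all $j<m$ with no prior boundedness hypothesis, so boundedness below is immediate. Note, too, that it is really this lemma (rather than compact generation alone) that you are invoking when you say ``because $S$ generates'' to conclude that the cone of a transition map $P_k\to P_s$ has cohomology concentrated in degrees $<k$.
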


First of all, it is known that for any noetherian scheme $X$ the natural functor from the bounded derived
category of coherent sheaves $D^b(\coh X)$ to $D(\Qcoh X)$ is fully faithful and
establishes an equivalence of $D^b(\coh X)$ with the subcategory $D^b(\Qcoh X)_{\coh}$
of cohomologically bounded complexes with coherent cohomologies.
To prove Proposition \ref{cohcapp} we will need the following lemmas.

\begin{lemma}\label{fd}{\rm (\cite{TT}, B.11, B.8)}
Let $X$ be a noetherian scheme. Then there is an integer $N$ such that for all $k\ge N$
 and all quasi-coherent sheaves $\cF$ we have
 $\Ext^k(\E, \cF)=0,$ where $\E$ is a locally free sheaf of finite type.
\end{lemma}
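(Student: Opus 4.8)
The plan is to reduce the statement about $\Ext$ to a vanishing statement for the sheaf cohomology of quasi-coherent sheaves, and then to bound the cohomological dimension of $X$. First I would use that $\E$ is locally free of finite type: then the sheaves $\EXT^i_{\O_X}(\E,\cF)$ vanish for $i>0$, while $\shHom_{\O_X}(\E,\cF)\cong\E^{\vee}\otimes_{\O_X}\cF$ is again quasi-coherent. Feeding this into the local-to-global spectral sequence
$$
H^p\big(X,\EXT^q_{\O_X}(\E,\cF)\big)\Rightarrow \Ext^{p+q}_{\O_X}(\E,\cF),
$$
which collapses onto the row $q=0$, yields a natural isomorphism $\Ext^k_{\O_X}(\E,\cF)\cong H^k(X,\E^{\vee}\otimes_{\O_X}\cF)$ for all $k$. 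Hence it is enough to produce an integer $N$ such that $H^k(X,\cG)=0$ for all $k\ge N$ and every quasi-coherent sheaf $\cG$, since the sheaves $\E^{\vee}\otimes_{\O_X}\cF$ form a subclass of such $\cG$.

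Second, I would bound this cohomological dimension. A noetherian scheme is quasi-compact, so it has a finite cover $X=\bigcup_{i=1}^{r}U_i$ by affine opens. If $X$ is separated, each $U_i\cap U_j$ is affine and the \v{C}ech complex of this cover computes $H^{*}(X,\cG)$ (higher cohomology of $\cG$ vanishing on affines); as the complex has length $r$, one gets $H^k(X,\cG)=0$ for $k\ge r$, so $N=r$ works. In the general (quasi-separated) case one replaces this by an induction on $r$ via the Mayer--Vietoris sequence, using that $U_r\cap(U_1\cup\cdots\cup U_{r-1})$ is again quasi-compact and covered by $r-1$ affine opens; the same bound $N=r$ results. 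When $X$ is finite-dimensional one may instead simply invoke Grothendieck's vanishing theorem for the noetherian space $X$ and take $N=\dim X+1$.

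The content of the lemma is thus concentrated in the second step, and the only real point requiring care is the quasi-separated case: when $U_i\cap U_j$ need not be affine the \v{C}ech argument does not apply verbatim, and one must iterate Mayer--Vietoris while bookkeeping how many affine opens cover the successive intersections. This is exactly what the quoted statements in \cite{TT} provide; the passage from that cohomological bound to the claimed $\Ext$-vanishing is the purely formal reduction of the first paragraph.
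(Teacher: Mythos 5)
The paper does not prove this lemma; it simply cites \cite{TT}, B.8 and B.11, so there is no internal argument to compare against. Your reconstruction is essentially the argument of those references: the local-to-global spectral sequence collapses for locally free $\E$ of finite type to give $\Ext^k_{\O_X}(\E,\cF)\cong H^k(X,\E^{\vee}\otimes_{\O_X}\cF)$, reducing the claim to finiteness of the quasi-coherent cohomological dimension of the quasi-compact, quasi-separated scheme $X$, which is established by induction on the size of a finite affine cover.

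One detail is overstated: you assert that $U_r\cap(U_1\cup\cdots\cup U_{r-1})$ is covered by $r-1$ affine opens and that ``the same bound $N=r$ results.'' That is correct only when pairwise intersections $U_i\cap U_j$ are affine (the separated, or semi-separated, case); for a merely quasi-separated noetherian scheme $U_i\cap U_j$ is a quasi-compact open of an affine, hence a finite union of affines with no uniform bound on their number, so the Mayer--Vietoris induction produces some finite $N$ that may well exceed $r$. The extra bookkeeping this requires is exactly the content of \cite{TT} B.8. The lemma only needs existence of a finite $N$, so the conclusion stands, and for the paper's actual applications (quasi-projective, hence separated, schemes) the clean \v{C}ech bound $N=r$ is valid. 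A second caveat you already flag correctly: the Grothendieck-vanishing shortcut is available only when $\dim X<\infty$, which a noetherian scheme can fail, so it cannot replace the affine-cover argument in general.
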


\begin{lemma}{\rm(\cite{TT}) 2.3.1 e), 2.2.8)}\label{cov}
Let $X$ be a scheme as in Proposition \ref{cohcapp}. Then for any  $M\in D^b(\Qcoh X)_{\coh}$
there is a bounded above complex of locally free sheaves of finite
type $P^{\cdot}$ with a quasi-isomorphism
$P^{\cdot}\stackrel{\sim}{\to}M$.
\end{lemma}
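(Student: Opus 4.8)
The plan is to reduce the statement to an explicit homological construction. First I would invoke the fact recalled just before the lemma, that the natural functor $D^b(\coh X)\to D(\Qcoh X)$ is fully faithful with essential image $D^b(\Qcoh X)_{\coh}$; hence any $M\in D^b(\Qcoh X)_{\coh}$ is isomorphic in $D(\Qcoh X)$ to a genuine bounded complex $M^{\cdot}$ of coherent sheaves, say with $M^i=0$ for $i>b$. It therefore suffices to produce a bounded-above complex $P^{\cdot}$ of locally free sheaves of finite type together with a quasi-isomorphism $\phi\colon P^{\cdot}\to M^{\cdot}$.

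Second, I would build $P^{\cdot}$ and $\phi$ by descending induction on the degree, adjoining one term $P^n$ at a time for $n=b,b-1,b-2,\dots$. The inductive datum at stage $n$ is a complex of finite locally free sheaves concentrated in degrees $[n,b]$, equipped with a morphism to $M^{\cdot}$ that is an isomorphism on $H^i$ for $i\ge n+1$ and an epimorphism on $H^n$. For the base case take any surjection $P^b\twoheadrightarrow M^b$ from a finite locally free sheaf, which exists since $X$ has enough locally free sheaves. For the inductive step, let $K\subset H^n(P^{\cdot})\subset P^n$ be the kernel of the comparison map $H^n(P^{\cdot})\to H^n(M^{\cdot})$; it is coherent because $X$ is noetherian. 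Form the coherent sheaf $\mathcal E^n:=\bigl(K\times_{M^n}M^{n-1}\bigr)\oplus Z^{n-1}(M^{\cdot})$, where $K\to M^n$ is induced by $\phi^n$ and $M^{n-1}\to M^n$ is the differential of $M^{\cdot}$; it carries natural maps $\mathcal E^n\to P^n$ (projection onto $K$, then inclusion $K\subset P^n$) and $\mathcal E^n\to M^{n-1}$ (projection onto the $M^{n-1}$-factor, respectively the inclusion $Z^{n-1}(M^{\cdot})\hookrightarrow M^{n-1}$), and these fit into a commuting square with $P^n\xrightarrow{\phi^n}M^n\xleftarrow{} M^{n-1}$. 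Choosing a surjection $P^{n-1}\twoheadrightarrow\mathcal E^n$ from a finite locally free sheaf then supplies the differential $P^{n-1}\to P^n$ and the map $\phi^{n-1}\colon P^{n-1}\to M^{n-1}$; since the differential factors through $K\subset\ker(P^n\to P^{n+1})$ it squares to zero, the $K$-component forces $\operatorname{im}(P^{n-1}\to P^n)=K$ and hence the correct $H^n$, while the $Z^{n-1}(M^{\cdot})$-component forces $H^{n-1}$ of the new complex to surject onto $H^{n-1}(M^{\cdot})$. Thus the inductive hypotheses are restored one degree lower.

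Third, I would take $P^{\cdot}$ to be the (in general left-unbounded) complex produced in the limit: it is bounded above with all terms finite locally free, and by construction $\phi$ is an isomorphism on every cohomology sheaf, hence a quasi-isomorphism onto $M^{\cdot}$. Composing with the isomorphism $M^{\cdot}\cong M$ in $D(\Qcoh X)$ completes the argument.

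I expect the main obstacle to be conceptual rather than computational: finite locally free sheaves are not projective objects of $\Qcoh X$, so one cannot apply the horseshoe lemma, nor lift the differentials of $M^{\cdot}$ to chain maps between resolutions of the individual $M^i$ and pass to a total complex. The point of the degree-by-degree construction is exactly that it only ever requires surjecting a \emph{finite} locally free sheaf onto a prescribed coherent sheaf — which is precisely the ``enough locally free sheaves'' hypothesis — and never requires lifting a map along a surjection; noetherianness of $X$ is used throughout to keep the kernels and fiber products $\mathcal E^n$ coherent. (If one additionally wanted $P^{\cdot}$ to be \emph{bounded}, i.e.\ a strict perfect complex, one would further invoke the uniform vanishing $\Ext^{k}(\E,-)=0$ for $k\gg 0$ from Lemma \ref{fd}, but this is not needed for the statement at hand.)
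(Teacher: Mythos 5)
The paper does not give its own argument here: Lemma~\ref{cov} is quoted directly from Thomason--Trobaugh \cite{TT} (2.2.8 together with 2.3.1~e)), so there is no internal proof to compare against, and your self-contained construction has to be judged on its own. The inductive scheme you describe is the right one, but two spots need tightening.

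The heart of the inductive step is the assertion that $\operatorname{im}(P^{n-1}\to P^n)=K$, and here you omit the one non-automatic surjectivity. For $P^{n-1}\twoheadrightarrow\mathcal E^n\to K$ to be onto you need the projection $K\times_{M^n}M^{n-1}\to K$ to be an epimorphism; but a priori it is a base change of the \emph{non-surjective} map $M^{n-1}\to M^n$, which need not give an epimorphism. What saves you is precisely the definition of $K$: since $K=\ker\bigl(H^n(P^{\cdot})\to H^n(M^{\cdot})\bigr)$, the chain map $\phi^n$ carries $K$ into the coboundaries $B^n(M^{\cdot})=\operatorname{im}(d_M^{n-1})\subset M^n$, so the fiber product over $M^n$ equals the fiber product over $B^n(M^{\cdot})$, i.e.\ it is a base change of the epimorphism $M^{n-1}\twoheadrightarrow B^n(M^{\cdot})$ and hence is itself an epimorphism onto $K$. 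This is the step that makes the induction go and it should be said explicitly.

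The second point is your opening reduction. You replace $M$ by a bounded complex $M^{\cdot}$ of coherent sheaves and produce a chain-level quasi-isomorphism $P^{\cdot}\to M^{\cdot}$. But the lemma asks for a quasi-isomorphism into $M$ itself, and the equivalence $D^b(\coh X)\simeq D^b(\Qcoh X)_{\coh}$ only gives an isomorphism $M^{\cdot}\cong M$ in the derived category, a priori a zig-zag of quasi-isomorphisms; since bounded-above complexes of finite locally free sheaves are not $K$-projective on a non-affine scheme, you cannot simply transport your chain map along that zig-zag. The robust fix is to run the very same induction directly against $M$: the terms of $M$ are now only quasi-coherent, but $K$ is still coherent (it is a subsheaf of $\ker d_P^n\subset P^n$ on a noetherian scheme), and instead of surjecting a finite locally free sheaf onto all of $\mathcal E^n$ (which is now merely quasi-coherent) one surjects onto a \emph{coherent} subsheaf of $\mathcal E^n$ chosen large enough to hit $K$ and to hit $H^{n-1}(M)$. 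Such a subsheaf exists because on a noetherian scheme every quasi-coherent sheaf is the filtered union of its coherent subsheaves and both $K$ and $H^{n-1}(M)$ are coherent. For the use the paper actually makes of Lemma~\ref{cov} in Proposition~\ref{cohcapp} the derived-category isomorphism would suffice, so your shortcut is not fatal there, but it does not prove the lemma as literally stated.
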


\begin{lemma}\label{nezero}
Let a scheme $X$ and a set of locally free sheaves of finite type $S=\{\E_i\}_{i\in I}$ be as
in Proposition \ref{cohcapp}. Let $M\in D(\Qcoh X)$ be a complex of quasi-coherent sheaves
and  $H^j(M)\ne 0$ for some $j\in\bbZ$ then there is an sheaf $\E_i\in S$ such that
$\Hom(\E_i, M[j])\ne 0.$
\end{lemma}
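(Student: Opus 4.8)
The plan is to avoid any derived‑category truncation or obstruction chasing and instead produce the required morphism explicitly at the level of cocycles, exploiting the simple fact that a morphism $\E_i\to M[j]$ which is nonzero on $H^j$ is automatically nonzero in $D(\Qcoh X)$. In particular neither Lemma \ref{fd} nor Lemma \ref{cov} is needed here.

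Concretely: fix a complex of quasi‑coherent sheaves $N^{\bullet}$ representing $M$, and set $Z^j=\ker(N^j\to N^{j+1})$, $B^j=\im(N^{j-1}\to N^j)$, so $H^j(M)=Z^j/B^j\neq 0$. Since $X$ is noetherian, $H^j(M)$ contains a nonzero coherent subsheaf $\cG$; let $\widetilde Z\subseteq Z^j$ be its preimage, so that $B^j\subseteq\widetilde Z$ and $\widetilde Z/B^j\cong\cG$. Again by noetherianness $\widetilde Z$ is the filtered union of its coherent subsheaves, so some coherent $\widetilde\cG\subseteq\widetilde Z$ already surjects onto $\cG$. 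By the defining property of $S$ there is an epimorphism $\E=\bigoplus_{k=1}^{n}\E_{i_k}\twoheadrightarrow\widetilde\cG$ with each $\E_{i_k}\in S$; composing it with $\widetilde\cG\hookrightarrow\widetilde Z\subseteq Z^j=\ker(d^j)\subseteq N^j$ gives a map $f^0\colon\E\to N^j$ landing in $Z^j$. Being annihilated by $d^j$, $f^0$ defines a chain map $\E[0]\to N^{\bullet}[j]$, hence an element $f\in\Hom_{D(\Qcoh X)}(\E,M[j])$.

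Finally, $f\neq 0$: the functor $H^0\colon D(\Qcoh X)\to\Qcoh X$ carries $f$ to the map $\E=H^0(\E)\to H^0(M[j])=H^j(M)=Z^j/B^j$ induced by $f^0$ on cohomology, whose image is the image of $\widetilde\cG$ in $Z^j/B^j$, namely $\widetilde\cG/(\widetilde\cG\cap B^j)\cong\cG\neq 0$. So $H^0(f)\neq 0$, hence $f\neq 0$, and from $\Hom_{D(\Qcoh X)}(\E,M[j])=\bigoplus_k\Hom_{D(\Qcoh X)}(\E_{i_k},M[j])$ we get $\Hom_{D(\Qcoh X)}(\E_{i_k},M[j])\neq 0$ for some $k$, which is the asserted $\E_i\in S$. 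There is no genuine obstacle in this argument; the only points requiring a little care are the two uses of the noetherian hypothesis (extracting a coherent subsheaf of $H^j(M)$ and lifting it to a coherent subsheaf of the cocycle sheaf surjecting onto it) and the routine verification that a chain map out of a sheaf placed in degree $0$ induces on $H^0$ exactly the map it visibly induces.
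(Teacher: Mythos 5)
Your argument is correct and follows the same route as the paper's: both produce a morphism from some $\E_i$ into the cocycle sheaf $Z^j\subset M^j$ with nonzero image in $H^j(M)$ (the paper reaches $Z^j$ as $H^j(\sigma_{\ge j}M)$, you build the chain map directly), and both rest on the two noetherian facts that a quasi-coherent sheaf is the filtered union of its coherent subsheaves and that coherent sheaves are covered by finite sums from $S$. Your intermediate choice of $\cG\subset H^j(M)$ with a coherent lift $\widetilde\cG$ is a slight detour; one may simply pick a coherent subsheaf of $Z^j$ with nonzero image in $H^j(M)$, as the paper does.
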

\begin{proof}Let us consider the stupid truncation $\sigma_{\ge j}M.$ We have an epimorphism
$H^j(\sigma_{\ge j}M)\twoheadrightarrow H^j(M).$ Any quasi-coherent sheaf on noetherian scheme
is a direct limit of its coherent subsheaves.
Therefore we can find a coherent subsheaf $\cF\subset H^j(\sigma_{\ge j}M)$ such that the compositon map to
$H^j(M)$ is nontrivial. By assumption any coherent sheaf can be covered by a direct sum of $\E_i\in S.$
Hence, we can find a morphism $\E_i\to H^j(\sigma_{\ge j}M)$ such that the composition with the map
to $H^j(M)$ is nontrivial. This map induces a map
$$
\E_i\to H^j(\sigma_{\ge j}M)\to \sigma_{\ge j}M[j]\to M[j]
$$
which is nontrivial, because it is nontrivial on the cohomologies.
\end{proof}
\noindent
{\it Proof of Proposition \ref{cohcapp}}
$\Leftarrow$
Any (cohomologically) bounded complex satisfies the property a) of Definition \ref{bcoh}.
By Lemma \ref{cov} for any $M\in D^b(\Qcoh X)_{\coh}$ there is a
bounded above resolution of locally free sheaves of finite type $P^{\cdot}\stackrel{\sim}{\to} M.$
To construct an approximation $\varphi_k: P_k\to M$ we can consider a stupid truncation
$\sigma_{\ge l}P^{\cdot}$ for $l\ll (k-N),$ where $N$ is an integer from Lemma \ref{fd}.
The object $\sigma_{\ge l}P^{\cdot}$ is a perfect complex and the canonical map $\varphi_k: \sigma_{\ge l}P^{\cdot}\to M$
satisfies the property b). Indeed, the cone $C_k$ of this map $\varphi_k$ is a cohomologically bounded complex such that
$H^j(C_k)$
are trivial when $j>l.$ Lemma \ref{fd} implies that $\Hom(\E_i, C_k[m])=0$ for all $\E_i\in S$ when $m\ge k.$
Therefore, any bounded complex of coherent sheaves is compactly approximated.

$\Rightarrow$
Let $M\in D(\Qcoh X)$ be a compactly approximated object. By Lemma \ref{nezero} the property a) implies that
$M$ is cohomologically bounded below. On the other hand, the property b) gives us that $M$ is cohomologically bounded above.
Indeed, by Lemma \ref{nezero} a cone of a map $\varphi_k: P_k\to M$ is bounded above and the object $P_k$
is cohomologically bounded as perfect complex.
Moreover, Lemma \ref{nezero} implies that all cohomologies $H^m(C_k)$ of the cone $C_k$ of the map $\varphi_k$ are trivial
when $m> k.$ Hence, for sufficiently negative $k$ we have that nontrivial cohomologies of $M$ are isomorphic to the cohomologies
of $P_k.$ Therefore, they are coherent sheaves and $M$ is cohomologically bounded complex with coherent cohomologies.
\hfill$\Box$

\begin{theo}\label{bdcmain}
The bounded derived  category of coherent sheaves $D^b(\coh X)$ on a quasi-projective scheme $X$ has a unique enhancement.
\end{theo}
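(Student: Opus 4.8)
The plan is to exhibit $D^b(\coh X)$ as a category of compactly approximated objects and then invoke Theorem \ref{unca}. Since $X$ is quasi-projective, I would reuse the algebraic model constructed in the proof of Theorem \ref{perfc}: embed $X$ as an open subscheme of a projective scheme $\overline X\subset\bbP^n$, put $A=\bigoplus_m H^0(\overline X,\cO_{\overline X}(m))$ with its associated $\bbZ$-category $\cA$, and choose a homogeneous ideal $I$ whose zero locus is $\overline X\setminus X$. The $I$-torsion modules $\Tors_I(A)$ form a localizing Serre subcategory of $\umod\cA$, and the line bundles $\cO(m)|_X$ are compact in $D(\Qcoh X)$ (the scheme being quasi-compact and separated); hence Lemma \ref{quotab} gives an equivalence $D(\cA)/L\simeq D(\Qcoh X)$ with $L=D_I(\Gr(A))$, under which the representable modules $h^Y$ correspond to the line bundles $\cO(m)|_X$, $m\in\bbZ$. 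By Lemma \ref{perfcg} the subcategory $L=D_I(\Gr(A))$ is compactly generated by objects of $D(\cA)^c$, hence by $L^c=L\cap D(\cA)^c$.

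Next I would verify the sole hypothesis of Theorem \ref{unca}. Since the objects $\pi(h^Y)$ are line bundles --- genuine sheaves concentrated in degree $0$ --- we get $\Hom(\pi(h^Y),\pi(h^Z)[i])=\Ext^i_X(\cO(m),\cO(m'))=H^i(X,\cO(m'-m))$, which vanishes for $i<0$. Therefore Theorem \ref{unca} applies, and the category $(D(\cA)/L)^{ca}\simeq D(\Qcoh X)^{ca}$ of compactly approximated objects, taken with respect to the set of compact generators $S:=\{\pi(h^Y)\}_{Y\in\cA}=\{\cO(m)|_X\}_{m\in\bbZ}$, has a unique enhancement.

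Finally I would identify $D(\Qcoh X)^{ca}$ with $D^b(\coh X)$. A quasi-projective scheme is noetherian and has enough locally free sheaves, and since $\cO_X(1)$ is ample every coherent sheaf is a quotient of a finite direct sum of members of $S=\{\cO(m)|_X\}_{m\in\bbZ}$. Thus Proposition \ref{cohcapp} applies to this very set $S$ and gives $D(\Qcoh X)^{ca}\simeq D^b(\coh X)\simeq D^b(\Qcoh X)_{\coh}$. Combining with the previous step, $D^b(\coh X)$ has a unique enhancement.

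I do not expect a genuine obstacle here: the substantive work is already carried out in Theorem \ref{unca} and Proposition \ref{cohcapp}. The one point that needs care is the bookkeeping that makes the two applicable in series --- namely that the set of compact generators implicitly fixed by the abstract construction (the images $\pi(h^Y)$ of the representables) is literally the same set $S$ appearing in Proposition \ref{cohcapp}, since the notion of ``compactly approximated object'' depends on that choice. Once one observes that $\{\cO(m)|_X\}_{m\in\bbZ}$ serves simultaneously as the set $\{\pi(h^Y)\}$ and as an admissible covering family in the sense of Proposition \ref{cohcapp}, the two results dovetail and the theorem follows.
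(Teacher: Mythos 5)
Your proof is correct and follows essentially the same route as the paper: both reduce to the algebraic model $D(\cA)/D_I(\cA)$ via Lemma \ref{quotab}, invoke Lemma \ref{perfcg} for compact generation, apply Theorem \ref{unca}, and identify the compactly approximated objects with $D^b(\coh X)$ via Proposition \ref{cohcapp}. The only difference is that you make explicit two checks the paper leaves implicit---the $\Hom$-vanishing hypothesis of Theorem \ref{unca} via $\Ext^i_X(\cO(m),\cO(m'))\cong H^i(X,\cO(m'-m))=0$ for $i<0$, and the identification of the generator set $\{\pi(h^Y)\}$ with the family $S$ of Proposition \ref{cohcapp}---both of which are correct and tighten the argument.
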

\begin{proof}
Let $X$ be a quasi-projective scheme. Then it is an open subscheme of a projective scheme $\overline{X}\subset\bbP^n.$
Denote by $A$ the following $\bbZ$\!-graded algebra
$$
A=\bigoplus_n H^0( \overline{X}, \cO_{\overline{X}}(n)).
$$

With the graded algebra $A$ one can associate  a
$\bbZ$\!-category $\cA,$ objects of which are $\bbZ$ and
$\Hom_{\cA}(i, j)=A^{j-i}$ so that the composition in
$\cA$ comes from the multiplication in $A.$

It was shown in the previous Section \ref{geom} that the category $\Qcoh X$ is equivalent to a quotient of the category
$\Gr(A)=\umod\cA$ by the
localizing subcategory of $I$\!-torsion sheaves
$\Tors_{I}(\cA),$ where $I$ is a homogenous ideal such that the support of the subscheme $\Proj A/I\subset \overline{X}$
is exactly $\overline{X}\backslash X.$ Moreover, by Lemma \ref{quotab} we also know that the derived category
$D(\Qcoh X)$ is equivalent to the quotient of $D(\cA)$ by  the localizing subcategoy $D_{I}(\cA)\subset D(\cA)$ that
consists of all objects cohomologies of which belongs to $\Tors_{I}(\cA).$
In addition, the free modules $h^Y$ map to the corresponding line bundles $\O(i)|_{X}$ which are compact objects in
$D(\Qcoh X).$

By Lemma \ref{perfcg}
the subcategory $D_{I}(\cA)$ is compactly generated. Hence,
Theorem \ref{unca} implies that the bounded derived category of coherent sheaves $D^b(\coh X)$ which is equivalent to the subcategory
of compactly approximated objects in $D(\Qcoh X)$ has a unique enhancement.
\end{proof}

\section{Strong uniqueness and fully faithful functors}

In this section we prove a strong uniqueness for bounded derived categories of coherent sheaves and categories of perfect complexes
on projective schemes.
We remind the notion of an ample sequence in abelian category introduced in \cite{Or}.

\begin{defi}\label{defamp} Let  ${\fA}$ be a $k$\!-linear  abelian category.
Let $\{ P_i \}_{i\in\bbZ}$ be a sequence of objects of ${\fA}.$
We say that this sequence is  ample if for every object $C\in {\fA}$
there exists $N$  such that for all $i<N$ the following conditions hold:

\begin{enumerate}
\item[a)] there is an epimorphism $P_i^{\oplus n_i} \twoheadrightarrow C$ for some $n_i\in\bbN;$
\item[b)] $\Ext^j (P_i, C)=0$ for any $j\ne 0;$
\item[c)] $\Hom( C, P_i)=0$.
\end{enumerate}
\end{defi}

\begin{prop}\label{ampcoh}
Let $X\subseteq\bbP^N$ be a projective scheme such that the maximal torsion
subsheaf $T_0(\O_X)\subset \O_X$ of dimension $0$ is trivial.  Then the
 sequence $\{ \O_X(i) \}_{i\in\bbZ}$ is ample in the abelian
 category of coherent sheaves $\coh X.$
\end{prop}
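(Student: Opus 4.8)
The plan is to check the three defining conditions of an ample sequence (Definition \ref{defamp}) for the sequence $P_i=\O_X(i)$. Conditions a) and b) are formal consequences of Serre's theorems on a projective scheme, so the whole content sits in condition c), which is the only place that uses the hypothesis $T_0(\O_X)=0$.

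Fix a coherent sheaf $C$ on $X$. Since $\O_X(1)$ is very ample, Serre's global generation theorem gives that $C(-i)$ is globally generated for $-i\gg0$; choosing finitely many generating sections and twisting by $i$ produces an epimorphism $\O_X(i)^{\oplus n_i}\twoheadrightarrow C$, which is a). Serre vanishing gives $H^{j}(X,C(-i))=0$ for all $j>0$ once $-i\gg0$, and since $\O_X(i)$ is invertible we have $\EXT^{j}(\O_X(i),C)=0$ for $j>0$, so the local-to-global spectral sequence yields $\Ext^{j}(\O_X(i),C)\cong H^{j}(X,C(-i))$; hence $\Ext^{j}(\O_X(i),C)=0$ for all $j\ne0$ and $i\ll0$, which is b). All of these vanishings hold below one common bound $N=N(C)$.

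For c) we must show $\Hom(C,\O_X(i))=0$ for $i\ll0$. As $\O_X(i)$ is invertible, $\Hom(C,\O_X(i))\cong H^{0}(X,\mathcal G(i))$ where $\mathcal G:=\shHom(C,\O_X)$. The point is that $\mathcal G$ has no associated point of dimension $0$: locally $C$ admits a free presentation $\O_X^{\oplus r_1}\to\O_X^{\oplus r_0}\to C\to0$, and applying the left-exact functor $\shHom(-,\O_X)$ embeds $\mathcal G$ into $\O_X^{\oplus r_0}$, so every associated point of $\mathcal G$ is an associated point of $\O_X$; the hypothesis $T_0(\O_X)=0$ says precisely that $\O_X$ has none of dimension $0$. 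Thus c) reduces to the general claim: \emph{a coherent sheaf $\mathcal G$ on a projective scheme with $T_0(\mathcal G)=0$ satisfies $H^{0}(X,\mathcal G(i))=0$ for $i\ll0$.}

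I expect this general claim to be the main obstacle. I would establish it by translating into graded commutative algebra: writing $\mathcal G=\widetilde M$ for a finitely generated graded module $M$ over the homogeneous coordinate ring of $X$, the statement ``$H^{0}(X,\mathcal G(i))=0$ for $i\ll0$'' is equivalent to finite generation (hence boundedness below) of the saturation $\bigoplus_{n}H^{0}(X,\mathcal G(n))$, i.e.\ to finiteness of $H^{1}_{\mathfrak m}(M)$; by graded local duality this is controlled by the associated primes of $M$, and the absence of $0$-dimensional ones is exactly what is needed. Concretely, one may first reduce --- via a general finite linear projection with center disjoint from $\supp\mathcal G$, together with the filtration of $\mathcal G$ by dimension of support (the hypothesis $T_0(\mathcal G)=0$ killing the bottom layer) --- to the case of a torsion-free sheaf $\mathcal F$ on a projective space $\PP^{d}$; there $\mathcal F\hookrightarrow\mathcal F^{\vee\vee}$ with $\mathcal F^{\vee\vee}$ reflexive, hence of depth $\ge2$, hence with finitely generated (so bounded below) module of sections, giving $H^{0}(\PP^{d},\mathcal F^{\vee\vee}(i))=0$ and a fortiori $H^{0}(\PP^{d},\mathcal F(i))=0$ for $i\ll0$. (This vanishing is also contained in \cite{Or}.) Combining the three parts, for each coherent $C$ all of a), b) and c) hold for $i$ below a single bound, so $\{\O_X(i)\}_{i\in\bbZ}$ is an ample sequence in $\coh X$.
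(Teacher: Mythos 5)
Your proof is correct, but it handles the only nontrivial part — condition c) — by a genuinely different reduction than the paper's. The paper observes that covering $C$ by a direct sum $\O_X(k)^{\oplus n}\twoheadrightarrow C$ yields an injection $\Hom(C,\O_X(i))\hookrightarrow H^0(X,\O_X(i-k))^{\oplus n}$, so the whole of c) collapses at once to the single vanishing $H^0(X,\O_X(m))=0$ for $m\ll 0$; that vanishing is then proved by pushing forward to $\bbP^N$ and combining Serre duality with the local-to-global spectral sequence and local duality, so that the sheaf $\EXT^{N}_{\bbP^N}(\O_X,\omega_{\bbP^N})$ (which vanishes exactly when $T_0(\O_X)=0$) controls everything. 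You instead rewrite $\Hom(C,\O_X(i))=H^0(X,\shHom(C,\O_X)(i))$, note that $\shHom(C,\O_X)$ locally embeds in a free $\O_X$\!-module and hence inherits $T_0=0$ from $\O_X$, and then invoke the more general statement that \emph{any} coherent sheaf with $T_0=0$ has vanishing sections after a sufficiently negative twist. That general statement is true and your two proposed proofs of it (graded local duality, or filtration by dimension of support plus a finite projection to $\bbP^d$ and reflexivity) are both viable, but they are appreciably more work than the paper's single application of Serre duality on $\bbP^N$: you are proving the claim for an arbitrary $\cG$ with $T_0(\cG)=0$, whereas the covering trick shows that you only ever need it for $\cG=\O_X$. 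Your route has the virtue of isolating and naming the general principle; the paper's route gets away with proving only the special case. You correctly identify the general claim as the main obstacle, but as written it remains a sketch; for a complete proof you would need to carry out one of the two outlined strategies in detail, whereas the paper's reduction makes the remaining step a one-line duality computation.
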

\begin{proof}
It is a classical result of Serre \cite{Se} that  for any coherent sheaf $\cG$ on a projective scheme $X$
and for sufficiently large $m\gg 0$ the sheaf $\cG(m)$ is generated by a finite number of global sections and
$H^j(X, \cG(m))=0$ for $j>0.$ This implies a) and b) of Definition \ref{defamp}.

Since any coherent sheaf $\cG$ on $X$ is covered by a $\O(k)^{\oplus n_k}$ for some $k\in\bbZ,$
it is sufficient to show that $H^0(X, \O_X(m))=$ for $m\ll 0.$ But for $m\ll 0$
$$
H^0(X, \O_X(m))=H^0(\bbP^N, \O_X(m))=H^0(\bbP^N, \EXT^N_{\bbP^N}(\O_X, \omega_{\bbP^N})(-m))=0,
$$
because by local duality the sheaf $\EXT^N_{\bbP^N}(\O_X, \omega_{\bbP^N})$ is trivial when $T_0(\O_X)$ is trivial.
\end{proof}

Let ${\fA}$ be an abelian category with an ample sequence
$\{ P_i\}.$ Denote by $D^b(\fA)$  the bounded derived category of ${\fA}.$ Let us
consider the full subcategory $j: {\cP}\hookrightarrow D^b(\fA)$ such that
$\Ob {\cP}:=\{  P_i\; |\; i\in \bbZ \}$.
The following proposition is proved in \cite{Or, Or2}
\begin{prop}\label{ext}
Let $F : D^b(\fA)\stackrel{\sim}{\to} D^b(\fA)$ be an autoequivalence.
Suppose there exists an isomorphism of functors
$\theta_{\cP} : j\stackrel{\sim}{\to}F\cdot j,$ where $j$  is  the natural embedding
of ${\cP}:=\{  P_i\; |\; i\in \bbZ \}$ to $D^b(\fA).$
 Then it can be extended to an isomorphism
$\id\stackrel{\sim}{\to}F$ on the whole $D^b(\fA)$.
\end{prop}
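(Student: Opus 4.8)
The plan is to construct, for every $M\in D^b(\fA)$, an isomorphism $\theta_M\colon M\stackrel{\sim}{\to}F(M)$ which restricts to $\theta_{\cP}$ on $\cP$ and is natural in $M$. Two features of an ample sequence are used repeatedly: for every $N$ the objects $\{P_i\}_{i\le N}$ generate $D^b(\fA)$ as a triangulated category (condition a), and for a fixed object $C$ and $i$ sufficiently negative one has $\Hom(P_i,C[s])=0$ for $s\ne 0$ (condition b) and $\Hom(C,P_i)=0$ (condition c); together with the vanishing of negative $\Ext$'s in $\fA$ and the fact that each $\theta_{P_i}$ is an isomorphism — so $\Hom(P_i,F(P_j)[s])\cong\Hom(P_i,P_j[s])$ — this controls all the $\Hom$-groups that arise. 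First I would extend $\theta$ to every bounded complex $K^{\bullet}$ whose terms are finite direct sums of shifts $P_i[m]$: on finite direct sums it is forced by additivity, and for a general such $K^{\bullet}$ one inducts on the number of nonzero terms, realising $K^{\bullet}$ as a cone of a morphism between a shifted $P_i[m]$ and a shorter complex and extending $\theta$ over it by the morphism-of-triangles axiom. The indices in $K^{\bullet}$ should be chosen in a sufficiently fast-decreasing sequence so that the obstruction to, and the indeterminacy of, this extension at each stage lie in $\Hom$-groups between the $P_i$ in strictly positive degrees, which vanish by conditions b) and c).

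Next, for a general $M\in D^b(\fA)$ with cohomology concentrated in an interval $[a,b]$, I would build — by repeatedly covering top cohomology groups by finite sums $P_i^{\oplus n_i}$ (condition a) and passing to kernels — a bounded complex $K^{\bullet}$ with terms finite direct sums of $P_i$'s together with a morphism $q\colon K^{\bullet}\to M$ whose cone $C_q$ has cohomology concentrated in a single, arbitrarily negative degree; the boundedness of $M$ is exactly what lets one stop after finitely many steps once $C_q$ has been pushed below any prescribed level. Applying the extension of $\theta$ already constructed gives a morphism $g:=F(q)\cdot\theta_{K^{\bullet}}\colon K^{\bullet}\to F(M)$. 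Since $C_q$ is concentrated in one very negative degree and $F(M)$ is a fixed complex with bounded cohomology, one gets $\Hom(C_q,F(M)[j])=0$ for $j=0,1$ (the relevant $\Ext$-groups have negative degree after unwinding the hypercohomology spectral sequence), so from the triangle $K^{\bullet}\stackrel{q}{\to}M\to C_q\to K^{\bullet}[1]$ the morphism $g$ descends uniquely to a morphism $\theta_M\colon M\to F(M)$ with $\theta_M\cdot q=g$.

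Finally I would verify three things: independence of the chosen resolution, hence naturality (two resolutions $q\colon K^{\bullet}\to M$, $q'\colon K'^{\bullet}\to M$ are dominated by a common one, and the same vanishing forces the two descent maps to agree; naturality in a morphism $M\to M'$ follows by lifting it to a morphism of resolutions and invoking the vanishing again); that $\theta_M$ is an isomorphism (via the resolution, $\theta_M$ induces on $\Hom(P_i,-[j])$ for $i\ll 0$ an isomorphism assembled from the $\theta_{P_i}$, so the cone $C(\theta_M)$ satisfies $\Hom(P_i,C(\theta_M)[j])=0$ for all $j$ and all $i\ll 0$, whence $C(\theta_M)=0$ because such $P_i$ generate $D^b(\fA)$); and that $\theta_M$ restricts to $\theta_{\cP}$ on $\cP$, which is built into the construction. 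The step I expect to be the main obstacle is the construction in the second paragraph: producing, for an arbitrary object, a single \emph{bounded} complex of finite sums of $P_i$'s together with a map to $M$ whose cone is genuinely negligible relative to any prescribed bounded complex, and doing so coherently enough that the descended maps are canonical. This forces one to use conditions a), b) and c) in concert, and to keep the whole argument inside $D^b(\fA)$, since $F$ is only assumed to be an autoequivalence and need not be compatible with the standard $t$-structure — in particular $F$ need not preserve $\fA$, so one cannot simply reduce to the abelian level.
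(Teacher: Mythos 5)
Your proposal takes a genuinely different route from the one the paper relies on. The paper cites \cite{Or, Or2} for this proposition and reproduces (in Appendix~B, for the generalization Proposition~\ref{exten}) Orlov's seven-step argument: first define $\theta_X$ for $X$ concentrated in a single cohomological degree, using a single cover $P_i^{\oplus k}\twoheadrightarrow H^0(X)$ and condition~c), then perform an \emph{induction on the length of the interval supporting $H^{\bullet}(X)$}, gluing along the canonical $t$-structure truncation triangles, with separate and rather elaborate steps (Steps~2--4, 6--7) devoted to showing that the isomorphisms so produced are independent of choices and natural. Your plan instead builds, for each $M$, a single bounded complex $K^{\bullet}$ of finite sums of shifted $P_i$'s with a map $q\colon K^{\bullet}\to M$ whose cone is a shift $X[-a']$ with $a'\ll 0$, constructs $\theta_{K^{\bullet}}$ by stupid truncation and induction, and then descends along $q$ using the $t$-structure vanishing $\Hom(X[-a'],F(M)[j])=0$. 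This is much closer in spirit to the paper's Proposition~\ref{mainprop} (isomorphisms $\theta_P$ for semi-free modules via stupid truncations) than to Orlov's proof; the trade is that your route avoids any reference to the canonical $t$-structure on $D^b(\fA)$, which is pleasant since $F$ is not assumed $t$-exact, but it shifts the entire burden onto the naturality verification.

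That verification is where your sketch has a real gap, and it is not a small one: it is precisely the part that Orlov's proof spends five of its seven steps on, and the part that Proposition~\ref{mainprop} in this paper carefully does \emph{not} claim (\ref{mainprop} only produces, for each $P$, an isomorphism $\theta_P$ compatible with maps from $h^Y[-k]$ --- not a natural transformation of functors --- and the paper's main theorems route around this non-functoriality via Drinfeld DG quotients rather than fixing it). Concretely, two points are glossed over. First, the opening paragraph needs to establish not merely the existence of $\theta_{K^{\bullet}}$ on bounded $P_i$-complexes with rapidly decreasing indices, but its \emph{naturality} there, i.e.\ $\theta_{L^{\bullet}}\cdot g=F(g)\cdot\theta_{K^{\bullet}}$ for every $g\colon K^{\bullet}\to L^{\bullet}$; this is a separate induction and not automatic from the construction. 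Second, and more seriously, your phrases ``dominated by a common one'' and ``lifting it to a morphism of resolutions'' hide a genuine obstruction. Given two resolutions $q\colon K^{\bullet}\to M$ and $q'\colon K'^{\bullet}\to M$ chosen in advance, a map $K^{\bullet}\to K'^{\bullet}$ over $M$ need not exist: the obstruction lies in $\Hom(K^{\bullet},C_{q'})$, and since $K^{\bullet}$ has terms in degrees that straddle the single degree of $C_{q'}$, this group does not vanish for degree reasons alone. The fix is to build the common refinement (and, for naturality, the lift of $\varphi\colon M\to M'$) \emph{adaptively}, term by term from the top degree down, choosing at each stage the ampleness index $i_j\ll 0$ small enough --- after $K^{\bullet}$, $K'^{\bullet}$ and $\varphi$ are fixed --- so that condition~b) kills the relevant $\Ext^{>0}(P_{i_j},\text{--})$ groups; boundedness of cohomology of $M$ ensures this process terminates. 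Your sketch gestures at this (``sufficiently fast-decreasing sequence'') only in the construction of $\theta_{K^{\bullet}}$, not in the well-definedness and naturality arguments where it is most needed. With those two points spelled out the strategy should go through, but as written the hard half of the proposition is asserted rather than proved.
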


We can extend this proposition to the case of exact categories. Let
$\fE$ be an exact category. Assume that it is a full exact
subcategory of an abelian category $\fA,$ i.e. $\fE\subset\fA$ is
closed under extensions in $\fA.$ We also assume that an additional
property holds:

\begin{tabular}{ll}
(EPI)&\quad a map $f$ in $\fE$ is an admissible epimorphism if and only if it is an epimorphism in $\fA.$
\end{tabular}

Now we define an ample sequence in $\fE.$

\begin{defi}\label{ampl} Let  ${\fE}$ be a $k$\!-linear  exact category.
Let $\{ P_i \}_{i\in\bbZ}$ be a sequence of objects of $\fE.$
We say that the sequence $\{ P_i \}_{i\in\bbZ}$ is  ample in $\fE$
if there is an exact embedding $\fE\subset \fA$ in an abelian category $\fA$
such that the condition (EPI) holds and $\{ P_i \}_{i\in\bbZ}$ is ample in $\fA.$
\end{defi}

Starting with an exact category $\fE$ we can construct a derived category $D^*(\fE).$
A complex $E^{\cdot}$ is  called acyclic if for any $n$ the differential
$d_n: E_n\to E_{n+1}$ factors as $E_n\stackrel{p_n}{\to} Z_n\stackrel{i_n}{\to} E_{n+1},$
where $p_n$ is a cokernel for $d_{n-1}$ and an admissible epimorphism and
$i_n$ is a kernel for $d_{n+1}$ and an admissible monomorphism.
We define a derived category $D^{*}(\fE)$ as a quotient of homotopy category
$H^*(\fE)$ by the triangulated subcategory of acyclic complexes.

\begin{remark}{\label{exactful}\rm
If $\fE\subset\fA$ is an exact subcategory of an abelian category $\fA$ such that the condition
(EPI) holds and for any $C\in\fA$ there is an epimorphism $E\twoheadrightarrow C$ from $E\in \fE,$
then  the canonical functor $D^{-}(\fE)\to D^{-}(\fA)$ is an equivalence and hence
the functor $D^{b}(\fE)\to D^{b}(\fA)$ is  fully faithful (\cite{Ke2}).
This follows from the fact that for any bounded above complex $C^{\cdot}$ over $\fA$ with cohomologies
from $\fE$ there is a quasi-isomorphism $E^{\cdot}\to C^{\cdot},$ where $E^{\cdot}$ is a bounded above complex over $\fE.$ }
\end{remark}

Proposition \ref{ext} can be generalized  and extended to the case of exact categories.

\begin{prop}\label{exten}
Let $\fE$ be an exact category possessing an ample sequence
$\{ P_i\}.$ Let $F : D^b(\fE)\stackrel{\sim}{\to} D^b(\fE)$ be an autoequivalence.
Suppose there is an isomorphism of functors
$\theta_{\cP} : j\stackrel{\sim}{\to}F\cdot j,$ where $\Ob {\cP}:=\{  P_i\; |\; i\in \bbZ \}$ and
$j : {\cP}\hookrightarrow D^b(\fE)$ is  a natural full embedding.
Then it can be extended to an isomorphism
$\id\stackrel{\sim}{\to}F$ on the whole $D^b(\fE)$.
\end{prop}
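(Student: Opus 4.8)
The plan is to deduce Proposition \ref{exten} from Proposition \ref{ext} by transporting the whole situation into an ambient abelian category and then re-running that argument there. By Definition \ref{ampl} I fix an exact embedding $\fE\subset\fA$ into an abelian category for which the condition (EPI) holds and $\{P_i\}$ is ample in $\fA$. Since ampleness supplies, for every $C\in\fA$, an epimorphism $P_i^{\oplus n}\twoheadrightarrow C$ with $P_i^{\oplus n}$ an object of $\fE$, Remark \ref{exactful} applies and gives that $D^{-}(\fE)\to D^{-}(\fA)$ is an equivalence and $D^b(\fE)\to D^b(\fA)$ is fully faithful. I will henceforth view $D^b(\fE)$ as a full triangulated subcategory of $D^b(\fA)$ containing $\cP$; in particular $\Hom_{D^b(\fE)}(C,C')=\Hom_{D^b(\fA)}(C,C')$ for all $C,C'\in D^b(\fE)$, so the defining vanishing properties of an ample sequence hold verbatim inside $D^b(\fE)$: for $C\in\fE$ and $i\ll 0$ one has $\Ext^j_{\fE}(P_i,C)=0$ for $j\ne 0$ and $\Hom_{\fE}(C,P_i)=0$.

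Next I would record the resolution statement, adapted to the exact setting. Given an object $C$ of $\fE$, condition a) of ampleness produces an epimorphism $P_{i_0}^{\oplus n_0}\twoheadrightarrow C$ with $i_0\ll 0$; by (EPI) this is an admissible epimorphism in $\fE$, hence it fits into a conflation whose kernel $C_1$ again lies in $\fE$. Iterating yields a left resolution $\cdots\to P_{i_1}^{\oplus n_1}\to P_{i_0}^{\oplus n_0}\to C\to 0$, exact in $\fE$, by finite direct sums of members of the ample sequence with indices tending to $-\infty$. For an arbitrary $C\in D^b(\fE)$, presented by a bounded complex over $\fE$, a Cartan--Eilenberg type assembly of such resolutions produces a bounded above complex $P^{\cdot}$, with each term a finite direct sum of the $P_i$, together with a quasi-isomorphism $P^{\cdot}\stackrel{\sim}{\to}C$; under the identification above this is a quasi-isomorphism in $D^{-}(\fA)$ as well.

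With these two ingredients in place I would then follow the proof of Proposition \ref{ext} line by line. First extend $\theta_{\cP}$ by additivity to the full subcategory of finite direct sums of the $P_i$. Then, for a fixed object $C$ with a chosen ample resolution $P^{\cdot}\to C$, extend $\theta$ inductively to the stupid truncations $\sigma_{\ge -n}P^{\cdot}$: at each stage the comparison isomorphism exists and is unique because the relevant obstruction and indeterminacy groups vanish, these being exactly the $\Ext^{\ne 0}$ and $\Hom(-,P_i)$ groups controlled by ampleness, now available in $D^b(\fE)$. Finally, using the boundedness of $C$ together with the vanishing $\Ext^{>0}(P_i,C)=0$ for $i\ll 0$, pass to the limit over $n$ to obtain an isomorphism $\theta_C\colon C\stackrel{\sim}{\to}F(C)$, and check that the $\theta_C$ are functorial in $C\in D^b(\fE)$ and restore $\theta_{\cP}$ on $\cP$.

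The main obstacle, and essentially the only place where the exact case differs from the abelian one, is the resolution step of the previous paragraph: in $\fA$ one freely takes kernels and cokernels, whereas in $\fE$ every such operation must be replaced by an admissible one, and one must verify that the maps $P_i^{\oplus n}\twoheadrightarrow C$ produced by ampleness really are admissible epimorphisms — this is precisely what (EPI) guarantees. Once this bookkeeping is carried out, the triangulated part of the argument is insensitive to whether the heart is abelian or merely exact, because it takes place entirely inside $D^b(\fE)\subset D^b(\fA)$ and uses only the ample sequence together with the Hom/Ext vanishing, both of which have been shown to transfer.
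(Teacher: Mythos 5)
Your high-level strategy matches the paper's: fix an exact embedding $\fE\subset\fA$ satisfying (EPI), observe that ampleness together with Remark \ref{exactful} makes $D^b(\fE)\to D^b(\fA)$ fully faithful, and then re-run the argument of Proposition \ref{ext} inside $D^b(\fE)\subset D^b(\fA)$. This is exactly the paper's route, and your two preparatory observations (the Hom/Ext vanishing transfers, and every object of $\fE$ admits a left resolution by finite direct sums of the $P_i$ using (EPI) to get admissibility of the covers) are correct and necessary.

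However, your description of the middle of the argument does not match what the proof of Proposition \ref{ext} (i.e.\ \cite[3.4.6]{Or2}) actually does, and it is here that the real subtlety lies. That proof does not extend $\theta$ along the stupid truncations $\sigma_{\ge -n}P^{\cdot}$ of a fixed resolution and ``pass to the limit over $n$'' — indeed in a bounded derived category there is no limit to pass to; one can only invoke a stabilization argument, which you would need to make precise. What the proof actually does is: (Steps 1--3) construct $\theta_X$ for $X$ concentrated in a single cohomological degree by choosing an ample epimorphism $P_i^{\oplus k}\twoheadrightarrow X$ and completing the resulting square of triangles; (Step 4) prove compatibility of $\theta$ with morphisms $u\in\Hom(X,Y[k])$ by writing $u=u_k\cdots u_1$ with each $u_i\in\Hom(Z_{i-1},Z_i[1])$; (Steps 5--7) do an ascending induction on the length of the interval supporting the cohomology, using canonical truncation triangles.

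The place the paper explicitly flags as requiring a ``correction'' when passing from $\fA$ to $\fE$ is Step 4, and you miss it. The issue is \emph{not} merely that the resolution must be built with admissible epimorphisms — that is a prerequisite, as you say. The issue is that in the factorization $u=u_k\cdots u_1$ the intermediate objects $Z_1,\dots,Z_{k-1}$ are a priori only objects of $\fA$ (the images/kernels $\im d^{-i}=\Ker d^{-i+1}$ of a resolution), and you must show they are isomorphic to objects of $D^b(\fE)$ so that the previously constructed $\theta$ and its functoriality are available for them. The paper does this by taking the resolution $E^{\cdot}$ of the fiber of $u$ to be a complex over $\fE$ and observing that each $\im d^{-i}$ is the cone/fiber of a morphism between bounded complexes over $\fE$, hence lies in the full triangulated subcategory $D^b(\fE)$. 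Without this observation Step 4 silently uses objects outside $D^b(\fE)$, which is precisely the gap that distinguishes the exact case from the abelian one. Your write-up attributes the entire difficulty to the resolution step and to (EPI), which conflates the easy preparatory point with the genuine one.
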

\begin{proof}
A proof of this proposition is essentially the same as the proof of Proposition \ref{ext} and it is given
in Appendix \ref{prB}.
\end{proof}
\begin{remark}\label{amperf}{\rm
Our main example of an exact category is the category of locally free sheaves of finite type $\Loc X$ on
a projective scheme $X.$ In this case the exact embedding $\Loc X\subseteq\coh X$ satisfies condition (EPI) and
the bounded derived category $D^b(\Loc X)$ is equivalent to the category
of perfect complexes $\Perf X\subseteq D^b(\coh X).$
If the maximal torsion subsheaf $T_0(\O_X)\subset \O_X$ of dimension 0  is trivial then the
sequence $\{\cO_{X}(i)\}_{i\in \bbZ}$ is an ample sequence in $\Loc X$ in according to Definition
\ref{ampl}.}
\end{remark}

\begin{theo}\label{streq} Let $\fE$ be an exact category with an ample sequence
$\{ P_i\}_{i\in \bbZ}$ and $j:\cP\hookrightarrow D^b(\fE)$
be a full subcategory with $\Ob {\cP}:=\{  P_i\; |\; i\in \bbZ \}.$
Assume that there is an equivalence $u: D^b(\fE)\stackrel{\sim}{\to} (D(\cP)/L)^c$ (or with $(D(\cP)/L)^{ca}$),
where $L\subset D(\cP)$ is a localizing subcategory that is generated by compact objects $L^c=L\cap D(\cP)^c,$ such that
$u\cdot j\cong \pi\cdot h^{\bullet}$ on $\cP.$
Then the category $D^b(\fE)$ has a strongly unique enhancement.
\end{theo}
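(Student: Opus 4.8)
The plan is to combine the ordinary (non-strong) uniqueness we already have with the rigidity statement of Proposition \ref{exten}. By Theorem \ref{corperf} (or Theorem \ref{unca} in the compactly approximated case), applied to $\cA=\cP$ and the localizing subcategory $L$, the triangulated category $(D(\cP)/L)^c$ (resp. $(D(\cP)/L)^{ca}$) already has a \emph{unique} enhancement; transporting this along the given equivalence $u$, the category $D^b(\fE)$ has a unique enhancement. So the real content is to upgrade ``unique'' to ``strongly unique'': given two enhancements $(\cB,\varepsilon)$ and $(\cB',\varepsilon')$ of $D^b(\fE)$, we must produce a quasi-functor $\phi:\cB\to\cB'$ with $\varepsilon'\cdot\Ho(\phi)\cong\varepsilon$, not merely an abstract quasi-equivalence.

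First I would invoke the precise versions of the uniqueness theorems rather than the bare statements. Theorem \ref{corperf} (2),(3) says that the quasi-functor $\cN$ constructed there not only induces a fully faithful (here: essentially surjective, hence equivalence) functor on homotopy categories, but also satisfies $\Ho(\cN)(X)\cong N(X)$ for every object $X$, and more importantly that there is an isomorphism of functors $\theta:\Ho(\cN)\cdot\pi\cdot h^\bullet\stackrel{\sim}{\to}N\cdot\pi\cdot h^\bullet$ on $\cA=\cP$. Using $u\cdot j\cong\pi\cdot h^\bullet$, this translates into: the comparison quasi-functor between any two enhancements of $D^b(\fE)$ induces an equivalence $G:D^b(\fE)\stackrel{\sim}{\to}D^b(\fE)$ (after identifying both via their $\varepsilon$'s) together with an isomorphism $\theta_\cP:j\stackrel{\sim}{\to}G\cdot j$ on the subcategory $\cP=\{P_i\}$. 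Concretely: let $\phi_0:\cB\to\cB'$ be the quasi-equivalence coming from uniqueness; it need not respect the $\varepsilon$'s, but the autoequivalence $G:=\varepsilon'\cdot\Ho(\phi_0)\cdot\varepsilon^{-1}$ of $D^b(\fE)$ does come equipped (by the ``more precise version'') with an isomorphism $\theta_\cP:j\stackrel{\sim}{\to}G\cdot j$.

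Now Proposition \ref{exten} applies verbatim: since $\fE$ has the ample sequence $\{P_i\}$ and $G$ is an autoequivalence of $D^b(\fE)$ admitting an isomorphism $\theta_\cP:j\stackrel{\sim}{\to}G\cdot j$ on $\cP$, it extends to an isomorphism of functors $\eta:\id_{D^b(\fE)}\stackrel{\sim}{\to}G$. Unwinding the definition of $G$, this $\eta$ is precisely an isomorphism $\varepsilon\stackrel{\sim}{\to}\varepsilon'\cdot\Ho(\phi_0)$, i.e.\ the enhancements $(\cB,\varepsilon)$ and $(\cB',\varepsilon')$ are strongly equivalent via $\phi:=\phi_0$. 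The remark \ref{amperf} takes care of the $\Perf(X)$ case (where $\fE=\Loc X$), and Proposition \ref{ampcoh} plus the equivalence $D^b(\coh X)\cong D^b(\Qcoh X)_{\coh}\cong (D(\cP)/L)^{ca}$ of Section \ref{cohbound} takes care of $D^b(\coh X)$; in both cases the hypothesis $T_0(\O_X)=0$ is exactly what guarantees ampleness of $\{\O_X(i)\}$.

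The main obstacle is verifying that the quasi-functor supplied by the uniqueness theorems really does carry the bookkeeping datum $\theta_\cP$ on the subcategory $\cP$ in the form Proposition \ref{exten} demands—this is the role of parts (2) and (3) of Theorems \ref{corperf} and \ref{fufabig}, which were proved precisely so that this step goes through, so one must be careful to cite the strengthened statements and to check the identification $u\cdot j\cong\pi h^\bullet$ is compatible with the isomorphism $\theta$ appearing there. A secondary point is confirming that $L\subset D(\cP)$ is genuinely generated by compact objects in the two geometric situations, which is Lemma \ref{perfcg} for the ideal $I$ cutting out $\overline X\setminus X$. Once these compatibilities are in place, the argument is purely formal: uniqueness gives $\phi_0$, the precise version gives $\theta_\cP$, and Proposition \ref{exten} converts $\theta_\cP$ into the required isomorphism $\varepsilon\stackrel{\sim}{\to}\varepsilon'\cdot\Ho(\phi_0)$.
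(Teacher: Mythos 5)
Your proposal is correct and follows essentially the same route as the paper: invoke the precise form of uniqueness (Theorem \ref{corperf}, parts (2)–(3), resp.\ the compactly-approximated construction behind Theorem \ref{unca}) to obtain a quasi-equivalence together with an isomorphism of functors $\theta$ on $\pi h^\bullet(\cP)$, transport this along $u$ using $u\cdot j\cong\pi h^\bullet$ to get an autoequivalence $F$ of $D^b(\fE)$ with $j\stackrel{\sim}{\to}F\cdot j$, and then apply Proposition \ref{exten} to extend this to $\id\stackrel{\sim}{\to}F$. The only cosmetic difference is that you phrase it as comparing two arbitrary enhancements via a comparison quasi-functor $\phi_0$, whereas the paper compares each enhancement $(\cE,N)$ directly with the standard one $\cV\subset\SF(\cP)/\cL\cap\SF(\cP)$ and sets $F=u^{-1}\Ho(\cN)^{-1}Nu$; since strong equivalence is an equivalence relation, these are the same argument. (Your closing remark about checking $L$ is compactly generated belongs to the proof of Theorem \ref{strun}, not of Theorem \ref{streq} itself, where it is a hypothesis.)
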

\begin{proof}
(1)
Let  $\cE$ be a pretriangulated DG category and $N: (D(\cP)/L)^c \stackrel{\sim}{\to} \Ho(\cE)$ be an equivalence.
By Theorem \ref{corperf} there is a quasi-equivalence $\cN:\cV\to\cE,$ where
$\cV\subset\SF(\cP)/\cL\cap\SF(\cP)$ is the full DG subcategory which consists
of all compact objects $(D(\cP)/L)^c.$
Moreover, we know that there is an isomorphisms of functors
$\theta: \Ho(\cN)\cdot\pi\cdot h^{\bullet}\stackrel{\sim}{\to} N\cdot\pi\cdot h^{\bullet}$
 from $\cP$ to $H^{0}(\cE).$

Consider the composition $F=u^{-1}\Ho(\cN)^{-1} N u$ from $D^b(\fE)$ to itself. There is an isomorphism
$j\stackrel{\sim}{\to} F\cdot j$ on the subcategory $\cP.$
Hence we can apply Proposition \ref{exten} and obtain that the functor
$F$ is isomorphic to the identity functor on the whole $D^b(\fE).$ Therefore, the functors $\Ho(\cN)$ and $N$ are isomorphic.
Thus, any equivalence $N: (D(\cP)/L)^c \stackrel{\sim}{\to} \Ho(\cE)$ can be lifted to a quasi-equivalence $\cN,$
and the category $D^b(\fE)\cong (D(\cP)/L)^c$ has strongly unique enhancement.

(2)
Denote by $\cW\subset\SF(\cP)/\cL\cap\SF(\cP)$ the full DG subcategory which consists
of all objects that are compactly approximated in  $D(\cP)/L,$ i.e that belong to $(D(\cP)/L)^{ca}.$

Let  $\cD$ be a pretriangulated DG category and $\bar\epsilon: (D(\cP)/L)^{ca} \stackrel{\sim}{\to} \Ho(\cD)$ be an equivalence.
In the proof of Theorem \ref{unca} we constructed a quasi-equivalence
$(\phi^{ca})^{-1}\cdot\rho^{ca}: \cW\to \cD.$ Moreover, by construction,
the functors $H^0((\phi^{ca})^{-1}\cdot\rho^{ca})\pi h^{\bullet}$ and  $\bar\epsilon \pi h^{\bullet}$ from $\cP$ to $H^0(\cD)$
are isomorphic.
Thus, for the composition $F= u^{-1} H^0((\rho^{ca})^{-1}\cdot \phi^{ca})\cdot\bar\epsilon u$
we have an isomorphism
$j\stackrel{\sim}{\to} F\cdot j$ on the subcategory $\cP.$
Applying Proposition \ref{exten}  we obtain that the functor
$F$ is isomorphic to the identity functor on the whole $D^b(\fE).$
Therefore, the functors $\Ho((\phi^{ca})^{-1}\cdot\rho^{ca})$ and $\bar\epsilon$ are isomorphic.
Thus any equivalence $\bar\epsilon: (D(\cP)/L)^{ca} \stackrel{\sim}{\to} \Ho(\cD)$
can be lifted to a quasi-equivalence.
\end{proof}
This theorem immediately implies the following corollary.
\begin{theo}\label{strun}
Let $X$ be a projective scheme over $k$ such that the maximal torsion
subsheaf $T_0(\O_X)\subset \O_X$ of dimension $0$ is trivial. Then
the triangulated categories $\Perf(X)$ and $D^b(\coh X)$ have  strongly
unique enhancements.
\end{theo}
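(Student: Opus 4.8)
The plan is to deduce both statements from Theorem \ref{streq}, by exhibiting in each case the exact category $\fE$, its ample sequence, and the required equivalence onto a quotient of the form $(D(\cP)/L)^c$ (resp. $(D(\cP)/L)^{ca}$). The common setup is as follows. Put $A=\bigoplus_n H^0(X,\O_X(n))$ and let $\cA$ be the associated $\bbZ$\!-category, with objects $\bbZ$ and $\Hom_{\cA}(a,b)=A^{b-a}$, so $\Gr(A)=\umod\cA$. Since $X$ is projective, Serre's theorem gives an equivalence $\Qcoh X\cong\QMod(\cA)$ under which the free module $h^a$ corresponds to $\O_X(a)$, and each $\O_X(a)$ is compact in $D(\Qcoh X)$. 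Hence Lemma \ref{quotab} provides an equivalence $D(\Qcoh X)\cong D(\cA)/L$, where $L$ is the localizing subcategory of complexes with torsion cohomology (the case of the irrelevant ideal in Section \ref{geom}); by Lemma \ref{perfcg} this $L$ is generated by $L^c=L\cap D(\cA)^c$. Note also that the vanishing condition required by Theorems \ref{corperf} and \ref{unca}, namely $\Hom(\pi h^Y,\pi h^Z[i])=0$ for $i<0$, holds automatically, since under the identifications above it is $H^i(X,\O_X(b-a))$ with $a,b$ integers, which vanishes for $i<0$.

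For $\Perf(X)$ I would take $\fE=\Loc X$. By Remark \ref{amperf} the embedding $\Loc X\subset\coh X$ satisfies (EPI), one has $D^b(\Loc X)\cong\Perf(X)$, and --- precisely because $T_0(\O_X)=0$, via Proposition \ref{ampcoh} --- the sequence $\{\O_X(i)\}_{i\in\bbZ}$ is ample in $\Loc X$ in the sense of Definition \ref{ampl}. Let $\cP\hookrightarrow D^b(\Loc X)$ be the full subcategory on the objects $\O_X(i)$; since $D^b(\Loc X)\to D^b(\coh X)$ is fully faithful, $\cP$ is identified with $\cA$. Restricting the equivalence $D(\Qcoh X)\cong D(\cA)/L$ to compact objects yields $u\colon\Perf(X)=D(\Qcoh X)^c\stackrel{\sim}{\to}(D(\cA)/L)^c=(D(\cP)/L)^c$, and by construction $u$ sends $\O_X(i)$ to $\pi(h^i)$, i.e. $u\cdot j\cong\pi\cdot h^\bullet$ on $\cP$. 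Theorem \ref{streq} then gives that $\Perf(X)$ has a strongly unique enhancement.

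For $D^b(\coh X)$ I would take $\fE=\coh X$, with ample sequence $\{\O_X(i)\}_{i\in\bbZ}$ again by Proposition \ref{ampcoh}. Let $S=\{\O_X(i)\}_{i\in\bbZ}$; every coherent sheaf on $X$ is a quotient of a finite direct sum of twists $\O_X(-m)$ for $m\gg0$, and $\O_X$ is compact, so Proposition \ref{cohcapp} applies and identifies $D^b(\coh X)$ with the subcategory $D(\Qcoh X)^{ca}$ of compactly approximated objects relative to $S$. Transporting along $D(\Qcoh X)\cong D(\cA)/L$, which carries $S$ onto $\{\pi(h^i)\}$, gives $u\colon D^b(\coh X)\stackrel{\sim}{\to}(D(\cA)/L)^{ca}=(D(\cP)/L)^{ca}$ with $u\cdot j\cong\pi\cdot h^\bullet$ on $\cP$; Theorem \ref{streq} then yields the strong uniqueness for $D^b(\coh X)$.

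There is no serious obstacle here: the one nontrivial input beyond Theorem \ref{streq} is the hypothesis $T_0(\O_X)=0$, which is exactly what makes $\{\O_X(i)\}$ an ample sequence via Proposition \ref{ampcoh}. The remaining work is bookkeeping --- checking that the chain of equivalences (Serre's theorem, Lemma \ref{quotab}, restriction to compact objects, Proposition \ref{cohcapp}) is compatible on the distinguished objects $\O_X(i)\leftrightarrow\pi(h^i)$, so that the hypothesis $u\cdot j\cong\pi\cdot h^\bullet$ genuinely holds, and that the notion of compactly approximated object in Proposition \ref{cohcapp} (relative to $S$) agrees with the one used in Theorem \ref{streq} (relative to $\{\pi(h^Y)\}_{Y\in\cP}$). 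If anything is delicate it is this last compatibility, which I would handle by observing that both are defined through the same set $\{\O_X(i)\}$ under a common equivalence.
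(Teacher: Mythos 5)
Your proposal reproduces the paper's own argument: the $\bbZ$\!-category $\cA$ built from $A=\bigoplus_n H^0(X,\O_X(n))$, the identification $D(\Qcoh X)\cong D(\cA)/L$ via Serre's theorem and Lemma \ref{quotab}, compact generation of $L$ from Lemma \ref{perfcg}, the identifications $\Perf(X)\cong(D(\cA)/L)^c$ and $D^b(\coh X)\cong(D(\cA)/L)^{ca}$ (the latter via Proposition \ref{cohcapp}), ampleness of $\{\O_X(i)\}$ in $\Loc X$ and $\coh X$ from Remark \ref{amperf} and Proposition \ref{ampcoh} using $T_0(\O_X)=0$, and finally Theorem \ref{streq}. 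This is the same route as the paper, with your explicit verification of the vanishing $\Hom(\pi h^Y,\pi h^Z[i])=0$ for $i<0$ being a minor elaboration the paper leaves implicit.
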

\begin{proof}
Let $X$ be a projective scheme.
Denote by $A$ the graded algebra
$
A=\bigoplus_n H^0( X, \cO_{X}(n)).
$
To the graded algebra $A$ we can attach a
$\bbZ$\!-category $\cA,$ objects of which are $\bbZ$ and
$\Hom_{\cA}(i, j)=A^{j-i}$ with a natural composition law.

It was explained in Section \ref{geom} that the category $\Qcoh X$ is equivalent to a quotient of the category
$\umod\cA=\Gr(A)$ by the
localizing subcategory of torsion sheaves
$\Tors(\cA).$ Moreover, by Lemma \ref{quotab} we also know that the derived category
$D(\Qcoh X)$ is equivalent to the quotient  $D(\cA)/D_{\tors}(\cA)$ where $D_{\tors}(\cA)\subset D(\cA)$
is a localizing subcategory  that
consists of all objects cohomologies of which belongs to $\Tors(\cA).$
In addition, the free modules $h^Y$ map to the corresponding line bundles $\O_X(i)$ which are compact objects in
$D(\Qcoh X).$
Lemma \ref{perfcg} gives us that
the subcategory $D_{\tors}(\cA)$ is compactly generated.
The category $\Perf(X)=D^b(\Loc X)$ (Remark \ref{amperf})  is equivalent to the category
$(D(\cA)/D_{\tors}(\cA))^c$ and  by Proposition \ref{cohcapp} the bounded derived category of coherent sheaves
$D^b(\coh X)$ is equivalent to the triangulated category of compactly approximated objects $(D(\cA)/D_{\tors}(\cA))^{ca}\cong D(\Qcoh X)^{ca}.$
These equivalences are isomorphic to the identity on the full subcategory
$\cP:=\{\O_X(i)\}_{i\in \bbZ}.$ By Remark \ref{amperf} (resp. Prop. \ref{ampcoh}) the sequence $\cP:=\{\O_X(i)\}_{i\in \bbZ}$ is ample in
$\Loc X$ (resp.$\coh X$) on a projective scheme with $T_0(\O_X)=0.$ Hence we can apply Theorem \ref{streq} and obtain that
$\Perf(X)$ (resp.$D^b(\coh X)$) has strongly unique enhancement.
\end{proof}

Let $X$ be a quasi-compact and quasi-separated scheme. Define a DG enhancement $\cd_{dg}(\Qcoh X)$ of the derived category
$D(\Qcoh X)$
as the quotient $\cC_{dg}(\Qcoh X)/\Ac_{ dg}(\Qcoh X),$ where $\cC_{dg}(\Qcoh X)$ is the DG category of
unbounded complexes over $\Qcoh X$ and $\Ac_{dg}(\Qcoh X)$ is the DG subcategory of unbounded acyclic complexes.
Thus the DG category $\cd_{dg}(\Qcoh X)$ is an enhancement of the derived category $D(\Qcoh X).$
There is a theorem of Bertrand To\"{e}n which says that the functors between DG derived categories
are represented by objects on the product.

\begin{theo}[\cite{To} Th.8.9]\label{Toen} Let $X$ and $Y$ be quasi-compact and
separated schemes over a field $k.$
Then we have a canonical quasi-equivalence
$$
\cd_{dg}(\Qcoh (X\times Y))\stackrel{\sim}{\lto} \RHOM_c(\cd_{dg}(\Qcoh X),\cd_{dg}(\Qcoh Y)),
$$
where $\RHOM_c$ denotes the DG category formed by
the direct sums preserving quasi-functors (we say that a quasi-functor preserves direct sums if its homotopy functor does).
\end{theo}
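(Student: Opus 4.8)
The plan is to deduce this from To\"{e}n's description of the internal $\Hom$ in the homotopy category of DG categories, combined with a K\"{u}nneth-type identification of perfect complexes on the product. Throughout, all tensor products are over the ground field $k,$ so that $\otimes$ requires no derived correction.

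\textbf{Structural reductions.} For a quasi-compact and separated scheme $X$ the derived category $D(\Qcoh X)$ is compactly generated with $D(\Qcoh X)^c=\Perf(X)$ (\cite{BvB, Ne3}), and $\cd_{dg}(\Qcoh X)$ is a DG category admitting arbitrary direct sums with $\Ho(\cd_{dg}(\Qcoh X))=D(\Qcoh X).$ Let $\prfdg(X)\subset\cd_{dg}(\Qcoh X)$ denote the full DG subcategory on the perfect complexes (which we may replace by a quasi-equivalent small DG category, this being harmless). Since $\Ob\prfdg(X)$ is a set of compact generators of $D(\Qcoh X),$ Proposition \ref{Keller} yields a quasi-equivalence $\SF(\prfdg(X))\stackrel{\sim}{\lto}\cd_{dg}(\Qcoh X),$ and the same applies to $Y$ and to $X\times Y.$ Moreover, perfect complexes on a scheme are dualizable, so $\RHOM(-,\O_X)$ is a quasi-equivalence $\prfdg(X)^{\op}\stackrel{\sim}{\lto}\prfdg(X).$

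\textbf{To\"{e}n's theorem.} The continuous form of To\"{e}n's theorem (\cite{To}) asserts that for small DG categories $\cA,\cB$ restriction along the Yoneda embedding $\cA\hookrightarrow\SF(\cA)$ induces a quasi-equivalence
$$
\RHOM_c(\SF(\cA),\SF(\cB))\stackrel{\sim}{\lto}\SF(\cA^{\op}\otimes\cB):
$$
a direct-sum preserving quasi-functor $\SF(\cA)\to\SF(\cB)$ is the left Kan extension of its restriction to $\cA,$ and those restrictions are classified up to homotopy by the derived category $D(\cA^{\op}\otimes\cB)=\Ho(\SF(\cA^{\op}\otimes\cB))$ of bimodules over $\cA^{\op}\otimes\cB,$ this classification being itself realised by a morphism of DG categories. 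Taking $\cA=\prfdg(X),$ $\cB=\prfdg(Y),$ and using the structural reductions together with the duality $\prfdg(X)^{\op}\simeq\prfdg(X)$ gives
$$
\RHOM_c(\cd_{dg}(\Qcoh X),\cd_{dg}(\Qcoh Y))\stackrel{\sim}{\lto}\SF(\prfdg(X)\otimes\prfdg(Y)).
$$

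\textbf{The K\"{u}nneth step and conclusion.} It remains to identify $\SF(\prfdg(X)\otimes\prfdg(Y))$ with $\cd_{dg}(\Qcoh(X\times Y)),$ and by the structural reduction for $X\times Y$ and Proposition \ref{Keller} this reduces to showing that external tensor product defines a quasi-equivalence $\prfdg(X)\otimes\prfdg(Y)\to\prfdg(X\times Y)$: namely that $\cE\boxtimes\cF$ is perfect when $\cE,\cF$ are, that such objects generate $D(\Qcoh(X\times Y)),$ and that $\boxtimes$ is quasi-fully-faithful on $\Hom$-complexes. For the last and decisive point one combines dualizability of $\cE_i$ with flat base change and the projection formula along $p_1\colon X\times Y\to X$ (valid since $X,Y$ are separated) to identify $\bR\Hom_{X\times Y}(\cE_1\boxtimes\cF_1,\cE_2\boxtimes\cF_2)$ with $\bR\Hom_X(\cE_1,\cE_2)\otimes\bR\Hom_Y(\cF_1,\cF_2);$ as the tensor product is over the field $k$ this is exact, and on cohomology it gives the K\"{u}nneth isomorphism $\Hom(\cE_1\boxtimes\cF_1,\cE_2\boxtimes\cF_2[n])\cong\bigoplus_{p+q=n}\Ext^p_X(\cE_1,\cE_2)\otimes\Ext^q_Y(\cF_1,\cF_2).$ Generation on the product follows from the same base-change considerations. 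This base-change/K\"{u}nneth input is the genuinely hard ingredient; once it is available at the level of triangulated categories, its lift to the DG enhancements is automatic, because $\boxtimes,$ $\stackrel{\bL}{\otimes}$ and $\bR p_{i*}$ already act on $\cd_{dg}(\Qcoh-)$ and all comparison functors are restrictions of these. Chaining the three quasi-equivalences produces the canonical quasi-equivalence $\cd_{dg}(\Qcoh(X\times Y))\stackrel{\sim}{\lto}\RHOM_c(\cd_{dg}(\Qcoh X),\cd_{dg}(\Qcoh Y)),$ and tracing the identifications shows that the bimodule attached to a complex $\cE^{\cdot}$ on $X\times Y$ represents the functor $(-)\stackrel{\bL}{\otimes}_{\cd_{dg}(\Qcoh X)}\cE^{\cdot}=\bR p_{2*}(\cE^{\cdot}\stackrel{\bL}{\otimes}p_1^*(-))=\Phi_{\cE^{\cdot}},$ as claimed.
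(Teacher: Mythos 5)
The paper offers no proof of this statement; it is quoted verbatim as Theorem~8.9 of To\"{e}n's paper \cite{To}, and all the work the authors need from it is packaged into that citation. There is therefore no ``paper's own proof'' against which your argument can be checked. What you have written is instead a reconstruction of To\"{e}n's own argument, and as a reconstruction it is accurate at the level of architecture: you correctly (i) pass from $\cd_{dg}(\Qcoh X)$ to $\SF(\prfdg(X))$ using the fact that $\Perf(X)$ is a set of compact generators of $D(\Qcoh X)$ for $X$ quasi-compact and separated (this is exactly the role of Proposition~\ref{Keller} in the present paper, or Keller's Lemma 4.2 in \cite{Ke}); (ii) invoke the internal-$\Hom$ formula in $\Hqe$ for colimit-preserving morphisms, $\RHOM_c(\SF(\cA),\SF(\cB))\simeq\SF(\cA^{\op}\otimes\cB)$, which is To\"{e}n's Theorem~7.2, together with the self-duality $\prfdg(X)^{\op}\simeq\prfdg(X)$ furnished by $\RHOM(-,\O_X)$; and (iii) close with the K\"{u}nneth/base-change identification of $\prfdg(X)\otimes\prfdg(Y)$ with $\prfdg(X\times Y)$, which is To\"{e}n's Lemma~8.11. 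Your derivation of the quasi-full-faithfulness of $\boxtimes$ from dualizability, flat base change and the projection formula is the same computation To\"{e}n performs.

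Two places are glossed rather than proved. First, you assert that ``generation on the product follows from the same base-change considerations''; this is true but the reduction to the affine case requires a \v{C}ech/Mayer--Vietoris argument on the separated schemes $X$ and $Y$ (it is not merely a projection-formula manipulation), and To\"{e}n spells this out. Second, the sentence ``its lift to the DG enhancements is automatic'' deserves a word of justification: the reason it is automatic is that one never leaves the model of $\cd_{dg}(\Qcoh-)$ as semi-free modules over a small DG category, and the comparison quasi-functors are constructed from honest DG bimodules, so no triangulated-to-DG lifting issue arises. Neither of these is a genuine gap, but both are points at which a reader who did not already know the proof would be stranded. Since the paper treats the whole theorem as a black box, none of this affects the correctness of anything downstream of Theorem~\ref{Toen}.
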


\begin{remark}{\rm
In other words this theorem tells us that  for any  quasi-functor $\cF$
from $\cd_{dg}(\Qcoh X)$ to $\cd_{dg}(\Qcoh Y)$ that preserves direct sums the functor $H^0(\cF)$ can be represented
by an object on the product $X\times Y,$ i.e. it is isomorphic to a functor of the form
$$
\Phi_{\cE^{\cdot}}(-):=\bR p_{2*}(\cE^{\cdot}\stackrel{\bL}{\otimes}  p^{*}_{1}(-)),
$$ where $\cE^{\cdot}$ is a complex of quasi-coherent sheaves
on the product $X\times Y,$ and this representation is unique up to isomorphism in $D(\Qcoh (X\times Y))$
(see \cite{To} Cor.8.12).}
\end{remark}

Combining Theorem \ref{Toen} with the uniqueness of enhancement for the derived category of quasi-coherent sheaves
on a quasi-compact and quasi-separated scheme with enough locally free sheaves
we obtain the following corollary.
\begin{cor}
Let $X$ and $Y$ be quasi-compact separated schemes over a field $k.$ Assume that $X$ has enough locally free sheaves.
Let
$F: D(\Qcoh X)\to D(\Qcoh Y)$ be a fully faithful functor that commutes with direct sums.
Then there is an object $\cE^{\cdot}\in D(\Qcoh(X\times Y))$ such that
the functor $\Phi_{\cE^{\cdot}}$ is
fully faithful and $\Phi_{\cE^{\cdot}}(C^{\cdot})\cong F(C^{\cdot})$ for any $C^{\cdot}\in D(\Qcoh X).$
\end{cor}
\begin{proof}
We know that by Corollary \ref{qcqse} $D(\Qcoh X)$ has a unique enhancement. Moreover, by Theorem \ref{GP} and Lemma \ref{quotab}
the category $D(\Qcoh X)$ can be represented as a quotient $D(\cA)/L$ where $\cA$ is a small category formed
by a set of locally free sheaves of finite type and $L$ is a localizing subcategory in $D(\cA).$
Thus, the enhancement $\cd_{dg}(\Qcoh X)$ is equivalent to the enhancement  $\Mod\cA/\cL.$
Under the quotient functor all objects of the form $h^{Y}$ goes to locally free sheaves of finite type and, hence, they are
compact in $D(\Qcoh X).$
Now we can apply Theorem \ref{fufabig} and obtain a quasi-functor $\cF$ from
$\cd_{dg}(\Qcoh X)$ to $\cd_{dg}(\Qcoh Y)$ such that $H^0(\cF)$  is fully faithful and
$H^0(\cF)(C^{\cdot})\cong F(C^{\cdot})$ for any $C^{\cdot}\in D(\Qcoh X).$

By construction, the functor
$H^0(\cF)$ realizes an equivalence of $D(\Qcoh X)$ with a subcategory in $D(\Qcoh Y)$
that is the essential image of the functor $F.$ The inclusion of this subcategory in $D(\Qcoh Y)$ commutes with
direct sums by assumption. Therefore, the functor $H^0(\cF)$ commutes with direct sums and by Theorem \ref{Toen}
it has the form $\Phi_{\cE^{\cdot}}$ for some $\cE^{\cdot}\in D(\Qcoh(X\times Y)).$
\end{proof}
Note that under assumptions in the corollary above the functor $F$ has a right adjoint functor by Brown
representability theorem (see \cite{NeBook} Thm. 8.4.4.).

\begin{cor}\label{repr} Let $X$ be a quasi-projective scheme and $Y$ be a quasi-compact and
separated scheme. Let $K: \Perf(X)\to D(\Qcoh Y)$ be a fully faithful
functor. Then there is an object $\cE^{\cdot}\in D(\Qcoh(X\times Y))$ such that
\begin{enumerate}
\item the functor $\Phi_{\cE^{\cdot}}|_{\Perf(X)}:\Perf(X)\to D(\Qcoh Y)$ is fully faithful and $\Phi_{\cE^{\cdot}}(P^{\cdot})\cong K(P^{\cdot})$
for any $P^{\cdot}\in\Perf(X);$
\item if $X$ is projective with $T_0(\O_X)=0,$ then $\Phi_{\cE^{\cdot}}|_{\Perf(X)}\cong K;$
\item if $K$ sends $\Perf(X)$ to $\Perf(Y)$ then
the functor $\Phi_{\cE^{\cdot}}: D(\Qcoh X)\to D(\Qcoh Y)$ is fully faithful and
also sends $\Perf(X)$ to $\Perf(Y);$
\item if $Y$ is noetherian and  $K$ sends $\Perf(X)$ to $D^b(\Qcoh Y)_{\coh},$ then the object $\cE^{\cdot}$ is isomorphic to an object of
$D^b(\coh(X\times Y)).$
\end{enumerate}
\end{cor}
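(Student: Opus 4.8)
The plan is to deduce all four items from the precise uniqueness result for perfect complexes (Theorem \ref{corperf}), its strong version on projective schemes (Theorem \ref{streq}, i.e.\ Proposition \ref{exten}), and To\"en's representability Theorem \ref{Toen}. As in the proof of Theorem \ref{perfc}, present $X$ as an open subscheme of $\overline X\subset\bbP^n$, let $A=\bigoplus_m H^0(\overline X,\O_{\overline X}(m))$, let $\cA$ be the associated $\bbZ$\!-category, and let $L=D_I(\cA)\subset D(\cA)$ be the localizing subcategory of complexes with $I$\!-torsion cohomology; then $D(\Qcoh X)\cong D(\cA)/L$, $\Perf(X)\cong(D(\cA)/L)^c$, the vanishing $\Hom(\pi(h^Y),\pi(h^Z)[i])=0$ for $i<0$ holds (the $\pi(h^Y)$ being the line bundles $\O_X(i)$), and $L$ is compactly generated (Lemma \ref{perfcg}). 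Let $\cC\subset\cd_{dg}(\Qcoh Y)$ be the full DG subcategory on (complexes representing) the objects in the essential image of $K$. Since $K$ is fully faithful and exact, its essential image is a triangulated subcategory and $K$ factors as an equivalence $N\colon(D(\cA)/L)^c\stackrel{\sim}{\to}\Ho(\cC)$ followed by the inclusion $\iota\colon\cC\hookrightarrow\cd_{dg}(\Qcoh Y)$; in particular $\cC$ is a pretriangulated enhancement of $\Perf(X)$, and Theorem \ref{corperf} (applied with $\cE=\cd_{dg}(\Qcoh Y)$) yields a quasi-equivalence $\cN\colon\cV\to\cC$, where $\cV\subset\SF(\cA)/\cL\cap\SF(\cA)$ is the DG subcategory of compact objects, with $\Ho(\cN)(P^{\cdot})\cong K(P^{\cdot})$ for every $P^{\cdot}\in\Perf(X)$.

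\textbf{Globalization and item (1).} Recall from the proof of Theorem \ref{corperf} that $\cN=\iota\cdot\chi^{-1}\cdot\rho^{c}$, where $\rho\colon\SF(\cA)/\cL\cap\SF(\cA)\stackrel{\sim}{\to}\SF(\cB)$ is the quasi-equivalence of Lemma \ref{equivp}, $\rho^{c}=\rho|_{\cV}\colon\cV\stackrel{\sim}{\to}\prfdg(\cB)$, and $\chi\colon\cC\stackrel{\sim}{\to}\prfdg(\cB)$. The tautological DG inclusion $\cB\hookrightarrow\cd_{dg}(\Qcoh Y)$ induces, as in Example \ref{first}, a direct-sum-preserving quasi-functor $\widehat\iota\colon\SF(\cB)\to\cd_{dg}(\Qcoh Y)$, and its restriction to $\prfdg(\cB)$ is isomorphic to $\iota\cdot\chi^{-1}$ (both are quasi-functors $\prfdg(\cB)\to\cd_{dg}(\Qcoh Y)$ restricting to the inclusion on $\cB$, unique up to isomorphism by Morita invariance). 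Fixing the enhancement $\SF(\cA)/\cL\cap\SF(\cA)$ of $D(\Qcoh X)$ and a quasi-equivalence $\cd_{dg}(\Qcoh X)\stackrel{\sim}{\to}\SF(\cA)/\cL\cap\SF(\cA)$ (which exists by Corollary \ref{qcqproj}), we obtain a direct-sum-preserving quasi-functor
\[
\cG\colon\ \cd_{dg}(\Qcoh X)\ \stackrel{\sim}{\to}\ \SF(\cA)/\cL\cap\SF(\cA)\ \xrightarrow{\ \rho\ }\ \SF(\cB)\ \xrightarrow{\ \widehat\iota\ }\ \cd_{dg}(\Qcoh Y)
\]
with $\cG|_{\cV}\cong\cN$, hence $H^{0}(\cG)|_{\Perf(X)}\cong\Ho(\cN)$, agreeing with $K$ on objects. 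As $H^{0}(\cG)$ preserves direct sums, To\"en's Theorem \ref{Toen} produces $\cE^{\cdot}\in D(\Qcoh(X\times Y))$ with $\Phi_{\cE^{\cdot}}\cong H^{0}(\cG)$; restricting to $\Perf(X)$ proves (1).

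\textbf{Items (2)--(4).} For (2), take $\fE=\Loc X$, so $D^{b}(\fE)\cong\Perf(X)$, with the ample sequence $\cP=\{\O_X(i)\}$ (ample by Proposition \ref{ampcoh} and Remark \ref{amperf}, using $T_0(\O_X)=0$); as in the proof of Theorem \ref{strun} there is an equivalence $u\colon\Perf(X)\stackrel{\sim}{\to}(D(\cA)/L)^{c}$ with $u\cdot j\cong\pi\cdot h^{\bullet}$ on $\cP$. By Theorem \ref{corperf}(2) we may arrange $\Ho(\cN)\cdot\pi\cdot h^{\bullet}\cong N\cdot\pi\cdot h^{\bullet}$ on $\cP$, so the autoequivalence $F=u^{-1}\cdot\Ho(\cN)^{-1}\cdot N\cdot u$ of $D^{b}(\fE)$ satisfies $F\cdot j\cong j$; Proposition \ref{exten} gives $F\cong\id$, i.e.\ $\Ho(\cN)\cong N=K$, whence $\Phi_{\cE^{\cdot}}|_{\Perf(X)}\cong K$ as functors. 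For (3), if $K(\Perf(X))\subseteq\Perf(Y)$ then by (1) each $\Phi_{\cE^{\cdot}}(P^{\cdot})\cong K(P^{\cdot})$ is compact; since $\Phi_{\cE^{\cdot}}$ preserves direct sums, the standard argument of Proposition \ref{Keller2} (for $P^{\cdot}$ compact the objects on which the $\Hom$-map is bijective form a localizing subcategory containing $\Perf(X)$, hence all of $D(\Qcoh X)$, then the same in the second variable) shows $\Phi_{\cE^{\cdot}}$ is fully faithful on $D(\Qcoh X)$ and sends $\Perf(X)$ into $\Perf(Y)$. For (4), when $Y$ is noetherian and $K(\Perf(X))\subseteq D^{b}(\Qcoh Y)_{\coh}$, we have $\bR p_{2*}(\cE^{\cdot}\stackrel{\bL}{\otimes}p_1^{*}\O_X(i))\cong\Phi_{\cE^{\cdot}}(\O_X(i))\cong K(\O_X(i))\in D^{b}(\coh Y)$ for all $i$; feeding a Beilinson-type resolution of $\O_{\Delta_X}$ on $X\times X$ (obtained by restricting and twisting the one on $\bbP^{n}\times\bbP^{n}$) into $\cE^{\cdot}$ exhibits $\cE^{\cdot}$ as a finite iterated extension of objects $p_1^{*}\O_X(-i)\stackrel{\bL}{\otimes}p_2^{*}K(\O_X(i)\otimes\cG_i)$ with $\cG_i\in\coh X$, each lying in $D^{b}(\coh(X\times Y))$, whence $\cE^{\cdot}\in D^{b}(\coh(X\times Y))$.

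\textbf{Main obstacle.} The delicate point is the globalization in item (1): one must verify that the quasi-functor furnished by Theorem \ref{corperf} between the \emph{small} enhancements $\cV$ of $\Perf(X)$ and $\cC$ of $K(\Perf(X))$ genuinely extends to a \emph{direct-sum-preserving} quasi-functor $\cd_{dg}(\Qcoh X)\to\cd_{dg}(\Qcoh Y)$ --- equivalently, that $\cd_{dg}(\Qcoh X)$ is recovered (via $\rho$) as the semi-free cocompletion $\SF(\cB)$ of $\prfdg(\cB)$ and that $\widehat\iota|_{\prfdg(\cB)}$ really inverts $\chi$ --- since only then is To\"en's theorem applicable and the kernel $\cE^{\cdot}$ available. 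Once this compatibility is pinned down, items (2)--(4) are comparatively formal, resting respectively on Proposition \ref{exten}, on the localizing-subcategory argument of Proposition \ref{Keller2}, and on the reconstruction of a Fourier--Mukai kernel from finitely many of its twists.
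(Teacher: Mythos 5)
Your argument for items (1)--(3) is essentially sound and runs parallel to the paper's, though the globalization step is organized differently: the paper introduces an auxiliary DG category $\cD\subset\SF(\prfdg(Y))$ containing both $\cC=\prfdg(Y)$ and the DG subcategory $\cC'$ given by the image of $K$, and builds $\F=\phi_Y^{-1}\cdot\iota_*\cdot\cJ^*\cdot\cN^*\cdot\phi_X$ from the restriction and extension DG functors, whereas you work through $\SF(\cB)$ and a quasi-functor $\widehat\iota$ extending $\cB\hookrightarrow\cd_{dg}(\Qcoh Y)$. Both globalizations work, and your Morita-invariance argument for $\widehat\iota|_{\prfdg(\cB)}\cong\iota\cdot\chi^{-1}$ is a legitimate (if terser) substitute for the paper's explicit bookkeeping via $\cJ^*$ and $\iota_*$. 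Likewise your proof of (3) by the localizing-subcategory argument of Proposition \ref{Keller2} is an acceptable alternative to the paper's observation that one may take $\cD=\cC$, making $\iota_*$ the identity and $H^0(\cJ^*)$ fully faithful.

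The genuine gap is in item (4). You propose to restrict and twist the Beilinson resolution of $\O_{\Delta_{\bbP^n}}$ to $X\times X$ and feed it into $\cE^{\cdot}$, but for a general quasi-projective $X\subset\bbP^n$ the restricted complex is \emph{not} a resolution of $\O_{\Delta_X}$: the derived pullback $\bL j^*\O_{\Delta_{\bbP^n}}$ along $j\colon X\times X\hookrightarrow\bbP^n\times\bbP^n$ has nonvanishing higher Tor's unless $X=\bbP^n$. Moreover your resulting kernel pieces $p_1^*\O_X(-i)\stackrel{\bL}{\otimes}p_2^*K(\O_X(i)\otimes\cG_i)$ with $\cG_i\in\coh X$ ask you to apply $K$ to objects $\O_X(i)\otimes\cG_i$ that are typically not perfect, so they lie outside the domain of $K$; the whole formula is ill-posed. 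The paper avoids both problems by pushing the kernel off $X$ rather than resolving the diagonal on $X$: one considers $\bR(i,\id_Y)_*\cE^{\cdot}$ on $\bbP^N\times Y$ (where $i\colon X\hookrightarrow\bbP^N$), which represents $\Phi_{\cE^{\cdot}}\circ\bL i^*$, and uses the semiorthogonal decomposition $\langle\O(-N)\boxtimes D(\Qcoh Y),\dots,\O\boxtimes D(\Qcoh Y)\rangle$ of $D(\Qcoh(\bbP^N\times Y))$. Its components are $\O(-p)\boxtimes\Phi_{\cE^{\cdot}}\bL i^*(\Omega^p_{\bbP^N}[p])$, and the arguments $\bL i^*\Omega^p_{\bbP^N}$ are always locally free sheaves on $X$, hence perfect, so $\Phi_{\cE^{\cdot}}$ of them agrees with $K$ and lands in $D^b(\coh Y)$. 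One then concludes coherence of $\bR(i,\id_Y)_*\cE^{\cdot}$ from the parallel SOD of $D^b(\coh(\bbP^N\times Y))$, and hence coherence of $\cE^{\cdot}$ itself. You would need to replace your Step (4) with this pushforward-plus-SOD argument.
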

\begin{proof}
Since $X$ is a quasi-projective scheme it is an open subscheme of a projective scheme $\overline{X}\subset\bbP^n.$
We have a graded $A$  algebra
$
A=\bigoplus_n H^0( \overline{X}, \cO_{\overline{X}}(n)).
$
and  a corresponding
$\bbZ$\!-category $\cA,$ objects of which are $\bbZ$ and morphisms
$\Hom_{\cA}(i, j)=A^{j-i}.$

As it was explained in Section \ref{geom} the category $D(\Qcoh X)$ is equivalent to the quotient
$D(\cA)/D_{I}(\cA),$ where  $D_{I}(\cA)$ is a
localizing subcategory of complexes with $I$\!-torsion cohomologies
(here $I$ is an ideal such that the support of the subscheme $\Proj A/I\subset \overline{X}$
is exactly $\overline{X}\backslash X.$)
In addition, the free modules $h^Y$ map to the corresponding line bundles $\O(i)|_{X}$ which are compact objects in
$D(\Qcoh X),$ and $\Perf(X)\cong (D(\cA)/D_{I}(\cA))^c.$

As we know (see Proposition \ref{Keller}) the DG categories $\cd_{dg}(\Qcoh X)$ and $\cd_{dg}(\Qcoh X)$ are
quasi-equivalent to the DG categories $\SF(\prfdg(X))$ and $\SF(\prfdg(Y)),$ respectively, and we denote by $\phi_X$ and $\phi_Y$ the corresponding
 quasi-functors.
For shortness, denote by $\cC$ the DG category $\prfdg(Y)$ and
by $\cC'$ the full DG subcategory in $\SF(\cC)\cong\cd_{dg}(\Qcoh Y)$
which consists of all objects in the essential image
of $H^0(\phi_Y)K.$ The functor $K$ induces  an equivalence
$N:\Perf(X)\stackrel{\sim}{\to}H^0(\cC').$

By Theorem \ref{corperf} there is a quasi-functor $\cN:\prfdg(X)\to\cC'$ which is a quasi-equivalence.
 It induces a quasi-equivalence $\cN^*:\SF(\prfdg(X))\to\SF(\cC').$

Let $\cD$ be a DG subcategory $\cD\subset\SF(\cC)$ that contains $\cC$ and $\cC'.$
We denote by $\cJ:\cC'\hookrightarrow\cD\subset\SF(\cC)$ and $\cI:\cC\hookrightarrow\cD\subset\SF(\cC)$ the respective DG full embeddings.
We have the  extension DG functor $\cJ^*:\SF(\cC')\to\SF(\cD)$ and the restriction DG functor $\cI_{*}:\Mod\cD\to\Mod\cC,$ which
 induces a quasi-functor $\iota_{*}:\SF(\cD)\to\SF(\cC).$
Consider the composition of quasi-functors $\iota_*\cdot\cJ^*\cdot\cN^*:\SF(\prfdg(X))\to \SF(\prfdg(Y)).$ The functors
$H^0(\iota_*), H^0(\cJ^*),$ and $H^0(\cN^*)$
evidently commute with direct sums. (Note that the right adjoint to the quasi-functor $\iota_*\cJ^*$ is the quasi-functor
$\phi:\SF(\cC)\to \SF(\cC')$ which is induced by DG functor
$\varPhi:\SF(\cC)\to \Mod\cC'$ given by the standard formula
$\varPhi(M)(C')=\Hom_{\SF(\cC)}(C', M)$ for $C'\in\cC', \; M\in\SF(\cC).$)

Thus, considering the compositions of quasi-functors $\phi_Y^{-1}\cdot\iota_*\cdot\cJ^*\cdot\cN^*\cdot\phi_X$ we obtain a quasi-functor $\F$ from
$\cd_{dg}(\Qcoh X)$ to $\cd_{dg}(\Qcoh Y)$ that completes the following commutative diagram
$$
\xymatrix{
\cd_{dg}(\Qcoh X) \ar[rrr]^{\F} \ar[d]_{\phi_X}^{\wr} &&&\cd_{dg}(\Qcoh Y) \ar[d]^{\wr}_{\phi_Y}\\
\SF(\prfdg(X)) \ar[r]^-{\cN^*}_-{\sim} &\SF(\cC') \ar[r]^-{\cJ^*}&\SF(\cD)\ar[r]^-{\iota_*}&
\SF(\cC)=\SF(\prfdg(Y))
}
$$
The quasi-functor $\F$ commutes with direct sums and, hence, by Theorem \ref{Toen} the functor $H^0(\cF)$
is isomorphic to $\Phi_{\cE^{\cdot}}$ with $\cE^{\cdot}\in D(\Qcoh(X\times Y)).$

(1) The restriction of the quasi-functor $\iota_{*}\cJ^*$ on $\cC'$ is isomorphic to the inclusion $\cC'\to\SF(\cC).$
This implies that the restriction $\Phi_{\cE^{\cdot}}|_{\Perf(X)}$ is fully faithful.
Moreover, by property (3) of Theorem \ref{corperf}   there is an isomorphism
$\Ho(\cN)(P^{\cdot})\cong N(P^{\cdot})$
 for any $P^{\cdot}\in\Perf(X).$   Hence, $\Phi_{\cE^{\cdot}}(P^{\cdot})\cong K(P^{\cdot}).$

(3) If $K$ sends $\Perf(X)$ to $\Perf(Y),$ then we can take $\cD=\cC.$ In this case $\iota_*$
is the identity, but $H^0(\cJ^*)$ is fully faithful by Proposition \ref{Keller2}. Therefore, $H^0(\F)$ is fully faithful
and sends $\Perf(X)$ to $\Perf(Y).$

(2) Let $X$ be projective. We know that
by property (2) of Theorem \ref{corperf}   there is an isomorphism of functors
$\theta: \Ho(\cN)\cdot\pi\cdot h^{\bullet}\stackrel{\sim}{\to} N\cdot\pi\cdot h^{\bullet}$
 from $\cA$ to $H^{0}(\cC').$

Consider the composition $G=\Ho(\cN)^{-1}\cdot N$ from $\Perf(X)$ to itself. We have a natural transformation
from $j\stackrel{\sim}{\to} G\cdot j$ on the subcategory $h^{\bullet}(\cA)$ which coincides with the full subcategory
$\cP:=\{\O_X(i)\}_{i\in \bbZ}.$ Since $X$ is projective with $T_0(\O_X)=0$ the sequence $\{\O_X(i)\}_{i\in \bbZ}$ is ample in $\Loc X.$
Hence we can apply the Proposition \ref{exten} to $\Perf(X)$ and obtain that the functor
$G$ is isomorphic to the identity functor on the whole $\Perf(X).$ Therefore, the functors $\Ho(\cN)$ and $N$ are isomorphic,
i.e. the functor $K$ is isomorphic to the functor $\Phi_{\cE^{\cdot}}|_{\Perf(X)}.$

(4) Finally, we have to argue that the object $\cE^{\cdot}$ is isomorphic to an object from $D^{b}(\coh (X\times Y))$
if $Y$ is noetherian and the functor $\Phi_{\cE^{\cdot}}$ sends $\Perf(X)$ to $D^b(\Qcoh Y)_{\coh}.$
It is well-known that the canonical functor $D^b(\coh Z)\to D(\Qcoh Z)$ is fully faithful and establishes an equivalence with
the subcategory  $D^b(\Qcoh Z)_{\coh}$ of cohomologically bounded complexes with coherent cohomologies.
Consider an inclusion $i: X\hookrightarrow{\mathbb P}^N.$ The composition of the inverse image functor $\bL i^*$ and $\Phi_{\cE^{\cdot}}$ gives
a functor from $D(\Qcoh {\mathbb P}^{N})$ to $D(\Qcoh Y)$ which is represented by the object $\bR (i, \id_{Y})_{*}\cE^{\cdot}$
and sends $\Perf({\bbP}^{N})$ to $D^b(\Qcoh Y)_{\coh}.$
It is sufficient to check that the object $\bR (i, \id_{Y})_{*}\cE^{\cdot}$ belongs to $D^b(\Qcoh(\bbP^N\times Y))_{\coh}.$
The category $D(\Qcoh(\bbP^N\times Y))$ has a semi-orthogonal decomposition
of the form $\langle \O(-N)\boxtimes~D(\Qcoh Y), \dots, \O\boxtimes D(\Qcoh Y)\rangle.$
The components of $\bR (i, \id_{Y})_{*}\cE^{\cdot}$ with respect to this decomposition are isomorphic to
$\O(-p)\boxtimes\Phi_{\cE^{\cdot}} \bL i^*(\Omega^{p}_{\bbP^N}[p]).$

There is a similar decomposition
$\langle \O(-N)\boxtimes D^b(\coh Y), \dots, \O\boxtimes D^b(\coh Y)\rangle$
for the bounded category of coherent sheaves $D^b(\coh(\bbP^N\times Y)).$ Since the objects
$\Phi_{\cE^{\cdot}} \bL i^*(\Omega^{p}_{\bbP^N}[p])$ belong to $D^b(\Qcoh Y)_{\coh},$ the object
$\bR (i, \id_{Y})_{*}\cE^{\cdot}$ is isomorphic to an object of $D^b(\coh(\bbP^N\times~Y)).$
Therefore, the object $\cE^{\cdot}$ is also isomorphic to an object of $D^b(\coh(X\times Y)).$
\end{proof}

Let us consider the bounded derived category of coherent sheaves $D^b(\coh X)$ on a quasi-projective scheme $X.$
Consider an exact functor $F$ from $D^b(\coh X)$ to a triangulated category $\cT$ that admits arbitrary direct sums.
We say that the functor $F$ {\it commutes with homotopy colimits} if for any $M^{\cdot}\in D^b(\coh X),$
which is isomorphic in $D(\Qcoh X)$ to $\hocolim_i P_i^{\cdot}$ of perfect complexes $P_i^{\cdot},$  there is an isomorphism
 $F(M)\stackrel{\sim}{\to}\hocolim_i F(P_i^{\cdot})$ in
$\cT$ that commutes with canonical morphisms from $F(P_i)$ for each $i.$
Note that our functor $F$ is defined only on $D^b(\coh X)\cong D^b(\Qcoh X)_{\coh}\subset D(\Qcoh X).$

\begin{cor}\label{dbcoh}
Let $X$ be a projective scheme with $T_0(\O_X)=0$ and $Y$ be a quasi-compact and separated scheme. Let
$K: D^b(\coh X)\to D(\Qcoh Y)$ be a fully faithful
functor that commutes with homotopy colimits.
 Then there is an object $\cE^{\cdot}\in D(\Qcoh(X\times Y))$ such that
$\Phi_{\cE^{\cdot}}|_{D^b(\coh X)}\cong K.$
\end{cor}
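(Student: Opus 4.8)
The plan is to reduce the statement to Corollary \ref{repr} by restricting $K$ to $\Perf(X)$, and then to lift the conclusion back up to all of $D^b(\coh X)$, using the hypothesis that $K$ commutes with homotopy colimits together with the ample sequence argument of Proposition \ref{ext}. For the set-up I would argue exactly as in the proofs of Theorems \ref{bdcmain} and \ref{strun}: put $A=\bigoplus_n H^0(X,\O_X(n))$ and let $\cA$ be the associated $\bbZ$\!-category. By Lemma \ref{quotab} one has $D(\Qcoh X)\cong D(\cA)/D_{\tors}(\cA)$; the localizing subcategory $D_{\tors}(\cA)$ is generated by compact objects (Lemma \ref{perfcg}); $\Perf(X)\cong(D(\cA)/D_{\tors}(\cA))^c$; and by Proposition \ref{cohcapp} this equivalence identifies $D^b(\coh X)$ with the category of compactly approximated objects $(D(\cA)/D_{\tors}(\cA))^{ca}$ taken with respect to the set of compact generators $\{h^Y\}\leftrightarrow\{\O_X(i)\}_{i\in\bbZ}$. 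In particular every $M\in D^b(\coh X)$ is isomorphic in $D(\Qcoh X)$ to $\hocolim_k P_k$ for a tower of perfect complexes $P_k$ (Lemma \ref{hocolcoh}), and Remark \ref{Pcomp} controls $\Hom$\!-groups from perfect complexes into such an $M$.

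Next I would apply Corollary \ref{repr}(1),(2) to the fully faithful functor $K_0:=K|_{\Perf(X)}:\Perf(X)\to D(\Qcoh Y)$. Since $X$ is projective with $T_0(\O_X)=0$, this produces an object $\cE^{\cdot}\in D(\Qcoh(X\times Y))$ with $\Phi_{\cE^{\cdot}}|_{\Perf(X)}$ fully faithful, together with an \emph{isomorphism of functors} $\eta:\Phi_{\cE^{\cdot}}|_{\Perf(X)}\stackrel{\sim}{\to}K_0$. Moreover $\Phi_{\cE^{\cdot}}$ is the homotopy functor of a direct sums preserving quasi-functor (this is how $\cE^{\cdot}$ is produced, via Theorem \ref{Toen}), so it commutes with the homotopy colimits of Definition \ref{hocolim}; as $K$ does so by hypothesis, for $M\cong\hocolim_k P_k$ in $D^b(\coh X)$ the isomorphism $\eta$ applied to the tower $(P_k)_k$ yields $\Phi_{\cE^{\cdot}}(M)\cong\hocolim_k\Phi_{\cE^{\cdot}}(P_k)\cong\hocolim_k K(P_k)\cong K(M)$. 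Thus $\Phi_{\cE^{\cdot}}$ and $K$ agree on objects of $D^b(\coh X)$, and in particular $\Phi_{\cE^{\cdot}}|_{D^b(\coh X)}$ has the same essential image $\cT'\subset D(\Qcoh Y)$ as $K$.

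The technical heart, and the step I expect to be the main obstacle, is to upgrade this to: $\Phi_{\cE^{\cdot}}|_{D^b(\coh X)}$ is fully faithful. For $M,N\in D^b(\coh X)$ I would fix $m$ with $\Hom(\O_X(i),N[j])=0$ for $j<m$ and, for $k\le m-1$, a morphism $\varphi_k:P_k\to M$ from a perfect $P_k$ as in Definition \ref{bcoh}(b); its cone $C_k$ has $\Hom(\O_X(i),C_k[j])=0$ for $j\ge k$, so after applying the fully faithful right adjoint $\mu$ to $\pi$ the object $\mu(C_k)$ sits in cohomological degrees $\le k-1<m$ while $\mu(N)$ sits in degrees $\ge m$, whence $\Hom(C_k,N)=\Hom(C_k[-1],N)=0$ and $\varphi_k^{*}:\Hom(M,N)\stackrel{\sim}{\to}\Hom(P_k,N)$. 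The same holds after $\Phi_{\cE^{\cdot}}$, since $\Phi_{\cE^{\cdot}}(C_k)\cong K(C_k)$, $\Phi_{\cE^{\cdot}}(N)\cong K(N)$ and $K$ is fully faithful on $D^b(\coh X)$. This reduces full faithfulness to checking that $\Phi_{\cE^{\cdot}}:\Hom(P_k,N)\to\Hom(\Phi_{\cE^{\cdot}}P_k,\Phi_{\cE^{\cdot}}N)$ is bijective for a perfect $P_k$; writing $N=\hocolim_l Q_l$, factoring a given morphism $P_k\to N$ through some $Q_l$ (possible because $P_k$ is compact and by Remark \ref{Pcomp}), and using the naturality of $\eta$ on $\Perf(X)$ together with the compatibility of the induced isomorphism $\Phi_{\cE^{\cdot}}N\cong KN$ with the structure maps of the tower, this map is identified with $K:\Hom(P_k,N)\to\Hom(KP_k,KN)$, which is bijective. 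The delicate point throughout is precisely this bookkeeping of isomorphisms, since a priori $\Phi_{\cE^{\cdot}}$ and $K$ are known to agree only on objects of $D^b(\coh X)$, not as functors.

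Finally I would conclude as in Theorem \ref{strun}. Now $\Phi_{\cE^{\cdot}}|_{D^b(\coh X)}:D^b(\coh X)\to\cT'$ and $K:D^b(\coh X)\to\cT'$ are both equivalences, so $F:=K^{-1}\cdot\Phi_{\cE^{\cdot}}|_{D^b(\coh X)}$ is an autoequivalence of $D^b(\coh X)$, and $\eta$ provides an isomorphism $j\stackrel{\sim}{\to}F\cdot j$ on the full subcategory $\cP=\{\O_X(i)\}_{i\in\bbZ}$. Since $X$ is projective with $T_0(\O_X)=0$, the sequence $\{\O_X(i)\}_{i\in\bbZ}$ is ample in $\coh X$ by Proposition \ref{ampcoh}, so Proposition \ref{ext} extends this to an isomorphism $\id_{D^b(\coh X)}\stackrel{\sim}{\to}F$. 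Hence $\Phi_{\cE^{\cdot}}|_{D^b(\coh X)}=K\cdot F\cong K$, which is the assertion.
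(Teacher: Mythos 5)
Your proof is correct and follows the same overall route as the paper: restrict $K$ to $\Perf(X)$, invoke Corollary~\ref{repr} (and Theorem~\ref{Toen}) to produce $\cE^{\cdot}$ together with an isomorphism of functors on $\Perf(X)$, use the hypothesis that $K$ commutes with homotopy colimits to show object-level agreement on $D^b(\coh X)$, prove full faithfulness of $\Phi_{\cE^{\cdot}}|_{D^b(\coh X)}$, and then conclude with the ample-sequence rigidity result of Proposition~\ref{ext} / Proposition~\ref{exten}.

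The one place where you take a genuinely different (though valid) path is the full-faithfulness step. You argue entirely inside $D^b(\coh X)$: given $M,N\in D^b(\coh X)$ you choose a compact approximation $\varphi_k\colon P_k\to M$ with $k$ sufficiently negative relative to the amplitude of $N$, show both $\Hom(M,N)\to\Hom(P_k,N)$ and its image under $\Phi_{\cE^{\cdot}}$ are bijections, and then reduce to the case of a perfect source. This is essentially the mechanism of Lemma~\ref{fufa}, re-run here. The paper instead first proves the key bijection $\Hom(Q,M)\xrightarrow{\sim}\Hom(\Phi_{\cE^{\cdot}}(Q),\Phi_{\cE^{\cdot}}(M))$ for $Q\in\Perf(X)$, $M\in D^b(\coh X)$ (the step you also use as your inner step, proved the same way by factoring through a stupid truncation of a resolution), and then applies a d\'evissage in the first argument over the whole unbounded category: the objects $C\in D(\Qcoh X)$ for which the map is bijective for all shifts form a triangulated subcategory closed under direct sums containing $\Perf(X)$, hence equal to $D(\Qcoh X)$, and in particular contain $D^b(\coh X)$. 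The d\'evissage is shorter and avoids the delicate bookkeeping of compatible isomorphisms along the approximating tower that you flag as the main worry; your route buys nothing extra here, but it does work, since the needed vanishings on $C_k$ and $C_k[-1]$ are exactly what Definition~\ref{bcoh}(b) plus the inner step provide.

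One small remark: the paper's set-up for this corollary does not explicitly pass through the $(D(\cA)/D_{\tors}(\cA))^{ca}$ identification; it simply uses the existence of a bounded-above locally free resolution (Lemma~\ref{cov}) and the resulting presentation $M\cong\hocolim_k\sigma_{\ge k}P^{\cdot}$. Your more machinery-laden framing is equivalent, but not necessary.
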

\begin{proof}
We can consider the restriction of the functor $K$ on the subcategory of perfect complexes
$\Perf(X)\subset D^b(\Qcoh X)_{\coh}\cong D^b(\coh X).$
By Corollary \ref{repr} there is an object $\cE^{\cdot}\in D(\Qcoh (X\times Y))$
and an isomorphism of functors $\theta:\Phi_{\cE^{\cdot}}|_{\Perf(X)}\cong K|_{\Perf(X)}.$
For any object $M^{\cdot}\in D^b(\coh X)$ there is a quasi-isomorphism $P^{\cdot}\stackrel{\sim}{\to} M^{\cdot}$ where
$P^{\cdot}$ is  a bounded above complex of locally free sheaves of finite
type (see Lemma \ref{cov}). Hence, the object $M^{\cdot}$ is isomorphic to
$\hocolim_k \;\sigma_{\ge k}P^{\cdot}$ in $D(\Qcoh X).$
By assumption, there is an isomorphism $\hocolim_k K(\sigma_{\ge k} P^{\cdot})\stackrel{\sim}{\to} K(M^{\cdot})$
in $D(\Qcoh Y).$ On the other hand, the functor $\Phi_{\cE^{\cdot}}$ commutes with direct sums and homotopy colimits. Therefore, there is an isomorphism
$\theta_M: \Phi_{\cE^{\cdot}}(M^{\cdot})\stackrel{\sim}{\to} K(M^{\cdot})$ that makes the following square

$$
\begin{CD}
\Phi_{\cE^{\cdot}}(\sigma_{\ge k}P^{\cdot}) @>>> \Phi_{\cE^{\cdot}}(M^{\cdot})\\
@V\theta_{\ge k}V{\wr}V @V\theta_{M}V{\wr}V \\
K(\sigma_{\ge k} P^{\cdot}) @>>> K(M^{\cdot}).
\end{CD}
$$
commutative for any $k.$

Now we have to check that the restriction of $\Phi_{\cE^{\cdot}}$ on $D^b(\coh X)$ is fully faithful.
Let $Q$ belong to $\Perf(X).$ Any morphism $f:Q\to M^{\cdot}$ factors through $\sigma_{\ge k} P^{\cdot}$ for $k\ll 0.$
Since the right and left squares in the following diagram

$$
\begin{CD}
\Phi_{\cE^{\cdot}}(Q)@>>>\Phi_{\cE^{\cdot}}(\sigma_{\ge k}P^{\cdot}) @>>> \Phi_{\cE^{\cdot}}(M^{\cdot})\\
@V{\theta_{Q}}V{\wr}V @V\theta_{\ge k}V{\wr}V @V\theta_{M}V{\wr}V \\
K(Q)@>>> K(\sigma_{\ge k} P^{\cdot}) @>>> K(M^{\cdot}).
\end{CD}
$$
commute, the outside square is also commutes. Therefore, $\theta_M\cdot\Phi_{\cE^{\cdot}}(f)= K(f)\cdot\theta_{Q}$
for any $f:Q\to M^{\cdot}.$
If now $\Phi_{\cE^{\cdot}}(f)=0,$ then $K(f)=0$ and $f=0,$ because $K$ is fully faithful.
On the other hand, for any $g:\Phi_{\cE^{\cdot}}(Q)\to \Phi_{\cE^{\cdot}}(M^{\cdot})$  we can consider
$g'=\theta_{M}\cdot g\cdot\theta_{Q}^{-1}$ and take $f:Q\to M^{\cdot}$ such that $K(f)=g'.$
It is obvious now that $\Phi_{\cE^{\cdot}}(f)=g.$ Thus, there are isomorphisms
$$
\Hom(Q, M^{\cdot})\stackrel{\sim}{\lto}\Hom(\Phi_{\cE^{\cdot}}(Q), \Phi_{\cE^{\cdot}}(M^{\cdot}))
$$
for any $Q\in \Perf(X)$ and every $M^{\cdot}\in D^b(\coh X).$

The objects $C^{\cdot}\in D(\Qcoh X)$ for which the natural map
$$
\Hom(C^{\cdot}, M^{\cdot}[m])\stackrel{\sim}{\lto}\Hom(\Phi_{\cE^{\cdot}}(C^{\cdot}), \Phi_{\cE^{\cdot}}(M^{\cdot})[m])
$$
is bijective for all $m\in\bbZ$ form a triangulated subcategory in $D(\Qcoh X)$ which contains $\Perf(X)$ and is closed under direct sums
(since $\Phi_{\cE^{\cdot}}$ commutes with direct sums). Therefore, this subcategory coincides with the whole
$D(\Qcoh X).$ Thus, the functor $\Phi_{\cE^{\cdot}}|_{D^b(\coh X)}$ is fully faithful and has the same essential image as $K.$
Now we can apply Proposition \ref{exten} and extend the isomorphism
$\theta:\Phi_{\cE^{\cdot}}|_{\Perf(X)}\cong K|_{\Perf(X)}$ to an isomorphism of the functors
$\Phi_{\cE^{\cdot}}|_{D^b(\coh X)}\cong K.$
\end{proof}
\begin{remark}{\rm
Notice that in the proof of the corollary we used only the fact that $K$ commutes with homotopy colimits
of a special form $\hocolim \sigma_{\ge k} P$ and this implies that
$K\cong\Phi_{\cE^{\cdot}}|_{D^b(\coh X)}.$ Thus, as a result, it commutes with all homotopy colimits.
}
\end{remark}
\begin{example}\label{exlast}{\rm
We say that the functor $K: D^b(\coh X)\to D(\Qcoh Y)$ is bounded
above by $n\in \bbZ$ if $H^i(K(\cF))=0$ for all coherent sheaves $\cF\in\coh X$ when $i>n.$
If the functor $K$ is bounded above then it commutes with homotopy colimits. Indeed,
for any $M^{\cdot}\in D^b(\coh X)$ and a locally free resolution $P^{\cdot}\to M^{\cdot}$
we have that the map $K(\sigma_{\ge k} P^{\cdot})\to K(M^{\cdot}),$ when $k\ll 0,$
is an isomorphism
on cohomologies $H^i$ for all $i>k+n.$  Therefore, $\hocolim_k K(\sigma_{\ge k} P^{\cdot})\cong K(M^{\cdot}).$
By remark above $K$ commutes with homotopy colimits.
}
\end{example}
\begin{cor}\label{lastcor} Let $X$ be a projective scheme with $T_0(\O_X)=0$ and $Y$ be a noetherian scheme. Let
$K:D^b(\coh X)\to D^b(\coh Y)$ be a fully faithful functor that has
a right adjoint $K^{!}:D^b(\coh Y)\to D^b(\coh X).$ Then
there is an object $\cE^{\cdot}\in D^b(\coh(X\times Y))$ such that
$\Phi_{\cE^{\cdot}}|_{D^b(\coh X)}\cong K.$
\end{cor}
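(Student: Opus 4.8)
The plan is to deduce this from Corollaries \ref{repr} and \ref{dbcoh}, the only new input being that the existence of the right adjoint $K^{!}$ forces $K$ to be bounded above. First I would restrict $K$ to the subcategory of perfect complexes $\Perf(X)\subset D^b(\coh X)$. Then $K|_{\Perf(X)}$ is a fully faithful functor $\Perf(X)\to D(\Qcoh Y)$ whose image lies in $D^b(\coh Y)=D^b(\Qcoh Y)_{\coh}$. Since $X$ is projective with $T_0(\O_X)=0$ and $Y$ is noetherian, parts (1), (2) and (4) of Corollary \ref{repr} yield an object $\cE^{\cdot}\in D^b(\coh(X\times Y))$ together with an isomorphism $\Phi_{\cE^{\cdot}}|_{\Perf(X)}\cong K|_{\Perf(X)}$. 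It then remains to upgrade this isomorphism from $\Perf(X)$ to all of $D^b(\coh X)$, and this is exactly what Corollary \ref{dbcoh} (more precisely its proof, together with the Remark after it) does, provided one knows that $K$ commutes with homotopy colimits of the special form $\hocolim_k \sigma_{\ge k}P^{\cdot}$; note there is no compatibility issue, since the kernel produced inside Corollary \ref{dbcoh} is precisely the $\cE^{\cdot}$ coming from Corollary \ref{repr}. So the whole statement reduces to showing that $K$ commutes with these homotopy colimits.

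\textbf{Reduction to boundedness.} By Example \ref{exlast} it is enough to show that $K$ is bounded above, i.e.\ that there is an $n\in\bbZ$ with $H^i(K(\cF))=0$ for every coherent sheaf $\cF$ on $X$ and every $i>n$. Here the hypothesis on $K^{!}:D^b(\coh Y)\to D^b(\coh X)$ enters. If $H^{i}(K(\cF))\ne 0$ with $i$ maximal, the truncation map gives a nonzero morphism $K(\cF)\to H^{i}(K(\cF))[-i]$, hence, writing $\cG=H^{i}(K(\cF))\in\coh Y$ and using the adjunction, $\Hom(\cF,K^{!}(\cG)[-i])\ne 0$. Since a coherent sheaf $\cF$ admits no nonzero morphism into a complex concentrated in degrees below its bottom cohomological degree, this forces $-i\ge \inf\{j:H^j(K^{!}(\cG))\ne 0\}$. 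Thus $K$ is bounded above as soon as the bottom cohomological degrees of the $K^{!}(\cG)$, $\cG\in\coh Y$, are bounded below by a constant independent of $\cG$. Each $K^{!}(\cG)$ is individually bounded because $K^{!}$ takes values in $D^b(\coh X)$; the required \emph{uniform} lower bound is then obtained from the ample sequence $\{\O_X(i)\}_{i\in\bbZ}$ on the projective scheme $X$ (Serre vanishing), using that $Y$ is noetherian so that all the relevant $\Hom$ and $\Ext$ groups are finite.

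\textbf{Conclusion.} Once $K$ is known to be bounded above by some $n$, take for $M^{\cdot}\in D^b(\coh X)$ a bounded-above resolution $P^{\cdot}\to M^{\cdot}$ by locally free sheaves of finite type (Lemma \ref{cov}), so that $M^{\cdot}\cong\hocolim_k\sigma_{\ge k}P^{\cdot}$ in $D(\Qcoh X)$ with each $\sigma_{\ge k}P^{\cdot}$ perfect. Boundedness of $K$ above puts $K(\sigma_{\le k-1}P^{\cdot})$ in degrees $\le n+k-1$, so pairing against a perfect complex $Q$ on $Y$ (which has bounded Tor-amplitude) makes $\Hom(Q,K(\sigma_{\le k-1}P^{\cdot})[*])$ vanish for $k\ll 0$; since $\Perf(Y)$ compactly generates $D(\Qcoh Y)$, this gives $K(M^{\cdot})\cong\hocolim_k K(\sigma_{\ge k}P^{\cdot})$ compatibly with the structure maps, i.e.\ $K$ commutes with these homotopy colimits. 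Feeding this into Corollary \ref{dbcoh} (whose remaining ingredient is Proposition \ref{exten} applied to the ample sequence $\{\O_X(i)\}$ on $X$) produces the isomorphism $\Phi_{\cE^{\cdot}}|_{D^b(\coh X)}\cong K$ with $\cE^{\cdot}\in D^b(\coh(X\times Y))$, as wanted.

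\textbf{Main obstacle.} The crux is the uniform boundedness of $K$ from above. The per-object boundedness of $K^{!}(\cG)$ is immediate from $K^{!}$ landing in $D^b(\coh X)$, but extracting a single $n$ that works simultaneously for all coherent sheaves on $X$ is the delicate step, and it is here that the geometry of $X$ (projectivity, an ample line bundle, Serre-type vanishing) and the noetherian hypothesis on $Y$ are genuinely used; everything else is formal manipulation of adjunctions and homotopy colimits together with the already-established Corollaries \ref{repr} and \ref{dbcoh}.
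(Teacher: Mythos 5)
Your reduction is correct and matches the paper: by Example \ref{exlast} and Corollary \ref{dbcoh} (together with Corollary \ref{repr}(4) for the coherence of $\cE^{\cdot}$), everything comes down to showing that $K$ is bounded above. Your adjunction computation is also fine: if $i$ is maximal with $\cG:=H^i(K(\cF))\ne 0$, the truncation map gives $\Hom(K(\cF),\cG[-i])\ne 0$, hence $\Hom(\cF,K^!(\cG)[-i])\ne 0$, which forces $i\le -\inf\{j:H^j(K^!(\cG))\ne 0\}$.

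The gap is in the very last step, which you yourself flag as the crux: you need the quantity $a(\cG):=\inf\{j:H^j(K^!(\cG))\ne 0\}$ to be bounded below \emph{uniformly} in $\cG\in\coh Y$, and you offer only a vague appeal to the ample sequence on $X$, Serre vanishing, and noetherianity of $Y$. None of these obviously produces a uniform bound — each $K^!(\cG)$ is bounded, but a priori the amplitudes could drift to $-\infty$ as $\cG$ varies, and finiteness of $\Hom/\Ext$ groups is beside the point. As written this is a missing lemma, not a routine verification. The paper sidesteps the uniformity problem entirely by a different and sharper argument: first bound $H^i(K(\O(j)))$ uniformly in $j$ using the Beilinson resolution (so only finitely many $K(\O(-s))$, $0\le s\le N$, control all line bundles); then, given a coherent sheaf $\cF$ whose top cohomology $H^m(K(\cF))=\cG$ exceeds that bound, iteratively take kernels $\cF_{p+1}=\Ker\bigl(\O(-k_p)^{\oplus n_p}\twoheadrightarrow\cF_p\bigr)$ so that the \emph{same fixed} sheaf $\cG$ reappears as $H^{m+p}(K(\cF_p))$ for every $p$. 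Then $\Hom(\cF_p,K^!(\cG)[-m-p])\ne 0$ for all $p\ge 0$, contradicting the boundedness of the single object $K^!(\cG)$. You should adopt this escalating-kernel device (or prove the uniform amplitude bound on $K^!$ by some comparable mechanism) to close the gap; otherwise the argument does not go through.
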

\begin{proof}
To apply Corollary \ref{dbcoh} we need to check that the functor $K$ commutes with homotopy colimits.
Following the example above it is sufficient to show that the functor $K$ is bounded above.
First, we easily  see that $K$ is bounded above on the sequence of line bundles $\{\cO(j)\}_{j\in \bbZ},$
because by Beilinson's theorem for any $j>0$ (resp. $j<-N$) we have a left (resp. right) resolution of $\cO_X(j)$ of the form
$$
V_N\otimes \cO_X(-N)\lto\cdots\lto V_1\otimes \cO_X(-1)\lto V_0\otimes \cO_X
$$
where $X\hookrightarrow\bbP^N$ is a closed embedding and $V_s$ is $H^0(\bbP^N, \Omega^s(s+j))$
(resp. $H^N(\bbP^N, \Omega^s(s+j))$) (see, for example, \cite{OSS} Th.3.1.4).

 Denote by $n$ an integer such that
$H^i(K(\cO(j))=0$ for all $j$ when $i>n.$
If  $\cF\in\coh X$ is a coherent sheaf that has a highest nontrivial cohomology $H^m(K(\cF))=\cG\ne 0$ with $m>n$ then
we can construct a sequence of coherent sheaves $\{\cF_p\in\coh X\}_{p\ge 0}$ such that
$H^{m+p}(K(\cF_p))=\cG.$ The construction goes by induction with base $\cF_0=\cF$ and $\cF_{p+1}$ is the kernel
of a some epimorphism from $\cO(-k_p)^{\oplus n_p}$ to $\cF_p.$ It is evident that such epimorphism exists
when $k_p\ll 0$ and the highest cohomology of $K(\cF_{p+1})$ has a number $m+p+1$ and it is isomorphic to
the $H^{m+p}(K(\cF_p))\cong \cG.$

Assume that $K$ has a right adjoint functor $K^!:D^b(\coh Y)\to D^b(\coh X).$ Take the object $K^!(\cG).$
By construction above we have that
$$
\Hom(\cF_p, K^!(\cG)[-m-p])\cong\Hom(K(\cF_p), \cG[-m-p])\ne 0
$$
for all $p\ge 0.$ This gives a contradiction with the fact that $K^!(\cG)$ is a bounded complex.
Therefore, $K$ is bounded above and by Example \ref{exlast} and Corollary \ref{dbcoh} it is isomorphic to
the functor $\Phi_{\cE^{\cdot}}$ for some $\cE^{\cdot}\in D(\Qcoh (X\times Y)).$
Since the functor $K$ sends $\Perf(X)$ to
$D^b(\coh Y),$ Corollary \ref{repr} (4) implies that $\cE$ is isomorphic to an object of $D^b(\coh(X\times Y)).$
\end{proof}
\begin{remark}{\rm
The last statement  also holds under conditions that $Y$ is projective and $K$ has a partially left adjoint
$K^*:\Perf(Y)\to \Perf(X).$ In this case for sufficiently large $s\gg 0$ there is a nontrivial morphism
from $\cO_{Y}(-s)$ to $\cG$ that can be lifted to a nontrivial morphism from
$\cO_{Y}(-s)$ to $K(\cF_p[m+p]).$ Therefore,
$$
\Hom(K^*(\cO_Y(-s)), \cF_p[m+p])\cong \Hom(\cO_Y(-s), K(\cF_p)[m+p])\ne 0
$$
This gives a contradiction with the fact that $K^*(\cO_Y(-s))$ is a perfect complex.
And, hence, $K$ is bounded above as well.
}
\end{remark}

\appendix
\section
{Small $\mathbb{U}$\!-cocomplete categories}

Fix universes $\mathbb{U} \in \mathbb{V},$ so that $\mathbb{U}$
contains an infinite set \cite{SGA4}.
A set $X$ is called a {\it $\mathbb{U}$\!-set} (resp. {\it $\mathbb{U}$\!-small}) if
$X\in \mathbb{U}$ (resp. $X$ is {\sf isomorphic} to an element of
$\mathbb{U}$). Similarly for groups, vector spaces, etc.

We
call $\cC$ a {\it $\mathbb{U}$\!-category} if for each $A,B \in \cC$ the set
$\Hom (A,B)$ is $\mathbb{U}$\!-small.
We call a $\mathbb{U}$\!-category (or a DG $\mathbb{U}$\!-category) $\cC$ {\it $\mathbb{U}$\!-small}, if
the collection of objects of $\cC$ is a  $\mathbb{U}$\!-small set.
Also a $\mathbb{U}$\!-category is called
{\it essentially $\mathbb{U}$\!-small} if isomorphism classes of its objects
form a $\mathbb{U}$\!-small set.
Denote by $\cA b _{\mathbb{U}}$ the category of $\mathbb{U}$\!-small
abelian groups.

A triangulated $\mathbb{U}$\!-category is called
{\it $\mathbb{U}$\!-cocomplete} if it has all $\mathbb{U}$\!-small direct
sums, i.e. it has a direct sum of any collection of its objects
which is indexed by a $\mathbb{U}$\!-small set.

We would like to translate some well known fact about cocomplete
triangulated categories into the language of triangulated
$\mathbb{U}$\!-categories which are $\mathbb{U}$\!-cocomplete.

Fix a triangulated $\mathbb{U}$\!-category $\cT$ which is
$\mathbb{U}$\!-cocomplete. Recall that a set $S\subset \cT$ {\it
generates} $\cT$ if whenever $\Hom (A,X[n])=0$ for all $A\in S$ and
all $n\in \mathbb{Z},$ then $X=0.$ For a subset $S\subset \cT$ denote by
$\langle S\rangle ^{\mathbb{U}}$ the smallest strictly full
triangulated subcategory which contains $S$ and is
$\mathbb{U}$\!-cocomplete.

\begin{theo}\label{Brown} (Brown representability for
$\mathbb{U}$\!-categories) Let $\cT$ be a triangulated
$\mathbb{U}$\!-category $\cT$ which is $\mathbb{U}$\!-cocomplete.
Suppose that $\cT$ is compactly generated by a $\mathbb{U}$\!-small set $S\subset \cT ^c.$

\begin{enumerate}
\item[a)] Let $H: \T^{\op}\to \cA b_{\mathbb{U}}$ be a cohomological functor
which takes $\mathbb{U}$\!-small coproducts to products. Then $H$ is
representable;
\item[b)] $\cT =\langle S\rangle ^{\mathbb{U}}.$
\end{enumerate}
\end{theo}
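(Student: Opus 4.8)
The plan is to run the standard, transfinite-free construction behind Neeman's Brown representability theorem, being careful at each stage that the coproduct involved is indexed by a $\mathbb{U}$\!-small set, so that it exists in $\cT$ and in fact lies in $\langle S\rangle^{\mathbb{U}}$; part b) will then fall out of the same construction. Fix a cohomological functor $H\colon\cT^{\op}\to\cA b_{\mathbb{U}}$ sending $\mathbb{U}$\!-small coproducts to products. First I would build a sequence $X_0\to X_1\to X_2\to\cdots$ together with compatible natural transformations $\eta_n\colon\Hom_{\cT}(-,X_n)\to H$. Put $X_0:=\coprod_{s\in S}\coprod_{x\in H(s)}s$; this is a $\mathbb{U}$\!-small coproduct since $S\in\mathbb{U}$ and each $H(s)$ is a $\mathbb{U}$\!-small set, hence it exists and lies in $\langle S\rangle^{\mathbb{U}}$. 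Using $H(X_0)=\prod_{s,x}H(s)$, the tautological family $(x)_{s,x}$ gives a class in $H(X_0)$, equivalently (Yoneda) a transformation $\eta_0$ that is visibly surjective on each $s\in S$. Inductively, take $K_n:=\coprod s$ over all pairs $(s,f)$ with $s\in S$, $f\colon s\to X_n$ and $\eta_n(f)=0$ in $H(s)$ — a $\mathbb{U}$\!-small coproduct because $S\in\mathbb{U}$ and each $\Hom_{\cT}(s,X_n)$ is $\mathbb{U}$\!-small — with the map $K_n\to X_n$ which is $f$ on the summand $(s,f)$, and set $X_{n+1}:=\operatorname{Cone}(K_n\to X_n)$. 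Applying the cohomological $H$ to the triangle $K_n\to X_n\to X_{n+1}\to K_n[1]$ and using $H(K_n)=\prod_{(s,f)}H(s)$ shows that the class $u_n\in H(X_n)$ representing $\eta_n$ maps to $0$ in $H(K_n)$, hence lifts to $u_{n+1}\in H(X_{n+1})$; the corresponding $\eta_{n+1}$ restricts to $\eta_n$ along $X_n\to X_{n+1}$. Finally put $X:=\hocolim_n X_n$ (the coproduct $\bigoplus_{n\in\bbN}X_n$ is $\mathbb{U}$\!-small, so this exists and lies in $\langle S\rangle^{\mathbb{U}}$) and lift the compatible family $\{u_n\}$ along the exact sequence obtained by applying $H$ to $\bigoplus_n X_n\to\bigoplus_n X_n\to X$ to get $\eta\colon\Hom_{\cT}(-,X)\to H$.

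Next I would check that $\eta$ is an isomorphism on $S$. Since each $s\in S$ is compact, $\Hom_{\cT}(s,X)=\colim_n\Hom_{\cT}(s,X_n)$ and $\eta_s=\colim_n(\eta_n)_s$. Surjectivity is inherited from $\eta_0$. For injectivity, if $f\colon s\to X_n$ satisfies $\eta_n(f)=0$ then, after the colimit identifications, $f$ is one of the summands of $K_n\to X_n$, so its image in $\Hom_{\cT}(s,X_{n+1})$ vanishes because consecutive arrows of the triangle $K_n\to X_n\to X_{n+1}$ compose to $0$; hence $f$ dies in the colimit. Thus $\eta_s\colon\Hom_{\cT}(s,X)\stackrel{\sim}{\to}H(s)$ for all $s\in S$. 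Applying this to the particular functor $H=\Hom_{\cT}(-,Y)$ for a fixed $Y\in\cT$ (which lands in $\cA b_{\mathbb{U}}$ and takes $\mathbb{U}$\!-small coproducts to products) yields b): Yoneda turns $\eta$ into a morphism $\psi\colon X\to Y$ with $X\in\langle S\rangle^{\mathbb{U}}$ inducing isomorphisms $\Hom_{\cT}(s,X)\stackrel{\sim}{\to}\Hom_{\cT}(s,Y)$ for every $s\in S$, so $\Hom_{\cT}(s,\operatorname{Cone}(\psi)[m])=0$ for all $s\in S$ and all $m\in\bbZ$, whence $\operatorname{Cone}(\psi)=0$ since $S$ generates $\cT$; therefore $Y\cong X\in\langle S\rangle^{\mathbb{U}}$, i.e. $\cT=\langle S\rangle^{\mathbb{U}}$.

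Finally, for a general $H$ with $X,\eta$ as constructed, I would observe that the full subcategory of $\cT$ on which $\eta$ is an isomorphism is strictly full, triangulated (apply the five lemma to the long exact sequences attached to a triangle, using that $\Hom_{\cT}(-,X)$ and $H$ are both contravariant cohomological functors), closed under $\mathbb{U}$\!-small coproducts (both functors turn them into products), and contains $S$; by b) it therefore equals $\cT$, so $\eta$ is an isomorphism everywhere and $X$ represents $H$. The routine parts are the diagram chases showing the $\eta_n$ assemble compatibly and that $\operatorname{Cone}$ and $\hocolim$ behave as expected, together with the use of compactness to get convergence after $\omega$ steps rather than a transfinite induction. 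The one point that genuinely demands attention — and the reason for stating the theorem in the $\mathbb{U}$\!-form — is the universe bookkeeping: one must verify that the indexing sets ($\coprod_s H(s)$, then $\{(s,f):\eta_n(f)=0\}$, then $\bbN$) are truly $\mathbb{U}$\!-small, which is exactly what the hypotheses $S\in\mathbb{U}$, $\cT$ a $\mathbb{U}$\!-category, and $H$ valued in $\cA b_{\mathbb{U}}$ guarantee, and which is what keeps the whole construction inside the $\mathbb{U}$\!-cocomplete category $\cT$ (in fact inside $\langle S\rangle^{\mathbb{U}}$).
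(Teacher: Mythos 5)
Your proof is the standard $\omega$-step version of Neeman's Brown representability argument (\cite{NeBook}, Theorem 8.3.3), which is exactly what the paper's one-line proof cites, and the universe bookkeeping you emphasize is precisely the point of the $\mathbb{U}$-version. There is, however, one genuine (if small) gap: as written, your construction only produces isomorphisms $\eta_s\colon\Hom_{\cT}(s,X)\stackrel{\sim}{\to}H(s)$ for $s\in S$ itself, not for the shifts $s[m]$. This is not enough for the concluding steps. In part b), to get $\Hom_{\cT}(s,\operatorname{Cone}(\psi)[m])=0$ for all $m$ — which is what Definition \ref{van} requires before you may conclude $\operatorname{Cone}(\psi)=0$ — you need $\psi_*$ to be an isomorphism on $\Hom(s[m],-)$ for every $m$, since the relevant long exact sequence for $X\to Y\to\operatorname{Cone}(\psi)$ at the $m$-th step involves both $\Hom(s,X[m])$ and $\Hom(s,X[m+1])$. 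Likewise, in your final paragraph, the subcategory that is actually closed under shifts and cones is $\{Z:\eta_{Z[m]}\ \text{is an isomorphism for all }m\in\bbZ\}$ (a cohomological functor $H$ gives no a priori relation between $H(Z)$ and $H(Z[1])$, so ``$\eta_Z$ iso'' alone is not preserved under shift), and for $S$ to lie in this subcategory you again need $\eta_{s[m]}$ iso for all $m$. Neeman avoids the issue by building shift-closure into his definition of a generating set; the paper's Definition \ref{van} does not, so the step must be supplied.

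The fix is cheap and costs nothing in the universe bookkeeping: replace $S$ at the outset by $\bar S:=\{\,s[m]:s\in S,\ m\in\bbZ\,\}$, which is still $\mathbb{U}$\!-small (as $S$ and $\bbZ$ are), still consists of compact objects, still generates $\cT$, and satisfies $\langle\bar S\rangle^{\mathbb{U}}=\langle S\rangle^{\mathbb{U}}$ because a triangulated subcategory is automatically closed under shifts. Running your construction of $X_0$, $K_n$, and the homotopy colimit with $\bar S$ in place of $S$ then gives $\eta$ an isomorphism on all of $\bar S$, after which both b) and the final five-lemma/coproduct argument go through exactly as you wrote them. With this modification your proof coincides with the one the paper refers to.
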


\begin{proof} The (simultaneous) proof is the same as
in the book of A.~Neeman  \cite{NeBook} Theorem 8.3.3.
\end{proof}

\begin{lemma}\label{A2} Let $\cT$ be a triangulated $\mathbb{U}$\!-category.
Assume that $\cS\subset \cT$ is an essentially $\mathbb{U}$\!-small
triangulated subcategory. Then the Verdier quotient $\cT /\cS$ is a
$\mathbb{U}$\!-category.
\end{lemma}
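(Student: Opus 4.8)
The plan is to realize $\Hom_{\cT/\cS}(X,Y)$ as a colimit over a category of fractions and to bound the size of that indexing category using the essential $\mathbb{U}$-smallness of $\cS$; the real work sits in that size bound, everything else being standard facts about $\mathbb{U}$-small sets. Since $\cT/\cS$ has the same objects as $\cT$, it suffices to prove that $\Hom_{\cT/\cS}(X,Y)$ is $\mathbb{U}$-small for all objects $X,Y$.

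First I would recall that $\cT/\cS=S^{-1}\cT$, where $S$ is the class of morphisms $f$ of $\cT$ with $\operatorname{cone}(f)\in\cS$ --- a multiplicative system compatible with the triangulation (this does not need $\cS$ to be thick) --- so that by the calculus of fractions $\Hom_{\cT/\cS}(X,Y)=\varinjlim\Hom_\cT(Z,Y)$, the colimit over the (filtered) category $I_X$ whose objects are the morphisms $s\colon Z\to X$ in $\cT$ with $\operatorname{cone}(s)\in\cS$ and whose morphisms $(Z_1,s_1)\to(Z_2,s_2)$ are the morphisms $a\colon Z_1\to Z_2$ of $\cT$ with $s_2 a=s_1$. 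Since $\cT$ is a $\mathbb{U}$-category, every term $\Hom_\cT(Z,Y)$ and every Hom-set of $I_X$ is $\mathbb{U}$-small; and a colimit of $\mathbb{U}$-small sets over a category equivalent to a $\mathbb{U}$-small one is again $\mathbb{U}$-small, being a quotient of a $\mathbb{U}$-small coproduct of $\mathbb{U}$-small sets (and colimits are unchanged under equivalence of the indexing category). So the whole lemma reduces to the claim that $I_X$ is essentially $\mathbb{U}$-small.

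The hard part is exactly this claim, and I would handle it as follows. Using the essential $\mathbb{U}$-smallness of $\cS$, fix a $\mathbb{U}$-small set $\{C_\beta\}_{\beta\in B}$, $B\in\mathbb{U}$, of representatives for the isomorphism classes of objects of $\cS$, and for each $\beta$ and each $g\in\Hom_\cT(X,C_\beta)$ fix a choice of cone, putting $W_{\beta,g}:=\operatorname{cone}(g)[-1]$. Given an object $s\colon Z\to X$ of $I_X$, complete it to a distinguished triangle $Z\xrightarrow{s}X\xrightarrow{g_0}C\to Z[1]$ with $C=\operatorname{cone}(s)\in\cS$; rotating exhibits $Z[1]$ as a cone of $g_0\colon X\to C$, so $Z\cong\operatorname{cone}(g_0)[-1]$ in $\cT$, and transporting $g_0$ along an isomorphism $C\cong C_\beta$ gives $Z\cong W_{\beta,g}$ for a suitable $g\in\Hom_\cT(X,C_\beta)$. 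The family $\{W_{\beta,g}\}_{\beta\in B,\ g\in\Hom_\cT(X,C_\beta)}$ is $\mathbb{U}$-small, since its indexing set $\coprod_{\beta\in B}\Hom_\cT(X,C_\beta)$ is a $\mathbb{U}$-small coproduct of $\mathbb{U}$-small sets. Composing $s$ with an isomorphism $W_{\beta,g}\xrightarrow{\ \sim\ }Z$ shows that every object of $I_X$ is isomorphic in $I_X$ to one of the form $(W_{\beta,g},t)$ with $t\in\Hom_\cT(W_{\beta,g},X)$; hence the isomorphism classes of objects of $I_X$ form a subquotient of the $\mathbb{U}$-small set $\coprod_{\beta,g}\Hom_\cT(W_{\beta,g},X)$. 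Thus $I_X$ is essentially $\mathbb{U}$-small and, being a $\mathbb{U}$-category, is equivalent to a $\mathbb{U}$-small category.

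Finally I would conclude: $\Hom_{\cT/\cS}(X,Y)=\varinjlim_{I_X}\Hom_\cT(Z,Y)$ is then a quotient of a $\mathbb{U}$-small coproduct of $\mathbb{U}$-small sets, hence $\mathbb{U}$-small, for all $X,Y$, so $\cT/\cS$ is a $\mathbb{U}$-category. The main obstacle is the size bound on the fraction category $I_X$: one must pin down the isomorphism type of an arbitrary denominator $s\colon Z\to X$ using only data drawn from $\mathbb{U}$-small sets, and the device that makes this work is writing $Z$ as a shifted cone of a morphism from $X$ into a representative object of $\cS$ --- precisely the point at which the essential $\mathbb{U}$-smallness hypothesis is used. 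The surrounding steps --- that subsets, quotients, and $\mathbb{U}$-small coproducts of $\mathbb{U}$-small sets are $\mathbb{U}$-small, and that colimits are invariant under equivalence of the indexing category --- are routine.
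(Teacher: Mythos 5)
Your proof is correct and takes essentially the same approach as the paper: both arguments hinge on rotating the triangle to realize the denominator $Z$ (the source of a morphism $s\colon Z\to X$ with cone in $\cS$) as a shifted cone of a map $X\to D$ with $D\in\cS$, and then counting such diagrams using the essential $\mathbb{U}$-smallness of $\cS$ together with the $\mathbb{U}$-smallness of the Hom-sets of $\cT$. Your write-up is more explicit about the colimit-over-fractions formula and the indexing category $I_X$, but the key device is identical to the paper's.
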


\begin{proof} The categories $\cT$ and $\cT /\cS$ have
the same objects. Fix objects $A$ and $B.$ A morphism $f$ between
$A$ and $B$ in $\cT /\cS$ is represented by a diagram
$A\stackrel{s}{\leftarrow} C\stackrel{g}{\to}B,$ where the cone $D$
of the morphism $s$ is in $\cS.$ Up to isomorphism we have a
$\mathbb{U}$\!-small set of choices for a diagram $A\to D,$ where
$D\in \cC.$ Thus up to isomorphism we have a $\mathbb{U}$\!-small set
of choices for a diagram $A\stackrel{s}{\leftarrow} C,$ such that
the cone of $s$ is in $\cS.$ Thus up to isomorphism there is a
$\mathbb{U}$\!-small set of choices for a diagram
$A\stackrel{s}{\leftarrow} C\stackrel{g}{\to}B$ as above.
\end{proof}

\begin{lemma} Let $\cT$ be a triangulated category which is
$\mathbb{U}$\!-cocomplete. Assume that $\cS \subset \cT$ is a full
triangulated subcategory which is closed under $\mathbb{U}$\!-small
direct sums in $\cT.$ Then the quotient category $\cT /\cS$ is also
$\mathbb{U}$\!-cocomplete and the functor $\pi :\cT \to \cT
/\cS$ preserves $\mathbb{U}$\!-small direct sums.
\end{lemma}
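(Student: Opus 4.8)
The plan is to prove both assertions at once by producing an explicit coproduct in $\cT/\cS$. Given a $\mathbb{U}$-small family $\{X_i\}_{i\in I}$ of objects of $\cT/\cS$ (the same as objects of $\cT$), I would form $\bigoplus_i X_i$ in $\cT$, with structure maps $\iota_i$, and claim that $\pi\bigl(\bigoplus_i X_i\bigr)$ together with the morphisms $\pi(\iota_i)\colon\pi(X_i)\to\pi\bigl(\bigoplus_i X_i\bigr)$ is a coproduct of the $\pi(X_i)$ in $\cT/\cS$. Granting this, $\cT/\cS$ is $\mathbb{U}$-cocomplete and $\pi$ preserves $\mathbb{U}$-small direct sums simultaneously, so the whole proof reduces to checking, for each object $Z$ of $\cT/\cS$, that the natural map $\Hom_{\cT/\cS}\bigl(\pi(\bigoplus_i X_i),Z\bigr)\to\prod_i\Hom_{\cT/\cS}(\pi(X_i),Z)$ is a bijection.

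Before that I would isolate two standard inputs. (a) A $\mathbb{U}$-small coproduct of distinguished triangles in $\cT$ is again distinguished, so $\operatorname{Cone}(\bigoplus_i f_i)\cong\bigoplus_i\operatorname{Cone}(f_i)$; since $\cS$ is closed under $\mathbb{U}$-small direct sums, a $\mathbb{U}$-small coproduct of morphisms each having cone in $\cS$ again has cone in $\cS$, hence maps to an isomorphism in $\cT/\cS$. (b) A morphism of $\cT$ becomes $0$ in $\cT/\cS$ if and only if it factors through an object of $\cS$ (one direction because objects of $\cS$ vanish in $\cT/\cS$, the other by rotating the triangle on the morphism in a roof representation of $0$). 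I would also use the usual roof calculus for the Verdier quotient, in particular that roof composition is computed via homotopy cartesian squares and that in such a square the two parallel morphisms have isomorphic cones.

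For surjectivity, given $g_i\colon\pi(X_i)\to Z$ written as $\pi(h_i)\circ\pi(s_i)^{-1}$ with $X_i\xleftarrow{s_i}W_i\xrightarrow{h_i}Z$ and $\operatorname{Cone}(s_i)\in\cS$, I would take $h\colon\bigoplus_i W_i\to Z$ with $h\circ\iota^W_i=h_i$ (universal property of the coproduct in $\cT$; note $Z$ itself need not have coproducts), observe that $\pi(\bigoplus_i s_i)$ is invertible by (a), and set $g:=\pi(h)\circ\pi(\bigoplus_i s_i)^{-1}$. A short computation from the identity $(\bigoplus_i s_i)\circ\iota^W_i=\iota_i\circ s_i$ in $\cT$ then yields $g\circ\pi(\iota_i)=g_i$. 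For injectivity, given $\phi=\pi(h)\circ\pi(s)^{-1}$ with $\bigoplus_i X_i\xleftarrow{s}W\xrightarrow{h}Z$, $\operatorname{Cone}(s)\in\cS$, and $\phi\circ\pi(\iota_i)=0$ for all $i$, I would take the homotopy pullback $V_i$ of $s$ along $\iota_i$, so that $t_i\colon V_i\to X_i$ has $\operatorname{Cone}(t_i)\cong\operatorname{Cone}(s)\in\cS$ and $\pi(s)^{-1}\circ\pi(\iota_i)=\pi(u_i)\circ\pi(t_i)^{-1}$, where $u_i\colon V_i\to W$ satisfies $s\circ u_i=\iota_i\circ t_i$. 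Then $0=\phi\circ\pi(\iota_i)$ forces $\pi(h\circ u_i)=0$, so by (b) $h\circ u_i$ factors through some $S_i\in\cS$. Assembling $u\colon\bigoplus_i V_i\to W$ with $u\circ\iota^V_i=u_i$, the morphism $h\circ u$ factors through $\bigoplus_i S_i\in\cS$, hence $\pi(h\circ u)=0$; since $s\circ u=\bigoplus_i t_i$ has cone in $\cS$ by (a) and $\pi(s)$ is invertible, $\pi(u)$ is invertible, whence $\pi(h)=0$ and $\phi=0$.

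The hard part is organisational rather than conceptual: one must be scrupulous with the roof and homotopy-pullback manipulations and, above all, make sure that every coproduct appearing in the argument --- of the $X_i$, of the $W_i$ and $V_i$, of the various cones, and of the $S_i$ --- is indexed by the one fixed $\mathbb{U}$-small set $I$, so that it exists in $\cT$ and, for the cones and the $S_i$, actually lands in $\cS$. That is the only point where the hypothesis is used, and it is exactly what ``closed under $\mathbb{U}$-small direct sums'' is designed to supply; otherwise the argument is the $\mathbb{U}$-version of \cite{NeBook} Cor.~3.2.11.
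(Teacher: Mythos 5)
Your proof is correct and is essentially the paper's own approach: the paper disposes of this lemma by citing Neeman's Lemma~3.2.10, and what you have written is precisely an unpacking of that argument (show $\pi(\bigoplus_i X_i)$ satisfies the universal property of the coproduct in $\cT/\cS$ via the calculus of fractions, using that cones of $\mathbb{U}$-small coproducts of morphisms with cone in $\cS$ remain in $\cS$). The one point you are right to flag is the careful bookkeeping that every auxiliary coproduct is indexed by the fixed $\mathbb{U}$-small set $I$; that is exactly where the hypothesis is used and is the only substantive modification needed relative to the non-universe-theoretic statement.
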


\begin{proof} Same as the proof of Lemma 3.2.10 in \cite{NeBook}.
\end{proof}

\begin{remark}{\rm Note that in the last lemma we do not know if the
quotient $\cT /\cS$ is a $\mathbb{U}$\!-category, or even if morphisms
between two objects in $\cT /\cS$ form a set.}
\end{remark}

\begin{prop} Let $\cT$ be a triangulated $\mathbb{U}$\!-category which
is $\mathbb{U}$\!-cocomplete. Let  $\cS \subset \cT$ be a full
triangulated subcategory which is closed under $\mathbb{U}$\!-small
direct sums in $\cT.$ Assume that $\cS$ is generated by a
$\mathbb{U}$\!-small set of objects in $\cS \cap \cT ^c.$ Then

\begin{enumerate}
\item[a)] the localization functor $\pi: \cT \to \cT /\cS$ has a right
adjoint functor $\mu$ which is full and faithful;
\item[b)] $\mu$ preserves arbitrary direct sums;
\item[c)] $\cT /\cS$ is a $\mathbb{U}$\!-category.
\end{enumerate}
\end{prop}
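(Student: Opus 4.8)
The plan is to produce, for every object $X$ of $\cT,$ a \emph{Bousfield triangle}
$$
A_X \lto X \stackrel{\eta_X}{\lto} B_X \lto A_X[1]
$$
with $A_X \in \cS$ and $B_X$ in the right orthogonal $\cS^{\perp} := \{\, Y \in \cT \mid \Hom_{\cT}(s[n], Y) = 0 \text{ for all } s \in S,\ n \in \bbZ \,\},$ where $S \subset \cS \cap \cT^{c}$ is the given $\mathbb{U}$\!-small generating set (which we may assume closed under shifts). I would not appeal to Brown representability here, since $\cT$ is not assumed compactly generated; instead, once such triangles exist, all three assertions follow formally, exactly as in \cite{NeBook}. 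The only delicate point is to check that every construction stays inside the world of $\mathbb{U}$\!-small coproducts, so that it can be carried out in the merely $\mathbb{U}$\!-cocomplete category $\cT.$

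To build the triangle I would run the iterated ``killing'' procedure. Set $Y_0 := X$ and, having constructed $Y_i,$ let $E_i := \bigoplus_{f} \mathrm{dom}(f),$ the coproduct over the set of all morphisms $f\colon s[n] \to Y_i$ with $s \in S,\ n \in \bbZ,$ and let $Y_i \to Y_{i+1}$ be the cone of the canonical map $E_i \to Y_i.$ The index set is a $\mathbb{U}$\!-small union (over the $\mathbb{U}$\!-small set $S \times \bbZ$) of the $\mathbb{U}$\!-small sets $\Hom_{\cT}(s[n], Y_i),$ hence $\mathbb{U}$\!-small, so $E_i$ exists in $\cT,$ and $B_X := \hocolim_i Y_i$ exists because countable coproducts are $\mathbb{U}$\!-small. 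Since each $s \in S$ is compact, any morphism $s[n] \to B_X$ factors through some $Y_i$ and is annihilated in $Y_{i+1},$ so $B_X \in \cS^{\perp}.$ On the other hand $\mathrm{Cone}(X \to Y_i)$ is an iterated extension of the objects $E_j[1]$ with $j < i,$ each $E_j$ being a $\mathbb{U}$\!-small coproduct of shifts of objects of $S,$ hence $\mathrm{Cone}(X \to Y_i) \in \cS;$ passing to the homotopy colimit and using that $\cS$ is triangulated and closed under $\mathbb{U}$\!-small coproducts gives $\mathrm{Cone}(X \to B_X) \in \cS,$ i.e. $A_X := \mathrm{Cone}(X \to B_X)[-1] \in \cS.$

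From here I would argue as in \cite{NeBook}. Since $\Hom_{\cT}(\cS,\cS^{\perp}) = 0,$ a standard calculation with roofs shows $\pi|_{\cS^{\perp}}\colon \cS^{\perp} \to \cT/\cS$ is fully faithful, and it is essentially surjective because $\pi(X) \cong \pi(B_X)$ for every $X$ (as $\pi(A_X) = 0$); thus $\pi|_{\cS^{\perp}}$ is an equivalence. The right adjoint $\mu\colon \cT/\cS \to \cT$ is then the composite of a quasi-inverse with the full embedding $\cS^{\perp} \hookrightarrow \cT,$ hence is full and faithful, proving a). For b), given a $\mathbb{U}$\!-small family $\{Z_i\}$ in $\cT/\cS,$ the object $\bigoplus_i \mu(Z_i)$ lies in $\cS^{\perp}$ (compactness of the $s \in S$ forces $\cS^{\perp}$ to be closed under $\mathbb{U}$\!-small coproducts), and $\pi\bigl(\bigoplus_i \mu(Z_i)\bigr) \cong \bigoplus_i \pi\mu(Z_i) \cong \bigoplus_i Z_i$ using that $\pi$ preserves $\mathbb{U}$\!-small coproducts (the preceding Lemma) and $\pi\mu \cong \id;$ hence $\mu\bigl(\bigoplus_i Z_i\bigr) \cong \bigoplus_i \mu(Z_i).$ Finally c) is immediate: the equivalence $\cS^{\perp} \simeq \cT/\cS$ identifies the Hom\!-sets of $\cT/\cS$ with those of the full subcategory $\cS^{\perp}$ of the $\mathbb{U}$\!-category $\cT,$ which are $\mathbb{U}$\!-small.

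The main obstacle—and essentially the only thing beyond \cite{NeBook}—is the bookkeeping in the second paragraph: one must verify that each $E_i$ and each homotopy colimit appearing in the killing process is indexed by a $\mathbb{U}$\!-small set, so that it actually exists in $\cT$ (which is only $\mathbb{U}$\!-cocomplete, not cocomplete), and that $\cS$ and $\cS^{\perp}$ are closed under precisely those $\mathbb{U}$\!-small coproducts. Both reduce to $S$ being $\mathbb{U}$\!-small, $\cT$ being a $\mathbb{U}$\!-category, and the objects of $S$ being compact in $\cT.$
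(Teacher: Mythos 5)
Your proof is correct and takes essentially the same route as the paper, which simply refers to Neeman ([N1] Lemma 1.7 and Proposition 1.9, [N3] Lemma 9.1.7) and asks the reader to check that those arguments survive the $\mathbb{U}$-small bookkeeping; that bookkeeping is exactly what you have supplied, via the iterated killing construction of the Bousfield triangle. The only step worth making explicit is the identification $S^{\perp}=\cS^{\perp}$: your construction guarantees $\Hom(s[n],B_X)=0$ only for $s\in S$, and to upgrade this to $B_X\in\cS^{\perp}$ one observes that $\cS=\langle S\rangle^{\mathbb U}$ (Brown representability for the $\mathbb{U}$-cocomplete category $\cS$, or equivalently Remark 1.11 of the paper) and that $\{Z : \Hom(Z[n],B_X)=0 \ \forall n\}$ is localizing and contains $S$.
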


\begin{proof} d) is an immediate consequence of a). The proof of a),
b) and c) is the same as in \cite{Ne} Lemma 1.7,
Proposition 1.9 and \cite{NeBook} Lemma 9.1.7.
\end{proof}

\begin{theo} Let $\cT$ be a triangulated $\mathbb{U}$\!-category which
is $\mathbb{U}$\!-cocomplete and such that the category $\cT ^c$ is
essentially $\mathbb{U}$\!-small. Let  $\cS \subset \cT$ be a full
triangulated subcategory which is closed under $\mathbb{U}$\!-small
direct sums in $\cT.$ Assume that $\cS$ is generated by a
$\mathbb{U}$\!-small set of objects in $\cS \cap \cT ^c.$
 Let $\cC =\cT /\cS$ be the quotient category. Then

\begin{enumerate}
\item $\cC$ is generated by a $\mathbb{U}$\!-small subset of its compact
objects;
\item $\cT ^c$ maps to $\cC ^c$ under the quotient functor;
\item the induced functor $\cT^c/\cS ^c \to \cC ^c$ is full and
faithful;
\item $\cC^c$ is the idempotent completion of $\cT ^c/\cS ^c.$
\end{enumerate}
\end{theo}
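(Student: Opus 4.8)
The plan is to prove this as the $\mathbb{U}$\!-small counterpart of Neeman's Theorem \ref{Neeman}: I would transcribe the proofs of \cite{Ne} Th.~2.1 and \cite{NeBook} Th.~4.4.9, replacing every use of the classical Brown representability theorem by the $\mathbb{U}$\!-small version established in Theorem \ref{Brown}, and controlling $\mathbb{U}$\!-smallness by means of the preceding Proposition of this appendix (which provides the right adjoint $\mu$ to the quotient functor $\pi$, tells us that $\mu$ is full, faithful and preserves $\mathbb{U}$\!-small direct sums, and guarantees that $\cC=\cT/\cS$ is again a $\mathbb{U}$\!-category). As in Theorem \ref{Neeman} I would also assume that $\cT$ is compactly generated; since $\cT^c$ is essentially $\mathbb{U}$\!-small, $\cT$ is then compactly generated by a $\mathbb{U}$\!-small set $R\subset\cT^c$, so that Theorem \ref{Brown} is applicable to $\cT$.

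First I would settle (2). Given $Y\in\cT^c$ and a $\mathbb{U}$\!-small family $\{X_i\}$ in $\cC$, note that $\pi$ is essentially surjective and preserves $\mathbb{U}$\!-small direct sums (preceding Lemma), so $\bigoplus_i X_i\cong\pi(\bigoplus_i Z_i)$ for suitable $Z_i\in\cT$; then the adjunction $(\pi,\mu)$, the fact that $\mu$ preserves $\mathbb{U}$\!-small sums, and the compactness of $Y$ give
$$
\Hom_{\cC}(\pi Y,\textstyle\bigoplus_i X_i)\cong\Hom_{\cT}(Y,\textstyle\bigoplus_i\mu X_i)\cong\textstyle\bigoplus_i\Hom_{\cT}(Y,\mu X_i)\cong\textstyle\bigoplus_i\Hom_{\cC}(\pi Y,X_i),
$$
so $\pi Y\in\cC^c$. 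For (1) I would then use the $\mathbb{U}$\!-small generating set $R\subset\cT^c$ above: by (2) the set $\pi(R)$ is a $\mathbb{U}$\!-small subset of $\cC^c$, and it generates $\cC$, because if $\Hom_{\cC}(\pi Y,X[n])=0$ for all $Y\in R$ and all $n$, then $\Hom_{\cT}(Y,\mu(X)[n])=0$ for all such $Y,n$, hence $\mu(X)=0$ since $R$ generates $\cT$, and therefore $X\cong\pi\mu(X)=0$ because the counit $\pi\mu\to\id_{\cC}$ is an isomorphism ($\mu$ being fully faithful).

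Next I would treat (3). Since $\pi$ kills $\cS$, and hence $\cS^c=\cS\cap\cT^c$, the restriction $\pi|_{\cT^c}$ factors through $\cT^c/\cS^c$, and by (2) the induced functor $\cT^c/\cS^c\to\cC^c$ is well defined. Its full faithfulness is the heart of the matter: a morphism $\pi a\to\pi b$ with $a,b\in\cT^c$ is represented by a fraction $\pi a\stackrel{\pi s}{\longleftarrow}\pi c\stackrel{\pi g}{\longrightarrow}\pi b$ whose backward leg has cone in $\cS$, and the key step — the ``compact approximation'' lemma of \cite{Ne} — is that, because $\cS$ is generated by a $\mathbb{U}$\!-small set of objects of $\cS\cap\cT^c$, one may replace $c$ by a compact object of $\cT$ through which $s$ and $g$ factor compatibly; fullness and faithfulness then follow. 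I expect this compact-approximation step to be the main obstacle. The $\mathbb{U}$\!-small bookkeeping is harmless: $\cS^c$ and $\cT^c$ are essentially $\mathbb{U}$\!-small, so by the argument of Lemma \ref{A2} the Verdier quotient $\cT^c/\cS^c$ is an essentially $\mathbb{U}$\!-small $\mathbb{U}$\!-category, and all the index sets entering the approximation remain $\mathbb{U}$\!-small.

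Finally, for (4) I would show that $\cC^c$ is the idempotent completion of $\cT^c/\cS^c$ inside $\cC$. By (1), $\cC$ is compactly generated by the $\mathbb{U}$\!-small set $\pi(R)$, and by (3) the image of $\cT^c/\cS^c$ is a full triangulated subcategory of $\cC^c$ containing $\pi(R)$; applying Theorem \ref{Brown} to $\cC$ exactly as in the proof of Theorem \ref{Neeman}(4), one concludes that every compact object of $\cC$ is a direct summand of an object of this image, which is the assertion. The only point beyond the classical case is, again, that all sets involved stay $\mathbb{U}$\!-small — guaranteed by the essential $\mathbb{U}$\!-smallness of $\cT^c$ and by $\cS$ being generated by a $\mathbb{U}$\!-small set — so no size problem arises.
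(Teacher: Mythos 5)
Your proposal takes exactly the route the paper intends: the printed proof is literally ``The same as the proof of Theorem 2.1 in \cite{Ne}'', i.e.\ transcribe Neeman's argument with the $\mathbb{U}$\!-small Brown representability of Theorem \ref{Brown} and the preceding appendix Proposition supplying the Bousfield localization functor $\mu$ together with its full faithfulness and preservation of $\mathbb{U}$\!-small coproducts. Your added observation that $\cT$ should be assumed compactly generated (matching the hypotheses of Theorem \ref{Neeman} in the main text, and needed to apply Theorem \ref{Brown} to $\cT$) is a fair reading of a hypothesis the appendix statement leaves implicit.
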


\begin{proof} The same as the proof of Theorem 2.1 in \cite{Ne}.
\end{proof}

Let $\cA$ be a $\mathbb{U}$\!-small DG $\mathbb{U}$\!-category. Denote
by $\ModU\cA\subset \Mod\cA$ the full
DG subcategory consisting of all $\mathbb{U}$\!-small DG $\cA$\!-modules,
i.e. DG modules $M$ such that $M(Y)$ is a $\mathbb{U}$\!-small vector
space for each $Y\in \cA.$ It is a pretriangulated category. Put
$$\SF_{\mathbb{U}}(\cA)=
\SF(\cA)\cap \ModU\cA,\quad \Ac _{\mathbb{U}}(\cA)=\Ac (\cA)\cap \ModU\cA.
$$
Let $D_{\mathbb{U}}(\cA)=
H^0(\ModU\cA)/H^0(\Ac _{\mathbb{U}}(\cA))$ be the
corresponding derived category. The natural functor $H^0(\SF_{\mathbb{U}}(\cA))
\to D_{\mathbb{U}}(\cA)$ is an equivalence of
triangulated $\mathbb{U}$\!-categories.
(This is because every
$\mathbb{U}$\!-small DG $\cA$\!-module is quasi-isomorphic to a
$\mathbb{U}$\!-small semi-free DG module. The proof is the same as in
\cite{Ke}, Thm. 3.1 b).)

Note that triangulated
categories $H^0(\ModU\cA),\; H^0(\SF_{\mathbb{U}}(\cA)),\; H^0(\Ac _{\mathbb{U}}(\cA)),$ and
$D_{\mathbb{U}}(\cA)$ are $\mathbb{U}$\!-cocomplete and
since $\cA$ is a DG $\mathbb{U}$\!-category the image of the Yoneda DG
functor $h^\bullet :\cA \to \Mod\cA$ lies in $\ModU\cA.$

Note also that the DG category $\ModU\cA$ is DG
equivalent to a DG category which is $\mathbb{V}$\!-small.
Indeed,
call $M\in \ModU\cA$ {\it strict} if $M(Y)$ is
a $\mathbb{U}$\!-set (and not just a $\mathbb{U}$\!-small set). Denote
by $\ModUs\cA$ the full DG subcategory
consisting of strict DG modules. Clearly every DG module is DG
isomorphic to a strict one. The DG category $\ModUs\cA$ is obviously $\mathbb{V}$\!-small.

Denote $\SF ^{\str}_{\mathbb{U}}(\cA)=\SF _{\mathbb{U}}(\cA)\cap
\ModUs\cA$ and similarly for $\Ac^{\str}_{\mathbb{U}}(\cA).$ The  inclusions
$$
\ModUs\cA \subset \ModU\cA,\quad \SF ^{\str}_{\mathbb{U}}(\cA)\subset \SF_{\mathbb{U}}(\cA), \quad \Ac ^{\str}_{\mathbb{U}}(\cA)\subset \Ac
_{\mathbb{U}}(\cA)
$$
are quasi-equivalences of pre-triangulated
categories. Hence
$$
H^0(\ModUs\cA)/H^0(\Ac ^{\str}_{\mathbb{U}}(\cA))\simeq
D_{\mathbb{U}}(\cA)
$$
and the natural functor $H^0(\SF^{\str}_{\mathbb{U}}(\cA))\to D_{\mathbb{U}}(\cA)$ is an
equivalence. Thus $\SF ^{\str}_{\mathbb{U}}(\cA)$ is a
$\mathbb{V}$\!-small enhancement of $D_{\mathbb{U}}(\cA).$ In
particular, the $\mathbb{U}$\!-category $D_{\mathbb{U}}(\cA)$ is
essentially $\mathbb{V}$\!-small.

Let $L\subset D_{\mathbb{U}}(\cA)$ be a strictly full triangulated
subcategory. Then $L$ is essentially $\mathbb{V}$\!-small. Hence the
quotient $D_{\mathbb{U}}(\cA)/L$ is a $\mathbb{V}$\!-category by Lemma \ref{A2} above.

Let $\cL \subset \SF ^{\str}_{\mathbb{U}}(\cA)$ be the full DG
subcategory of objects which map to $L$ under the equivalence
$H^0(\SF ^{\str}_{\mathbb{U}}(\cA))\to D_{\mathbb{U}}(\cA).$ Then
$\cL$ is a pretriangulated category, so that $H^0(\cL)\simeq L.$
Hence also $D_{\mathbb{U}}(\cA)/L\simeq H^0(\SF^{\str}_{\mathbb{U}}(\cA))/H^0(\cL).$
Since the DG category $\SF^{\str}_{\mathbb{U}}(\cA)$ is $\mathbb{V}$\!-small the Drinfeld DG
quotient $\SF ^{\str}_{\mathbb{U}}(\cA)/\cL$ is defined, and by
Theorem \ref{Drin}
$$
H^0(\SF ^{\str}_{\mathbb{U}}(\cA)/\cL)\simeq
H^0(\SF^{\str}_{\mathbb{U}}(\cA))/H^0(\cL).
$$
Therefore
$
\SF^{\str}_{\mathbb{U}}(\cA)/\cL$ is a canonical enhancement of
$D_{\mathbb{U}}(\cA)/L.$

\begin{lemma}\label{aplem} Let $\cA$ be a $\mathbb{U}$\!-small DG $\mathbb{U}$\!-category. Then
the derived category $D_{\mathbb{U}}(\cA)$ coincides with its full
subcategory $\langle h^\bullet (\cA)\rangle ^{\mathbb{U}}.$
\end{lemma}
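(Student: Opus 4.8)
The plan is to deduce the lemma directly from part (b) of the Brown representability Theorem \ref{Brown}, once we check that the representable modules form a $\mathbb{U}$\!-small set of compact generators of $D_{\mathbb{U}}(\cA)$. As noted just before the statement, $D_{\mathbb{U}}(\cA)$ is a triangulated $\mathbb{U}$\!-category which is $\mathbb{U}$\!-cocomplete, and since $\cA$ is a DG $\mathbb{U}$\!-category the image of the Yoneda DG functor lies in $\ModU\cA$; thus $S:=\{h^Y\}_{Y\in\cA}$ is an honest $\mathbb{U}$\!-small set of objects of $D_{\mathbb{U}}(\cA)$ because $\cA$ is $\mathbb{U}$\!-small.

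First I would establish the key identification: for every semi-free (hence h-projective) module $h^Y$ and every $M\in\ModU\cA$ one has $\Hom_{D_{\mathbb{U}}(\cA)}(h^Y,M)\cong H^0(\Hom_{\Mod\cA}(h^Y,M))\cong H^0(M(Y))$, the zeroth cohomology of the complex $M(Y)$. This is the $\mathbb{U}$\!-small analogue of the classical Yoneda computation and is the only non-formal input. From it, compactness of $h^Y$ in $D_{\mathbb{U}}(\cA)$ follows, since $\mathbb{U}$\!-small coproducts in $\ModU\cA$ are computed pointwise and both evaluation at $Y$ and $H^0$ commute with $\mathbb{U}$\!-small direct sums, so $\Hom_{D_{\mathbb{U}}(\cA)}(h^Y,-)$ preserves $\mathbb{U}$\!-small coproducts. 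Likewise, $S$ generates: if $X\in D_{\mathbb{U}}(\cA)$ satisfies $\Hom_{D_{\mathbb{U}}(\cA)}(h^Y,X[n])=0$ for all $Y\in\cA$ and all $n\in\bbZ$, then $H^n(X(Y))=0$ for all $Y$ and $n$, i.e. $X$ is acyclic, hence $X\cong 0$ in $D_{\mathbb{U}}(\cA)$.

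Finally, $D_{\mathbb{U}}(\cA)$ is a $\mathbb{U}$\!-cocomplete triangulated $\mathbb{U}$\!-category compactly generated by the $\mathbb{U}$\!-small set $S\subset D_{\mathbb{U}}(\cA)^c$, so Theorem \ref{Brown}(b) yields $D_{\mathbb{U}}(\cA)=\langle S\rangle^{\mathbb{U}}=\langle h^\bullet(\cA)\rangle^{\mathbb{U}}$, which is the assertion. I do not expect any real obstacle here beyond the Yoneda identification above; the rest is bookkeeping with universes, checking that the pointwise description of coproducts and the exactness properties of $H^0$ respect $\mathbb{U}$\!-smallness, all of which is routine given the setup in the Appendix.
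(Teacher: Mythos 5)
Your argument is exactly the paper's: observe that $\{h^Y\}_{Y\in\cA}$ is a $\mathbb{U}$\!-small set of compact generators of the $\mathbb{U}$\!-cocomplete triangulated $\mathbb{U}$\!-category $D_{\mathbb{U}}(\cA)$ and invoke Theorem \ref{Brown}(b). The paper states this in one line; you supply the (correct and routine) verification of compactness and generation via the Yoneda computation $\Hom_{D_{\mathbb{U}}(\cA)}(h^Y,M)\cong H^0(M(Y))$.
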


\begin{proof} Clearly the $\mathbb{U}$\!-small set $h^\bullet(\cA)$ of
compact objects in $D_{\mathbb{U}}(\cA)$ generates the
$\mathbb{U}$\!-cocomplete triangulated $\mathbb{U}$\!-category
$D_{\mathbb{U}}(\cA).$ So it remains to apply part b) of Theorem \ref{Brown}.
\end{proof}

\begin{prop} Let $\cC$ be a pretriangulated DG
$\mathbb{U}$\!-category.  Assume that the triangulated $\mathbb{U}$\!-category  $H^0(\cC)$ is $\mathbb{U}$\!-cocomplete and is generated by
a $\mathbb{U}$\!-small set of compact objects $\cA \subset
H^0(\cC)^c.$ Consider $\cA $ as a full DG subcategory in $\cC.$ The DG functor $\varPhi :\cC \to \ModU\cA$
defined as
$$\varPhi (X)(Y)=\Hom _{\cB}(Y,X)$$
for $X\in \cC$ and $Y\in \cA$ induces a quasi-functor $\phi:\cC\to\SF(\cA)$ such that the  functor
$H^0(\phi):H^0(\cB)\to D_{\mathbb{U}}(\cA)$ is an equivalence of
categories.
\end{prop}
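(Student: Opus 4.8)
The plan is to mimic the proof of Proposition \ref{Keller} in the $\mathbb{U}$\!-small setting, replacing "arbitrary direct sums" by "$\mathbb{U}$\!-small direct sums" everywhere and invoking the $\mathbb{U}$\!-small Brown representability (Theorem \ref{Brown}) in place of its classical counterpart. First I would record the two facts that drive the argument: since $\cA\subset H^0(\cC)^c$ is a $\mathbb{U}$\!-small set which generates the $\mathbb{U}$\!-cocomplete triangulated $\mathbb{U}$\!-category $H^0(\cC),$ part b) of Theorem \ref{Brown} gives $H^0(\cC)=\langle \cA\rangle^{\mathbb{U}}$; and by Lemma \ref{aplem} the $\mathbb{U}$\!-small set $\{h^Y\}_{Y\in\cA}$ of compact objects of $D_{\mathbb{U}}(\cA)$ satisfies $D_{\mathbb{U}}(\cA)=\langle h^\bullet(\cA)\rangle^{\mathbb{U}}.$ As in Example \ref{first}, the DG functor $\varPhi$ (which factors as the Yoneda functor $\cC\to\ModU\cC$ followed by restriction along $\cJ:\cA\hookrightarrow\cC$) induces a quasi-functor $\phi:\cC\to\SF_{\mathbb{U}}(\cA),$ and $H^0(\phi)$ is the composition of $H^0(\varPhi):H^0(\cC)\to H^0(\ModU\cA)$ with the localization $Q:H^0(\ModU\cA)\to D_{\mathbb{U}}(\cA).$

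Next I would check that $H^0(\phi)$ commutes with $\mathbb{U}$\!-small direct sums. For $Y\in\cA,$ compact in $H^0(\cC),$ and a $\mathbb{U}$\!-small family $\{X_i\}\subset\cC$ one has, functorially in $Y,$
$$
H^0(\varPhi)\Bigl(\bigoplus X_i\Bigr)(Y)=H^0\Hom_{\cC}\Bigl(Y,\bigoplus X_i\Bigr)=\bigoplus H^0\Hom_{\cC}(Y,X_i)=\bigoplus H^0(\varPhi)(X_i)(Y),
$$
so $H^0(\varPhi)$ preserves $\mathbb{U}$\!-small coproducts, and since $Q$ is the localization at $H^0(\Ac_{\mathbb{U}}(\cA)),$ which is closed under $\mathbb{U}$\!-small coproducts, the Appendix lemma on quotients of $\mathbb{U}$\!-cocomplete categories shows $Q$ preserves them too; hence so does $H^0(\phi).$ Moreover $H^0(\phi)$ sends $\cA$ to $\{h^Y\}_{Y\in\cA},$ which are compact in $D_{\mathbb{U}}(\cA),$ and by Yoneda together with the h-projectivity of the $h^Y$ the map $\Hom_{H^0(\cC)}(Y,Z[n])\to\Hom_{D_{\mathbb{U}}(\cA)}(h^Y,h^Z[n])$ is bijective for all $Y,Z\in\cA,$ $n\in\bbZ$; that is, $H^0(\phi)$ is fully faithful on $\cA.$ (As in the proof of Proposition \ref{Keller}, using that $H^0(\cC)$ is idempotent complete — it has countable coproducts — one also gets, via the $\mathbb{U}$\!-small analogues of Remark \ref{twodef} and Proposition \ref{Keller1}, that $H^0(\phi)$ restricts to an equivalence $H^0(\cC)^c\stackrel{\sim}{\to}D_{\mathbb{U}}(\cA)^c,$ though this is not strictly needed below.)

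Then I would argue full faithfulness exactly as in the proof of Proposition \ref{Keller2}: for fixed $Y\in\cA$ the objects $X\in H^0(\cC)$ for which $\Hom(Y,X[n])\to\Hom(H^0(\phi)Y,H^0(\phi)X[n])$ is bijective for all $n$ form a strictly full triangulated subcategory which is closed under $\mathbb{U}$\!-small coproducts (because $Y$ and $H^0(\phi)Y$ are compact and $H^0(\phi)$ preserves $\mathbb{U}$\!-small coproducts) and contains $\cA,$ hence coincides with $\langle\cA\rangle^{\mathbb{U}}=H^0(\cC)$; a second, symmetric step (fixing $X,$ letting $X'$ vary, now using that $\Hom(-,X)$ and $\Hom(-,H^0(\phi)X)$ send coproducts to products) shows $\Hom(X',X)\to\Hom(H^0(\phi)X',H^0(\phi)X)$ is bijective for all $X',X.$ Finally, the essential image of $H^0(\phi)$ is a strictly full triangulated subcategory of $D_{\mathbb{U}}(\cA)$ closed under $\mathbb{U}$\!-small coproducts (since $H^0(\cC)$ is $\mathbb{U}$\!-cocomplete and $H^0(\phi)$ is exact and preserves such coproducts) containing all $h^Y,$ $Y\in\cA,$ hence it equals $\langle h^\bullet(\cA)\rangle^{\mathbb{U}}=D_{\mathbb{U}}(\cA)$ by Lemma \ref{aplem}. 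Thus $H^0(\phi)$ is an equivalence.

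The only real point to watch is set-theoretic bookkeeping: keeping all coproducts $\mathbb{U}$\!-small throughout and invoking Theorem \ref{Brown}(b) and Lemma \ref{aplem} rather than the classical Brown representability. No new idea beyond the proof of Proposition \ref{Keller} is required.
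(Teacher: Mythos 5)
Your proof is correct and follows the same route as the paper: the preservation of $\mathbb{U}$\!-small coproducts is checked by the same compactness calculation, full faithfulness is obtained by the same d\'evissage on $\langle\cA\rangle^{\mathbb{U}}=H^0(\cC)$ (the argument of Propositions \ref{Keller} and \ref{Keller2}), and essential surjectivity uses Lemma \ref{aplem} exactly as the paper does. The only difference is cosmetic — you spell out the d\'evissage rather than citing it, and you note (correctly, and correctly flag as unnecessary) that one could also identify the compact objects on both sides.
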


\begin{proof} First one shows (as in the proof of Proposition \ref{adjnew}) that
$H^0(\phi)$ preserves direct sums.

By Brown representability theorem part b) one knows that
$H^0(\cB)=\langle \cA \rangle ^{\mathbb{U}}.$ Hence the same proof
as in Proposition \ref{Keller}  shows that $H^0(\phi)$ is full and faithful.

On the other hand, the essential image of $H^0(\phi)$ is a full trianglated
subcategory of $D_{\mathbb{U}}(\cA)$ closed under $\mathbb{U}$\!-small
direct sums and containing the $\mathbb{U}$\!-small set
$h^\bullet(\cA)$ of compact generators. Thus $H^0(\phi)$ is
essentially surjective by Lemma \ref{aplem} above.
\end{proof}

The following proposition shows that things don't change much when
we pass to a larger universe or even consider the whole category of
DG modules.

\begin{prop} Let $\cA$ be a $\mathbb{U}$\!-small DG
$\mathbb{U}$\!-category. Then
\begin{enumerate}
\item The category $D_{\mathbb{U}}(\cA)$ is a full subcategory of $D(\cA).$
\item  Let $L\subset D(\cA)$ be a localizing subcategory generated by
objects which are in $L_{\mathbb{U}}:=L\cap D_{\mathbb{U}}(\cA).$
(For example, $L$ may be compactly generated.) Then the natural
functor
$
D_{\mathbb{U}}(\cA)/L_{\mathbb{U}}\to D(\cA)/L
$
is full and faithful.
\end{enumerate}
\end{prop}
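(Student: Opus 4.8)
The plan is to reduce both assertions to the comparison of semi-free DG modules. For (1), recall from the discussion above that $D_{\mathbb U}(\cA)\simeq H^0(\SF_{\mathbb U}(\cA))$ and $D(\cA)\simeq H^0(\SF(\cA))$, and that $\SF_{\mathbb U}(\cA)=\SF(\cA)\cap\ModU\cA$ is a \emph{full} DG subcategory of $\SF(\cA)$. Since semi-free DG modules are h-projective, for any $P,Q\in\SF(\cA)$ the natural map $H^0(\Hom_{\SF(\cA)}(P,Q))\to\Hom_{D(\cA)}(P,Q)$ is an isomorphism, and the same holds inside the $\mathbb U$-world; as the two $\Hom$-complexes coincide on objects of $\SF_{\mathbb U}(\cA)$, the induced functor $D_{\mathbb U}(\cA)\to D(\cA)$ is fully faithful (its essential image being the objects quasi-isomorphic to a $\mathbb U$-small DG module). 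I would also record here that $D_{\mathbb U}(\cA)$ is closed in $D(\cA)$ under $\mathbb U$-small coproducts --- a $\mathbb U$-small coproduct of $\mathbb U$-small semi-free modules is again one, and coproducts are computed on semi-free representatives --- so that an object compact in $D(\cA)$ remains compact when regarded in $D_{\mathbb U}(\cA)$.

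For (2) I would fix a $\mathbb U$-small generating set $S\subset L_{\mathbb U}$ of $L$ consisting of compact objects; such an $S$ exists under the hypotheses, in particular whenever $L$ is compactly generated (Theorem \ref{Neeman}), since the compact objects of $D_{\mathbb U}(\cA)$ are essentially $\mathbb U$-small. Let $L'\subseteq D_{\mathbb U}(\cA)$ be the smallest $\mathbb U$-cocomplete triangulated subcategory containing $S$; then $L'\subseteq L_{\mathbb U}$, and the composite $D_{\mathbb U}(\cA)\hookrightarrow D(\cA)\stackrel{\pi}{\to}D(\cA)/L$ annihilates $L'$, hence factors through a functor $\bar G\colon D_{\mathbb U}(\cA)/L'\to D(\cA)/L$. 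The key step is to prove $\bar G$ fully faithful, and for this I would use Bousfield localization on both sides: $\pi$ has a fully faithful right adjoint $\mu$ with essential image the right orthogonal $L^{\perp}=S^{\perp}$ (Theorem \ref{Neeman}, Proposition \ref{adjnew}, plus the standard fact that the image of the right adjoint is the local subcategory), and by the $\mathbb U$-cocomplete analogues of these results proved above in this appendix (together with Theorem \ref{Brown}), $\pi'\colon D_{\mathbb U}(\cA)\to D_{\mathbb U}(\cA)/L'$ has a fully faithful right adjoint $\mu'$ with essential image $(L')^{\perp}$. Because $S^{\perp}$ is automatically the orthogonal of the whole subcategory it generates, one gets $(L')^{\perp}=S^{\perp}\cap D_{\mathbb U}(\cA)=L^{\perp}\cap D_{\mathbb U}(\cA)$, and a short diagram chase through the equivalences $\mu,\mu'$ identifies $\bar G$ with the inclusion of the full subcategory $L^{\perp}\cap D_{\mathbb U}(\cA)$ into $L^{\perp}$, which is fully faithful by part (1).

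Finally I would identify $L'$ with $L_{\mathbb U}$. The inclusion $L'\subseteq L_{\mathbb U}$ is clear; conversely, if $W\in L_{\mathbb U}$ then $\pi(W)=0$, so $\bar G(\pi'(W))=0$ and, $\bar G$ being fully faithful, $\pi'(W)\cong0$, i.e. $W$ lies in the kernel of $\pi'$. That kernel is the thick closure of $L'$ in $D_{\mathbb U}(\cA)$, and $L'$ is already thick since it is closed under $\mathbb U$-small (hence countable) coproducts; therefore $W\in L'$. Thus $L_{\mathbb U}=L'$, the natural functor of the proposition is literally $\bar G$, and we are done. The part I expect to require the most care is the middle paragraph: checking that the identification $(L')^{\perp}=L^{\perp}\cap D_{\mathbb U}(\cA)$ is genuinely compatible with $\bar G$ and the two adjunctions, and confirming that the appendix's $\mathbb U$-cocomplete versions of Neeman's localization theorem and of Proposition \ref{adjnew} apply verbatim to the pair $(D_{\mathbb U}(\cA),L')$; everything else is formal.
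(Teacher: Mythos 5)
Your argument for part (1) is correct and slightly cleaner than the paper's: instead of manipulating a calculus-of-fractions diagram directly in $H^0(\Mod\cA)$, you pass to the full DG subcategory $\SF_{\mathbb U}(\cA)\subset\SF(\cA)$ and use that both sides compute $\Hom$ in the derived category via h-projectivity. The two arguments use the same essential input (every $\mathbb U$-small DG module has a $\mathbb U$-small semi-free resolution), and either is fine.

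For part (2), there is a genuine gap. You begin by fixing a $\mathbb U$-small generating set $S\subset L_{\mathbb U}$ \emph{consisting of compact objects}, claiming this exists ``under the hypotheses.'' But the hypothesis only says $L$ is generated by objects of $L_{\mathbb U}=L\cap D_{\mathbb U}(\cA)$, and $D_{\mathbb U}(\cA)$ contains a great many non-compact objects (any $\mathbb U$-small DG module); the paper's own parenthetical ``for example, $L$ may be compactly generated'' flags compact generation as a \emph{special} case, not a consequence of the hypothesis. This matters because the appendix's Bousfield-localization machinery --- the proposition giving a fully faithful right adjoint $\mu$ preserving direct sums, and asserting that $\cT/\cS$ is a $\mathbb U$-category --- is proved there only under the assumption that $\cS$ is generated by a $\mathbb U$-small set of \emph{compact} objects. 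Without compact generators you have no guarantee from the appendix that $D(\cA)/L$ or $D_{\mathbb U}(\cA)/L'$ has small Hom-sets, nor that the right adjoints $\mu,\mu'$ exist, and the middle step of your argument (identifying both quotients with subcategories of $L^\perp$) collapses.

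The paper handles the general case by a different mechanism: it identifies $D_{\mathbb U}(\cA)$ with the subcategory $D(\cA)^{\mathbb U}$ of $\mathbb U$-compact objects in Neeman's sense, observes that the hypothesis then says precisely that $L$ is generated by $\mathbb U$-compact objects and that $L_{\mathbb U}=\langle L_{\mathbb U}\rangle^{\mathbb U}$, and invokes Neeman's localization theorem for well-generated categories (Corollary 4.4.1 in \cite{NeBook}). Your $L'$-based reduction and orthogonal-complement diagram chase could in principle be carried out in that setting, but you would need the well-generated analogue of Bousfield localization (not proved in the appendix), at which point you are essentially re-deriving the Neeman result the paper cites. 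As written, your proof establishes only the compactly generated special case.
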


\begin{proof}
1) Consider the diagram
$M\stackrel{f}{\leftarrow} P\stackrel{s}{\to} N$
in $H^0(\Mod\cA),$ where $M,N\in H^0(\ModU\cA)$ and $s$ is a quasi-isomorphism. It
suffices to prove that there exists $Q\in H^0(\ModU\cA)$ and a morphism $g:Q\to P,$ such that
$s\cdot g$ is a quasi-isomorphism. Take $Q\in \SF _{\mathbb{U}}(\cA)$
and a quasi-isomorphism $t:Q\to N.$ Then there exists $g:Q\to P,$
such that $t=s\cdot g.$ This proves 1).

2) Following \cite{NeBook} we denote by
$D(\cA)^{(\mathbb{U})}\subset D(\cA)$ the full triangulated subcategory
of all objects $K$ such that for any collection $\{X_{\lambda}, \lambda \in \Lambda\}$
 of objects of
$D(\cA)$  any map $K\to \bigoplus_{\lambda\in \Lambda} X_{\lambda}$
factors through a direct subsum of cardinality strictly less than $\mathbb{U}.$
(These are called
$\mathbb{U}$\!-small in \cite{NeBook}, but we already use this term in a
different way). Also denote by
$$\{D(\cA)^{(\mathbb{U})}\}_{\mathbb{U}}\subset D(\cA)^{(\mathbb{U})}$$
the full triangulated subcategory of $\mathbb{U}$\!-perfect objects (see Def. 3.3.1 and 4.2.2 in \cite{NeBook}).
This category $\{D(\cA)^{(\mathbb{U})}\}_{\mathbb{U}}$ is denoted by
$D(\cA)^{\mathbb{U}}$ and its objects are called
$\mathbb{U}$\!-compact objects of $D(\cA).$ (The categories
$D(\cA)^{(\mathbb{U})}$ and $D(\cA)^{\mathbb{U}}$ are indeed
triangulated, because $\mathbb{U}$ is an infinite cardinal (Lemmas
4.1.4 and Corollary 3.3.12 in \cite{NeBook}). Since $\mathbb{U}$ is a
regular cardinal these categories are also $\mathbb{U}$\!-cocomplete
(Lemma 4.1.5 and Corollary 3.3.14 in \cite{NeBook}). It follows that
$D_{\mathbb{U}}(\cA)\subseteq D(\cA)^{\mathbb{U}}.$ Moreover, it is proved in Lemma 4.4.5 \cite{NeBook}
that in fact $D_{\mathbb{U}}(\cA)= D(\cA)^{\mathbb{U}}.$

Notice that $L_{\mathbb{U}}=\langle L_{\mathbb{U}}\rangle
^{\mathbb{U}},$ since $L$ is cocomplete and $D_{\mathbb{U}}(\cA)$ is
$\mathbb{U}$\!-cocomplete. Now Corollary 4.4.1 in \cite{NeBook} asserts that the functor
$D_{\mathbb{U}}(\cA)/L_{\mathbb{U}}\to D(\cA)/L$
is full and faithful.
\end{proof}

\section{Proof of Proposition \ref{exten}}\label{prB}

In this appendix we present a proof of Proposition 9.6. It is
essentially the same as the proof of Proposition 9.3 that is given
in \cite{Or, Or2}. We will directly follow \cite[3.4.6]{Or2} and
will use the notation of that proof.

Let $\fE$ be an exact category. Assume that it is a full exact
subcategory of an abelian category $\fA,$ i.e. $\fE\subset\fA$ is
closed under extensions in $\fA.$ We also assume that additional
property (EPI) holds:

\begin{tabular}{ll}
(EPI)&\quad a map $f$ in $\fE$ is an admissible epimorphism if and only if it is an epimorphism in $\fA.$
\end{tabular}
As in Definition \ref{ampl} we say that
the sequence $\{ P_i \; |\; i\in\bbZ\}$ of objects in $\fE$ is  ample in $\fE$
if it is ample in $\fA$ as in Definition \ref{defamp}.

Let us consider derived categories $D^*(\fE).$
Since  $\fE\subset\fA$ is an exact subcategory of an abelian category $\fA$ such that the condition
(EPI) holds and for any $C\in\fA$ there is an epimorphism $E\twoheadrightarrow C$ from $E\in \fE,$
then  the canonical functor $D^{-}(\fE)\to D^{-}(\fA)$ is an equivalence and
the functor $D^{b}(\fE)\to D^{b}(\fA)$ is  fully faithful \cite{Ke2}.

\begin{prop}
Let $\fE$ be an exact category with an ample sequence
$\{ P_i\; |\; i\in \bbZ_{\le 0} \}.$ Let $j : {\cP}\hookrightarrow D^b(\fE)$ be the natural embedding
of the full subcategory $\cP$ with objects $\Ob {\cP}:=\{  P_i\; |\; i\in \bbZ_{\le 0} \}.$
Let $F : D^b(\fE)\stackrel{\sim}{\to} D^b(\fE)$ be an autoequivalence.
Suppose that there is an isomorphism of functors
$f : j\stackrel{\sim}{\to}F|_{\cP}.$
Then $f$ can be extended to an isomorphism
$\id\stackrel{\sim}{\to}F$ on the whole category $D^b(\fE)$.
\end{prop}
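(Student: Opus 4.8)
The plan is to mimic the proof of Proposition~\ref{ext} from \cite{Or,Or2}, extending the isomorphism $f:j\stackrel{\sim}{\to}F|_{\cP}$ step by step from the ample sequence to larger and larger subcategories of $D^b(\fE)$. First I would record the basic consequences of $F$ being an autoequivalence together with the ampleness axioms of Definition~\ref{defamp}: for each $P_i$ one has $F(P_i)\cong P_i$ via $f$, and the graded $\Hom$-spaces $\bigoplus_n\Hom(P_i,P_j[n])$ are concentrated in degree $0$ for $i\ll j$ (by conditions b) and c) applied inside $\fA$, using that $D^b(\fE)\hookrightarrow D^b(\fA)$ is fully faithful). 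Because $F$ is exact and an equivalence, it preserves these vanishings, so $F$ automatically respects the combinatorial structure coming from the ample sequence.

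The key geometric step is to upgrade from objects of $\cP$ to arbitrary objects of $\fE$ (placed in degree $0$). Given $E\in\fE$, ampleness gives, for $i\ll 0$, a surjection $P_i^{\oplus n_i}\twoheadrightarrow E$ in $\fA$; by (EPI) this is an admissible epimorphism in $\fE$, so one gets a (possibly long) resolution of $E$ by finite sums of the $P_i$'s with $i\ll 0$, i.e.\ $E$ is computed by a bounded-above complex of such sums. Using that $f$ is already defined on each $P_i$, one shows that a morphism $P_i^{\oplus a}\to P_j^{\oplus b}$ is carried by $F$ (up to the isomorphisms $f$) to itself; since such morphisms live in a single cohomological degree, there is no room for higher obstructions, and one can glue the local isomorphisms $f_{P_i}$ into an isomorphism $f_E:E\stackrel{\sim}{\to}F(E)$, functorially in $E$. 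Concretely this is done by choosing a two-term presentation $P'\to P''\to E$, applying $F$, and filling in the induced map on cones; the naturality is checked on morphisms of $\fE$ by comparing presentations. This reproduces exactly the argument of \cite[3.4.6]{Or2}, where the role of $\coh X$ is played by $\fE$.

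Finally I would propagate the isomorphism from $\fE$ (in degree $0$) to all of $D^b(\fE)$ by the standard dévissage on the length of the cohomology: every object $C^{\cdot}\in D^b(\fE)$ sits in a triangle $\tau_{\le n-1}C^{\cdot}\to C^{\cdot}\to H^n(C^{\cdot})[-n]$ with the outer terms of strictly smaller amplitude (here one uses that $D^b(\fE)\cong D^b(\fA)_{\fE}$ and that truncations of complexes with cohomology in $\fE$ again have cohomology in $\fE$, by the extension-closedness of $\fE\subset\fA$). By induction the isomorphism $f$ is already defined and natural on the two outer terms; since $F$ is exact it sends the triangle to a triangle, and the octahedral/$3\times 3$ argument produces a unique compatible isomorphism $f_{C^{\cdot}}:C^{\cdot}\to F(C^{\cdot})$ provided the relevant $\Hom$-group of "degree $-1$" maps between the outer terms vanishes --- and this vanishing is precisely where ampleness (the degree-$0$ concentration, propagated through the resolutions) is used. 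Checking naturality of this extension on all morphisms of $D^b(\fE)$, again by reducing to morphisms between objects of $\fE$ via resolutions, completes the proof.

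The main obstacle I anticipate is the bookkeeping in the gluing step over $\fE$: one must show that the isomorphisms $f_{P_i}$ are mutually compatible with all morphisms among the $P_i^{\oplus\ast}$ and that the resulting $f_E$ is independent of the chosen presentation and natural in $E$. This is not deep, but it is the technically heaviest part; it is handled exactly as in \cite[3.4.6]{Or2}, which is why the proof is relegated to this appendix and presented as "essentially the same" as the abelian case.
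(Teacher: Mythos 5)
Your overall plan --- first construct $f_E$ for objects sitting in degree $0$, then propagate to all of $D^b(\fE)$ by induction on cohomological amplitude --- is the same as the paper's, and your description of the first phase (reduction to covers $P_i^{\oplus k}\twoheadrightarrow E$ via ampleness and (EPI), then independence of choices and naturality on $\fE$, all as in \cite{Or2}) is accurate.

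The gap is in the d\'evissage step, and it falls exactly where the paper flags the one correction needed relative to the abelian case. You use the cohomological truncation triangle
\[
\tau_{\le n-1}C^{\cdot}\to C^{\cdot}\to H^n(C^{\cdot})[-n]
\]
and justify that the outer terms lie in $D^b(\fE)$ by appealing to ``$D^b(\fE)\cong D^b(\fA)_{\fE}$'' together with extension-closedness of $\fE\subset\fA$. Neither half of this is right. The essential image of $D^b(\fE)\hookrightarrow D^b(\fA)$ is the class of complexes quasi-isomorphic to a bounded complex over $\fE$, which is \emph{not} the same as the class of complexes whose cohomology lies in $\fE$, and in general that essential image is \emph{not} closed under $\tau$-truncation; nor does extension-closedness give you that cohomology objects of such complexes lie in $\fE$. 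The intended main example already shows this: with $\fE=\Loc X$ and $\fA=\coh X$ on a singular projective scheme $X$, one has $D^b(\fE)\cong\Perf(X)$, and a two-term complex of vector bundles resolving a coherent sheaf of infinite projective dimension is perfect while its cohomological truncations are not. So the triangle you propose to use may take you outside $D^b(\fE)$, and the induction cannot even be set up as stated.

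The paper's proof sidesteps this entirely: it never applies $\tau$-truncation to a general object of $D^b(\fE)$. Instead (Step~4) it replaces the relevant complex by a bounded-above resolution $E^{\cdot}$ over $\fE$ using Remark~\ref{exactful}, works with \emph{stupid} truncations of $E^{\cdot}$ (which are obviously still complexes over $\fE$), and shows that the intermediate objects $\im d^{-i}$ and $\coker d^{-k}$ appearing in the factorization of a map $u\in\Hom(X,Y[k])$ are each isomorphic to an object of $D^b(\fE)$ because each arises as a cone of a morphism between objects already known to lie in $D^b(\fE)$. This replacement of $\tau$-truncations by resolution-and-stupid-truncation is precisely the content of the ``exact'' generalization, and it is the step your proposal would need to carry out explicitly rather than treat as routine bookkeeping.
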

\begin{proof}
A proof of this proposition is essentially the same as the proof of Proposition \ref{ext}.
We consider the canonical functor from $D^b(\fE)\to D^b(\fA).$ It is fully faithful, because any object $C\in \fA$
can be covered by a direct sum of $P_i$ that belongs to $\fE.$ This means that we can work with
$D^b(\fE)$ as a full triangulated subcategory of $D^b(\fA)$ and, in particular, we can talk about
cohomologies of a complex from $D^b(\fE)$ as objects of $\fA.$

First, since $F$ commutes with finite direct sums, the transformation $f$ extends componentwise to all finite direct sums of objects
of the category $\cP.$ Note that an object $X\in D^b(\fE)$ is isomorphic to an object of $\fA$ if and only if
$\Hom^j(P_i, X)=0$ for all $j\ne 0$ when $i\ll 0.$ It follows that in this case the object $F(X)$ is also isomorphic to an object of $\cA,$
because
$$
\Hom^j(P_i, F(X))\cong\Hom^j(F(P_i), F(X))\cong \Hom^j(P_i, X)=0
$$
for $j\ne 0$ when $i\ll 0$.

\noindent {\it Step 1.} In Step 1 we construct an isomorphism $f_X:
X\stackrel{\sim}{\to} F(X)$ for all $X\in D^b(\fE)$ that are
isomorphic to an object of $\fA.$ Let $X$ be such an object then
$X\cong H^0(X).$ We fix a morphism $v: P_i^{\oplus k}\to X$ such
that the canonical map $P_i^{\oplus k}\to H^0(X)$ is surjective in
the abelian category $\fA.$ After that the proof of Step 1 is the
same as the proof of Step 1 in \cite[3.4.6]{Or2}.

\noindent
{\it Step 2.} Now we show that $f_X$ does not depend on the choice of the morphism $v: P_i^{\oplus k}\to X.$
The proof of Step 2 is the same as the proof of Step 2 in \cite[3.4.6]{Or2}.

\noindent {\it Step 3.} In Step 3 we check that the morphisms $f_X$
define a natural transformation of functors on the subcategory of
$D^b(\fE)$ consisting of all objects that are isomorphic to objects
of the abelian category $\fA.$ That is, for any morphism of such
objects $\varphi: X\to Y,$ we have to prove that
$f_Y\cdot\varphi=F(\varphi)\cdot f_X.$ The proof of this step is
word for word as the proof of Step 3 in \cite[3.4.6]{Or2}.

\noindent {\it Step 4.} We constructed transformations $f_X: X\to
F(X)$ for all $X\in D^b(\fE)$ that are isomorphic to objects of the
abelian category $\fA.$ Now we define $f_{X[n]} : X[n]\to
F(X[n])\cong F(X)[n]$ for any  such $X$ by evident formula
$f_{X[n]}=f_X [n].$ We need to show that these transformations
commute with any $u\in\Hom(X, Y[k])$ for all $k>0.$ For abelian
category we used the fact that any element $u\in\Hom(X, Y[k])$ can
be represented as a composition $u = u_k \cdots u_1$ of some
elements $u_i\in\Hom(Z_{i-1}, Z_{i}[1])$ with $Z_0=X,\; Z_k=Y.$ That
allowed us to reduce the problem to the case $u\in\Hom(X, Y[1]).$ In
the case of $D^b(\fE)$ we should correct the argument.

Let $X$ and $Y$ be objects of $D^b(\fE)$ that are isomorphic to
objects of the abelian category $\fA.$ A fiber of a morphism $u\in
\Hom (X, Y[k])$ when $k\ge 2$ can be represented by a complex over
$\fA$ of the form $ C^{\cdot}: C^{-k+1}\to\cdots\to C^0 $ which has
only two nontrivial comomologies $H^0(C^{\cdot})\cong X$ and
$H^{-k+1}(C^{\cdot})\cong Y.$ By Remark \ref{exactful} there are a
complex $E^{\cdot}: \{\cdots\stackrel{d^{-2}}{\lto}
E^{-1}\stackrel{d^{-1}}{\lto} E^0\}$ over $\fE$ and a
quasi-isomorphism $E^{\cdot}\to C^{\cdot}.$ Consider the usual
truncation of the form $\tau_{\ge -k+1}E^{\cdot}: \{ \coker
d^{-k}\to E^{-k+2}\to\cdots\to E^0\}$ and the induced
quasi-isomorphism $\tau_{\ge -k+1}E^{\cdot}\to C^{\cdot}.$ It is
easy to see that the object $\im d^{-1}$ as a fiber of the map
$E^0\to X$ is isomorphic to an object of $D^b(\fE).$ Similarly, each
$\im d^{-i}=\Ker d^{-i+1}$ for $2\le i\le k-1$ is also isomorphic to
an object of $D^b(\fE).$ And, finally, $\coker d^{-k}$ as an
extension of $\im d^{-k+1}$  and $Y$ is also isomorphic to an object
of $D^b(\fE).$ Now we put $Z_0=X,\; Z_k=Y$ and $Z_i=\im d^{-i}$ for
$i=1,\dots, k-1$ with corresponding $u_i\in \Hom(Z_{i-1},
Z_{i}[1]).$ Thus, the morphism $u$ can be represented as a
composition $u = u_k \cdots u_1.$ Therefore, it is sufficient to
verify that $f_{X[n]}$ commutes with elements $u\in \Hom(X, Y[1]).$

The rest of the proof is the same as the corresponding part of the proof of Step 4 in \cite[3.4.6.]{Or2}.

\noindent
{\it Step 5.}
We carry out the final part of the proof  by induction on the length of
the interval to which the non-trivial cohomology of the object belongs. For this consider the full subcategory
$j_n: \cD_n\hookrightarrow D^b(\fE)$ consisting of objects with non-trivial cohomologies in some interval of lenght $n$
(the interval is not fixed).
We now prove that there is a unique extension of the natural transformation $f$ to a natural
functorial isomorphism $f_n: j_n\to F|_{\cD_n}.$ We have already proved this above for $n=1,$ as the basis of the induction.

Now to prove the induction step, suppose that the assertion is
already proved for some $n=a\ge 1.$ Let $X$ be an object of
$\cD_{a+1}.$ In Step 5 we construct an isomorphism $f_X: X\to F(X)$
for all $X\in \cD_{a+1}.$ The proof is the same as the proof of Step
5 in \cite[3.4.6]{Or2}.

\noindent {\it Step 6.} Now we have to prove that the isomorphism
$f_X$ does not depend on the choices made in the construction in
Step 5. The proof is word for word as the proof of Step 6 in
\cite[3.4.6]{Or2}.

\noindent
{\it Step 7.}
We constructed an isomorphisms $f_X: X\to F(X)$ for all $X\in \cD_{a+1}.$ It remains to show that
this extension of $f_a$ is  a natural transformation from $j_{a+1}$ to $F|_{\cD_{a+1}}.$
Thus, we have to check that for any $\varphi: X\to Y$ with $X$ and $Y$ in $\cD_{a+1}$ we obtain a commutative diagram
$$
\begin{CD}
X @>{\varphi}>> Y\\
@V{f_X}VV       @VV{f_Y }V\\
F(X) @>{F(\varphi)}>> F(Y).
\end{CD}
$$
We reduce this problem to the case in which both objects $X$ and $Y$ belong to $\cD_a.$
The proof is the same as the proof of Step 7 in \cite[3.4.6]{Or2}.
%
\end{proof}

\end{document}